\documentclass[twoside,11pt,reqno]{amsart}
\usepackage{amsmath,amssymb,amscd,mathrsfs,epic,wasysym,latexsym,tikz,mathrsfs,cite,enumerate,enumitem,xspace,stmaryrd,needspace,genyoungtabtikz}
\usepackage[noBBpl]{mathpazo}
\usepackage[hidelinks,bookmarks=false,pagebackref,bookmarks=true]{hyperref}
\usepackage[capitalise,noabbrev,english]{cleveref}
\usepackage{pb-diagram}
\usepackage[all]{xy}
\usepackage{mathdots,mathtools}
\usepackage[a4paper,margin=2cm]{geometry}

\makeatletter

\def\author@andify{%
  \nxandlist {\unskip ,\penalty-1 \space\ignorespaces}%
    {\unskip {} \@@and~}%
    {\unskip \penalty-2 \space \@@and~}%
}

\@namedef{subjclassname@2020}{\textup{2020} Mathematics Subject Classification}
\makeatother

\hfuzz 3pt
\vfuzz 2pt



\synctex=1
\numberwithin{equation}{section}

\allowdisplaybreaks

\crefname{thm}{Theorem}{Theorems}
\crefname{propn}{Proposition}{Propositions}
\crefname{lemma}{Lemma}{Lemmas}
\crefname{cory}{Corollary}{Corollaries}
\crefname{conj}{Conjecture}{Conjectures}
\Crefname{thm}{Theorem}{Theorems}
\Crefname{propn}{Proposition}{Propositions}
\Crefname{lemma}{Lemma}{Lemmas}
\Crefname{cory}{Corollary}{Corollaries}
\Crefname{conj}{Conjecture}{Conjectures}

\newtheoremstyle{mattthm}{}{}{\slshape}{}{\bfseries}{.}{ }{}

\theoremstyle{mattthm}
\newtheorem{lemma}{Lemma}[section]
\newtheorem{propn}[lemma]{Proposition}
\newtheorem{thm}[lemma]{Theorem}
\newtheorem{cory}[lemma]{Corollary}

\newtheorem*{mainthm}{Main Theorem}

\newtheoremstyle{mattdef}{}{}{}{}{\bfseries}{.}{ }{}

\theoremstyle{mattdef}
\newtheorem{rmk}[lemma]{Remark}

\newtheorem{Example}[lemma]{Example}
\newtheorem*{eg}{Example}

\let\<\langle
\let\>\rangle

\newcommand\Comment[2][\relax]{\space\par\medskip\noindent%
   \fbox{\begin{minipage}{\textwidth}\textbf{Comment\ifx\relax#1\else---#1\fi}\newline%
        #2\end{minipage}}\medskip
}

\def\um{\underline{m}}
\def\un{\underline{n}}

\def\bi{\text{\boldmath$i$}}
\def\bj{\text{\boldmath$j$}}

\def\bg{\text{\boldmath$g$}}
\def\bz{\text{\boldmath$z$}}

\def\bla{\text{\boldmath$\lambda$}}
\def\bmu{\text{\boldmath$\mu$}}
\def\bnu{\text{\boldmath$\nu$}}

\def\pmod#1{\text{ }(\text{\rm mod } #1)\,}

\newcommand\tens{\mathbin{\circ}}
\newcommand{\Hom}{\operatorname{Hom}}

\newcommand{\id}{\operatorname{id}}
\def\sgn{\mathtt{sgn}}

\newcommand{\Z}{\mathbb{Z}}

\newcommand{\N}{\mathbb{N}}
\newcommand{\0}{{\bar 0}}
\renewcommand{\1}{{\bar 1}}
\def\eps{{\varepsilon}}
\def\phi{{\varphi}}

\newcommand{\zz}{{\textsf{z}}}
\newcommand{\ze}{{\textsf{e}}}

\newcommand{\za}{{\textsf{a}}}

\newcommand{\zu}{{\mathsf{u}}}

\newcommand{\F}{{\mathbb F}}
\newcommand\bbf{\mathbb F}
\newcommand\bbc{\mathbb C}

\newcommand{\ga}{\gamma}

\newcommand{\la}{\lambda}

\newcommand{\al}{\alpha}
\newcommand{\be}{\beta}

\def\Si{\mathfrak{S}}
\newcommand{\si}{\sigma}
\newcommand{\om}{\omega}

\newcommand{\de}{\delta}

\newcommand{\ka}{\kappa}
\newcommand{\T}{\mathcal{T}}

\def\Mtype{\mathtt{M}}
\def\Qtype{\mathtt{Q}}

\def\id{\mathop{\mathrm {id}}\nolimits}

\newcommand{\Ind}{{\mathrm {Ind}}}

\newcommand{\Mor}{{\mathrm {Mor}}}
\newcommand{\sMor}{{\mathrm {sMor}}}

\newcommand{\Res}{{\mathrm {Res}}}

\newcommand{\C}{{\mathbb C}}
\newcommand{\Q}{{\mathbb Q}}
\newcommand{\CL}{{\mathcal C}}

\newcommand{\Zig}{{\sf A}}

\def\Par{{\mathscr P}}

\def\W{{\sf W}}

\def\iso{\stackrel{\sim}{\longrightarrow}}

{\catcode`\|=\active
  \gdef\set#1{\mathinner{\lbrace\,{\mathcode`\|"8000%
  \let|\midvert #1}\,\rbrace}}
}
\def\midvert{\egroup\mid\bgroup}

\colorlet{darkgreen}{green!50!black}
\tikzset{dots/.style={very thick,loosely dotted},
         greendot/.style={fill,circle,color=darkgreen,inner sep=1.5pt,outer sep=0},
         blackdot/.style={fill,circle,color=black,inner sep=1.5pt,outer sep=0},
         graydot/.style={fill,circle,color=gray,inner sep=1.1pt,outer sep=0}
}
\def\greendot(#1,#2){\node[greendot] at(#1,#2){}}
\def\blackdot(#1,#2){\node[blackdot] at(#1,#2){}}
\def\graydot(#1,#2){\node[graydot] at(#1,#2){}}

\newenvironment{braid}{
  \begin{tikzpicture}[baseline=6mm,black,line width=1pt, scale=0.5,
                      draw/.append style={rounded corners},
                      every node/.append style={font=\fontsize{5}{5}\selectfont}]%
  }{\end{tikzpicture}
}

\Yboxdim{20pt}

\newcommand\hf{1.42}\newcommand\xhf{.42}

\newcommand\Sp[1]{\mathrm{S}(#1)} 
\newcommand\SpC[1]{\mathrm{S}_\C(#1)} 
\newcommand\Dm[1]{\mathrm{D}(#1)} 
\newcommand\Em[1]{\mathrm{E}(#1)}
\newcommand\sPar{{\mathscr P_0}}
\newcommand\rPar{{\mathscr{RP}_p}}
\newcommand\ppmod[1]{\ (\operatorname{mod}\,#1)}

\newcommand\pfr{$p'$}
\newcommand\chm[2]{[#1:#2]}
\newcommand\ztz{\mathbb Z/2\mathbb Z}
\newcommand\fg[2]{F(#1,#2)}
\newcommand\li{linearly independent\xspace}
\newcommand\triv[1]{\mathbf{1}_{#1}}

\newcommand\gth[1]{\underline{g}(#1)}

\newcommand\ket[1]{|#1\rangle}
\newcommand\ch[1]{\chi^#1}
\newcommand\lset[2]{\left\{\left.#1\ \right|\ \smash{#2}\right\}}
\newcommand\con{cited~on~p.~}
\newcommand\cons{cited~on~pp.~}

\newcommand\dd{D^\ast}

\renewcommand*{\backref}[1]{}
\renewcommand*{\backrefalt}[4]{%
\ifcase #1 %
\hspace*{\fill}{\small [no~citations.]}%
\or
\hspace*{\fill}{\small [\con#2]}%
\else
\hspace*{\fill}{\small [\cons#2]}%
\fi
}

\renewcommand\iff{if and only if\xspace}
\newcommand\domby{\trianglelefteqslant}
\newcommand\domsby{\vartriangleleft}
\newcommand\dom{\trianglerighteqslant}
\newcommand\doms{\vartriangleright}
\newcommand\ndom{\ntrianglerighteqslant}
\newcommand\ndomby{\ntrianglelefteqslant}
\renewcommand\unlhd\domby

\makeatletter
\newcommand*{\bktq}{%
  \mathrel{%
    \mathpalette\@blackleq{}%
  }%
}
\newcommand*{\@blackleq}[1]{%
  \vcenter{%
    \m@th
    \setbox0=\hbox{$#1\mkern3mu$}%
    \setbox2=\hbox{$#1\vcenter{}$}%
    \setbox4=\hbox{\raisebox{1.1pt}[.2pt][.2pt]{$#1\leqslant$}}%
    \hbox{$#1\blacktriangleleft$}%
    \nointerlineskip
    \kern\wd0 %
    \copy4 %
  }%
}
\makeatother

\newcommand\loccit{[\textit{loc.~cit.}]\xspace}

\newcommand\sucq\succcurlyeq
\newcommand\preq\preccurlyeq

\newcommand\bkp{\backrefprint\renewcommand\con{}\renewcommand\cons{}}

\newcommand\bbq{\mathbb Q}
\newcommand\compn{\mathscr C}

\newcommand\calf{\mathscr F}
\newcommand\calp{\mathscr P}

\newcommand\blk[2]{\mathcal B^{#1,#2}}

\newcommand\gs\geqslant
\newcommand\ls\leqslant
\renewcommand\geq\geqslant
\renewcommand\leq\leqslant

\newcommand{\hsym}{\hat{\mathfrak S}}
\newcommand{\sym}{\mathfrak S}
\newcommand{\halt}{\hat{\mathfrak A}}

\newcommand\bkt\blacktriangleleft
\newcommand\reg{{\operatorname{reg}}}
\newcommand\pbqx{$p$-bar-quotient}
\newcommand\pbq{\pbqx\xspace}
\newcommand\pbqs{\pbqx s\xspace}
\newcommand\lr[2]{\operatorname{c}(#1;#2)}
\newcommand\bak[1]{#1_\circ}
\newcommand\fd[1]{#1^\circ}
\renewcommand\a[1]{a(#1)}
\newcommand\prj[1]{\phi^{{#1}}}
\newcommand\mprj[1]{\hat{\phi}^{{#1}}}
\newcommand\lprj[1]{\tilde\phi^{{#1}}}

\newcommand\prm{P_{\um}}
\newcommand\mprm{\hat P_{\um}}
\newcommand\lprm{\tilde P_{\um}}
\newcommand\rpstp[2]{\mathscr{RP}_p^{#1,#2}}
\newcommand\pstp[2]{\calp_p^{#1,#2}}
\newcommand\stp[2]{\calp_0^{#1,#2}}
\newcommand\pfstp[2]{\calp_{p'}^{#1,#2}}
\newcommand\rp[1]{\mathscr{RP}_#1}

\setitemize[1]{label={$\diamond$}}

\newenvironment{pf}{\begin{proof}}{\end{proof}}

\newcommand\vn\varnothing
\newcommand\spe[1]{\mathcal{S}^{#1}}
\newcommand\ypm[1]{\mathcal{M}^{#1}}

\newcommand\ki[2]{K^{-1}_{#1#2}(-1)}

\begin{document}

\title[Decomposition numbers for RoCK blocks of double covers]{{\bf Decomposition numbers for abelian defect RoCK blocks of double covers of symmetric groups}}

\author{\sc Matthew Fayers}
\address
{Queen Mary University of London, Mile End Road, London E1 4NS, U.K.} 
\email{m.fayers@qmul.ac.uk}

\author{\sc Alexander Kleshchev}
\address{Department of Mathematics\\ University of Oregon\\Eugene\\ OR 97403, USA}
\email{klesh@uoregon.edu}

\author{\sc Lucia Morotti}
\address
{Leibniz Universit\"{a}t Hannover\\ Institut f\"{u}r Algebra, Zahlentheorie und Diskrete Mathematik\\ 30167 Hannover\\ Germany} 
\email{morotti@math.uni-hannover.de}

\subjclass[2020]{20C30, 20C20, 05E10}

\thanks{The authors would like to thank the Isaac Newton Institute for Mathematical Sciences, Cambridge, for support and hospitality during the programme `Groups, representations and applications: new perspectives' where work on this paper was undertaken. The first author was supported by EPSRC grant no and EP/W005751/1. The second author was supported by the NSF grant DMS-2101791, Charles Simonyi Endowment at the Institute for Advanced Study, and the Simons Foundation. While finishing writing the paper the third author was working at the Mathematisches Institut of the Heinrich-Heine-Universit\"at D\"usseldorf.
}

\begin{abstract}
We calculate the (super)decomposition matrix for a RoCK block of a double cover of the symmetric group with abelian defect, verifying a conjecture of the first author. To do this, we exploit a theorem of the second author and Livesey that a RoCK block $\blk\rho d$ is Morita superequivalent to a wreath superproduct of a certain quiver (super)algebra with the symmetric group $\sym_d$. We develop the representation theory of this wreath superproduct to compute its Cartan invariants. We then directly construct projective characters for $\blk\rho d$ to calculate its decomposition matrix up to a triangular adjustment, and show that this adjustment is trivial by comparing Cartan invariants.
\end{abstract}

\maketitle

\tableofcontents

\section{Introduction}

In the modular representation theory of the symmetric groups and their double covers, the central outstanding question is the \emph{decomposition number problem}: determining the composition factors of the $p$-modular reductions of ordinary irreducible representations. Even for the symmetric groups a solution to this problem seems far out of reach, but there is a remarkable family of blocks for which the problem has been solved. These are called \emph{RoCK blocks}. They are defined in a combinatorial way using the abacus, and were identified by Rouquier \cite{Ro} as being of particular importance. RoCK blocks have been pivotal in the proofs of several results, most importantly in the proof of Brou\'e's abelian defect group conjecture for symmetric groups \cite{CR}. This hinges on the proof by Chuang and Kessar \cite{CK} that a RoCK block of defect $d<p$ is Morita equivalent to the principal block of the wreath product $\sym_p\wr\sym_d$. A consequence of this is the formula due to Chuang--Tan \cite{CT1} for the decomposition numbers for RoCK blocks. The same formula appears in a computation of certain canonical basis coefficients, due independently to Leclerc--Miyachi and Chuang--Tan \cite{ctq,LM}.

In recent years, the representation theory of double covers of symmetric groups (or equivalently, the study of projective representations of symmetric groups) has been studied extensively. Let $p=2\ell+1$ be an odd prime (see \cite{mfspinalt2} for corresponding results in characteristic $2$), and $\F$ an algebraically closed field of characteristic $p$. Let $\hsym_n$ denote one of the proper double covers of the symmetric group $\sym_n$, for $n\gs4$, and let $z\in\hsym_n$ denote the central element of order $2$. An irreducible $\F\hsym_n$-module $M$ is a \emph{spin module} if $z$ acts as $-1$ on $M$, and a block of $\F\hsym_n$ is a spin block if it contains spin modules. In fact, for studying spin modules it is more natural to consider $\F\hsym_n$ as a superalgebra (i.e.\ a $\Z/2\Z$-graded algebra), and study spin supermodules and spin superblocks. The modular spin representation theory of $\hsym_n$ has been developed by Brundan and the second author in \cite{BKHeCl,bk1} (using two different approaches which were later unified by the second author and Shchigolev \cite{ks}). 
The combinatorial part of this theory revolves around the combinatorics of $p$-strict partitions. 

The definition of spin RoCK blocks for $\hsym_n$ was given by the second author and Livesey \cite{kl}, who proved an analogue of Chuang and Kessar's Morita equivalence result, and used this to show that Brou\'e's conjecture holds for spin RoCK blocks. Our purpose in this paper is to give a formula for the (super)decomposition numbers for spin RoCK blocks of abelian defect; in particular, we prove a formula conjectured by the first author in \cite{mattconj} based on calculations of canonical basis coefficients.

To state our main theorem we briefly introduce some notation. For a strict partition $\la$, we let $\Sp\la$ denote a $p$-modular reduction of the irreducible spin supermodule for $\bbc\hsym_n$ labelled by $\la$, and for a restricted $p$-strict partition $\mu$, we let $\Dm\mu$ denote the irreducible spin supermodule for $\bbf\hsym_n$ labelled by $\mu$; see Section~ \ref{SRSNHat} for details on these.

If $\la$ is any partition, we write $h(\la)$ for the number of positive parts of $\la$, and $\a\la=0$ or $1$ as $\la$ has an even or odd number of positive even parts. Finally, $\lr{\al}{\si,\tau}$ denotes the Littlewood--Richardson coefficient corresponding to partitions $\al,\si,\tau$, and $K^{-1}_{\tau\si}(q)$ the inverse Kostka polynomial corresponding to $\si,\tau$; see \S\ref{SSLR} and \S\ref{SSKostkaPol} for details on these. 

Rouquier $p$-bar-cores are discussed in Section \ref{SRouquier} -- these correspond to spin RoCK blocks of double covers of symmetric groups. Now our main theorem can be stated as follows.

\begin{mainthm}
Suppose $p=2\ell+1$ is an odd prime and $1\ls d<p$, and that $\rho$ is a $d$-Rouquier $p$-bar-core. Suppose $\la$ is a strict partition and $\mu$ a restricted $p$-strict partition, both with $p$-bar-core $\rho$ and $p$-bar-weight $d$. Let $(\la^{(0)},\dots,\la^{(\ell)})$ and $(\mu^{(0)},\dots,\mu^{(\ell)})$ be the \pbqs of $\la,\mu$. Then the decomposition number $[\Sp\la:\Dm\mu]$ equals
\[
2^{\lfloor\frac12(h(\la^{(0)})+\a\la)\rfloor}\sum\ki{\la^{(0)}}{\si^{(0)}}\prod_{i=1}^\ell\lr{\la^{(i)}}{\si^{(i)},\tau^{(i)}}\lr{\mu^{(i-1)}}{\si^{(i-1)},{\tau^{(i)}}'}
\]
where the sum is over all partitions $\si^{(0)},\dots,\si^{(\ell-1)},\tau^{(1)},\dots,\tau^{(\ell)}$, and we read $\si^{(\ell)}$ as $\vn$.
\end{mainthm}

We note that the assumption $d\gs 1$ made in the theorem is harmless -- it simply means that we are dealing with blocks of non-trivial defect; on the other hand, the assumption $d<p$ is equivalent to the assumption that the blocks we are dealing with have abelian defect groups. 

The proof of our main theorem involves two parts. 

First, we use the Morita equivalence result of Kleshchev--Livesey which shows that a RoCK block $\blk\rho d$ with $p$-bar-weight $d<p$ is Morita superequivalent to a wreath superproduct $\W_d=\Zig_\ell\wr\sym_d$, where $\Zig_\ell$ is an explicitly defined quiver superalgebra. In \cref{wreathsec} we develop superalgebra analogues of results of Chuang and Tan describing the representation theory of wreath products. In particular, by explicitly constructing indecomposable projective supermodules we are able to determine the (super)Cartan matrix of $\W_d$ when $d<p$, and hence of $\blk\rho d$ (but without any information on the labels of rows and columns). 

For the second part of the proof (in \cref{projcharsec}) we explicitly consider projective characters for $\blk\rho d$. The results of Leclerc--Thibon \cite{lt} comparing decomposition numbers with canonical basis coefficients, together with the first author's formula for canonical basis coefficients corresponding to spin RoCK blocks, show that our main theorem is true `up to column operations', i.e.\ that the decomposition matrix of $\blk\rho d$ is obtained from the matrix claimed in our main theorem by post-multiplying by a square matrix $A$. By explicitly constructing projective characters by induction and comparing with known general results on decomposition numbers, we are able to show that $A$ is triangular with non-negative integer entries. By then calculating the Cartan matrix entries predicted by our main theorem and showing that they agree with those of $\W_d$ when $d<p$, we can deduce that $A$ is the identity matrix, which gives us our main theorem.

\section{Combinatorial preliminaries}
We denote $\N:=\Z_{\geq 1}$ and $\N_0:=\Z_{\geq 0}$. 
Throughout the paper, we work over an algebraically closed field $\F$ of characteristic $p>2$. We write
\begin{itemize}
\item $\ell:=(p-1)/2$,
\item $I:=\{0,1,\dots,\ell\}$,
\item $J:=\{0,1,\dots,\ell-1\}$.
\end{itemize}
For $n\in\N_0$, we write $I^n$ for the set of words $i_1\dots i_n$ with $i_1,\dots,i_n\in I$.

\subsection{Compositions and partitions}
\label{SSPar}

A \emph{composition} is an infinite sequence $\la=(\la_1,\la_2,\dots)$ of non-negative integers which are eventually zero. Any composition $\la$ has finite sum $|\la|$, and we say that $\la$ is a composition of $|\la|$. We write $\compn$ for the set of all compositions, and for each $d\in\N_0$ we write $\compn(d)$ for the set of all compositions of $d$. When writing compositions, we may collect consecutive equal parts together with a superscript, and omit an infinite tail of $0$s. We write $\vn$ for the composition $(0,0,\dots)$.

A \emph{partition} is a composition whose parts are weakly decreasing. We write $\Par$ for the set of all partitions, and $\Par(d)$ for the set of partitions of $d$.

A partition is \emph{strict} if it has no repeated positive parts. We write $\sPar(d)$ for the set of all strict partitions of $d$. Say that a strict partition $\la$ is \emph{even} if $\la$ has an even number of positive even parts, and \emph{odd} otherwise. Now write
\begin{equation}
\label{EA}
\a\la:=
\left\{
\begin{array}{ll}
0 &\hbox{if $\la$ is even,}\\
1 &\hbox{if $\la$ is odd.}
\end{array}
\right.
\end{equation}

For a set $S$, let $\Par^S(d)$ denote the set of all \emph{$S$-multipartitions} of $d$. So the elements of $\Par^S(d)$ are tuples $\bla=(\la^{(s)})_{s\in S}$ of partitions satisfying $\sum_{s\in S}|\la^{(s)}|=d$. 
In the special case $S=I$, we write the elements of 
$\Par^I(d)$ as tuples $\bla=(\la^{(0)},\dots,\la^{(\ell)})$, and similarly for $\Par^J(d)$. We refer to $\la^{(i)}$ as the $i$th component of $\bla$. 
We identify $\Par^J(d)$ with the subset of $\Par^I(d)$ consisting of those  $\bla\in \Par^I(d)$ with $\la^{(\ell)}=\vn$. 

We use the following binary operations on partitions: if $\la,\mu\in\Par$, then we write $\la+\mu$ for the partition $(\la_1+\mu_1,\la_2+\mu_2,\dots)$, and $\la\sqcup\mu$ for the partition obtained by combining the parts of $\la$ and $\mu$ and putting them in weakly decreasing order. For example, $(3,1)+(4,1^2)=(7,2,1)$, while $(3,1)\sqcup(4,1^2)=(4,3,1^3)$.

The \emph{Young diagram} of a partition $\la$ is the set 
$
\lset{(r,c)\in\N^2}{c\ls\la_r},
$ 
whose elements are called the \emph{nodes} of $\la$. We draw the Young diagram as an array of boxes using the English convention, in which $r$ increases down the page and $c$ increases from left to right. We often identify partitions with their Young diagrams; for example, we may write $\la\subseteq\mu$ to mean that $\la_r\ls\mu_r$ for all $r$.

If $\la$ is a partition, the \emph{conjugate} partition $\la'$ is obtained by reflecting the Young diagram of $\la$ on the main diagonal.

The \emph{dominance order} is a partial order $\dom$ defined on $\calp$. We set $\la\dom\mu$ (and say that $\la$ \emph{dominates} $\mu$) if $|\la|=|\mu|$ and $\la_1+\dots+\la_r\gs\mu_1+\dots+\mu_r$ for all $r\gs1$. This can be interpreted in terms of Young diagrams in the following way: $\la\dom\mu$ \iff the Young diagram of $\mu$ can be obtained from the Young diagram of $\la$ by moving some nodes further to the left, see \cite[1.4.10]{JK}. 
By \cite[1.4.11]{JK}, the dominance order is reversed by conjugation: $\la\dom\mu$ \iff $\mu'\dom\la'$.

\medskip

Now we introduce the prime $p$ into the combinatorics. Say that a partition is \emph{$p$-strict} if its repeated parts are all divisible by $p$. A $p$-strict partition $\la$ is \emph{restricted} if for all $r$ either $\la_r-\la_{r+1}<p$ or $\la_r-\la_{r+1}=p$ and $p\nmid\la_r$. We write $\Par_p(n)$ for the set of $p$-strict partitions of $n$, and $\rPar(n)$ for the set of restricted $p$-strict partitions of $n$.

We also introduce some new terminology: say that a $p$-strict partition $\la$ is a {\em \emph{\pfr}-partition} (or simply that $\la$ is $p'$) if it has no positive parts divisible by $p$.

Suppose $\la$ is a $p$-strict partition. \emph{Removing a $p$-bar} from $\la$ means either:
\begin{itemize}
\item
replacing a part $\la_r\gs p$ with $\la_r-p$, and rearranging the parts into decreasing order, or
\item
deleting two parts summing to $p$.
\end{itemize}
In the first case we assume that either $p\mid\la_r$ or $\la_r-p$ is not a part of $\la$, so that the resulting partition is $p$-strict.

The \emph{$p$-bar-core} of $\la$ is the partition obtained by repeatedly removing $p$-bars until it is not possible to remove any more  -- this is well defined thanks to \cite[Theorem~1]{my}. The {\em $p$-bar-weight} of $\la$ is the number of $p$-bars removed to reach its $p$-bar-core.

If $\rho$ is a $p$-bar-core and $d\gs1$, we write:
\begin{itemize}
\item
$\pstp\rho d$ for the set of $p$-strict partitions with $p$-bar-core $\rho$ and $p$-bar-weight $d$;
\item
$\rpstp\rho d$ for the set of restricted partitions in $\pstp\rho d$;
\item
$\stp\rho d$ for the set of strict partitions in $\pstp\rho d$;
\item
$\pfstp\rho d$ for the set of \pfr-partitions in $\pstp\rho d$.
\end{itemize}
Note that $\pfstp\rho d\subseteq \stp\rho d$.

Now we look at individual nodes. The \emph{residue} of a node in column $c$ is the smaller of the residues of $c-1$ and $-c$ modulo $p$. So the residues of nodes follow the repeating pattern
\[
0,1,\dots,\ell-1,\ell,\ell-1,\dots,1,0,0,1,\dots,\ell-1,\ell,\ell-1,\dots,1,0,\dots
\]
from left to right in every row of a Young diagram.  Note that the residue of a node is always interpreted as an element of $I$. For $i\in I$,  
an \emph{$i$-node} means a node of residue $i$.

\subsection{Littlewood--Richardson coefficients, Specht modules and permutation modules}
\label{SSLR}

For partitions $\la,\mu^1,\dots,\mu^r$ we denote by $\lr\la{\mu^1,\dots,\mu^r}$ the corresponding Littlewood--Richardson coefficient, which is zero unless $|\la|=|\mu^1|+\dots+|\mu^r|$. 
In fact, $\lr\la{\mu^1,\dots,\mu^r}$ does not depend on the order of the partitions $\mu^1,\dots,\mu^r$ and depends only on the multiset $\{\mu^1,\dots,\mu^r\}$. So we will also use the notation $\lr\la M$ for any multiset $M$ of partitions. If $M=\{\mu^1,\dots,\mu^r\}$ and $N=\{\nu^1,\dots,\nu^s\}$ are two multisets of partitions, we can also consider 
\[
\lr\la{M\sqcup N}:=\lr\la{\mu^1,\dots,\mu^r,\nu^1,\dots,\nu^s}.
\]

Below we will use various standard results on Littlewood--Richardson coefficients which can be found for example in \cite[I.9]{macdbook} or \cite[Section 5]{fulton}.


We will often use calculations involving representations of the symmetric group in characteristic zero. For any group $G$, let $\triv G$ denote the trivial $G$-module. For the group algebra $\C\sym_d$, the irreducible modules are the \emph{Specht modules} $\spe\la$, for $\la\in\calp(n)$. In particular, $\spe{(n)}$ is the trivial $\sym_n$-module, and $\spe{(1^n)}$ is the sign module, which we also denote $\sgn$. It is well-known that 
\begin{equation}\label{ESpechtSign}
\spe\la\otimes\sgn\cong\spe{\la'}
\end{equation}
 for all $\la$, see \cite[6.7]{JamesBook}. Given a $\C\sym_n$-module $M$ and any partition $\la$, we write $[M:\spe\la]$ for the multiplicity of $\spe\la$ as a composition factor of $M$ if $|\la|=n$, and $0$ otherwise.

We often induce and restrict modules between $\sym_n$ and its Young subgroups. If $\al=(\al_1,\dots,\al_r)\in\compn(n)$, then the Young subgroup $\sym_\al$ is the naturally embedded subgroup $\sym_{\al_1}\times\dots\times\sym_{\al_r}$ of $\sym_n$. Now given modules $M_1,\dots,M_r$ for $\sym_{\al_1},\dots,\sym_{\al_r}$ respectively, we obtain a module $M_1\boxtimes\dots\boxtimes M_r$ for $\sym_\al$ 
and the induced module 
\[
M_1\tens\cdots\tens M_r:=\Ind^{\sym_n}_{\sym_\al}M_1\boxtimes\cdots\boxtimes M_r.
\] 
For example, if $\la\in\compn(n)$, then $\spe{(\la_1)}\tens\spe{(\la_2)}\tens\cdots$ is the permutation module $\ypm\la$ defined in \cite[4.1]{JamesBook}, nowadays called the \emph{Young permutation module}. In general, given partitions $\al^1,\dots,\al^r$ and $\la$, the multiplicity $[\spe{\al^1}\tens\dots\tens\spe{\al^r}:\spe\la]$ is the Littlewood--Richardson coefficient $\lr\la{\al^1,\dots,\al^r}$. By Frobenius reciprocity, this can also be written 
as $[\Res_{\sym_{(|\al^1|,\dots,|\al^r|)}}\spe\la:\spe{\al^1}\boxtimes\dots\boxtimes\spe{\al^r}]$. 

Later we will need the following results.

\begin{lemma}\label{msgnm}
Suppose $\al\in\calp$ and $\be,\ga\in\compn$. Then
\[
\lr\al{(1^{\be_1}),(1^{\be_2}),\dots,(\ga_1),(\ga_2),\dots}=\left[(\ypm\be\otimes\sgn)\tens\ypm\ga:\spe\al\right].
\]
\end{lemma}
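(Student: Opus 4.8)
The plan is to recognise both sides as multiplicities computed in $\C\sym_{|\al|}$ and reduce everything to the combinatorial description of Littlewood--Richardson coefficients as multiplicities in induced products of Specht modules, which is already recalled in \S\ref{SSLR}. Write $n=|\al|$, and note that $\lr\al{\cdots}=0$ unless $\be_1+\be_2+\cdots+\ga_1+\ga_2+\cdots=n$, in which case the right-hand side also lives in $\C\sym_n$, so we may assume this.

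First I would rewrite the right-hand side. By definition $\ypm\be=\spe{(\be_1)}\tens\spe{(\be_2)}\tens\cdots$ and $\ypm\ga=\spe{(\ga_1)}\tens\spe{(\ga_2)}\tens\cdots$, so by associativity and transitivity of induction,
\[
(\ypm\be\otimes\sgn)\tens\ypm\ga=\bigl((\spe{(\be_1)}\tens\spe{(\be_2)}\tens\cdots)\otimes\sgn\bigr)\tens\spe{(\ga_1)}\tens\spe{(\ga_2)}\tens\cdots.
\]
The key point is to push the sign twist inside: since $\sgn$ is one-dimensional, $(\Ind^{\sym_n}_{\sym_\be}N)\otimes\sgn\cong\Ind^{\sym_n}_{\sym_\be}(N\otimes\Res_{\sym_\be}\sgn)$, and $\Res_{\sym_\be}\sgn=\sgn\boxtimes\sgn\boxtimes\cdots$. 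Hence $\ypm\be\otimes\sgn\cong(\spe{(\be_1)}\otimes\sgn)\tens(\spe{(\be_2)}\otimes\sgn)\tens\cdots$, and by \eqref{ESpechtSign} each $\spe{(\be_j)}\otimes\sgn\cong\spe{(1^{\be_j})}$. Therefore
\[
(\ypm\be\otimes\sgn)\tens\ypm\ga\cong\spe{(1^{\be_1})}\tens\spe{(1^{\be_2})}\tens\cdots\tens\spe{(\ga_1)}\tens\spe{(\ga_2)}\tens\cdots.
\]
Now the stated identity is exactly the general fact recalled in \S\ref{SSLR}: the multiplicity $[\spe{\al^1}\tens\cdots\tens\spe{\al^r}:\spe\al]$ equals $\lr\al{\al^1,\dots,\al^r}$, applied with the list of partitions $(1^{\be_1}),(1^{\be_2}),\dots,(\ga_1),(\ga_2),\dots$. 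This gives the left-hand side directly (using that the LR coefficient depends only on the multiset of the $\al^j$, so the order in which we list the $\be$-columns and $\ga$-rows is immaterial, and that parts equal to zero contribute a trivial tensor factor and can be ignored).

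There is essentially no hard step here; the only things to be slightly careful about are (i) the identity $(\Ind N)\otimes\sgn\cong\Ind(N\otimes\Res\sgn)$ and the compatibility of $\Res\sgn$ with the Young subgroup decomposition, both of which are standard, and (ii) bookkeeping with trailing zero parts of the compositions $\be,\ga$, which is harmless since $\spe{(0)}=\spe\vn$ is the trivial module of $\sym_0$ and tensoring by it does nothing. So the proof is really just a matter of assembling \eqref{ESpechtSign}, the behaviour of induction under a linear twist, and the Specht-module description of Littlewood--Richardson coefficients.
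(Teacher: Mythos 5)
Your proof is correct and is essentially the paper's own argument run in the opposite direction: the paper starts from the left-hand side, rewrites each $\spe{(1^{\be_j})}$ as $\spe{(\be_j)}\otimes\sgn$ via \eqref{ESpechtSign}, and pulls the sign twist outside the induction, whereas you start from the right-hand side and push the twist inside. The key ingredients — the compatibility of $\otimes\,\sgn$ with induction from Young subgroups and the Specht-module description of Littlewood--Richardson coefficients — are identical.
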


\begin{pf}
The left-hand side equals
\begin{align*}
&\left[(\spe{(1^{\be_1})}\tens\spe{(1^{\be_2})}\tens\dotsb)\tens(\spe{(\ga_1)}\tens\spe{(\ga_2)}\tens\dotsb):\spe\al\right]
\\
=&\left[\big((\spe{(\be_1)}\otimes\sgn)\tens(\spe{(\be_2)}\otimes\sgn)\tens\dotsb\big)\tens(\spe{(\ga_1)}\tens\spe{(\ga_2)}\tens\dotsb):\spe\al\right]
\\
=&\left[\big((\spe{(\be_1)}\tens\spe{(\be_2)}\tens\dotsb)\otimes\sgn\big)\tens(\spe{(\ga_1)}\tens\spe{(\ga_2)}\tens\dotsb):\spe\al\right]
\\
=&\left[(\ypm\be\otimes\sgn)\tens\ypm\ga:\spe\al\right].\qedhere
\end{align*}
\end{pf}

\vspace{1mm}
\begin{lemma}\label{mackey}
Suppose $\la\in\compn$ and $\tau,\si\in\calp$. Then
\[
\sum_{\mu\in\calp}[\ypm\la:\spe\mu][\spe\tau\tens\spe\si:\spe\mu]=\sum_{\substack{\be,\ga\in\compn\\\be+\ga=\la}}[\ypm\be:\spe\tau][\ypm\ga:\spe\si].
\]
\end{lemma}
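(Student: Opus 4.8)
The label is a hint: this is a Mackey-formula computation, performed after rewriting both sides as multiplicities of the trivial module. Note first that both sides vanish unless $|\la|=|\tau|+|\si|$: on the left, $[\ypm\la:\spe\mu]$ forces $|\mu|=|\la|$ while $[\spe\tau\tens\spe\si:\spe\mu]$ forces $|\mu|=|\tau|+|\si|$; on the right, $[\ypm\be:\spe\tau]$ and $[\ypm\ga:\spe\si]$ force $|\be|=|\tau|$ and $|\ga|=|\si|$, so a nonzero term needs $|\la|=|\be|+|\ga|=|\tau|+|\si|$. So assume $n:=|\la|=|\tau|+|\si|$ and set $m:=|\tau|$.

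Since $\C\sym_n$ is semisimple with the $\spe\mu$ a complete set of irreducibles, the left-hand side equals $\dim\Hom_{\sym_n}(\ypm\la,\spe\tau\tens\spe\si)$. Write $\ypm\la=\Ind^{\sym_n}_{\sym_\la}\triv{\sym_\la}$ and $\spe\tau\tens\spe\si=\Ind^{\sym_n}_{\sym_{(m,n-m)}}(\spe\tau\boxtimes\spe\si)$. By Frobenius reciprocity this dimension is the multiplicity $[\Res^{\sym_n}_{\sym_\la}(\spe\tau\tens\spe\si):\triv{\sym_\la}]$ of the trivial module in the restriction of $\spe\tau\tens\spe\si$ to the Young subgroup $\sym_\la$.

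Now apply the Mackey formula to $\Res^{\sym_n}_{\sym_\la}\Ind^{\sym_n}_{\sym_{(m,n-m)}}$. The $(\sym_\la,\sym_{(m,n-m)})$-double cosets in $\sym_n$ are parametrised by non-negative integer matrices with row sums $\la_1,\la_2,\dots$ and column sums $m,n-m$, that is, by pairs $(\be,\ga)\in\compn\times\compn$ with $\be+\ga=\la$ (whence automatically $|\be|=m$ and $|\ga|=n-m$). The summand of the restriction coming from the double coset of $(\be,\ga)$ is isomorphic, as a $\C\sym_\la$-module, to $\Ind^{\sym_\la}_{\sym_\be\times\sym_\ga}\bigl((\Res^{\sym_m}_{\sym_\be}\spe\tau)\boxtimes(\Res^{\sym_{n-m}}_{\sym_\ga}\spe\si)\bigr)$, where $\sym_\be\times\sym_\ga$ sits inside $\sym_\la$ in the evident way and inside $\sym_m\times\sym_{n-m}=\sym_{(m,n-m)}$ via $\sym_\be\leq\sym_m$, $\sym_\ga\leq\sym_{n-m}$. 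The multiplicity of the trivial module is additive over direct sums, is unchanged by induction from a subgroup of $\sym_\la$ (Frobenius reciprocity), and is multiplicative on external tensor products; hence the left-hand side equals $\sum_{\be+\ga=\la}[\Res^{\sym_m}_{\sym_\be}\spe\tau:\triv{\sym_\be}]\,[\Res^{\sym_{n-m}}_{\sym_\ga}\spe\si:\triv{\sym_\ga}]$. Finally $[\Res^{\sym_m}_{\sym_\be}\spe\tau:\triv{\sym_\be}]=\dim\Hom_{\sym_\be}(\triv{\sym_\be},\spe\tau)=\dim\Hom_{\sym_m}(\ypm\be,\spe\tau)=[\ypm\be:\spe\tau]$, and similarly for the $\si$-factor, giving precisely the right-hand side.

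The only step with real content is the double-coset bookkeeping above: identifying the $(\sym_\la,\sym_{(m,n-m)})$-double cosets with pairs $(\be,\ga)$, $\be+\ga=\la$, and reading off the intersection subgroup $\sym_\be\times\sym_\ga$ together with the module it carries. This is the standard description of double cosets of Young subgroups and is routine. (Alternatively, transport everything through the characteristic map: the left-hand side becomes the Hall inner product $\langle h_\la,\,s_\tau s_\si\rangle$ and the right-hand side becomes $\sum_{\be+\ga=\la}\langle h_\be,s_\tau\rangle\langle h_\ga,s_\si\rangle$, and these agree because the coproduct on symmetric functions satisfies $\Delta h_\la=\sum_{\be+\ga=\la}h_\be\otimes h_\ga$ and is adjoint to multiplication -- this is the same Mackey computation repackaged.)
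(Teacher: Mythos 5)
Your proof is correct and follows essentially the same route as the paper: both arguments identify the left-hand side with $\dim\Hom_{\sym_n}(\ypm\la,\spe\tau\tens\spe\si)$ and then apply Mackey's formula, with the $(\sym_\la,\sym_{(m,n-m)})$-double cosets parametrised by pairs $(\be,\ga)$ with $\be+\ga=\la$ and intersection subgroups $\sym_\be\times\sym_\ga$. The only (cosmetic) difference is the direction of application — you restrict $\Ind(\spe\tau\boxtimes\spe\si)$ to $\sym_\la$ and count trivial constituents, whereas the paper restricts $\ypm\la$ to $\sym_{(m,n-m)}$ and counts $\spe\tau\boxtimes\spe\si$ — which yields the identical double-coset bookkeeping.
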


\begin{pf}
Let $n=|\la|$. We may assume that $|\tau|+|\si|=n$ as well (since otherwise both sides are obviously zero) and we may restrict the range of summation on the left-hand side to $\mu\in\calp(n)$. The definition of $\ypm\la$ gives $[\ypm\la:\spe\mu]=[\spe{(\la_1)}\tens\spe{(\la_2)}\tens\dotsb:\spe\mu]$. On the other hand, if we define $K$ to be the Young subgroup $\sym_{(|\tau|,|\si|)}$, then Frobenius reciprocity gives $[\spe\tau\tens\spe\si:\spe\mu]=[\Res_K\spe\mu:\spe\tau\boxtimes\spe\si]$. Since the irreducible $\C\sym_n$-modules are precisely the modules $\spe\mu$ for $\mu\in\calp(n)$, the left-hand side gives the multiplicity
\[
[\Res_K\Ind^{\sym_n}\triv{\sym_\la}:\spe\tau\boxtimes\spe\si].
\]
By Mackey's Theorem, this is the same as
\[
\sum_H[\Ind^K\triv H:\spe\tau\boxtimes\spe\si],
\]
summing over $K$-conjugacy class representatives of subgroups $H\ls K$ of the form $(\sym_\la)^x\cap K$ for $x\in\sym_n$. We can take these representatives to be the groups $\sym_\be\times\sym_\ga$ as $\be,\ga$ range over compositions satisfying $|\be|=|\tau|$, $|\ga|=|\si|$ and $\be_r+\ga_r=\la_r$ for each $r$. Now the definition of the modules $\ypm\be$ and $\ypm\ga$ gives the result.
\end{pf}

We have the following `Mackey formula' for Littlewood--Richardson coefficients.

\begin{lemma}\label{smackey}
Suppose $\al,\be,\ga,\de\in\calp$. Then
\[
\sum_{\la\in\calp}\lr\la{\al,\be}\lr\la{\ga,\de}=\sum_{\phi,\chi,\psi,\om\in\calp}\lr\al{\phi,\chi}\lr\be{\psi,\om}\lr\ga{\phi,\psi}\lr\de{\chi,\om}.
\]
\end{lemma}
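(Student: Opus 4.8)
The plan is to prove this by Mackey's theorem, in the same spirit as \cref{mackey} (or, equivalently, via the Hopf-algebra structure on symmetric functions). Put $n=|\al|+|\be|$; if $|\ga|+|\de|\ne n$ then every term on both sides vanishes, so assume $|\ga|+|\de|=n$ too. Since $\spe\al\tens\spe\be=\Ind^{\sym_n}_{\sym_{(|\al|,|\be|)}}(\spe\al\boxtimes\spe\be)$ is a semisimple $\C\sym_n$-module in which $\spe\la$ occurs with multiplicity $\lr\la{\al,\be}$, and similarly for $\spe\ga\tens\spe\de$, orthonormality of the irreducible characters of $\sym_n$ gives
\[
\sum_{\la\in\calp}\lr\la{\al,\be}\lr\la{\ga,\de}=\dim\Hom_{\sym_n}\big(\spe\al\tens\spe\be,\ \spe\ga\tens\spe\de\big),
\]
and by Frobenius reciprocity this equals $\dim\Hom_{\sym_{(|\al|,|\be|)}}\big(\spe\al\boxtimes\spe\be,\ \Res\,\Ind^{\sym_n}_{\sym_{(|\ga|,|\de|)}}(\spe\ga\boxtimes\spe\de)\big)$, where $\Res$ denotes restriction to $\sym_{(|\al|,|\be|)}$.

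Next I would apply Mackey's theorem to this restriction of an induced module. The relevant $\big(\sym_{(|\al|,|\be|)},\sym_{(|\ga|,|\de|)}\big)$-double cosets in $\sym_n$ are indexed by $2\times2$ matrices $\left(\begin{smallmatrix}a&b\\c&e\end{smallmatrix}\right)$ of non-negative integers with row sums $|\al|,|\be|$ and column sums $|\ga|,|\de|$. For such a matrix one may choose a double-coset representative $g$ for which $\sym_{(|\al|,|\be|)}\cap{}^g\sym_{(|\ga|,|\de|)}$ equals $\sym_a\times\sym_b\times\sym_c\times\sym_e$, sitting inside $\sym_{(|\al|,|\be|)}$ as $(\sym_a\times\sym_b)\times(\sym_c\times\sym_e)$ and inside $\sym_{(|\ga|,|\de|)}$ (after conjugating back) as $(\sym_a\times\sym_c)\times(\sym_b\times\sym_e)$. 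So the corresponding Mackey summand is obtained by restricting $\spe\ga$ along $\sym_a\times\sym_c\ls\sym_{|\ga|}$ and $\spe\de$ along $\sym_b\times\sym_e\ls\sym_{|\de|}$, regrouping the $\sym_a,\sym_b$-factors into $\sym_{|\al|}$ and the $\sym_c,\sym_e$-factors into $\sym_{|\be|}$, and inducing up to $\sym_{|\al|}\times\sym_{|\be|}$.

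Finally I would expand the restrictions of Specht modules to Young subgroups via Littlewood--Richardson coefficients, using $\Res^{\sym_{|\ga|}}_{\sym_a\times\sym_c}\spe\ga\cong\bigoplus_{|\phi|=a,\,|\psi|=c}\lr\ga{\phi,\psi}\,\spe\phi\boxtimes\spe\psi$ and the analogue for $\spe\de$, together with $\big\langle\Res^{\sym_{|\al|}}_{\sym_{|\phi|}\times\sym_{|\chi|}}\spe\al,\ \spe\phi\boxtimes\spe\chi\big\rangle=\lr\al{\phi,\chi}$ and its analogue for $\spe\be$. Tracking the four partitions $\phi,\psi$ (coming out of $\spe\ga$) and $\chi,\om$ (coming out of $\spe\de$), with $\phi,\chi$ feeding into $\sym_{|\al|}$ and $\psi,\om$ into $\sym_{|\be|}$, the contribution of a given matrix comes out as $\sum\lr\ga{\phi,\psi}\lr\de{\chi,\om}\lr\al{\phi,\chi}\lr\be{\psi,\om}$; and summing over all matrices is the same as summing over all quadruples $(\phi,\chi,\psi,\om)$ of partitions, because if the sizes $|\phi|,|\chi|,|\psi|,|\om|$ do not arrange into a valid matrix then one of the four coefficients already vanishes. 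This yields the right-hand side. The one genuine obstacle is the bookkeeping in the Mackey step — keeping straight which piece of each of $\al,\be,\ga,\de$ is paired with which — and once that is set up correctly the rest is formal. (Alternatively, in the language of symmetric functions, $\lr\la{\al,\be}=\langle s_\al s_\be,\,s_\la\rangle$, so the left-hand side is $\langle s_\al s_\be,\,s_\ga s_\de\rangle=\langle s_\al\otimes s_\be,\,\Delta(s_\ga)\,\Delta(s_\de)\rangle$, and expanding the coproducts $\Delta(s_\ga)=\sum_{\phi,\psi}\lr\ga{\phi,\psi}\,s_\phi\otimes s_\psi$, $\Delta(s_\de)=\sum_{\chi,\om}\lr\de{\chi,\om}\,s_\chi\otimes s_\om$ and pairing gives the right-hand side — the same computation in different dress.)
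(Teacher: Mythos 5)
Your proof is correct, and it is essentially the argument the paper has in mind: the paper simply cites Chuang--Tan's proof of the special case $\al=(r)$ and notes it generalizes, whereas you have written out that Mackey-type argument (and its symmetric-function/Hopf-algebra translation) in full, with the double-coset bookkeeping handled correctly. No gaps.
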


\begin{pf}
The special case where $\al=(r)$ is proved by Chuang and Tan \cite[Lemma 2.2(3)]{ctq}, but their proof works in the general case.
\end{pf}

\subsection{Kostka polynomials}
\label{SSKostkaPol}
Given $\la,\si\in\calp$, we write $K^{-1}_{\la\si}(t)$ for the \emph{inverse Kostka polynomial} indexed by $\la,\si$; this polynomial arises in the theory of symmetric functions: it is the coefficient of the Schur function $s_\si$ when the Hall--Littlewood symmetric function $P_\la$ is expressed in terms of Schur functions. We refer to \cite[III.6]{macdbook} for more information on Kostka polynomials, but we note in particular that $K^{-1}_{\la\si}(t)$ is zero unless $\la\dom\si$ and that $K^{-1}_{\la\la}(t)=1$; see \cite[Lemma 3.4]{mattconj}.

Of special importance for us will be the evaluation of $K^{-1}_{\la\si}(t)$ at $t=-1$. So $K^{-1}_{\la\si}(-1)$ is the coefficient of $s_\si$ in the Schur P-function $P_\la$.

We note two lemmas that we will need later.

\begin{lemma}\label{kostlem}
Suppose $\si\in\calp(n)$. Then $\ki\la\si\in\N_0$ for all $\la\in\calp_0(n)$, and there is at least one $\la\in\calp_0(n)$ for which $\ki\la\si>0$.
\end{lemma}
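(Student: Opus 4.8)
The plan is to exploit the description, recorded just above the statement, of $\ki\la\si$ as the coefficient of the Schur function $s_\si$ in the Schur $P$-function $P_\la$ — valid here because $\la\in\calp_0(n)$ is strict — and to treat the two assertions separately.

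\textbf{Integrality and non-negativity.} Integrality is formal: the transition matrix $\big(K_{\la\si}(t)\big)$ from Schur functions to Hall--Littlewood $P$-functions is unitriangular over $\Z[t]$ for the dominance order (as recalled above, $K_{\la\la}(t)=1$ and $K_{\la\si}(t)=0$ unless $\la\dom\si$), so its inverse $\big(K^{-1}_{\la\si}(t)\big)$ again has entries in $\Z[t]$; evaluating at $t=-1$ gives $\ki\la\si\in\Z$. For non-negativity I would invoke the classical fact that every Schur $P$-function is a non-negative integer combination of Schur functions; a convenient reference is \cite[III.8]{macdbook}, and it is also a consequence of the shifted Littlewood--Richardson rule. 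This yields $\ki\la\si\in\N_0$ for all strict $\la$, which is the first assertion.

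\textbf{Existence.} Write $\Ga_n$ for the span, inside the degree-$n$ symmetric functions, of the power-sum products $p_\mu$ with $\mu\vdash n$ having all parts odd; equivalently, $\Ga_n$ is the degree-$n$ piece of the subring $\Q[p_1,p_3,p_5,\dots]$. A standard fact (again \cite[III.8]{macdbook}) is that $\{P_\la : \la\in\calp_0(n)\}$ is a $\Q$-basis of $\Ga_n$: the $P_\la$ lie in $\Ga_n$ and are linearly independent (triangularity in the monomial basis), and the dimensions match by Euler's identity relating partitions into odd parts and into distinct parts. Suppose now, for contradiction, that $\ki\la\si=0$ for every $\la\in\calp_0(n)$. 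With respect to the usual inner product on symmetric functions (for which the $s_\mu$ are orthonormal) this says $s_\si$ is orthogonal to every $P_\la$, hence to all of $\Ga_n$; in particular $\langle s_\si,p_{(1^n)}\rangle=0$. But $\langle s_\si,p_\mu\rangle$ is the value of the irreducible character $\chi^{\si}$ of $\sym_n$ on the class of cycle type $\mu$, so $\langle s_\si,p_{(1^n)}\rangle=\chi^{\si}(1)=\dim\spe\si\ge 1$ — a contradiction. Therefore some $\la\in\calp_0(n)$ has $\ki\la\si\ne0$, and by the non-negativity established above $\ki\la\si>0$ for that $\la$.

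\textbf{Main obstacle.} The only non-formal ingredient is the Schur-positivity of Schur $P$-functions used for the first assertion; in a self-contained write-up this is where the genuine work sits (it can be supplied via shifted semistandard tableaux, or cited). Once that is granted, the existence statement is an easy consequence of the power-sum description of $\Ga_n$ together with the non-vanishing of the character $\chi^{\si}$ at the identity.
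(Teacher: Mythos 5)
Your proposal is correct, and the non-negativity half rests on essentially the same fact as the paper's proof: the paper quotes Stembridge's theorem that $\ki\la\si$ counts tableaux of a certain kind, which is precisely the Schur-positivity of the Schur $P$-functions that you invoke (so the "genuine work" you flag is the same in both treatments, just cited in different forms). Where you genuinely diverge is the existence statement. The paper stays inside Stembridge's combinatorial formula and exhibits an explicit witness: $\ki\la\si>0$ when $\la$ is the strict partition whose parts are the diagonal hook lengths of $\si$. You instead give a non-constructive duality argument: since $\{P_\la:\la\in\calp_0(n)\}$ spans the degree-$n$ part of $\Q[p_1,p_3,p_5,\dots]$ and $\langle s_\si,p_1^n\rangle=\chi^{\si}(1)=\dim\spe\si\gs1$, the Schur function $s_\si$ cannot be orthogonal to every $P_\la$ with $\la$ strict. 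Both arguments are sound; the paper's buys an explicit $\la$ (which could in principle be reused elsewhere), while yours avoids any further appeal to the tableau model beyond non-negativity and is arguably more conceptual, at the price of not naming the witness.
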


\begin{pf}
Stembridge \cite[Theorem 9.3(b)]{stem} shows that $\ki\la\si$ equals the number of tableaux of a certain type, which means in particular that $\ki\la\si\in\N_0$. Stembridge's formula shows in particular that $\ki\la\si>0$ when $\la$ is the strict partition whose parts are the diagonal hook lengths of $\si$.
\end{pf}

\begin{lemma}\label{klem}
Suppose $\xi,\pi\in\calp$. Then
\[
\sum_{\la\in\calp_0}2^{h(\la)}\ki\la\xi\ki\la\pi=\sum_{\be,\ga\in\calp}\lr\xi{\be,\ga'}\lr\pi{\be,\ga}.
\]
\end{lemma}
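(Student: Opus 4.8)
The plan is to recognize both sides as multiplicities in a symmetric-function (or character-theoretic) computation and match them via a Mackey-type argument, exactly parallel to the proof of \cref{mackey} and \cref{smackey}. The key identity underlying the left-hand side is the one recorded just above the statement: $K^{-1}_{\la\si}(-1)$ is the coefficient of the Schur function $s_\si$ in the Schur $P$-function $P_\la$, and the $P$-functions satisfy $\sum_{\la\in\calp_0}2^{-h(\la)}P_\la\otimes P_\la$ (under the relevant coproduct/inner product) reproduces the `doubling' that produces the factor $2^{h(\la)}$. Concretely, I would use the classical fact that $\sum_{\la\in\calp_0}2^{h(\la)}\,z_\la^{-1}\,\cdots$—or more usefully, the identity expressing $\sum_{\la\in\calp_0}2^{-h(\la)}P_\la(x)P_\la(y)$ as a product $\prod_{i,j}\frac{1+x_iy_j}{1-x_iy_j}$. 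Expanding this kernel in two ways and extracting the coefficient of $s_\xi(x)s_\pi(y)$ should yield the two sides of the claimed equation.

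In more detail: first I would write $P_\la=\sum_\si \ki\la\si s_\si$, so the left-hand side becomes the coefficient of $s_\xi(x)s_\pi(y)$ in $\sum_{\la\in\calp_0}2^{h(\la)}P_\la(x)P_\la(y)$. Next I would invoke the Cauchy-type identity for Schur $P$- and $Q$-functions, $\sum_{\la\in\calp_0}P_\la(x)Q_\la(y)=\prod_{i,j}\frac{1+x_iy_j}{1-x_iy_j}$, together with $Q_\la=2^{h(\la)}P_\la$, to identify $\sum_{\la\in\calp_0}2^{h(\la)}P_\la(x)P_\la(y)=\prod_{i,j}\frac{1+x_iy_j}{1-x_iy_j}$. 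Then I would expand the right-hand kernel: $\prod_{i,j}(1-x_iy_j)^{-1}=\sum_\be s_\be(x)s_\be(y)$ (the ordinary Cauchy identity) and $\prod_{i,j}(1+x_iy_j)=\sum_\ga s_\ga(x)s_{\ga'}(y)$ (the dual Cauchy identity). Multiplying these two expansions and using $s_\be s_\ga=\sum_\xi \lr\xi{\be,\ga}s_\xi$ (and similarly $s_\be s_{\ga'}=\sum_\pi\lr\pi{\be,\ga'}s_\pi$) gives that the coefficient of $s_\xi(x)s_\pi(y)$ equals $\sum_{\be,\ga}\lr\xi{\be,\ga'}\lr\pi{\be,\ga}$, which is precisely the right-hand side. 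Comparing the two computations of the same coefficient finishes the proof.

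The main obstacle I anticipate is bookkeeping with the two different Cauchy identities: one must be careful which factor contributes $s_\ga$ versus $s_{\ga'}$, and on which set of variables, so that the conjugate lands on $\ga$ in exactly the asymmetric way appearing in the statement ($\lr\xi{\be,\ga'}$ versus $\lr\pi{\be,\ga}$). A secondary point to get right is the normalization $Q_\la=2^{h(\la)}P_\la$ for strict $\la$, which is what converts the Schur-$P$ Cauchy kernel into the symmetric product $\prod\frac{1+x_iy_j}{1-x_iy_j}$ with the correct power of $2$; this is standard (see \cite[III.8]{macdbook}) but should be cited precisely. An alternative, if one prefers to avoid the symmetric-function machinery, is to argue combinatorially: interpret $\ki\la\xi$ via Stembridge's tableau formula (as in \cref{kostlem}) and set up a sign-reversing involution, but the generating-function approach above is cleaner and I would present that.
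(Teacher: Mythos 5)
Your proposal is correct, but it takes a genuinely different route from the paper's. The paper works with a single alphabet: it starts from the transition-matrix identity $S_\xi(t)=\sum_{\la,\pi}b_\la(t)K^{-1}_{\la\xi}(t)K^{-1}_{\la\pi}(t)s_\pi$ for the dual Schur functions, specializes $t=-1$ (where $b_\la(-1)=2^{h(\la)}$ for $\la\in\calp_0$ and $0$ otherwise), identifies $S_\xi(-1)$ with the super Schur function $s_\xi(X/{-}X)$ via \cite[III.8, Example 7(a)]{macdbook}, and expands the latter as $\sum_\be s_\be s_{\xi'/\be'}$ before applying $\lr{\xi'}{\be',\ga}=\lr\xi{\be,\ga'}$. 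You instead work with two alphabets and the Cauchy kernel: the left-hand side is the coefficient of $s_\xi(x)s_\pi(y)$ in $\sum_{\la\in\calp_0}2^{h(\la)}P_\la(x)P_\la(y)=\prod_{i,j}\tfrac{1+x_iy_j}{1-x_iy_j}$ (via $Q_\la=2^{h(\la)}P_\la$ and \cite[III, (8.13)]{macdbook}), and the right-hand side falls out of multiplying the ordinary and dual Cauchy expansions of the two factors. Both arguments rest on the same duality in \cite[III.8]{macdbook}, but yours has the advantage of invoking only the well-known Cauchy identities rather than the less familiar dual Schur functions and the $s_\xi(X/{-}X)$ formalism, at the cost of carrying two variable sets; the paper's version stays in one alphabet and reads off the answer directly from Macdonald's transition matrices.

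Two small points to tidy up when writing this out. First, your opening sketch has the normalization inverted ($2^{-h(\la)}$); the detailed paragraph corrects this, and the clean statement is $\sum_{\la\in\calp_0}Q_\la(x)P_\la(y)=\prod_{i,j}\tfrac{1+x_iy_j}{1-x_iy_j}$ with $Q_\la=2^{h(\la)}P_\la$. Second, the direct extraction gives $\sum_{\be,\ga}\lr\xi{\be,\ga}\lr\pi{\be,\ga'}$; you should note explicitly that replacing $\ga$ by $\ga'$ in the summation (conjugation being an involution on $\calp$) yields the form $\sum_{\be,\ga}\lr\xi{\be,\ga'}\lr\pi{\be,\ga}$ in the statement. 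With those remarks added, the argument is complete.
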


\begin{pf}
We consider symmetric functions in an infinite set of variables $X$. Let $s_\pi$ denote the Schur function indexed by $\pi\in\calp$. Since the Schur functions are linearly independent, it suffices to show the following equality of symmetric functions, for each $\xi$:
\[
\sum_{\substack{\la\in\calp_0\\\pi\in\calp}}2^{h(\la)}\ki\la\xi\ki\la\pi s_\pi=\sum_{\be,\ga,\pi\in\calp}\lr\xi{\be,\ga'}\lr\pi{\be,\ga}s_\pi.
\]
Working with an indeterminate $t$, consider the symmetric function
\[
\sum_{\la,\pi\in\calp}b_\la(t)K^{-1}_{\la\xi}(t)K^{-1}_{\la\pi}(t)s_\pi,
\]
where $b_\la(t)$ is the polynomial defined in \cite[(2.12) on p.210]{macdbook}. According to the transition matrix in \cite[p.241]{macdbook}, this coincides with the `dual Schur function' $S_\xi(t)$. Now specialize $t$ to $-1$. It is immediate from the definition of $b_\la(t)$ that
\[
b_\la(-1)=
\begin{cases}
2^{h(\la)}&\text{if }\la\in\calp_0
\\
0&\text{otherwise},
\end{cases}
\]
so we find that
\[
S_\xi(-1)=\sum_{\substack{\la\in\calp_0\\\pi\in\calp}}2^{h(\la)}\ki\la\xi\ki\la\pi s_\pi.
\]
Let us write $S_\xi(-1)$ as $\bar S_\xi$. According to \cite[III.8, Example 7(a)]{macdbook}, $\bar S_\xi$ equals the function $s_\xi(X/{-}X)$ defined in \cite[I.5, Example 23]{macdbook}. From equation (1) in \loccit we obtain
\[
s_\xi(X/{-}X)=\sum_{\be\in\calp}s_\be s_{\xi'/\be'},
\]
where the skew Schur function $s_{\xi'/\be'}$ equals $\sum_{\ga\in\calp}\lr{\xi'}{\be',\ga}s_\ga$. In addition $s_\be s_\ga=\sum_{\pi\in\calp}\lr\pi{\be,\ga}s_\pi$ (indeed, this is the most usual definition of the Littlewood--Richardson coefficients), so that
\[
\sum_{\substack{\la\in\calp_0\\\pi\in\calp}}2^{h(\la)}\ki\la\xi\ki\la\pi s_\pi=\sum_{\be.\ga\in\calp}\lr{\xi'}{\be',\ga}\lr\pi{\be,\ga}s_\pi.
\]
Now the standard result that $\lr{\xi'}{\be',\ga}=\lr\xi{\be,\ga'}$ gives the required equality.
\end{pf}

\section{Rouquier bar-cores}\label{rouqsec}
\label{SRouquier}
\subsection{Definition and first properties}
For any $p$-strict partition $\rho$, define
\[
r_i(\rho):=\left|\lset{r\in\N}{\rho_r\equiv i\ppmod p}\right|
\]
for $i\in\{1,\dots,p-1\}$. If $\rho$ is a $p$-bar-core, then $\rho$ is determined by the integers $r_i(\rho)$. Following \cite{kl}, given $d\gs1$, we say that a $p$-bar-core $\rho$ is \emph{$d$-Rouquier} if
\begin{itemize}
\item
$r_1(\rho)\gs d$, and
\item
$r_i(\rho)\gs r_{i-1}(\rho)+d-1$ for $2\ls i\ls\ell$.
\end{itemize}
(This automatically implies that $r_i(\rho)=0$ for $i>\ell$, since a $p$-bar-core cannot have two parts whose sum is divisible by $p$.)

Assume that $\rho$ is a $d$-Rouquier $p$-bar-core, and $\la\in\pstp\rho d$. We want to define the \emph{\pbq} of $\la$. First note that $r_i(\la)=r_i(\rho)$ for each $1\ls i\ls\ell$, since $r_i(\rho)\geq d$, cf.\ \cite[Lemma 4.1.1.(i)]{kl}. Now define $\la^{(0)}$ to be the partition obtained by taking all the parts of $\la$ divisible by $p$ and dividing them by $p$. For $1\ls i\ls\ell$, let $r:=r_i(\la)$, let $\la_{k_1}>\dots>\la_{k_r}$ be the parts of $\la$ congruent to $i$ modulo $p$, and define the partition
\[
\la^{(i)}:=\left(\frac{\la_{k_1}-(r-1)p-i}p,\frac{\la_{k_2}-(r-2)p-i}p,\dots,\frac{\la_{k_r}-i}p\right).
\]
The \pbq of $\la$ is the multipartition $(\la^{(0)},\dots,\la^{(\ell)})\in\Par^I(d)$.

\begin{eg}
Suppose $p=5$ and $\rho=(32,27,22,17,16,12,11,7,6,2,1)$. Then $\rho$ is $4$-Rouquier, with $(r_1(\rho),r_2(\rho),r_3(\rho),r_4(\rho))=(4,7,0,0)$. The partition $\la=(37,32,22,17,16,12,11,10,7,6,2,1)$ lies in $\stp\rho4$, and has $5$-bar-quotient $(\la^{(0)},\la^{(1)},\la^{(2)})=((2),\vn,(1^2))$.
\end{eg}

\begin{lemma}\label{pbqlem}
Suppose $\rho$ is a $d$-Rouquier $p$-bar-core, and $\la\in\pstp\rho d$, with \pbq $(\la^{(0)},\dots,\la^{(\ell)})$. Then:
\begin{enumerate}
\item $\la$ is strict \iff $\la^{(0)}$ is strict;
\item $\la$ is \pfr\, \iff $\la^{(0)}=\vn$;
\item $\la$ is restricted \iff $\la^{(\ell)}=\vn$.
\end{enumerate}
\end{lemma}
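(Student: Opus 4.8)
The plan is to analyze how the three defining conditions—strictness, being \pfr, and being restricted—interact with the passage between $\la$ and its \pbq. The key observation is that the parts of $\la$ are partitioned by their residue class mod $p$: the multiples of $p$ (which, divided by $p$, give $\la^{(0)}$), and for each $i\in\{1,\dots,\ell\}$ the parts congruent to $i$ (which, after subtracting the appropriate staircase $jp+i$ and dividing by $p$, give $\la^{(i)}$). Since $\rho$ is $d$-Rouquier and $\la\in\pstp\rho d$, we have $r_i(\la)=r_i(\rho)\gs d$ for $1\ls i\ls\ell$, and the large gaps $r_i(\rho)\gs r_{i-1}(\rho)+d-1$ guarantee that within each residue class the parts of $\la$ are very spread out—consecutive parts in a class differ by at least $p$ (this is essentially the content of the formula defining $\la^{(i)}$, where each entry is a genuine non-negative integer and the entries are weakly decreasing). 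I would first record this ``separation'' fact, perhaps citing \cite[Lemma 4.1.1]{kl} for the statement $r_i(\la)=r_i(\rho)$, as it underlies all three parts.

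For part (i), $\la$ is strict \iff it has no repeated positive part. A repeated positive part of $\la$ must, by the $p$-strictness of $\la$, be divisible by $p$; hence repeated positive parts of $\la$ correspond exactly to repeated positive parts of $\la^{(0)}$ (the parts in residue classes $1,\dots,\ell$ are automatically distinct because they are spaced by multiples of $p$ as just noted). So $\la$ is strict \iff $\la^{(0)}$ is strict. For part (ii), $\la$ is \pfr \iff it has no positive part divisible by $p$ \iff the partition $\la^{(0)}$, which is obtained precisely from the parts of $\la$ divisible by $p$, is empty; this is immediate from the definition.

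Part (iii) is the one requiring the most care, and I expect it to be the main obstacle. Here ``restricted'' means: for every $r$, either $\la_r-\la_{r+1}<p$, or $\la_r-\la_{r+1}=p$ and $p\nmid\la_r$. I would examine consecutive parts $\la_r>\la_{r+1}$ of $\la$ and see when the restrictedness condition can fail, i.e. when $\la_r-\la_{r+1}>p$, or $\la_r-\la_{r+1}=p$ with $p\mid\la_r$. Using the separation fact, within residue classes $1,\dots,\ell$ the relevant differences are controlled by the differences of the corresponding $\la^{(i)}$ together with the built-in staircase, and because $\rho$ is $d$-Rouquier one checks these never produce a violation \emph{unless} it comes from the very end of some residue block interacting with $\la^{(\ell)}$: the smallest part in residue class $\ell$ is $\la_{k_r}=p\,\la^{(\ell)}_r+\ell$ with $\la^{(\ell)}_r\gs1$ exactly when $\la^{(\ell)}\neq\vn$, and one must compare it with the next part down (in residue class $\ell-1$ or below). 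The arithmetic should show that a restrictedness violation occurs precisely at the transition out of the residue-$\ell$ block when $\la^{(\ell)}\neq\vn$, and conversely that when $\la^{(\ell)}=\vn$ all consecutive differences satisfy the restrictedness condition—here one uses both that differences of parts within a class reflect differences in the (weakly decreasing) $\la^{(i)}$ shifted by $p$, and that the Rouquier gaps between blocks are large enough that the condition $\la_r-\la_{r+1}=p$ only arises with $p\nmid\la_r$. Carrying out this case analysis cleanly—keeping track of which residue class each part lies in and handling the boundary between blocks—is the technical heart of the proof; the other two parts are essentially bookkeeping.
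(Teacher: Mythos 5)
Parts (i) and (ii) of your argument are fine and agree with the paper, which treats them as immediate from the definition. The problem is in part (iii), specifically in the direction asserting that $\la^{(\ell)}\neq\vn$ forces $\la$ not to be restricted. You place the violation at the transition out of the residue-$\ell$ block, claiming that the smallest part of $\la$ congruent to $\ell$ modulo $p$ is $p\la^{(\ell)}_r+\ell$ with $\la^{(\ell)}_r\gs1$ exactly when $\la^{(\ell)}\neq\vn$. This is false: $\la^{(\ell)}\neq\vn$ only means $\la^{(\ell)}_1\gs1$, and since $r_\ell(\la)\gs d\gs|\la^{(\ell)}|$ the tail of $\la^{(\ell)}$ is generally zero, so the smallest part of the $\ell$-class is usually just $\ell$. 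For example, with $p=5$, $d=1$, $\rho=(7,2,1)$ and $\la=(12,2,1)$ (so $\ell=2$ and $\la^{(2)}=(1)$), the bottom of the $\ell$-block gives $2-1=1<p$ and exhibits no violation; the actual violation is $\la_1-\la_2=10>p$ at the \emph{top} of the block. The correct local statement --- and what the paper proves --- is that any index $a$ with $\la^{(\ell)}_a>\la^{(\ell)}_{a+1}$ yields $\la_a-\la_{a+1}>p$: a strict descent of $\la^{(\ell)}$, wherever it occurs, opens a gap of more than $p$ below $\la_a=\rho_a+p\la^{(\ell)}_a$, and the Rouquier inequalities (via $|\la^{(i)}|\ls d-a$ for $i\neq\ell$) guarantee that no part from another residue class lands in that gap.

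For the converse direction your plan also glosses over the fact that two consecutive parts of $\la$ lying in the same residue class need not be consecutive parts of $\la$ --- the residue classes interleave near the bottom of the partition --- so bounding differences ``within a class'' does not directly bound $\la_r-\la_{r+1}$. The paper sidesteps the entire case analysis with a global observation: when $\la^{(\ell)}=\vn$, every part of $\la$ is less than $\rho_1+p$ while $\la$ still contains the whole progression $\ell,\ell+p,\dots,\rho_1$; hence no two consecutive parts can differ by more than $p$, and a difference of exactly $p$ forces both parts to be congruent to $\ell$ modulo $p$, hence not divisible by $p$. I recommend replacing your boundary-case bookkeeping with these two arguments.
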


\begin{proof}
The first two statements follow directly from the definition, so we need only prove the third. Note that by the given properties of the integers $r_i(\rho)$, the $d$ largest parts of $\rho$ are all congruent to $\ell$ modulo $p$, and $\rho_k<\rho_1-(d-1)p$ for any $k$ with $\la_k\not\equiv\ell\ppmod p$.

We obtain $\la$ from $\rho$ by adding $d$ $p$-bars. So any part $\la_k$ for which $\la_k\not\equiv\ell\ppmod p$ satisfies $\la_k<\rho_1+p$. If $\la^{(\ell)}=\vn$ then $\la_1<\rho_1+p$, while $\la$ contains all the integers $\ell,\ell+p,\dots,\rho_1$, so $\la$ is restricted.

If instead $\la^{(\ell)}_1\neq\vn$, choose $a$ such that $\la^{(\ell)}_a>\la^{(\ell)}_{a+1}$. Then $|\la^{(\ell)}|\gs a$, so that $|\la^{(i)}|\ls d-a$ for any $i\neq\ell$. This means that any part $\la_k\not\equiv\ell\ppmod p$ satisfies $\la_k<\rho_1-(a-1)p=\rho_a$. 
So $\la$ contains the part $\la_a=\rho_a+\la^{(\ell)}_ap$, but does not contain any parts $\la_k$ with $\rho_a+(\la^{(\ell)}_a-1)p\ls\la_k<\rho_a+\la^{(\ell)}_ap$, so is not restricted.
\end{proof}

Clearly $\la\in\pstp\rho d$ is determined by $\rho$ and the \pbq $(\la^{(0)},\dots,\la^{(\ell)})$; conversely, given a multipartition $(\la^{(0)},\dots,\la^{(\ell)})\in\Par^I(d)$, there is a partition $\la\in\pstp\rho d$ with \pbq $(\la^{(0)},\dots,\la^{(\ell)})$. In view of this and \cref{pbqlem}, we see that
\begin{equation}\label{EPartId}
|\pstp\rho d|=|\Par^I(d)|\qquad\text{and}\qquad
|\pfstp\rho d|=|\rpstp\rho d|=|\Par^J(d)|.
\end{equation}
\subsection{Rouquier bar-cores and dominance order}
For our calculations in RoCK blocks, it will be helpful to introduce a partial order on $\calp^I(d)$: given two multipartitions $(\la^{(0)},\dots,\la^{(\ell)})$ and $(\mu^{(0)},\dots,\mu^{(\ell)})$ in $\calp^I(d)$, we write $(\la^{(0)},\dots,\la^{(\ell)})\sucq(\mu^{(0)},\dots,\mu^{(\ell)})$ if
\[
|\la^{(0)}|+\dots+|\la^{(k-1)}|+{\la^{(k)}}'_1+\dots+{\la^{(k)}}'_c\gs|\mu^{(0)}|+\dots+|\mu^{(k-1)}|+{\mu^{(k)}}'_1+\dots+{\mu^{(k)}}'_c
\]
for all $0\ls k\ls\ell$ and $c\gs1$. This order can be visualized by drawing the Young diagrams of $\la^{(0)},\dots,\la^{(\ell)}$ in a row from left to right; then $(\la^{(0)},\dots,\la^{(\ell)})\sucq(\mu^{(0)},\dots,\mu^{(\ell)})$ \iff $(\la^{(0)},\dots,\la^{(\ell)})$ can obtained from $(\mu^{(0)},\dots,\mu^{(\ell)})$ by moving nodes further to the left.

\begin{lemma}\label{succdom}
Let $\rho$ be a $d$-Rouquier $p$-bar-core. Suppose that the partitions $\la$ and $\mu$ in $\pstp\rho d$ have $p$-bar-quotients $(\la^{(0)},\dots,\la^{(\ell)})$ and $(\mu^{(0)},\dots,\mu^{(\ell)})$, respectively. Then $(\la^{(0)},\dots,\la^{(\ell)})\sucq(\mu^{(0)},\dots,\mu^{(\ell)})$ \iff $\la\domby\mu$.
\end{lemma}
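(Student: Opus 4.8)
The plan is to translate everything into the language of abaci (beta-numbers) and reduce the statement to a direct combinatorial comparison. Recall that a $p$-bar-core $\rho$ together with a choice of $p$-bar-weight $d$ determines an abacus display for $p$-strict partitions in $\pstp\rho d$ on $\ell$ runners (numbered $1,\dots,\ell$ according to residue classes $1,\dots,\ell$ mod $p$, with the residue-$0$ parts handled separately), where the Rouquier condition on $\rho$ guarantees that within $\pstp\rho d$ there is enough room on each runner that the beads never interact across runners. Under this display, moving a bead up one step on runner $i$ corresponds exactly to removing a $p$-bar, and the $p$-bar-quotient components $\la^{(0)},\dots,\la^{(\ell)}$ record the positions of the beads on the respective runners (with $\la^{(0)}$ reading off the parts divisible by $p$). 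The first thing I would do is make this dictionary precise and cite \cite{kl} for the fact that, for $\la\in\pstp\rho d$, the abacus display of $\la$ is obtained from that of $\rho$ by independently sliding beads up on the individual runners.

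Next I would reduce dominance on partitions to an inequality on partial sums of beta-numbers. The standard fact (for ordinary partitions, but it applies verbatim here since all partitions in sight are honest $p$-strict partitions with a common set of beta-numbers up to the appropriate shift) is that $\la\domby\mu$ if and only if, for every $m$, the sum of the $m$ largest beta-numbers of $\la$ is $\le$ the sum of the $m$ largest beta-numbers of $\mu$. I would then observe that, because the Rouquier condition forces the runners to be ``well-separated'' (the blocks of beta-numbers belonging to distinct runners occur in a fixed order, determined by $\rho$, with no overlap for any $\la\in\pstp\rho d$), these partial-sum inequalities decouple: the sum of the top $m$ beta-numbers of $\la$ is obtained by first exhausting the highest runner, then the next, and so on, so the global partial-sum condition is equivalent to the conjunction, over each runner $k$ and each $c\ge1$, of the inequality comparing the sum of the top $c$ beta-numbers on runner $k$ (plus the total bead-counts on all higher runners) for $\la$ versus for $\mu$.

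Finally I would match this up with the definition of $\sucq$. On runner $k$, the beta-numbers are an affine-linear reparametrization of the parts ${\la^{(k)}}'$ of the conjugate (reading column heights as bead gaps), so the sum of the top $c$ of them, minus the value for the empty partition, is exactly ${\la^{(k)}}'_1+\dots+{\la^{(k)}}'_c$; and ``the total bead-counts on the higher runners'' contributes precisely $|\la^{(0)}|+\dots+|\la^{(k-1)}|$ once one accounts for the fixed runner ordering and normalizes away the $\rho$-dependent constants (which are the same for $\la$ and $\mu$ and hence cancel). This is exactly the defining inequality of $(\la^{(0)},\dots,\la^{(\ell)})\sucq(\mu^{(0)},\dots,\mu^{(\ell)})$, so the two conditions coincide; alternatively, one can phrase the whole argument via the ``move nodes to the left'' characterizations of both $\domby$ and $\sucq$ recalled in the excerpt, checking that a leftward node-move on the concatenated quotient diagram corresponds to a $p$-bar-move downward on one runner, which is a leftward node-move on $\la$ itself.

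\textbf{Main obstacle.} The delicate point is the decoupling step: one must verify carefully that the Rouquier condition $r_i(\rho)\ge r_{i-1}(\rho)+d-1$ really does force, for \emph{all} $\la,\mu\in\pstp\rho d$ simultaneously, that the beta-number blocks on different runners never interleave, so that ``take the $m$ largest beta-numbers'' respects the runner stratification. Getting the bookkeeping of the various $\rho$-dependent shifts right — and confirming they cancel between $\la$ and $\mu$ — is where the real work lies; once the stratification is established, the equivalence with $\sucq$ is a formal manipulation of partial sums.
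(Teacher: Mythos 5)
The key step of your plan --- the ``decoupling'' --- fails, and it is exactly the point you flagged as the main obstacle. The Rouquier condition does \emph{not} make the beta-numbers (here, the parts themselves) on distinct runners occur in non-overlapping blocks: already the parts of the core interleave across residue classes. In the paper's own example ($p=5$, $\rho=(32,27,22,17,16,12,11,7,6,2,1)$, which is $4$-Rouquier), the parts congruent to $2$ are $32,27,22,17,12,7,2$ and those congruent to $1$ are $16,11,6,1$, and every $\la\in\pstp\rho d$ inherits this interleaving. Worse, which runner contributes the $m$-th largest part depends on $\la$: if $\la^{(\ell)}=\vn$ and $\la^{(\ell-1)}=(d)$, the largest part of $\la$ lies on runner $\ell-1$ and (when $r_\ell(\rho)=r_{\ell-1}(\rho)+d-1$) strictly exceeds every part on runner $\ell$. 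So ``take the $m$ largest beta-numbers'' does not respect the runner stratification, and the row partial sums $\la_1+\dots+\la_m$ do not split runner by runner. What the Rouquier condition actually separates are the \emph{columns} of $\la$ in which nodes are added for the various quotient components (formula (\ref{addcols})), which is why the paper's proof of the hard direction works with column partial sums $\la'_1+\dots+\la'_s$ for carefully chosen $s$, not with partial sums of parts.

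There is a second, independent mismatch in your final step: runner-wise partial sums of beta-numbers naturally produce the row sums $\la^{(k)}_1+\dots+\la^{(k)}_c$ of a quotient component, whereas $\sucq$ is defined via the column sums ${\la^{(k)}}'_1+\dots+{\la^{(k)}}'_c$. On a single component these give \emph{opposite} orders (compare $(2)$ with $(1,1)$), so the identification is not a normalization issue that cancels between $\la$ and $\mu$. Your closing alternative --- checking that a leftward node-move on the concatenated quotient corresponds to a leftward move of $p$ nodes in $\la$ --- is correct and is precisely the paper's proof of the ``only if'' direction; but it does not give ``if'', because a chain of leftward node-moves from $\mu$ to $\la$ witnessing $\la\domby\mu$ need not pass through partitions of $\pstp\rho d$ and so cannot be lifted to quotient moves. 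For that direction one has to argue by contraposition, exhibiting for each failure of $\bla\sucq\bmu$ an explicit column index $s$ with $\la'_1+\dots+\la'_s<\mu'_1+\dots+\mu'_s$, which is the substantial part of the paper's proof and is missing from your proposal.
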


\begin{pf}
For $i=0,\dots,\ell$ let $r_i$ be the largest part of $\rho$ congruent to $i$ modulo $p$. We also denote $\bla:=(\la^{(0)},\dots,\la^{(\ell)})$, $\bmu:=(\mu^{(0)},\dots,\mu^{(\ell)})$. 

We construct $\la$ from $\rho$ by successively adding $p$-bars. Correspondingly, the \pbq $\bla$ is obtained from $(\vn,\dots,\vn)$ by adding nodes; adding the node $(r,c)$ to $\la^{(i)}$ corresponds to adding nodes to $\la$ in columns
\begin{equation}\label{addcols}
\begin{cases}
r_i+(c-r)p+1,r_i+(c-r)p+2,\dots,r_i+(c-r+1)p&\text{if $i>0$}
\\
(c-1)p+1,(c-1)p+2,\ldots,cp&\text{if $i=0$.}
\end{cases}
\end{equation}

We now prove the `only-if' part of the lemma.  
It is easy to see that if $\bla\sucq\bmu$ then we can reach $\bla$ from $\bmu$ by a sequence of moves in which a single node is moved further to the left; so it suffices to consider a single such move, and show that this move corresponds to moving nodes to the left in $\mu$. So suppose $\bla$ is obtained from $\bmu$ by replacing the node $(s,c)$ in the $j$th component with the node $(r,b)$ in the $i$th component, where $i\ls j$.

If $0<i=j$, then $b-r<c-s$, so by (\ref{addcols}) $\la$ is obtained from $\mu$ by moving $p$ nodes further to the left. If $0=i=j$, then a similar argument applies using the inequality $b<c$. If $0<i<j$, then $b-r\ls c-s+d-1$, because $\mu^{(i)}_1+{\mu^{(j)}}'_1\ls d$. Now (\ref{addcols}) and the fact that $r_i<r_j+(d-1)p$ means that $\la$ is obtained from $\mu$ by moving $p$ nodes further to the left. If $0=i<j$, then we use a similar argument via the inequality $b\ls c-s+d$.

In any case, we obtain $\la\domsby\mu$, as required.

We now prove the `if' part of the lemma. 
Assume $\bla\not\sucq\bmu$; then we must show that $\la\ndomby\mu$.

{\em Case 1:}  there is $k\in I$ such that $|\la^{(0)}|+\dots+|\la^{(k)}|<|\mu^{(0)}|+\dots+|\mu^{(k)}|$. 

Note that in this case $k<\ell$. 
Let $a=|\la^{(0)}|+\dots+|\la^{(k)}|$ and $b=|\mu^{(0)}|+\dots+|\mu^{(k)}|$. Now let $\nu,\xi\in\pstp\rho d$ be given by
\[
\nu^{(i)}=
\begin{cases}
(1^a)&\text{if $i=0$}
\\
(1^{d-a})&\text{if $i=k+1$}
\\
\vn&\text{otherwise},
\end{cases}
\qquad
\xi^{(i)}=
\begin{cases}
(b)&\text{if $i=k$}
\\
(d-b)&\text{if $i=\ell$}
\\
\vn&\text{otherwise}.
\end{cases}
\]
Then $(\nu^{(0)},\dots,\nu^{(\ell)})\sucq\bla$ and $\bmu\sucq(\xi^{(0)},\dots,\xi^{(\ell)})$. So (from the `$\Rightarrow$' part of the \lcnamecref{succdom}) in order to show that $\la\ndomby\mu$ it suffices to show that $\nu\ndomby\xi$. To do this, we let $r$ be such that $\rho_r=r_{k+1}-(d-a-1)p$, and compare $\nu_1+\dots+\nu_r$ with $\xi_1+\dots+\xi_r$. We obtain
\begin{align*}
\nu_1+\dots+\nu_r&=\rho_1+\dots+\rho_r+(d-a)p,
\\
\xi_1+\dots+\xi_r&=\rho_1+\dots+\rho_r+(d-b)p+\max\{r_k-r_{k+1}+(d+b-a-1)p,0\},
\end{align*}
and now the assumptions $r_{k+1}>r_k+(d-1)p$ and $a<b$ give $\nu_1+\dots+\nu_r>\xi_1+\dots+\xi_r$, so that $\nu\ndomby\xi$.

{\em Case 2:}   $|\la^{(0)}|+\dots+|\la^{(k)}|\gs|\mu^{(0)}|+\dots+|\mu^{(k)}|$ for every $k\in I$. 

The assumption that $\bla\not\sucq\bmu$ means that we can find $k\in I$ and $c\gs1$ for which
\begin{equation}\label{notsucq}
\sum_{i=0}^{k-1}|\la^{(i)}|\,+\,{\la^{(k)}}'_1+\dots+{\la^{(k)}}'_c<\sum_{i=0}^{k-1}|\mu^{(i)}|\,+\,{\mu^{(k)}}'_1+\dots+{\mu^{(k)}}'_c.
\end{equation}

First we assume that $k>0$. 

Let $r={\la^{(k)}}'_c$, and $s=r_k+(c-r+1)p$; then we claim that $\la'_1+\dots+\la'_s<\mu'_1+\dots+\mu'_s$, so that $\la\ndomby\mu$.

We calculate $\la'_1+\dots+\la'_s-(\rho'_1+\dots+\rho'_s)$ using (\ref{addcols}). For $0\ls i<k$ each node of $\la^{(i)}$ contributes $p$ to this sum. In addition, each node $(t,b)$ of $\la^{(k)}$ for which $b-t\ls c-r$ contributes $p$ to the sum. (The nodes of $\la^{(i)}$ for $i>k$ do not contribute, because of the inequality $r_{k+1}-r_k>(d-1)p$.) Writing $T_{r,c}=\sum_{x=1}^{r-1}\min\{x,c\}$, we obtain
\begin{align*}
\sum_{i=1}^s\la'_i-\sum_{i=1}^s\rho'_i
&=\left(\sum_{i=0}^{k-1}|\la^{(i)}|+\sum_{t\gs\max\{1,r-c\}}\min\{\la^{(k)}_t,t+c-r\}\right)p
\\
&=\left(\sum_{i=0}^{k-1}|\la^{(i)}|+\sum_{d=1}^c{\la^{(k)}}'_d-T_{r,c}\right)p,
\end{align*}
with the second equality coming from the fact that ${\la^{(k)}}'_c=r$.

We calculate $\mu'_1+\dots+\mu'_s-(\rho'_1+\dots+\rho'_s)$ in the same way. The assumption that $|\mu^{(0)}|+\dots+|\mu^{(k-1)}|\ls|\la^{(0)}|+\dots+|\la^{(k-1)}|$ means that each node of $\mu^{(i)}$ for $i<k$ contributes $p$ to this sum, while the nodes of $\mu^{(i)}$ for $i>k$ do not contribute. So, as with $\la$, we obtain
\[
\sum_{i=1}^s\mu'_i-\sum_{i=1}^s\rho'_i
=\left(\sum_{i=0}^{k-1}|\mu^{(i)}|+\sum_{t\gs\max\{1,r-c\}}\min\{\mu^{(k)}_t,t+c-r\}\right)p.
\]
It follows that
\[
\sum_{t\gs\max\{1,r-c\}}\min\{\mu^{(k)}_t,t+c-r\}\geq\sum_{d=1}^c{\mu^{(k)}}'_d-T_{r,c}
\]
and then
\[
\sum_{i=1}^s\mu'_i-\sum_{i=1}^s\rho'_i\geq\left(\sum_{i=0}^{k-1}|\mu^{(i)}|+\sum_{d=1}^c{\mu^{(k)}}'_d-T_{r,c}\right)p.
\]
We obtain $\la'_1+\dots+\la'_s<\mu'_1+\dots+\mu'_s$, as required.

Now assume instead that $k=0$. Then we claim that $\la'_1+\dots+\la_{cp}'<\mu'_1+\dots+\mu_{cp}'$. As for the case above, we calculate $\la'_1+\dots+\la'_s-(\rho'_1+\dots+\rho'_s)$ using (\ref{addcols}). Each node $(t,b)$ of $\la^{(0)}$ with  $b\ls c$ contributes $p$ to this sum, and the nodes of $\la^{(i)}$ for $i\gs1$ do not contribute, because of the inequality $r_1>dp$ and the fact that $|\la^{(0)}|\gs c$. So we obtain
\[
\sum_{i=1}^{cp}\la'_i-\sum_{i=1}^{cp} \rho'_i=\sum_{i=1}^c{\la^{(0)}}'_i.
\]
The same formula with $\mu$ in place of $\la$ gives the result.
\qedhere
\end{pf}

\subsection{Rouquier bar-cores and containment of partitions}
We will need the following generalization of \cite[Lemma 4.1.2]{kl}.

\begin{propn}\label{addbars}
Suppose $\rho$ is a $d$-Rouquier $p$-bar-core. Suppose $\la\in\pstp\rho a$ and $\al\in\pstp\rho b$, where $a,b\ls d$, and let $(\la^{(0)},\dots,\la^{(\ell)})$ and $(\al^{(0)},\dots,\al^{(\ell)})$ be the \pbqs of $\la$ and $\al$. Then the following are equivalent:
\begin{enumerate}
\item\label{lainal}
$\la\subseteq\al$;
\item\label{lakinalk}
$\la^{(j)}\subseteq\al^{(j)}$ for all $j\in I$;
\item\label{hookadd}
$\al$ can be obtained from $\la$ by successively adding $p$-bars.
\end{enumerate}
\end{propn}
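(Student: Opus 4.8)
The plan is to prove the cycle of implications (iii)$\Rightarrow$(i)$\Rightarrow$(ii)$\Rightarrow$(iii), the only substantial step being (i)$\Rightarrow$(ii). The first implication is immediate: each of the two ways of adding a $p$-bar to a $p$-strict partition can only enlarge its Young diagram, so if $\al$ is built from $\la$ by successively adding $p$-bars then $\la\subseteq\al$. For the rest the main tool is the column-by-column description of $p$-bars already used inside the proof of \cref{succdom}; throughout, $r_i$ denotes the largest part of $\rho$ congruent to $i$ modulo $p$.

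I would first record a dictionary. For any $e\le d$ and $\nu\in\pstp\rho e$ one has $r_i(\nu)=r_i(\rho)$ for $1\le i\le\ell$ (by \cite[Lemma 4.1.1]{kl}), so $\nu$ has no parts congruent to $i$ modulo $p$ for $\ell<i<p$: a second-kind $p$-bar removal deletes one part in a residue class $j\in\{1,\dots,\ell\}$ and one in class $p-j$, so removing $p$-bars from $\nu$ down to $\rho$ can only decrease the number of parts in class $j$, which is impossible since that number is the constant $r_j(\rho)$. Hence every $p$-bar relating two partitions in $\bigcup_{e\le d}\pstp\rho e$ is of the first kind, i.e.\ bumps a single part up by $p$; and by \eqref{addcols} this is precisely the effect of incrementing one $\beta$-number of one component of the $p$-bar-quotient, the admissible increments being exactly the addable boxes of that component. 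Thus, for partitions in $\bigcup_{e\le d}\pstp\rho e$, adding a $p$-bar means adjoining one addable box to a single component of the $p$-bar-quotient. Implication (ii)$\Rightarrow$(iii) follows at once: if $\la^{(j)}\subseteq\al^{(j)}$ for all $j$, then $(\al^{(j)})_j$ is obtained from $(\la^{(j)})_j$ by adjoining addable boxes one at a time, through multipartitions of size $\le b\le d$, and by the dictionary each step adds a $p$-bar within $\bigcup_{e\le d}\pstp\rho e$.

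For (i)$\Rightarrow$(ii) I would induct on $|\al|-|\la|$, the base case $\la=\al$ being vacuous. For the inductive step, with $\la\subsetneq\al$, it suffices to find a $p$-bar removable from $\al$ all of whose boxes lie outside $\la$: deleting it yields $\al^-\in\pstp\rho{b-1}$ with $\la\subseteq\al^-\subsetneq\al$, whence $\la^{(j)}\subseteq(\al^-)^{(j)}$ by the inductive hypothesis and $(\al^-)^{(j)}\subseteq\al^{(j)}$ by the dictionary, giving (ii). To locate such a $p$-bar, let $c_0$ be the rightmost column in which $\la$ and $\al$ differ; the box at the foot of column $c_0$ of $\al$ is a removable box of $\al$ outside $\la$, lying in the component of $(\al^{(j)})_j$ indexed by the residue class of $c_0$, and one checks that the corresponding first-kind $p$-bar of $\al$ works, possibly after sliding $c_0$ to a nearby column in the same residue class so that the $p$-bar removal is legal (i.e.\ so that the part $p$ smaller than the one being bumped down is absent).

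The main obstacle is precisely this last verification: one must control not just the $p$ boxes of the bumped row-segment but every box disturbed when the parts of $\al$ are re-sorted, and check that they all avoid $\la$. This is where the $d$-Rouquier hypothesis enters decisively, through the gaps $r_i\ge r_{i-1}+(d-1)p$ combined with the bounds $|\la^{(0)}|+\dots+|\la^{(\ell)}|=a\le d$ and $|\al^{(0)}|+\dots+|\al^{(\ell)}|=b\le d$: these force the blocks of boxes arising from different residue classes to be far enough apart that no unwanted interaction can occur, so that the chosen $p$-bar really does lie in the region where $\al$ strictly contains $\la$. The argument is a mild generalisation of \cite[Lemma 4.1.2]{kl} (essentially the case $\la=\rho$), whose proof I would follow closely, the only new feature being that $\la$ is an arbitrary element of $\pstp\rho a$ rather than the core itself.
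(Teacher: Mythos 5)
Your reduction of everything to (i)$\Rightarrow$(ii), the dictionary between first-kind $p$-bars and addable boxes of the quotient components, and the two easy implications are all fine, and your induction on $|\al|-|\la|$ is a legitimate alternative shape of argument (the paper instead inducts on $a$, peeling nodes off $\la$'s quotient rather than $p$-bars off $\al$). But the proposal has a genuine gap at exactly the point you yourself flag as ``the main obstacle'': the existence, whenever $\la\subsetneq\al$, of a $p$-bar legally removable from $\al$ (i.e.\ corresponding to a removable node of some component $\al^{(j)}$) all $p$ of whose boxes avoid $\la$. This is not a routine verification that can be deferred -- it is the entire content of the implication. Your candidate, the foot of the rightmost column $c_0$ in which $\la$ and $\al$ differ, is indeed a removable box of $\al$ lying outside $\la$; but the $p$-bar containing it occupies a fixed window of $p$ consecutive columns determined by the component and diagonal to which that box belongs, and you must show that the feet of \emph{all} $p$ of those columns of $\al$ lie outside $\la$, and that the corresponding node of $\al^{(j)}$ is removable from $\al^{(j)}$. ``Sliding $c_0$ to a nearby column in the same residue class'' can move you onto a box that does lie in $\la$. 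Making this work requires controlling which $p$-bars (possibly from other components) contribute nodes to those columns, which is where the Rouquier gaps $r_i\gs r_{i-1}+(d-1)p$ and the bounds $a,b\ls d$ must be used quantitatively -- and none of that is carried out. Note also that \cite[Lemma 4.1.2]{kl} is the case $b-a=1$ of the proposition, not the case $\la=\rho$; it gives the first step of your induction but not the inductive step for $b-a\gs2$, so ``following it closely'' does not supply the missing argument.

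For comparison, the paper avoids the existence problem altogether: inducting on $a$, it removes a single node from the $p$-bar-quotient of $\la$ to get $\mu\subset\la\subseteq\al$, applies the inductive hypothesis to $\mu$, and thereby reduces to the case where the quotient of $\la$ has a unique removable node, i.e.\ is a single rectangle $(x^y)$ in one component with all other components empty; that special case is then settled by explicit column counts $\al'_s-\rho'_s$ against $\la'_s-\rho'_s$ using the Rouquier inequalities. If you wish to salvage your route, you need to prove your intermediate-partition claim with the same level of care; as it stands the proof is incomplete.
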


\begin{pf}
It is trivial that (\ref{hookadd})$\Rightarrow$(\ref{lainal}). It is also very easy to see that (\ref{lakinalk})$\Rightarrow$(\ref{hookadd}): adding a node to a component of the \pbq corresponds to increasing one of the parts of the partition by $p$, which is a way of adding a $p$-bar.

So it remains to show that (\ref{lainal})$\Rightarrow$(\ref{lakinalk}). (We remark that the case where $b-a=1$ is proved in \cite[Lemma 4.1.2]{kl}.)

We use induction on $a$. The case $a=0$ is trivial, so we assume $a>0$, and that the result is true with $a$ replaced by any smaller value. Assume $\la\subseteq\al$. 

Suppose $\mu\in\pstp\rho{a-1}$ and that the \pbq $(\mu^{(0)},\dots,\mu^{(\ell)})$ of $\mu$ is obtained from the \pbq of $\la$ by removing a single node. Then $\mu\subset\la$ (from the fact that (\ref{lakinalk})$\Rightarrow$(\ref{hookadd})$\Rightarrow$(\ref{lainal})), so $\mu\subset\al$, and the inductive hypothesis gives $\mu^{(j)}\subseteq\al^{(j)}$ for all $j$. So the only node of $(\la^{(0)},\dots,\la^{(\ell)})$ which can fail to be a node of $(\al^{(0)},\dots,\al^{(\ell)})$ is the node removed to obtain $\mu$. In particular, if there are at least two such partitions $\mu$ (that is, if $(\la^{(0)},\dots,\la^{(\ell)})$ has at least two removable nodes), then $\la^{(j)}\subseteq\al^{(j)}$ for all $j$ as required.

So we can assume that $(\la^{(0)},\dots,\la^{(\ell)})$ has only one removable node. This means there is $k\in I$ such that $\la^{(k)}$ is a rectangular partition $(x^y)$ with $x,y\gs1$, while $\la^{(j)}=\vn$ for $j\neq k$. From the argument in the previous paragraph, we can assume that $\al^{(k)}$ contains the partition $(x^{y-1},x-1)$. If we suppose for a contradiction that $\la^{(k)}\nsubseteq\al^{(k)}$, then $\al^{(k)}$ has fewer than $\min\{x,y\}$ nodes $(r,c)$ for which $c-r=x-y$.

For each $j$ we define $r_j$ to be the largest part of $\rho$ congruent to $j$ modulo $p$. As observed in \cref{succdom}, adding the node $(r,c)$ to the $j$th component of $\la^{(j)}$ corresponds to adding nodes to $\la$ in columns
\[
r_j+(c-\hat r)p+1,\ r_j+(c-\hat r)p+2,\ \dots,\ r_j+(c-\hat r+1)p,
\]
where we write $\hat r=1$ if $j=0$, and $\hat r=r$ otherwise.

Assume first that $k\gs1$. Then the assumption $\la\subseteq\al$ and the paragraph above give
\[
\al'_{r_k+(x-y)p+1}-\rho'_{r_k+(x-y)p+1}\gs\la'_{r_k+(x-y)p+1}-\rho'_{r_k+(x-y)p+1}=\min\{x,y\}.
\]
Since $\al^{(k)}$ has fewer than $\min\{x,y\}$ nodes $(r,c)$ for which $c-r=x-y$, there must be some $j\neq k$ such that $\al^{(j)}$ has a node $(r,c)$ for which
\[
r_k+(x-y)p+1=
\begin{cases}
r_j+(c-\hat r)p+k-j+1
&\text{if }j<k,
\\
r_j+(c-r)p+p+k-j+1
&\text{if }j>k.
\end{cases}
\]
In fact this is impossible for $j>k$, since it gives
\[
(r-c-1+x-y)p+j-k=r_j-r_k\gs(d-1)p+j-k,
\]
and therefore
\[
r-c+x-y\gs d.
\]
But $|\al^{(j)}|\gs r-c+1$ and $|\al^{(k)}|\gs x+y-2$, and we obtain $|\al^{(k)}|+|\al^{(j)}|\gs d+1$, which contradicts the assumption that $\al\in\pstp\rho b$ with $b\ls d$.

So instead $j<k$. Now we obtain
\[
(c-\hat r+y-x)p+k-j=r_k-r_j\gs(d-1)p+k-j,
\]
so that
\[
c-\hat r+y-x\gs d-1.
\]
But $|\al^{(j)}|\gs c-\hat r+1$ and $|\al^{(k)}|\gs x+y-2$ and $|\al^{(j)}|+|\al^{(k)}|\ls d$, so we have equality everywhere, and in particular $|\al^{(j)}|+|\al^{(k)}|=d$.

Now we perform a similar calculation using the fact that $\al'_{r_k+(x-y+1)p}-\rho'_{r_k+(x-y+1)p}\gs\min\{x,y\}$. Now there is $j'\neq k$ such that (writing $\check r=1$ if $j'=0$ and $\check r=r$ otherwise)
\[
r_k+(x-y+1)p=
\begin{cases}
r_{j'}+(c-\check r)p+k-j'
&\text{if }j'<k,
\\
r_{j'}+(c-r)p+p+k-j'
&\text{if }j'>k.
\end{cases}
\]
Now the case $j'<k$ leads to an impossibility (in a similar way to the case $j>k$ above), so $j'$ must be greater than $k$. But now we have indices $j<k<j'$ with $|\al^{(j)}|+|\al^{(k)}|+|\al^{(j')}|\gs d+1$, which again contradicts the assumption $\al\in\pstp\rho d$. The result follows in the case $k\gs1$.

The case $k=0$ is similar but simpler. In this case
\[
\al'_{(x-1)p+1}-\rho'_{(x-1)p+1}\gs\la'_{(x-1)p+1}-\rho'_{(x-1)p+1}=y,
\]
but $\al^{(0)}$ has fewer than $y$ nodes in column $x$, so there is $j>0$ such that $\al^{(j)}$ has a node $(r,c)$ with
\[
(x-1)p+1=r_j+(c-r)p+p+1-j
\]
and therefore
\[
(r-c+x-2)p+j=r_j\gs (d-1)p+j
\]
so that
\[
r-c+x-2\gs d-1.
\]
But now the fact that $|\al^{(0)}|\gs x-1$ and $|\al^{(j)}|\gs r-c+1$ gives a contradiction. So the result follows in the case $k=0$ as well.
\end{pf}

\section{Superalgebras, supermodules and wreath superproducts}\label{wreathsec}

The representation theory of double covers of symmetric groups is best approached via superalgebras. In this section we recall the general theory and then study representations of some special wreath superproducts $\Zig_\ell\wr \sym_d$ which play a crucial role for RoCK  (super)blocks of double covers of symmetric groups, cf.\ \cref{mainkl}. Our aim is to compute the Cartan invariants for $\Zig_\ell\wr \sym_d$ in the case where $d<p$ in terms of Littlewood-Richardson coefficients, cf.\ \cref{wrcartan}. 

\subsection{Superspaces}
\label{SSSASM}

We write $\ztz=\{\0,\1\}$. If $V$ is a vector space over $\F$, a \emph{$\ztz$-grading} on $V$ is a direct sum decomposition $V=V_\0\oplus V_\1$. A vector \emph{superspace} is a vector space with a chosen $\ztz$-grading. For $\eps\in\ztz$, if $v\in V_{\eps}$ we write $|v|=\eps$ and say that $v$ is \emph{homogeneous} of parity~$\eps$. 

If $V$ and $W$ are superspaces and $\eps\in\Z/2\Z$ then a linear map $f:V\to W$ is called a \emph{homogeneous superspace homomorphism of parity $\eps$} if $f(V_\de)\subseteq W_{\de+\eps}$ for all $\de\in \Z/2\Z$. A \emph{superspace homomorphism $f:V\to W$}  means a map $f=f_\0+f_\1$, where for $\eps=\0,\1$ the map $f_\eps:V\to W$ is a homogeneous superspace homomorphism of parity~$\eps$. We will use the term `even homomorphism' to mean `homogeneous homomorphism of parity $\0$', and similarly for odd homomorphisms. 

We write $\Pi$ for the parity change functor, see e.g.\ \cite[\S12.1]{KBook}. Thus, for a superspace $V$, the superspace $\Pi V$ equals $V$ as a vector space, but with parities swapped. 
We define an odd isomorphism of superspaces
\begin{align*}
\si_V:V\longrightarrow \Pi V,
\ 
v\longmapsto (-1)^{|v|}v.
\end{align*}

If $V_1,\dots,V_d$ are superspaces then $V_1\otimes\dots\otimes V_d$ is a superspace  with $|v_1\otimes\dots\otimes v_d|=|v_1|+\dots|v_d|$. (Here and below in similar situations we assume that the elements $v_k$ are homogeneous and extend by linearity where necessary.) If $f_i:V_i\to W_i$ is a superspace  homomorphism for $i=1,\dots,d$, then 
\[
f_1\otimes \dots\otimes f_d: V_1\otimes\dots\otimes V_d\to W_1\otimes\dots\otimes W_d
\]
is a superspace homomorphism defined from
\[
(f_1\otimes \dots\otimes f_d)(v_1\otimes\dots\otimes v_d)=(-1)^{\sum_{1\leq r<s\leq d}|f_s||v_r|}f_1(v_1)\otimes\dots\otimes f_d(v_d).
\]

Let $V=V_\0\oplus V_\1$ be a superspace, and $d\in\N$. The symmetric group $\Si_d$ acts on $V^{\otimes d}$ via
\begin{equation*}\label{EWSignAction}
{}^{w}(v_1\otimes\dots\otimes v_d):=(-1)^{[w;v_1,\dots,v_d]}v_{w^{-1}(1)}\otimes\dots\otimes v_{w^{-1}(d)},
\end{equation*}
where for $w\in\Si_d$ and $v_1,\dots,v_d\in V$, we have 
\[
[w;v_1,\dots,v_d]:=\sum_{\substack{1\leq a<c\leq d\\w(a)>w(c)}}|v_a||v_c|.
\]
It is now easy to check that  
\begin{equation}\label{ESignW}
\si_V^{\otimes d}({}^w(v_1\otimes\dots\otimes v_d))=\sgn(w)\Big({}^w\big(\si_V^{\otimes d}(v_1\otimes\dots\otimes v_d)\big)\Big).
\end{equation}

\subsection{Superalgebras}

An $\F$-\emph{superalgebra} is an $\F$-algebra $A$ with a chosen $\ztz$-grading $A=A_\0\oplus A_\1$ such that $ab\in A_{|a|+|b|}$ (whenever $a,b\in A$ are both homogeneous). If $A$ and $B$ are  $\F$-superalgebras, a \emph{superalgebra homomorphism} $f:A\to B$ is an even unital algebra homomorphism. 

If $A_1,\dots,A_d$ are superalgebras then the superspace $A_1\otimes\dots\otimes A_d$ is a superalgebra with multiplication
\[
(a_1\otimes\dots\otimes a_d)(b_1\otimes\dots\otimes b_d)=(-1)^{\sum_{1\leq r<s\leq d}|a_s||b_r|}a_1b_1\otimes\dots\otimes a_db_d.
\]

\begin{Example}\label{ExAA}
We consider the quiver 
\begin{align*}
\begin{braid}\tikzset{baseline=3mm}
\coordinate (0) at (-3,0);
\coordinate (1) at (0,0);
\coordinate (2) at (4,0);
\coordinate (3) at (8,0);
\coordinate (4) at (12,0);
\coordinate (6) at (16,0);
\coordinate (L1) at (20,0);
\coordinate (L) at (24,0);
\draw [thin, black,->, shorten >= 0.2cm]   (0) to[distance=1.5cm,out=90, in=120] (1);
\draw [thin, black, shorten <= 0.2cm]   (1) to[distance=1.5cm,out=-120, in=-90] (0);
\draw [thin, black,->,shorten <= 0.2cm, shorten >= 0.2cm]   (1) to[distance=1.5cm,out=60, in=120] (2);
\draw [thin,black,->,shorten <= 0.2cm, shorten >= 0.2cm]   (2) to[distance=1.5cm,out=-120, in=-60] (1);
\draw [thin,black,->,shorten <= 0.2cm, shorten >= 0.2cm]   (2) to[distance=1.5cm,out=60, in=120] (3);
\draw [thin,black,->,shorten <= 0.2cm, shorten >= 0.2cm]   (3) to[distance=1.5cm,out=-120, in=-60] (2);
\draw [thin,black,->,shorten <= 0.2cm, shorten >= 0.2cm]   (3) to[distance=1.5cm,out=60, in=120] (4);
\draw [thin,black,->,shorten <= 0.2cm, shorten >= 0.2cm]   (4) to[distance=1.5cm,out=-120, in=-60] (3);
\draw [thin,black,->,shorten <= 0.2cm, shorten >= 0.2cm]   (6) to[distance=1.5cm,out=60, in=120] (L1);
\draw [thin,black,->,shorten <= 0.2cm, shorten >= 0.2cm]   (L1) to[distance=1.5cm,out=-120, in=-60] (6);
\draw [thin,black,->,shorten <= 0.2cm, shorten >= 0.2cm]   (L1) to[distance=1.5cm,out=60, in=120] (L);
\draw [thin,black,->,shorten <= 0.2cm, shorten >= 0.2cm]   (L) to[distance=1.5cm,out=-120, in=-60] (L1);
\blackdot(0,0);
\blackdot(4,0);
\blackdot(8,0);
\blackdot(20,0);
\blackdot(24,0);
\draw(0,0) node[right]{\scriptsize$0$};
\draw(4,0) node[right]{\scriptsize$1$};
\draw(8,0) node[right]{\scriptsize$2$};
\draw[thick,dotted](13,0)--(15,0);
\draw(20,0) node[right]{\scriptsize$\ell{-}2$};
\draw(24,0) node[right]{\scriptsize$\ell{-}1$};
 \draw(-3.6,0) node{\scriptsize$\zu$};
\draw(2,1.2) node[above]{\scriptsize$\za^{1,0}$};
\draw(6,1.2) node[above]{\scriptsize$\za^{2,1}$};
\draw(10,1.2) node[above]{\scriptsize$\za^{3,2}$};
\draw(18,1.2) node[above]{\scriptsize$\za^{\ell-3,\ell-2}$};
\draw(22,1.2) node[above]{\scriptsize$\za^{\ell-1,\ell-2}$};
\draw(2,-1.2) node[below]{\scriptsize$\za^{0,1}$};
\draw(6,-1.2) node[below]{\scriptsize$\za^{1,2}$};
\draw(10,-1.2) node[below]{\scriptsize$\za^{2,3}$};
\draw(18,-1.2) node[below]{\scriptsize$\za^{\ell-3,\ell-2}$};
\draw(22,-1.2) node[below]{\scriptsize$\za^{\ell-2,\ell-1}$};
\end{braid}
\end{align*}
and define the {\em Brauer tree algebra $\Zig_\ell$} to be the path algebra of this quiver generated by length $0$ paths $\{\ze^j\mid j\in J\}$, and length $1$ paths $\zu$ and $\{\za^{k,k+1},\za^{k+1,k}\mid 0\leq k\leq \ell-2\}$, modulo the following relations:
\begin{enumerate}
\item all paths of length three or greater are zero;
\item all paths of length two that are not cycles are zero;
\item the length-two cycles based at the vertex $i\in\{1,\dots,\ell-2\}$ are equal;
\item $\zu^2=\za^{0,1}\za^{1,0}$ if $l\gs2$. 
\end{enumerate}
For example, if $\ell=1$ then the algebra $\Zig_\ell$ is the truncated polynomial algebra $\F[\zu]/(\zu^3)$. The algebra $\Zig_\ell$ is considered as a superalgebra by declaring that $\zu$ is odd and all other generators are even.
\end{Example}

\begin{Example}\label{ExWr}
For $d\in\N$, we consider the {\em wreath superproduct}
$
\W_d:=\Zig_\ell\wr \sym_d.
$
As a vector superspace this is just $\Zig_\ell^{\otimes d}\otimes \F \sym_d$, with $\F \sym_d$ concentrated in degree $\0$. The multiplication is determined by the following requirements:
\begin{enumerate}
\item[{\rm (1)}] $\bz\mapsto \bz\otimes 1$ defines a superalgebra embedding $\Zig_\ell^{\otimes d}\to\Zig_\ell^{\otimes d}\otimes \F \sym_d$; we identify $\Zig_\ell^{\otimes d}$ with a subsuperalgebra of $\W_d$ via this embedding.
\item[{\rm (2)}] $x\mapsto 1\otimes x$ defines a superalgebra embedding $\F \sym_d\to\Zig_\ell^{\otimes d}\otimes \F \sym_d$; we identify $\F \sym_d$ with a subsuperalgebra of $\W_d$ via this embedding.
\item[{\rm (3)}] $w(\zz_1\otimes\dots\otimes \zz_d)={}^{w}(\zz_1\otimes\dots\otimes \zz_d)w$ for all $w\in \sym_d$ and all $\zz_1,\dots,\zz_d\in\Zig_\ell$. 
\end{enumerate}
\end{Example}

\subsection{Supermodules}\label{SSSM}
Let $A$ be a superalgebra. An $A$-\emph{supermodule} means an $A$-module $V$ with a chosen $\ztz$-grading $V=V_\0\oplus V_\1$ such that $av\in V_{|a|+|v|}$ for all (homogeneous) $a\in A$ and $v\in V$. 

If $V$ and $W$ are $A$-supermodules then a homomorphism 
$f:V\to W$ of superspaces is a homomorphism of $A$-supermodules if $f(av)=(-1)^{|f||a|}af(v)$ for $a\in A$ and $v\in V$. 

For an $A$-supermodule $V$, the superspace $\Pi V$  is considered as an $A$-supermodule via the new action  $a\cdot v=(-1)^{|a|}av$ for $a\in A$ and $v\in\Pi V=V$. The map $\si_V:V\to\Pi V$ is then an odd isomorphism of supermodules; in particular, $\si_V(av)=(-1)^{|a|}a\cdot\si_V(v)$ for $a\in A$ and $v\in V$. 


We write `$\simeq$' for an even isomorphism of $A$-supermodules, and `$\cong$' for an arbitrary isomorphism of $A$-supermodules, cf.\ \cite[Chapter 12]{KBook}. 

A \emph{subsupermodule} of an $A$-supermodule $V$ is an $A$-submodule $W\subseteq V$ such that $W=(W\cap V_\0)\oplus (W\cap V_\1)$. An  $A$-supermodule is \emph{irreducible} if it has exactly  two subsupermodules. 

Irreducible supermodules come in two different types: an irreducible supermodule is of \emph{type $\Mtype$} if it is irreducible as a module, and of \emph{type $\Qtype$} otherwise (in which case as a module it is the direct sum of two non-isomorphic irreducible modules, see for example \cite[Section~12.2]{KBook}). Every irreducible module arises in one of these ways from an irreducible supermodule (see for example \cite[Corollary 12.2.10]{KBook}), so understanding the irreducible supermodules (together with their types) 
is essentially equivalent to understanding irreducible modules.

If $L$ is a finite-dimensional irreducible $A$-supermodule, then $L$ is  of type $\Mtype$ if and only if $L\not\simeq \Pi L$, see \cite[Lemma 12.2.8]{KBook}. 

If $A_1,\dots,A_d$ are superalgebras and $V_1,\dots,V_d$ are supermodules over $A_1,\dots,A_d$ respectively, we have a supermodule $V_1\boxtimes\dots\boxtimes V_d$ over $A_1\otimes\dots\otimes A_d$, which is $V_1\otimes\dots\otimes V_d$ as a superspace with the action defined by
\[
(a_1\otimes\dots\otimes a_d)(v_1\otimes\dots\otimes v_d)=(-1)^{\sum_{1\leq r<s\leq d}|a_s||v_r|}a_1v_1\otimes\dots\otimes a_dv_d.
\]
If $f_i:V_i\to W_i$ is an $A$-supermodule homomorphism for $i=1,\dots,d$, then 
\[
f_1\otimes \dots\otimes f_d: V_1\boxtimes\dots\boxtimes V_d\to W_1\boxtimes\dots\boxtimes W_d
\]
is a homomorphism of supermodules over $A_1\otimes\dots\otimes A_d$. In particular, \begin{equation}\label{ESiSi}
\si_{V_1}\otimes\dots\otimes \si_{V_d}:V_1\boxtimes\dots\boxtimes V_d\to (\Pi V_1)\boxtimes\dots\boxtimes (\Pi V_d)
\end{equation}
is an isomorphism of ($A_1\otimes\dots\otimes A_d$)-modules (of parity $d\pmod{2}$).

If $V$ is a finite-dimensional $A$-supermodule, a {\em composition series} of $V$ is a sequence of subsupermodules\, $0=V_0\subset V_1\subset\dots\subset V_n=V$ such that $V_k/V_{k-1}$ is an irreducible supermodule for all $k=1,\dots,n$. If $L_1,\dots,L_n$ are irreducible $A$-modules (not necessarily distinct) such that $V_k/V_{k-1}\simeq L_k$ for $k=1,\dots,n$, we say that $V$ {\em has composition factors} $L_1,\dots,L_k$. These are well defined up to even isomorphisms and permutation. So if $L$ is an irreducible $A$-supermodule we have a well-defined  {\em composition multiplicity}
\[
[V:L]:=\{k\mid L_k\cong L\}.
\]
(If $L$ is of type $\Mtype$ so that $L\not\simeq \Pi L$, we could consider the more delicate graded composition multiplicity $[V:L]_\pi=m+n\pi$ where $m=\{k\mid L_k\simeq L\}$ and $n=\{k\mid L_k\simeq \Pi L\}$, so that $[V:L]=m+n$, but this will not be needed.) 

If $A$ is a finite-dimensional superalgebra and $L$ is an irreducible $A$-supermodule, we denote the projective cover of $L$ by $P_L$. This is a direct summand of the regular supermodule with head $L$, see \cite[Proposition 12.2.12]{KBook}. The composition factors of the principal indecomposable supermodules $P_L$ will be of central importance in this paper. In particular, the \emph{super-Cartan invariants of $A$} are defined as the multiplicities 
\[
c_{L,L'}:=[P_L:L']
\]
for all irreducible $A$-supermodules  $L,L'$. The {\em super-Cartan matrix of $A$} is then the matrix $(c_{L,L'})$ of all super-Cartan invariants of $A$. 

For the superalgebra $\Zig_\ell$ of \cref{ExAA}, up to even isomorphisms and parity shifts $\Pi$, a complete set of irreducible $\Zig_\ell$-supermodules is 
\begin{equation}\label{EAIrr}
\{L_j\mid j\in J\}
\end{equation}
where $L_j$ is spanned by an even vector $v_j$ such that $\ze_jv_j=v_j$ and all other standard generators of $\Zig_\ell$ act on $v_j$ as zero. Now, note that for each $j$, the supermodule $L_j$ is of type $\Mtype$, and $P_j:=\Zig_\ell\ze_j$ is a projective cover of $L_j$. We can easily write down a basis for each $P_j$:
\begin{alignat*}2
P_0&=\<\ze_0,\zu\ze_0,\za^{1,0}\ze_0,\zu^2\ze_0\>\qquad&&\text{(omitting $\za^{1,0}\ze_0$ if $\ell=1$)},
\\
P_j&=\<\ze_j,\za^{j-1,j}\ze_j,\za^{j+1,j}\ze_j,\za^{j,j+1}\za^{j+1,j}\ze_j\>\qquad&&\text{for }1\ls j\ls\ell-2,
\\
P_{\ell-1}&=\<\ze_{\ell-1},\za^{\ell-2,\ell-1}\ze_{\ell-1},\za^{\ell-1,\ell-2}\za^{\ell-2,\ell-1}\ze_{\ell-1}\>\qquad&&\text{if $\ell\gs2$}.
\end{alignat*}
From this, we can immediately read off the composition factors of each $P_j$:

\begin{lemma}\label{pjcomp}
$P_0$ has composition factors $L_0,\Pi L_0,L_1,L_0$ (omitting $L_1$ if $\ell=1$), $P_i$ has composition factors $L_i,L_{i-1},L_{i+1},L_i$ for $1\leq i\leq \ell-2$, and $P_{\ell-1}$ has composition factors $L_{\ell-1},L_{\ell-2},L_{\ell-1}$ if $\ell\gs2$.
\end{lemma}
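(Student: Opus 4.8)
The statement follows by inspecting the displayed bases of the $P_j$, so the plan is simply to organise this inspection via the radical series of each $P_j$.

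First I would record that, $\Zig_\ell$ being the path algebra of the quiver modulo the stated relations, $\rad\Zig_\ell$ is spanned by the positive-length paths, so that $\rad^k\Zig_\ell$ is spanned by the paths of length $\geq k$ and $\rad^3\Zig_\ell=0$ by relation~(1). Hence for each $j\in J$ the radical series of $P_j=\Zig_\ell\ze_j$ is $\rad^k P_j=(\rad^k\Zig_\ell)\ze_j$, which is exactly the span of those vectors in the displayed basis of $P_j$ that are represented by paths of length $\geq k$. In particular $\rad^3 P_j=0$, and the successive quotients $P_j/\rad P_j$, $\rad P_j/\rad^2 P_j$, $\rad^2 P_j$ are spanned, respectively, by the length-$0$, length-$1$ and length-$2$ basis vectors listed for $P_j$.

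Next I would identify each of these one-dimensional subquotients as a (possibly parity-shifted) irreducible. For a basis vector $b=\zz\,\ze_j$ with $\zz$ a path of length $\leq 2$: multiplying $b$ on the left by any positive-length standard generator of $\Zig_\ell$ lands in the span of strictly longer paths, hence is zero in the subquotient containing $b$; the idempotent $\ze_i$ fixing $b$ is the one attached to the target vertex of $\zz$; and $b$ is homogeneous, odd precisely when $\zz$ is a power of $\zu$ (the only odd generator). Comparing with the description \eqref{EAIrr} of the $L_i$, the subquotient $\F b$ is therefore $\simeq L_i$ if $b$ is even and $\simeq\Pi L_i$ if $b$ is odd. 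Feeding in the displayed basis vectors: for $P_0$ one reads off target vertices $0$ (from $\ze_0$, even), then $0$ and $1$ (from $\zu\ze_0$, odd, and $\za^{1,0}\ze_0$, even), then $0$ (from $\zu^2\ze_0$, even), giving composition factors $L_0,\Pi L_0,L_1,L_0$ — with the middle $L_1$ and its basis vector absent when $\ell=1$; for $1\leq i\leq\ell-2$ one reads off $L_i$, then $L_{i-1}$ and $L_{i+1}$, then $L_i$, all from even vectors; and for $P_{\ell-1}$ (when $\ell\geq2$) one gets $L_{\ell-1}$, then $L_{\ell-2}$, then $L_{\ell-1}$.

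There is no real obstacle here; the only places to be careful are the path-composition convention (so that the label $i$ of each $L_i$ is the correct target vertex of the path) and the single parity shift caused by the odd generator $\zu$, which is what turns one copy of $L_0$ into $\Pi L_0$ in $P_0$.
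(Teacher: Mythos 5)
Your proposal is correct and matches the paper's approach: the paper simply states that the composition factors can be read off immediately from the displayed bases of the $P_j$, and your argument via the radical filtration, target vertices, and the parity of $\zu$ is exactly the inspection being invoked.
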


\subsection{Representations of wreath superproducts $\W_d$}
\label{Wd}
We suppose from now until the end of \cref{wreathsec} that $d<p$ or $p=0$. 
Our aim is to develop the representation theory of the wreath superproduct algebra  $\W_d$ from Example~\ref{ExWr}, and ultimately to compute the super-Cartan matrix for $\W_d$. We take inspiration from the paper \cite{CT1} by Chuang and Tan; many of our results are straightforward adaptations of their results to supermodules.

Given $\bj=j_1\dots j_d\in J^d$, we define the idempotent
\[
\ze_\bj:=\ze_{j_1}\otimes\dots\otimes \ze_{j_d}\in\Zig_\ell^{\otimes d}\subseteq \W_d.
\]
Then we have the orthogonal idempotent decomposition in $\W_d$:
\begin{equation}\label{EId}
1=\sum_{\bj\in J^d}e_\bj.
\end{equation}

For a composition $\de=(\de_1,\dots,\de_k)$ of $d$, we have a Young subgroup $\sym_\de=\sym_{\de_1}\times\dots\times \sym_{\de_k}\leq  \sym_d$ and the corresponding parabolic subalgebra 
\[
\W_\de:=\Zig_\ell^{\otimes d}\otimes \F \sym_{\de}\subseteq \W_d.
\]
Note that $\W_\de\cong \W_{\de_1}\otimes\dots\otimes \W_{\de_k}$ (tensor product of superalgebras). If $V_1,\dots,V_k$ are supermodules for $\W_{\de_1},\dots,\W_{\de_k}$ respectively, then we have the supermodule $V_1\boxtimes\dots\boxtimes V_k$ over  $\W_{\de_1}\otimes\dots\otimes \W_{\de_k}=\W_\de$, so we can form the $\W_d$-supermodule 
\[
V_1\circ\dots\circ V_k:=\Ind^{\W_d}_{\W_\de}(V_1\boxtimes\dots\boxtimes V_k).
\]
Note that the operation `$\circ$' is commutative in the sense that $V\circ V'\simeq V'\circ V$.

Recall that if $\la\in\Par(d)$, we write $\spe\la$ for the corresponding Specht module for $\F \sym_d$. Our assumptions on $p$ mean that $\spe\la$ is irreducible, and we can fix a primitive idempotent $f^\la\in \F \sym_d$ such that $\F \sym_d f^\la\cong \spe\la$.

Now given $\bla\in\Par^J(d)$, define $\de:=(\de_0,\dots,\de_{\ell-1})=(|\la^{(0)}|,\dots,|\la^{(\ell-1)}|)$. Then we have a primitive idempotent
\[
f^\bla:=f^{\la^{(0)}}\otimes \dots \otimes f^{\la^{(\ell-1)}}\in \F \sym_{\de_0}\otimes\dots\otimes \F \sym_{\de_{\ell-1}}=\F \sym_\de,
\]
from which we define an idempotent 
\begin{equation}\label{EELa}
e(\bla):=e_0^{\otimes \de_0}\otimes\dots\otimes e_{\ell-1}^{\otimes \de_{\ell-1}}\otimes f^\bla\in \W_d.
\end{equation}

Let $V$ be a finite-dimensional $\Zig_\ell$-supermodule and $\la\in \Par(d)$. Denote:
\[
V(\la):=V^{\otimes d}\otimes \spe\la
\]
considered as a $\W_d$-supermodule via 
\begin{align*}
\bz(v_1\otimes\dots\otimes v_d\otimes y)&=\bz(v_1\otimes\dots\otimes v_d)\otimes y,
\\
w(v_1\otimes\dots\otimes v_d\otimes y)&={}^w(v_1\otimes\dots\otimes v_d)\otimes wy
\end{align*}
for all $\bz\in \Zig_\ell^{\otimes x}$, $w\in \sym_d$, $v_1,\dots,v_d\in V$, $y\in \spe\la$. Important special cases of this construction where $V=L_j$ and $V=P_j$ are the simple $\Zig_\ell$-module and its projective cover constructed in \cref{SSSM}, yield the $\W_d$-supermodules $L_j(\la)$ and $P_j(\la)$. For a general $V$ we have the following two results.

\begin{lemma} \label{LSign} 
Let $V$ be a finite-dimensional $\Zig_\ell$-supermodule and $\la\in \Par(d)$. Then $(\Pi V)(\la)\cong V(\la')$.
\end{lemma}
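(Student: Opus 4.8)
The plan is to exhibit an explicit isomorphism $(\Pi V)(\la)\to V(\la')$ built from the odd isomorphism $\si_V\colon V\to\Pi V$ of \S\ref{SSSASM} together with the Specht module relation $\spe\la\otimes\sgn\cong\spe{\la'}$ from \eqref{ESpechtSign}. Concretely, pick an isomorphism $\theta\colon\spe{\la'}\to\spe\la\otimes\sgn$, so that via $\theta$ we may identify $V(\la')=V^{\otimes d}\otimes\spe{\la'}$ with $V^{\otimes d}\otimes\spe\la\otimes\sgn$. Then define
\[
\Phi\colon (\Pi V)(\la)=(\Pi V)^{\otimes d}\otimes\spe\la\longrightarrow V^{\otimes d}\otimes\spe\la\otimes\sgn=V(\la'),
\quad
v_1\otimes\dots\otimes v_d\otimes y\longmapsto \si_V^{-1}(v_1)\otimes\dots\otimes\si_V^{-1}(v_d)\otimes y\otimes 1,
\]
extended linearly. (One can equally use $(\si_V^{\otimes d})^{-1}$; the only real content is the sign bookkeeping.) This is visibly a bijective linear map, and one checks it is homogeneous.

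The key step is to verify $\W_d$-equivariance, and here the two pieces of the module structure must be checked separately. For the action of $\Zig_\ell^{\otimes d}$: since $\si_V$ is an \emph{odd} isomorphism of $\Zig_\ell$-supermodules, $\si_V(\bz v)=(-1)^{|\bz|}\bz\cdot\si_V(v)$ in $\Pi V$, i.e.\ the action of $\bz\in\Zig_\ell$ on $\Pi V$ carries the extra sign $(-1)^{|\bz|}$; applying this in each of the $d$ tensor factors and comparing with the sign rule for the action of $\Zig_\ell^{\otimes d}$ on a tensor product (\S\ref{SSSM}) shows the $\Zig_\ell^{\otimes d}$-actions on the two sides match up under $\Phi$, with the accumulated signs cancelling. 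For the action of $w\in\sym_d$: the crucial input is \eqref{ESignW}, $\si_V^{\otimes d}({}^w(v_1\otimes\dots\otimes v_d))=\sgn(w)\,{}^w(\si_V^{\otimes d}(v_1\otimes\dots\otimes v_d))$, which says precisely that transporting the $\sym_d$-action from $(\Pi V)^{\otimes d}$ to $V^{\otimes d}$ via $\si_V^{\otimes d}$ introduces exactly a factor of $\sgn(w)$; this factor is then absorbed by the $\sgn$ tensorand on the right-hand side, since $w$ acts on $\sgn$ as $\sgn(w)$. Combining, $\Phi$ intertwines the full $\W_d$-actions, so it is an isomorphism of $\W_d$-supermodules, giving $(\Pi V)(\la)\cong V(\la')$.

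The main obstacle is purely the sign bookkeeping: one must be careful that the sign $(-1)^{\sum_{r<s}|f_s||v_r|}$ appearing in the definition of $f_1\otimes\dots\otimes f_d$ on tensor products (\S\ref{SSSASM}), the sign $(-1)^{[w;v_1,\dots,v_d]}$ in the $\sym_d$-action on $V^{\otimes d}$, and the sign $(-1)^{|\bz|}$ coming from $\si_V$ being odd all combine correctly across the $d$ factors. This is why formula \eqref{ESignW} is stated separately in the preliminaries — it packages exactly the sign needed for the $\sym_d$-part — and once it is invoked the verification is routine. No claim about parity of $\Phi$ matters for the statement, since $\cong$ allows arbitrary (not necessarily even) isomorphisms; but one can note $\Phi$ has parity $d\pmod 2$, matching the parity of $\si_V^{\otimes d}$ recorded around \eqref{ESiSi}.
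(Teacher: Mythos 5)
Your proposal is correct and is essentially the paper's own proof: the paper writes down the mutually inverse map $\si_V^{\otimes d}\otimes\id\colon V(\la')\to(\Pi V)(\la)$ after identifying $\spe{\la'}$ with $\spe\la\otimes\sgn$ via \eqref{ESpechtSign}, checks $\Zig_\ell^{\otimes d}$-equivariance by citing \eqref{ESiSi}, and handles the $\sym_d$-action exactly as you do by invoking \eqref{ESignW} so that the $\sgn(w)$ is absorbed by the sign tensorand. No substantive difference.
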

\begin{proof}
By (\ref{ESpechtSign}), we have $\spe{\la'}\cong \spe\la\otimes \sgn$,\, so we can identify $\spe{\la'}$ with $\spe\la$ as vector spaces but with the new action $w\cdot d=\sgn(w)wd$. Now, we consider the linear isomorphism 
\[
\phi:=\si_V^{\otimes d}\otimes\id:V(\la')=V^{\otimes d}\otimes \spe{\la'}\iso
(\Pi V)^{\otimes d}\otimes \spe{\la}=(\Pi V)(\la).
\]
As pointed out in (\ref{ESiSi}), $\phi$ is an isomorphism of $\Zig_\ell^{\otimes d}$-supermodules. On the other hand, for $v_1,\dots,v_d\in V$, $y\in \spe{\la'}$ and $w\in \sym_d$, we have 
\begin{align*}
\phi\big(w(v_1\otimes\dots\otimes v_d\otimes y)\big)&=\phi\big({}^w(v_1\otimes \dots\otimes v_d)\otimes (\sgn(w)wy)\big)
\\
&=\sgn(w)\si_V^{\otimes d}\big({}^w(v_1\otimes \dots\otimes v_d)\big)\otimes wy
\\
&={}^w\big(\si_V^{\otimes d}(v_1\otimes \dots\otimes v_d)\big)\otimes wy
\\
&=w\phi(v_1\otimes \dots\otimes v_d\otimes y)
\end{align*}
where we use (\ref{ESignW}) for the penultimate equality. So $\phi$ is also an isomorphism of $\F \sym_d$-modules. It follows that $\phi$ is an isomorphism of $\W_d$-supermodules. 
\end{proof}

\begin{lemma} \label{LResInd} 
Let $V$ be a finite-dimensional $\Zig_\ell$-supermodule, and $\de=(\de_1,\dots,\de_k)$ be a composition of~$d$. 
\begin{enumerate}
\item For $\la\in \Par(d)$, we have 
\[
\Res^{\W_d}_{\W_\de}V(\la)\cong\bigoplus_{\mu^1\in\Par(\de_1),\dots,\mu^k\in\Par(\de_k)}\big(V(\mu^1)\boxtimes\dots\boxtimes V(\mu^k)\big)^{\oplus \lr\la{\mu^1,\dots,\mu^k}}.
\]
\item For $\mu^1\in\Par(\de_1),\dots,\mu^k\in\Par(\de_k)$, we have 
\[
V(\mu^1)\circ\dots\circ V(\mu^k)\cong \bigoplus_{\la\in \Par(d)}V(\la)^{\oplus \lr\la{\mu^1,\dots,\mu^k}}.
\]
\end{enumerate} 
\end{lemma}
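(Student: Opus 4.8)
\textbf{Proof plan for \cref{LResInd}.}
The plan is to reduce both statements to the analogous statements for Specht modules of the symmetric group, using the explicit description of $V(\la)$ as $V^{\otimes d}\otimes\spe\la$ together with the definition of the induction product $\circ$ and the Mackey decomposition. First I would observe that the two parts are equivalent by Frobenius reciprocity: since we are working over a field where $\F\sym_d$ is semisimple and all the supermodules involved are completely reducible, $\Hom$ between an induced supermodule $V(\mu^1)\circ\dots\circ V(\mu^k)$ and $V(\la)$ is computed by adjunction against $\Res^{\W_d}_{\W_\de}V(\la)$, so (i) and (ii) carry exactly the same numerical content; it therefore suffices to prove (i), and by transitivity of restriction it suffices to treat the case $k=2$, i.e.\ $\de=(\de_1,\de_2)$ with $\de_1+\de_2=d$.

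For the case $k=2$, the key step is to disentangle the $\Zig_\ell^{\otimes d}$-action from the $\F\sym_d$-action. As a superspace, $\Res^{\W_d}_{\W_\de}V(\la)=V^{\otimes d}\otimes\Res^{\sym_d}_{\sym_\de}\spe\la$, where $\W_\de$ acts with $\Zig_\ell^{\otimes\de_1}\otimes\Zig_\ell^{\otimes\de_2}$ acting on the first tensor factor in the obvious way and $\sym_\de=\sym_{\de_1}\times\sym_{\de_2}$ acting diagonally (permuting tensor legs within each block, with signs, and acting on the Specht factor). Because $\sym_\de$ preserves the partition of the $d$ tensor legs into the first $\de_1$ and the last $\de_2$, the superspace $V^{\otimes d}=V^{\otimes\de_1}\otimes V^{\otimes\de_2}$ is already an external tensor product of a $\W_{\de_1}$-piece and a $\W_{\de_2}$-piece, \emph{except} that the two pieces share the single Specht factor $\Res^{\sym_d}_{\sym_\de}\spe\la$. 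Decomposing that factor over $\sym_{\de_1}\times\sym_{\de_2}$ as $\bigoplus_{\mu^1,\mu^2}(\spe{\mu^1}\boxtimes\spe{\mu^2})^{\oplus\lr\la{\mu^1,\mu^2}}$ — which is exactly the Littlewood--Richardson rule, as recalled in \S\ref{SSLR} — and matching the resulting external tensor summands $V^{\otimes\de_1}\otimes\spe{\mu^1}$ and $V^{\otimes\de_2}\otimes\spe{\mu^2}$ with $V(\mu^1)$ and $V(\mu^2)$ gives the claimed isomorphism. One then induces the $k=2$ identity up through $\W_\de\subseteq\W_{(\de_1,\dots,\de_k)}\subseteq\W_d$, using the associativity of $\circ$ and $\lr\la{\mu^1,\dots,\mu^k}=\sum_\nu\lr\la{\nu,\mu^k}\lr\nu{\mu^1,\dots,\mu^{k-1}}$, to obtain the general statement.

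The main obstacle I anticipate is bookkeeping of the Koszul signs: the $\sym_d$-action on $V^{\otimes d}$ twists by the factor $(-1)^{[w;v_1,\dots,v_d]}$ from \S\ref{SSSASM}, and one must check that when $w$ lies in the Young subgroup $\sym_\de$ these signs split as a product of the corresponding signs for $\sym_{\de_1}$ and $\sym_{\de_2}$ (i.e.\ no cross terms between the two blocks of legs arise, since $w$ never moves a leg from one block to the other), so that the external tensor product structure of supermodules is genuinely respected on the nose rather than only up to a sign twist. Once this is verified, the identification of $V^{\otimes\de_1}\otimes\spe{\mu^1}$ with $V(\mu^1)$ is literally the definition, and the rest is the semisimple representation theory of $\F\sym_d$ together with the transitivity of induction and restriction. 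An alternative, slightly slicker route for (ii) would be to write $V(\mu^1)\circ V(\mu^2)=\Ind^{\W_d}_{\W_\de}\big((V^{\otimes\de_1}\otimes\spe{\mu^1})\boxtimes(V^{\otimes\de_2}\otimes\spe{\mu^2})\big)\cong V^{\otimes d}\otimes\Ind^{\sym_d}_{\sym_\de}(\spe{\mu^1}\boxtimes\spe{\mu^2})$ directly — the induction only ``sees'' the $\F\sym_d$-part since $\Zig_\ell^{\otimes d}$ is already defined over all of $\W_d$ — and then apply the Littlewood--Richardson rule to $\Ind^{\sym_d}_{\sym_\de}(\spe{\mu^1}\boxtimes\spe{\mu^2})=\bigoplus_\la\spe\la^{\oplus\lr\la{\mu^1,\mu^2}}$; I would likely present this version for (ii) and deduce (i) by adjunction, or vice versa, whichever reads more cleanly.
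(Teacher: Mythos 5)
Your argument is essentially the paper's: the authors simply cite Chuang--Tan's Lemma 3.3 and note that the same proof works ``paying attention to the superalgebra signs'', and your fleshed-out version --- induction/restriction touching only the Specht factor, the Littlewood--Richardson rule, and the check that the Koszul signs $[w;v_1,\dots,v_d]$ split over the blocks of a Young subgroup --- is exactly that argument. One small caution: lean on your direct proofs of each part rather than on the opening adjunction remark, since Hom-space data alone does not determine the direct-sum decomposition of the (generally non-semisimple, non-indecomposable) supermodules $V(\la)$; but as you do supply direct arguments for both (i) and (ii), this is harmless.
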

\begin{proof}
The proof is identical to that of \cite[Lemma 3.3]{CT1} paying attention to the superalgebra signs.  
\end{proof}

Given $\bla=(\la^{(0)},\dots,\la^{(\ell-1)})\in\Par^J(d)$, we now define the $\W_d$-supermodules
\begin{align*}
L(\bla)&:=L_0(\la^{(0)})\circ\dots\circ L_{\ell-1}(\la^{(\ell-1)}),
\\
P(\bla)&:=P_0(\la^{(0)})\circ\dots\circ P_{\ell-1}(\la^{(\ell-1)}).
\end{align*}

\begin{propn}
The set $\{L(\bla)\mid \bla\in\Par^J(d)\}$ is a complete irredundant set of irreducible $\W_d$-supermodules up to even isomorphism and parity shift. Moreover, $P(\bla)$ is a projective cover of $L(\bla)$ for each $\bla\in\Par^J(d)$.
\end{propn}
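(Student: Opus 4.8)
The plan is to run the super-analogue of the classical classification of simple and projective modules for a wreath product $A\wr\sym_d$ (as carried out by Chuang--Tan in \cite{CT1}), the key simplification being that every $L_j$ is one-dimensional and purely even, so $L_j\not\simeq\Pi L_j$ and everything built from the $L_j$ is of type $\Mtype$; the type-$\Qtype$ complications that afflict general wreath superproducts therefore never intervene. Throughout put $\de=(|\la^{(0)}|,\dots,|\la^{(\ell-1)}|)$, let $\bj_0=0^{\de_0}1^{\de_1}\cdots(\ell-1)^{\de_{\ell-1}}\in J^d$ (so that its stabiliser in $\sym_d$ is $\sym_\de$), and write $N_\bla:=L_0(\la^{(0)})\boxtimes\dots\boxtimes L_{\ell-1}(\la^{(\ell-1)})$, a supermodule over $\W_\de=\W_{\de_0}\otimes\dots\otimes\W_{\de_{\ell-1}}$, so that $L(\bla)=\Ind^{\W_d}_{\W_\de}N_\bla$ and $P(\bla)=\Ind^{\W_d}_{\W_\de}(P_0(\la^{(0)})\boxtimes\dots\boxtimes P_{\ell-1}(\la^{(\ell-1)}))$.

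First I would treat the one-component case: for $j\in J$, $m\ls d$ and $\mu\in\Par(m)$, the $\W_m$-supermodule $L_j(\mu)=L_j^{\otimes m}\otimes\spe\mu$ is irreducible of type $\Mtype$. Indeed $L_j^{\otimes m}$ is one-dimensional, $\Zig_\ell^{\otimes m}$ acts on it through the character $\ze_{j^m}\mapsto1$ (all other standard generators acting by $0$), and $\sym_m$ fixes $v_j^{\otimes m}$ because the sign $[w;v_j,\dots,v_j]$ vanishes, so $L_j(\mu)\cong\spe\mu$ as $\F\sym_m$-modules with $\Zig_\ell^{\otimes m}$ acting through a character. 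Since $\spe\mu$ is irreducible ($m<p$, or $p=0$) and $L_j(\mu)$ is concentrated in a single parity, its only subsupermodules are $0$ and itself, and it is of type $\Mtype$ by \cite[Lemma 12.2.8]{KBook}. Hence $N_\bla$ is an irreducible $\W_\de$-supermodule of type $\Mtype$ (an outer tensor product of type-$\Mtype$ irreducibles over an algebraically closed field), and $\Res^{\W_\de}_{\Zig_\ell^{\otimes d}}N_\bla$ is isotypic of the one-dimensional character $\chi_{\bj_0}$ corresponding to $L_{\bj_0}$.

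Next I would prove that $L(\bla)$ is irreducible. Writing $L(\bla)=\bigoplus_{g}g\otimes N_\bla$ over a set of left coset representatives $g$ for $\sym_\de$ in $\sym_d$ (with $1$ among them), a short computation using the relation $w\bz=({}^w\bz)w$ shows that $\Zig_\ell^{\otimes d}$ acts on $g\otimes N_\bla$ through $\chi_{g\bj_0}$; in particular $\ze_{\bj_0}L(\bla)=1\otimes N_\bla$, a $\W_\de$-submodule. If $W\subseteq L(\bla)$ is a nonzero subsupermodule then $\ze_\bk W\neq0$ for some $\bk$ in the $\sym_d$-orbit of $\bj_0$; translating by a suitable element of $\sym_d$ we may assume $\ze_{\bj_0}W\neq0$, whence $W\cap(1\otimes N_\bla)$ is a nonzero $\W_\de$-submodule of $N_\bla$, hence all of it, and so $W\supseteq\W_d\cdot(1\otimes N_\bla)=L(\bla)$. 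Thus $L(\bla)$ is irreducible, and being purely even it is of type $\Mtype$. For distinctness I would use the super-Mackey formula for parabolic induction and restriction in $\W_d$ (the double cosets of $\sym_\de$ in $\sym_d$, and intersections of parabolics, being again parabolic, exactly as for ordinary wreath products, but carrying the Koszul signs from \S\ref{SSSASM}): comparing $\Zig_\ell^{\otimes d}$-characters kills every double-coset term except $w=1$, leaving $\Hom_{\W_d}(L(\bla),L(\bmu))\cong\Hom_{\W_\de}(N_\bla,N_\bmu)=\delta_{\bla\bmu}\F$ by the previous paragraph; since all $L(\bla)$ are purely even, this also separates them from the $\Pi L(\bmu)$.

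It remains to handle projectivity and completeness. One checks that $P_j(\mu)=P_j^{\otimes m}\otimes\spe\mu\cong\W_m\,\ze_{j^m}f^\mu$ as $\W_m$-supermodules (the idempotents $\ze_{j^m}$ and $f^\mu$ commute in $\W_m$ since $\sym_m$ fixes the constant word $j^m$), so $P_j(\mu)$ is projective over $\W_m$; hence $P_0(\la^{(0)})\boxtimes\dots$ is projective over $\W_\de$, and $P(\bla)$ is projective over $\W_d$ (induction from a subalgebra over which $\W_d$ is free), and the head projections $P_j(\mu)\onto L_j(\mu)$ induce a surjection $P(\bla)\onto L(\bla)$. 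For completeness, I would identify $\rad\W_d=(\rad\Zig_\ell^{\otimes d})\,\W_d$ — a $\sym_d$-stable nilpotent ideal — with quotient $(\Zig_\ell^{\otimes d}/\rad)\rtimes\F\sym_d\cong\F^{J^d}\rtimes\F\sym_d$, which is semisimple because $d<p$ (or $p=0$) — over each $\sym_d$-orbit on $J^d$ the corresponding summand is Morita equivalent to the group algebra of a Young subgroup — and counting its (purely even) irreducibles yields exactly $|\Par^J(d)|$ classes up to $\simeq$ and $\Pi$. As the irreducibles $L(\bla)$ already furnish that many classes, they exhaust them; and then, since maps out of $P_0(\la^{(0)})\boxtimes\dots$ factor through its head $N_\bla$, one gets $\Hom_{\W_d}(P(\bla),L(\bmu))\cong\Hom_{\W_\de}(N_\bla,\Res L(\bmu))\cong\Hom_{\W_d}(L(\bla),L(\bmu))=\delta_{\bla\bmu}\F$, so $\head(P(\bla))=L(\bla)$ and $P(\bla)$ is the projective cover of $L(\bla)$. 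The main obstacle is the bookkeeping: pinning down the super-Mackey formula with the right signs, verifying that the module isomorphisms above respect the $\ztz$-grading, and proving $\rad\W_d=(\rad\Zig_\ell^{\otimes d})\W_d$ — the last requiring care because $\rad\Zig_\ell^{\otimes d}$ genuinely links different $\ze_\bj$ through the arrows of the quiver, so one cannot first split $\W_d$ according to multiplicity vector.
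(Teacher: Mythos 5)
Your argument is correct and is essentially the paper's proof written out in full: the paper cites \cite[Theorem A.5]{MacD} for the classification of the irreducibles (your radical-quotient count and orbit argument is exactly what underlies that result), and for the projective covers it uses precisely your identification $P(\bla)\simeq\W_d e(\bla)$ together with Frobenius reciprocity and the computation $\dim\Hom_{\W_d}(P(\bla),L(\bmu))=\de_{\bla\bmu}$. The only difference is one of exposition, not of substance.
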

\begin{proof}
The first statement is easy to see and is well-known, see e.g. \cite[Theorem A.5]{MacD}. For the second statement, note using Frobenius reciprocity that  
$
P(\bla)\simeq \W_de(\bla)
$
for the idempotent $e(\bla)\in \W_d$ defined in (\ref{EELa}). 
We now also deduce that $\dim\Hom_{\W_d}(P(\bla),L(\bmu))=\de_{\bla,\bmu}$ completing the proof. 
\end{proof}

Now we  determine the composition factors of the modules $L(\bla^1)\circ\cdots\circ L(\bla^k)$.

\begin{lemma} \label{LLCircL} 
Let $\bmu\in\Par^J(d)$ and let $(\de_1,\dots,\de_k)$ be a composition of $d$. 
For $r=1,\dots,k$, suppose $\bla^r=(\la^{(r,0)},\dots,\la^{(r,\ell-1)})\in \Par^J(\de_r)$. Then 
\[
[L(\bla^1)\circ\cdots\circ L(\bla^k):L(\bmu)]=
\prod_{j\in J}\lr{\mu^{(j)}}{\la^{(1,j)},\dots,\la^{(k,j)}}.
\]
\end{lemma}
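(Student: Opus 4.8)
The plan is to rewrite $L(\bla^1)\circ\cdots\circ L(\bla^k)$ as an explicit direct sum of the irreducibles $L(\bnu)$ and then simply read off the multiplicity of $L(\bmu)$. First I would unwind the definition $L(\bla^r)=L_0(\la^{(r,0)})\circ\cdots\circ L_{\ell-1}(\la^{(r,\ell-1)})$ so that, by transitivity of induction, $L(\bla^1)\circ\cdots\circ L(\bla^k)$ is the $\circ$-product of all the supermodules $L_j(\la^{(r,j)})$, one for each pair $(r,j)\in\{1,\dots,k\}\times J$. Using that $\circ$ is associative and commutative up to even isomorphism (as noted above), I would regroup the factors according to their second index to obtain
\[
L(\bla^1)\circ\cdots\circ L(\bla^k)\ \cong\ \bigl(L_0(\la^{(1,0)})\circ\cdots\circ L_0(\la^{(k,0)})\bigr)\circ\cdots\circ\bigl(L_{\ell-1}(\la^{(1,\ell-1)})\circ\cdots\circ L_{\ell-1}(\la^{(k,\ell-1)})\bigr).
\]

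Next I would set $m_j:=|\la^{(1,j)}|+\cdots+|\la^{(k,j)}|$ for each $j\in J$, so that $\sum_{j\in J}m_j=d$, and apply \cref{LResInd}(2) with $V=L_j$ to each bracketed factor:
\[
L_j(\la^{(1,j)})\circ\cdots\circ L_j(\la^{(k,j)})\ \cong\ \bigoplus_{\nu\in\Par(m_j)}L_j(\nu)^{\oplus\lr{\nu}{\la^{(1,j)},\dots,\la^{(k,j)}}}.
\]
Since induction is additive, $\circ$ distributes over these direct sums, so substituting them into the previous display and expanding gives
\[
L(\bla^1)\circ\cdots\circ L(\bla^k)\ \cong\ \bigoplus_{\bnu}L(\bnu)^{\oplus\prod_{j\in J}\lr{\nu^{(j)}}{\la^{(1,j)},\dots,\la^{(k,j)}}},
\]
the sum being over all $\bnu=(\nu^{(0)},\dots,\nu^{(\ell-1)})$ with $\nu^{(j)}\in\Par(m_j)$ for each $j$ (so $\bnu\in\Par^J(d)$), where I use $L_0(\nu^{(0)})\circ\cdots\circ L_{\ell-1}(\nu^{(\ell-1)})=L(\bnu)$ by definition.

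Finally, by the classification of the irreducible $\W_d$-supermodules established in the Proposition preceding this lemma, each $L(\bnu)$ is irreducible and the $L(\bnu)$ are pairwise non-isomorphic; so from the last display the multiplicity $[L(\bla^1)\circ\cdots\circ L(\bla^k):L(\bmu)]$ equals $\prod_{j\in J}\lr{\mu^{(j)}}{\la^{(1,j)},\dots,\la^{(k,j)}}$ when $|\mu^{(j)}|=m_j$ for every $j$, and equals $0$ otherwise; in the latter case the right-hand side of the asserted formula also vanishes, since for the index $j$ with $|\mu^{(j)}|\neq\sum_r|\la^{(r,j)}|$ the Littlewood--Richardson coefficient $\lr{\mu^{(j)}}{\la^{(1,j)},\dots,\la^{(k,j)}}$ is zero, so the two sides agree in all cases. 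The argument is essentially bookkeeping; I expect the only steps deserving care to be the two rearrangements — checking that the iterated $\circ$-product may be regrouped (associativity and commutativity of $\circ$, plus transitivity of induction) and that the isomorphisms coming from \cref{LResInd}(2) may be pushed through the remaining $\circ$ operations (additivity of induction) — and I do not anticipate a genuine obstacle.
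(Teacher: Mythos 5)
Your proposal is correct and is essentially the paper's own argument: the paper proves this lemma in one line by citing \cref{LResInd}(ii) together with commutativity of $\circ$, which is exactly the regrouping-by-colour-then-decompose strategy you spell out. Your more detailed handling of the degenerate case $|\mu^{(j)}|\neq\sum_r|\la^{(r,j)}|$ and the appeal to the irredundancy of the $L(\bnu)$ are both sound.
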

\begin{proof}
This follows from \cref{LResInd}(ii) using commutativity of `$\circ$'. 
\end{proof}
 
\subsection{The super-Cartan matrix for $\W_d$}
In this subsection we continue to assume that $d<p$ or $p=0$.  Having explicitly constructed the irreducible and projective indecomposable supermodules for $\W_d$, we now proceed to  compute its super-Cartan invariants. 

\begin{lemma} \label{LTwoFactors} 
Let $V,W$ be finite-dimensional $\Zig_\ell$-supermodules and $U$ be a subsupermodule of $V$ such that $V/U\simeq W$. 
Then for $\la\in\Par(d)$, the $\W_d$-supermodule $V(\la)$ has a filtration with subfactors $U(\mu)\circ W(\nu)$ each appearing exactly $\lr\la{\mu,\nu}$ times. 
\end{lemma}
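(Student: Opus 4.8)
The plan is to reduce the statement to the two structural facts already established: the restriction/induction formula \cref{LResInd} (with its Littlewood--Richardson bookkeeping) and the behaviour of the construction $V\mapsto V(\la)$ with respect to short exact sequences of $\Zig_\ell$-supermodules. First I would observe that the short exact sequence $0\to U\to V\to W\to 0$ of $\Zig_\ell$-supermodules induces, upon applying $(-)^{\otimes d}$, a filtration of $V^{\otimes d}$ as a $(\Zig_\ell^{\otimes d}\rtimes\sym_d)$-object whose subquotients are the various permuted tensor products $U^{\otimes a}\otimes W^{\otimes(d-a)}$ with multiplicities $\binom{d}{a}$; tensoring with $\spe\la$ and remembering the $\sym_d$-action, the $a$-th graded piece (the sum over all $a$-subsets) is exactly $\Ind^{\W_d}_{\W_{(a,d-a)}}\big(U(\text{--})\boxtimes W(\text{--})\big)$-shaped, more precisely it is $\Ind^{\W_d}_{\W_{(a,d-a)}}\Res^{\W_d}_{\W_{(a,d-a)}}$ applied in the appropriate way. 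This is the step where one must be careful with the superalgebra signs, exactly as in \cref{LResInd}; but since $\F\sym_d$ sits in even degree and the sign twists are controlled by \eqref{ESignW}, the argument of \cite[Lemma 3.3]{CT1} transfers verbatim.

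Next I would make the filtration explicit. The $\sym_d$-submodule of $V^{\otimes d}$ spanned by tensors with at least $d-a$ factors from $U$ (summing over placements) is a $\sym_d$-stable subspace; taking successive quotients gives a filtration of $V^{\otimes d}\otimes\spe\la$ whose $a$-th subquotient, as a $\W_d$-supermodule, is naturally isomorphic to $\Ind^{\W_d}_{\W_{(a,d-a)}}\big(U^{\otimes a}\otimes W^{\otimes(d-a)}\otimes\Res_{\sym_a\times\sym_{d-a}}\spe\la\big)$. Decomposing $\Res_{\sym_a\times\sym_{d-a}}\spe\la=\bigoplus_{\mu\in\Par(a),\,\nu\in\Par(d-a)}(\spe\mu\boxtimes\spe\nu)^{\oplus\lr\la{\mu,\nu}}$ and recognising $U^{\otimes a}\otimes\spe\mu=U(\mu)$, $W^{\otimes(d-a)}\otimes\spe\nu=W(\nu)$, the $a$-th subquotient becomes $\bigoplus_{\mu\in\Par(a),\,\nu\in\Par(d-a)}\big(U(\mu)\circ W(\nu)\big)^{\oplus\lr\la{\mu,\nu}}$. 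Running over all $a$ from $0$ to $d$ and refining each subquotient into its isotypic summands yields a filtration of $V(\la)$ in which $U(\mu)\circ W(\nu)$ appears exactly $\lr\la{\mu,\nu}$ times (note $\lr\la{\mu,\nu}=0$ unless $|\mu|+|\nu|=d$, so the constraint $|\mu|=a$ is automatic), which is precisely the claim.

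The main obstacle I anticipate is the careful verification that the associated graded of the tensor-power filtration really is an induced supermodule \emph{with the correct sign conventions}, rather than merely abstractly isomorphic to a direct sum of the right pieces ignoring parity. Concretely, one needs the natural map $\Ind^{\W_d}_{\W_{(a,d-a)}}\big(U^{\otimes a}\otimes W^{\otimes(d-a)}\otimes\Res\spe\la\big)\to \operatorname{gr}_a V(\la)$ to be an even $\W_d$-supermodule isomorphism; this amounts to checking that the sign $(-1)^{[w;v_1,\dots,v_d]}$ governing the $\sym_d$-action on $V^{\otimes d}$ is compatible with the coset-representative signs implicit in the induction, which is exactly the computation underlying \cref{LResInd}. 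Once that is in hand the rest is the purely combinatorial manipulation with Littlewood--Richardson coefficients above, and one may simply cite \cref{LResInd} (equivalently \cite[Lemma 3.3]{CT1}) to dispatch the sign bookkeeping.
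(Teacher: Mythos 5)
Your proof is correct and follows essentially the same route as the paper: filter $V^{\otimes d}\otimes\spe\la$ by the number of tensor factors lying in $U$, identify the associated graded pieces as inductions from $\W_{(a,d-a)}$, and extract the multiplicities $\lr\la{\mu,\nu}$ from $\Res_{\sym_a\times\sym_{d-a}}\spe\la$, with the sign bookkeeping delegated to the computation behind \cref{LResInd} (the paper likewise just cites the proof of \cite[Lemma 4.2]{CT1}). The only blemish is an indexing slip — the layer spanned by tensors with at least $a$ (not $d-a$) factors in $U$ is the one whose subquotient is built from $U^{\otimes a}\otimes W^{\otimes(d-a)}$ — which does not affect the argument.
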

\begin{proof}
For $0\leq c\leq d$, we denote by $V_c$ the subsupermodule of $V(\la)=
V^{\otimes d}\otimes \spe\la$ spanned by the vectors of the form $v_1\otimes\dots\otimes v_d\otimes x$ such that at least $c$ of the vectors $v_1,\dots,v_d\in V$ belong to $U$ and $x\in \spe\la$. 
This gives a filtration $V(\la)=V_0\supseteq V_{1}\supseteq\dots\supseteq V_d\supseteq V_{d+1}=0$ with 
\[
\frac{V_c}{V_{c+1}}\cong \bigoplus_{\substack{\mu\in\Par(c)\\\nu\in\Par(d-c)}}(U(\mu)\circ W(\nu))^{\oplus\lr\la{\mu,\nu}},
\]
cf.\ the proof of \cite[Lemma 4.2]{CT1}.
\end{proof}

\begin{lemma} \label{LOneColor} 
Let $\la\in\Par(d)$, and $V$ be an $\Zig_\ell$-supermodule with composition series 
\[
V=V_{0}\supset V_{1}\supset\dots\supset V_{m+1}=0.
\]
Set $K:=\{0,\dots,m\}$. For $\bnu=(\nu^{(0)},\dots,\nu^{(m)})\in\Par^K(d)$ and $j\in J$, define multisets
\begin{align*}
M(j,\bnu)&:=\{\nu^{(k)}\mid k\in K,\ V_{k}/V_{k+1}\simeq L_j\}   
\\
M'(j,\bnu)&:=\{(\nu^{(k)})'\mid k\in K,\ V_{k}/V_{k+1}\simeq \Pi L_j\}.
\end{align*}
Then for any $\bmu=(\mu^{(0)},\dots,\mu^{(\ell-1)})\in\Par^J(d)$, we have  
\[
[V(\la):L(\bmu)]=\sum_{\bnu\in\Par^K(d)}
\lr\la{\nu^{(0)},\dots,\nu^{(m)}}\,\prod_{j\in J}\lr{\mu^{(j)}}{M(j,\bnu)\sqcup M'(j,\bnu)}.
\]
\end{lemma}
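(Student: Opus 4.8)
The plan is to induct on $m$ (so that the given composition series of $V$ has $m+1$ nonzero terms). When $m=0$ the module $V$ is irreducible, so $V\simeq L_j$ or $V\simeq\Pi L_j$ for some $j\in J$. If $V=L_j$, then $V(\la)=L_j(\la)$ is precisely the irreducible $\W_d$-supermodule $L(\bmu^\circ)$ whose $j$th component is $\la$ and whose other components are $\vn$ (the $\W_0$-factors in the defining $\circ$-product being trivial), so $[V(\la):L(\bmu)]=\de_{\bmu,\bmu^\circ}$; on the right-hand side $\bnu=(\nu^{(0)})$ runs over $\Par(d)$, the only nonempty multiset is $M(j,\bnu)=\{\nu^{(0)}\}$, and the coefficients $\lr\la{\nu^{(0)}}$, $\lr{\mu^{(j)}}{\{\nu^{(0)}\}}$ and $\lr{\mu^{(i)}}{\vn}$ all collapse to Kronecker deltas, giving the same value. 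If $V=\Pi L_j$, then $V(\la)\cong L_j(\la')$ by \cref{LSign}, and the same computation with $\la'$ in slot $j$ applies; it matches the right-hand side because the unique composition factor now feeds $(\nu^{(0)})'$ into $M'(j,\bnu)$ rather than $\nu^{(0)}$ into $M(j,\bnu)$.

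For the inductive step put $W:=V_0/V_1$, which is irreducible, say $W\simeq L_{j_0}$ or $W\simeq\Pi L_{j_0}$ with $j_0\in J$; then $V_1\supset V_2\supset\dots\supset V_{m+1}=0$ is a composition series of $V_1$ of length $m$, with index set (say) $\{1,\dots,m\}$, and I write $M_1(j,\brho)$ and $M_1'(j,\brho)$, for $\brho=(\rho^{(1)},\dots,\rho^{(m)})\in\Par^{\{1,\dots,m\}}$, for the associated multisets. Applying \cref{LTwoFactors} with $U=V_1$ gives
\[
[V(\la):L(\bmu)]=\sum_{\sigma,\tau\in\Par}\lr\la{\sigma,\tau}\,[V_1(\sigma)\circ W(\tau):L(\bmu)].
\]
Since $-\circ W(\tau)=\Ind^{\W_d}_{\W_{(|\sigma|,|\tau|)}}\!\big((-)\boxtimes W(\tau)\big)$ is exact ($\F\sym_d$ being free over $\F\sym_{(|\sigma|,|\tau|)}$), a composition series of $V_1(\sigma)$ induces a filtration of $V_1(\sigma)\circ W(\tau)$, so
\[
[V_1(\sigma)\circ W(\tau):L(\bmu)]=\sum_{\bp\in\Par^J(|\sigma|)}[V_1(\sigma):L(\bp)]\,[L(\bp)\circ W(\tau):L(\bmu)].
\]
But $W(\tau)$ is itself an irreducible of the form $L(-)$: it equals $L_{j_0}(\tau)$, or $L_{j_0}(\tau')$ if $W\simeq\Pi L_{j_0}$ (again by \cref{LSign}). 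So \cref{LLCircL} gives $[L(\bp)\circ W(\tau):L(\bmu)]=\lr{\mu^{(j_0)}}{p^{(j_0)},\widehat\tau}\prod_{j\ne j_0}\de_{\mu^{(j)},p^{(j)}}$, with $\widehat\tau=\tau$ or $\tau'$ accordingly. The Kronecker deltas force $p^{(j)}=\mu^{(j)}$ for $j\ne j_0$ and leave $\kappa:=p^{(j_0)}$ free, and substituting the inductive formula for $[V_1(\sigma):L(\bp)]$ yields
\begin{multline*}
[V(\la):L(\bmu)]=\sum_{\sigma,\tau,\kappa,\brho}\lr\la{\sigma,\tau}\,\lr\sigma{\rho^{(1)},\dots,\rho^{(m)}}\,\lr{\mu^{(j_0)}}{\kappa,\widehat\tau}\,\lr\kappa{M_1(j_0,\brho)\sqcup M_1'(j_0,\brho)}\\
\times\prod_{j\ne j_0}\lr{\mu^{(j)}}{M_1(j,\brho)\sqcup M_1'(j,\brho)}.
\end{multline*}

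Finally I collapse the $\sigma$- and $\kappa$-sums using the associativity of Littlewood--Richardson coefficients (the standard fact, immediate from the definitions in \S\ref{SSLR}, that $\lr\la{\al^1,\dots,\al^r}=\sum_\beta\lr\beta{\al^1,\dots,\al^{r-1}}\lr\la{\beta,\al^r}$, together with order-independence of the coefficients): namely
\[
\sum_\sigma\lr\la{\sigma,\tau}\lr\sigma{\rho^{(1)},\dots,\rho^{(m)}}=\lr\la{\tau,\rho^{(1)},\dots,\rho^{(m)}}
\]
eliminates the $\sigma$-sum, and
\[
\sum_\kappa\lr{\mu^{(j_0)}}{\kappa,\widehat\tau}\lr\kappa{M_1(j_0,\brho)\sqcup M_1'(j_0,\brho)}=\lr{\mu^{(j_0)}}{\{\widehat\tau\}\sqcup M_1(j_0,\brho)\sqcup M_1'(j_0,\brho)}
\]
eliminates the $\kappa$-sum. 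Setting $\bnu:=(\tau,\rho^{(1)},\dots,\rho^{(m)})\in\Par^K(d)$ (so $\nu^{(0)}=\tau$ and $\nu^{(k)}=\rho^{(k)}$ for $k\ge1$) and using that the extra factor $W\simeq L_{j_0}$ contributes exactly $\nu^{(0)}$ to $M(j_0,\bnu)$ --- respectively $W\simeq\Pi L_{j_0}$ contributes $(\nu^{(0)})'$ to $M'(j_0,\bnu)$ --- while contributing nothing to the multisets for the indices $j\ne j_0$, one reads off exactly the asserted formula. The argument is entirely formal; I expect the only real work to be this bookkeeping --- singling out the distinguished index $j_0$ and handling the parity shift $\Pi$ uniformly through \cref{LSign} --- together with recognising the two Littlewood--Richardson-associativity cancellations that make the multiple sum collapse.
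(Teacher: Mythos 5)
Your proof is correct and is precisely the argument the paper intends: the paper's own proof is the one-line remark that the lemma ``follows by induction from \cref{LTwoFactors}, using \cref{LResInd,LSign}'', and your write-up is a faithful, fully detailed execution of exactly that induction (with \cref{LLCircL}, itself a consequence of \cref{LResInd}, handling the $\circ$-product with the top composition factor). The base case, the exactness of parabolic induction, the parity bookkeeping via \cref{LSign}, and the two Littlewood--Richardson associativity collapses are all handled correctly.
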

\begin{proof}
This follows by induction from \cref{LTwoFactors}, using \cref{LResInd,LSign}. 
\end{proof}

The following result is a `superversion' of \cite[Proposition 4.4]{CT1}.

\begin{propn} \label{PCTMain} 
Let $V_0,\dots, V_{\ell-1}$ be finite-dimensional $\Zig_\ell$-supermodules and $\bla=(\la^{(0)},\dots,\la^{(\ell-1)})\in\Par^J(d)$. Set
\[
V(\bla):=V_0(\la^{(0)})\circ\dots\circ V_{\ell-1}(\la^{(\ell-1)}).
\]
Let $V_j=V_{j,0}\supset V_{j,1}\supset\dots\supset V_{j,m_j+1}=0$ be a composition series of $V_j$ for each $j\in J$. Set 
\[
K :=\{(j,s)\in J\times\N_0\mid s\leq m_j\}.
\]
For $i\in J$ and $\bnu\in\Par^K(d)$, let
\begin{align*}
M(i,\bnu)&:=\{\nu^{(j,s)}\mid (j,s)\in K\ \text{and}\ V_{j,s}/V_{j,s+1}\simeq L_i\}   
\\
M'(i,\bnu)&:=\{(\nu^{(j,s)})'\mid  (j,s)\in K\ \text{and}\ V_{j,s}/V_{j,s+1}\simeq \Pi L_i\}.
\end{align*}
Then for any $\bmu=(\mu^{(0)},\dots,\mu^{(\ell-1)})\in\Par^J(d)$, we have 
\[
[V(\bla):L(\bmu)]=\sum_{\bnu\in\Par^K(d)}
\prod_{j\in J}\lr{\la^{(j)}}{\nu^{(j,0)},\dots,\nu^{(j,m_j)}}\,\lr{\mu^{(j)}}{M(j,\bnu)\sqcup M'(j,\bnu)}.
\]
\end{propn}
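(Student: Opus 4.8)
The plan is to reduce \cref{PCTMain} to the single-colour case \cref{LOneColor} by an induction on $\ell$ (or equivalently on the number of colours $j\in J$), using the associativity and commutativity of the induction product `$\circ$' together with the branching rule of \cref{LResInd}. First I would observe that $V(\bla)$ is by construction an iterated product $V_0(\la^{(0)})\circ\dots\circ V_{\ell-1}(\la^{(\ell-1)})$, so writing $W:=V_0(\la^{(0)})\circ\dots\circ V_{\ell-2}(\la^{(\ell-2)})$ we have $V(\bla)=W\circ V_{\ell-1}(\la^{(\ell-1)})$, and by the inductive hypothesis I know the composition multiplicities $[W:L(\bmu)]$ for all $\bmu$ (indexed only by the colours $0,\dots,\ell-2$, extended by $\vn$ in the last slot). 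The single new ingredient needed is therefore: given a supermodule $X$ with known composition factors and known multiplicities $[X:L(\bmu)]$, compute $[X\circ V_{\ell-1}(\nu):L(\bmu')]$.

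For this step I would first record that each $L(\bmu)$ is itself $L_0(\mu^{(0)})\circ\dots\circ L_{\ell-1}(\mu^{(\ell-1)})$, so that computing $[X\circ V_{\ell-1}(\nu):L(\bmu')]$ is governed by how products of the $L(\bmu)$ with $V_{\ell-1}(\nu)$ decompose. Since exactness of the induction functor (it is induction from a parabolic, hence exact) means composition factors of a product are computed colour-by-colour, I can take a composition series of $X$ refining the given one, and for each factor $L(\bmu)$ occurring, apply \cref{LLCircL} and \cref{LResInd}(ii) to expand $L(\bmu)\circ V_{\ell-1}(\nu)$ — but $V_{\ell-1}(\nu)$ is not irreducible, so I instead take the composition series $V_{\ell-1}=V_{\ell-1,0}\supset\dots\supset V_{\ell-1,m_{\ell-1}+1}=0$ and use \cref{LTwoFactors} (iterated) to filter $X\circ V_{\ell-1}(\nu)$ with subfactors built from the $L_i$ and $\Pi L_i$ appearing in $V_{\ell-1}$; the Littlewood--Richardson coefficients $\lr\nu{-}$ bookkeep how $\nu$ splits across these subfactors. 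Combining with \cref{LSign} to handle the $\Pi L_i$ factors (which contribute conjugated partitions, hence the $M'$ multisets) and with \cref{LLCircL} to merge the new colour-$i$ contributions into the products $\lr{\mu^{(i)}}{\dots}$ already present, the bookkeeping produces exactly one extra factor $\lr{\la^{(\ell-1)}}{\nu^{(\ell-1,0)},\dots,\nu^{(\ell-1,m_{\ell-1})}}$ in the $j=\ell-1$ slot of the product, and augments each $M(i,\bnu)$, $M'(i,\bnu)$ by the colour-$i$ contributions coming from the composition series of $V_{\ell-1}$. This is precisely the asserted formula, with the index set $K$ growing from $\{(j,s):j\le\ell-2\}$ to $\{(j,s):j\le\ell-1\}$.

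I would then assemble these pieces: the base case $\ell=1$ (one colour) is exactly \cref{LOneColor}, and the inductive step is the paragraph above. The main obstacle I anticipate is the careful handling of the superalgebra signs and the parity-shift combinatorics — specifically, making sure that when $V_{\ell-1}$ has a composition factor $\simeq\Pi L_i$ the corresponding contribution enters as $(\nu^{(\ell-1,s)})'$ rather than $\nu^{(\ell-1,s)}$, and that these conjugations interact correctly with the Littlewood--Richardson expansions (this is where \cref{LSign} and \eqref{ESpechtSign}, i.e.\ $\spe\la\otimes\sgn\cong\spe{\la'}$, must be invoked cleanly). A secondary but routine point is verifying that the multiplicities genuinely multiply across colours — i.e.\ that no cross-terms between distinct colours appear — which follows from the fact that $\Zig_\ell^{\otimes d}$ sits diagonally and the colour-$i$ idempotents $\ze_i^{\otimes}$ are central to the relevant computation; the cleanest way to see this is to invoke $[L(\bla^1)\circ\cdots\circ L(\bla^k):L(\bmu)]=\prod_{j}\lr{\mu^{(j)}}{\la^{(1,j)},\dots,\la^{(k,j)}}$ from \cref{LLCircL} at the level of composition factors and then lift to the filtration. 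Modulo these sign checks, everything reduces to iterated application of \cref{LResInd,LSign,LTwoFactors,LLCircL}, exactly as in the non-super setting of Chuang--Tan \cite[Proposition 4.4]{CT1}.
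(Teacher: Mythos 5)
Your proposal is correct and is essentially the paper's argument: the paper likewise reduces to \cref{LOneColor} applied to each factor $V_j(\la^{(j)})$, then combines the factors using bi-additivity of `$\circ$' on Grothendieck groups, \cref{LLCircL}, and the standard iterated Littlewood--Richardson identity to merge the per-colour multisets $M(i,\bnu,j)$, $M'(i,\bnu,j)$ into $M(i,\bnu)\sqcup M'(i,\bnu)$. The only difference is organizational — you peel off one colour at a time by induction, whereas the paper expands all $\ell$ factors at once — and this changes nothing of substance.
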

\begin{proof}
For $j\in J$, we set 
\begin{align*}
M(i,\bnu,j)&:=\{\nu^{(j,s)}\mid 0\leq s\leq m_j\ \text{and}\ V_{j,s}/V_{j,s+1}\simeq L_i\}   
\\
M'(i,\bnu,j)&:=\{(\nu^{(j,s)})'\mid  0\leq s\leq m_j\ \text{and}\ V_{j,s}/V_{j,s+1}\simeq \Pi L_i\},
\end{align*}
so that $M(i,\bnu)=\bigsqcup_{j\in J}M(i,\bnu,j)$ and $M'(i,\bnu)=\bigsqcup_{j\in J}M'(i,\bnu,j)$. 

By \cref{LOneColor} , for each $j\in J$, setting $\de_j:=|\la^{(j)}|$, we have in the Grothendieck group
\[
[V(\la^{(j)})]=\sum_{\bmu^j\in\Par^J(\de_j)}q_{\bmu^j} [L(\bmu^j)],
\]
where
\[
q_{\bmu^j}=\sum_{\nu^{(j,0)},\dots,\nu^{(j,m_j)}}
\lr{\la^{(j)}}{\nu^{(j,0)},\dots,\nu^{(j,m_j)}}\,\prod_{i\in J}\lr{\mu^{(j,i)}}{M(i,\bnu,j)\sqcup M'(i,\bnu,j)}.
\]
Now, 
\begin{align*}
[V(\bla)]&=[V(\la^{(0)})\circ\dots\circ V_{\ell-1}(\la^{(\ell-1)})]
\\
&=
\sum_{\bmu^0\in\Par^J(\de_0),\dots,\bmu^{\ell-1}\in\Par^J(\de_{\ell-1})}q_{\bmu^0}\dots q_{\bmu^{\ell-1}}[L_{\bmu^0}\circ\cdots\circ L_{\bmu^{\ell-1}}].
\end{align*}
It remains to apply \cref{LLCircL} and use the following  identity involving Littlewood--Richardson coefficients: 
\[
\lr{\mu^{(i)}}{M(i,\bnu)\sqcup M'(i,\bnu)}=\lr{\mu^{(i)}}{\mu^{(0,i)},\dots,\mu^{(\ell-1,i)}}\prod_{j\in J}\lr{\mu^{(j,i)}}{M(i,\bnu,j)\sqcup M'(i,\bnu,j)},
\]
which in turn follows from the description of the Littlewood--Richardson coefficient in terms of induction for symmetric groups  using the transitivity of induction. 
\end{proof}

\begin{cory} \label{wrcartan}
Let $\bla,\bmu\in\Par^J(d)$. Then
\[
[P(\bla):L(\bmu)]=\sum 
\prod_{j\in J}\lr{\mu^{(j)}}{\al^{(j)},\be^{(j+1)},\ga^{(j-1)},\de^{(j)}}\,\lr{\la^{(j)}}{\al^{(j)},\be^{(j)},\ga^{(j)},\de^{(j)}},
\]
where the summation is over all partitions $\al^{(i)},\be^{(i)},\ga^{(i)},\de^{(i)}$ with $i\in J$, reading $\ga^{(-1)}=(\be^{(0)})'$ and $\be^{(\ell)}=\ga^{(\ell-1)}=\vn$. (If $\ell=1$ this formula is interpreted as 
$
c_{\la,\mu}=\sum_{\al,\be,\de}\lr\mu{\al,\be',\de}\lr\la{\al,\be,\de}
$.) 
\end{cory}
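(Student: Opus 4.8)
The plan is to apply \cref{PCTMain} with $V_j:=P_j$ for every $j\in J$, so that $V(\bla)=P(\bla)$, and then to recognise the resulting expression as the claimed formula after a change of names for the summation variables.

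First I would record, using \cref{pjcomp}, the composition series of each $P_j$: for $1\ls j\ls\ell-2$ the supermodule $P_j$ has subquotients (reading from the head) $L_j,\,L_{j-1},\,L_{j+1},\,L_j$; the supermodule $P_0$ has subquotients $L_0,\,\Pi L_0,\,L_1,\,L_0$; the supermodule $P_{\ell-1}$ has subquotients $L_{\ell-1},\,L_{\ell-2},\,L_{\ell-1}$; and when $\ell=1$ the algebra $\Zig_\ell=\F[\zu]/(\zu^3)$ gives $P_0$ the subquotients $L_0,\,\Pi L_0,\,L_0$. Thus in the notation of \cref{PCTMain} we have $m_j=3$ for $0\ls j\ls\ell-2$, $m_{\ell-1}=2$, and the only $\Pi$-twisted subquotient anywhere is the second one of $P_0$, namely $\Pi L_0$. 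Next I would rename the variable $\nu^{(j,s)}$ attached to the $s$-th subquotient of $P_j$ by setting $\al^{(j)}:=\nu^{(j,0)}$ (the top copy of $L_j$), $\be^{(j)}:=\nu^{(j,1)}$, $\ga^{(j)}:=\nu^{(j,2)}$ and $\de^{(j)}:=\nu^{(j,m_j)}$ (the bottom copy of $L_j$); here one reads $\ga^{(\ell-1)}=\vn$ because $P_{\ell-1}$ has no third subquotient (and similarly, for $\ell=1$, $\be^{(0)}=\nu^{(0,1)}$ and $\de^{(0)}=\nu^{(0,2)}$). With this dictionary the factor $\lr{\la^{(j)}}{\nu^{(j,0)},\dots,\nu^{(j,m_j)}}$ appearing in \cref{PCTMain} becomes exactly $\lr{\la^{(j)}}{\al^{(j)},\be^{(j)},\ga^{(j)},\de^{(j)}}$.

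Then I would trace which subquotients feed the multisets $M(j,\bnu)$ and $M'(j,\bnu)$ of \cref{PCTMain}. The subquotients isomorphic to $L_j$ are: the two copies of $L_j$ inside $P_j$ itself (contributing $\al^{(j)}$ and $\de^{(j)}$); the second subquotient $L_{(j+1)-1}=L_j$ of $P_{j+1}$, present precisely when $j\ls\ell-2$ (contributing $\be^{(j+1)}$); and the third subquotient $L_{(j-1)+1}=L_j$ of $P_{j-1}$, present precisely when $j\gs1$ (contributing $\ga^{(j-1)}$). The subquotients isomorphic to $\Pi L_j$ occur only for $j=0$, where $\Pi L_0$ inside $P_0$ contributes $(\nu^{(0,1)})'=(\be^{(0)})'$ to $M'(0,\bnu)$. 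Hence, under the conventions $\ga^{(-1)}:=(\be^{(0)})'$ and $\be^{(\ell)}:=\ga^{(\ell-1)}:=\vn$, one obtains $M(j,\bnu)\sqcup M'(j,\bnu)=\{\al^{(j)},\be^{(j+1)},\ga^{(j-1)},\de^{(j)}\}$ for every $j\in J$, so that $\lr{\mu^{(j)}}{M(j,\bnu)\sqcup M'(j,\bnu)}=\lr{\mu^{(j)}}{\al^{(j)},\be^{(j+1)},\ga^{(j-1)},\de^{(j)}}$. Substituting both computations into the formula of \cref{PCTMain}, and noting that summing over all tuples $\bnu$ is the same as summing over all partitions $\al^{(i)},\be^{(i)},\ga^{(i)},\de^{(i)}$ ($i\in J$) (the size constraints being enforced automatically by the Littlewood--Richardson coefficients), gives the asserted identity; the case $\ell=1$ is handled identically and yields the parenthetical formula.

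I expect the only genuinely delicate point to be the bookkeeping around the single $\Pi$-twisted subquotient: the copy of $\Pi L_0$ inside $P_0$ is exactly what produces the conjugate partition $(\be^{(0)})'$ (equivalently, the boundary value $\ga^{(-1)}$) in the final answer, through the definition of $M'$ in \cref{PCTMain}, which itself relies on \cref{LSign}. The rest is a routine matching of indices, the main care being to verify that the subquotients that are absent at the two ends of the chain $0,1,\dots,\ell-1$ correspond correctly to setting $\be^{(\ell)}$ and $\ga^{(\ell-1)}$ (and, when $\ell=1$, also $\be^{(1)}$) equal to $\vn$.
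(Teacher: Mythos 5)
Your proposal is correct and follows exactly the paper's route: the paper's proof is the one-line instruction to apply \cref{PCTMain} with $V(\bla)=P(\bla)$ using \cref{pjcomp}, and your write-up just carries out the variable-matching in detail (including the correct treatment of the $\Pi L_0$ factor via $M'$ and the boundary conventions $\be^{(\ell)}=\ga^{(\ell-1)}=\vn$). The only nitpick is notational: for $j=\ell-1$ your definitions $\ga^{(j)}:=\nu^{(j,2)}$ and $\de^{(j)}:=\nu^{(j,m_j)}$ would collide, so one should say instead that $P_{\ell-1}$ simply has no $\ga$-slot (whence $\ga^{(\ell-1)}=\vn$) and its last subquotient gives $\de^{(\ell-1)}$.
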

\begin{proof}
Apply \cref{PCTMain} to the case $V(\bla)=P(\bla)$, using \cref{pjcomp}.
\end{proof} 

\section{Representations of double covers of symmetric groups}
\label{SRSNHat}

\subsection{The double cover of the symmetric group}

Let $\hsym_n$ denote a proper double cover of the symmetric group $\sym_n$. Then $\hsym_n$ contains a central element $z$ of order $2$, with $\hsym_n/\<z\>\cong\sym_n$.

The central involution $z$ yields a central idempotent $e_z=\frac12(1-z)$, and direct sum decomposition
\[
\F\hsym_n=e_z\F\hsym_n\oplus(1-e_z)\F\hsym_n.
\]
The algebra $(1-e_z)\F\hsym_n$ is isomorphic to $\F\sym_n$, so we concentrate here on representations of $e_z\F\hsym_n$, often called the \emph{spin representations} of $\sym_n$. We identify $e_z\F\hsym_n$ with the twisted group algebra $\T_n$, see \cite[Section 13.1]{KBook}, where a superalgebra structure is defined on $\T_n$ by letting $e_z\si$ being even or odd depending on whether the image of $\si$ in $\sym_n$ is even or odd.

The classification of irreducible spin supermodules in characteristic $0$ goes back to Schur (though Schur worked with modules rather than supermodules, and only constructed characters; the corresponding modules were constructed much later, by Nazarov \cite{Na}). For each strict partition $\la$ of $n$ there is an irreducible spin supermodule $\SpC\la$ for $\C\hsym_n$, and $\lset{\SpC\la}{\la\in\sPar(n)}$ is a complete irredundant set of irreducible spin supermodules. 
Moreover, recalling (\ref{EA}), the supermodule $\SpC\la$ is of type $\Mtype$ if $\a\la=0$, and of type $\Qtype$ if $\a\la=1$.

The classification of irreducible supermodules in characteristic $p$ is due to Brundan and the second author \cite{bk1}. 
(Another classification is obtained in \cite{BKHeCl}, and \cite[Theorem B]{ks} shows that the two classifications agree.) 
For each restricted $p$-strict partition $\mu$ of $n$, there is an irreducible $\T_n$-supermodule $\Dm\mu$, and $\lset{\Dm\mu}{\mu\in\rPar(n)}$ is a complete irredundant set of irreducible $\T_n$-supermodules. Moreover, $\Dm\mu$ is of type $\Mtype$ if $\mu$ has an even number of nodes of non-zero residue, and of type $\Qtype$ otherwise.

Since we shall be interested exclusively in representations in characteristic $p$, we use the notation $\Sp\la$ for a $p$-modular reduction of $\SpC\la$, viewed as a $\T_n$-supermodule. Note that $\Sp\la$ is not well-defined as a supermodule, but its composition factors are. The (super) \emph{decomposition number problem} then asks for the composition multiplicities $[\Sp\la:\Dm\mu]$ for $\la\in\sPar(n)$ and $\mu\in\rPar(n)$.

The block classification for spin modules is due to Humph\-reys~\cite{humph}. Here we prefer to deal with spin \emph{superblocks}, i.e.\ indecomposable direct summands of $\T_n$ as a superalgebra; in fact blocks and superblocks coincide except in the trivial case of simple blocks, so we ignore this distinction, and say `block' to mean `superblock', see \cite[\S5.2b]{kl} for more details on this. With this convention, each $\Sp\la$ belongs to a single block, and the $\T_n$-supermodules $\Sp\la$ and $\Dm\mu$ lie in the same block \iff $\la$ and $\mu$ have the same $p$-bar-core. This automatically means that they have the same $p$-bar-weight, so blocks are labelled by pairs $(\rho,d)$, where $\rho$ is a $p$-bar-core and $d\in\N_0$ with $|\rho|+pd=n$. We write $\blk\rho d$ for the block corresponding to the pair $(\rho,d)$.

An alternative statement of the block classification can be given using residues: in view of \cite[Theorem 5]{my}, two $p$-strict partitions of $n$ have the same $p$-bar-core \iff they have the same number of $i$-nodes for each $i\in I$. 
So we may alternatively label a block of $\T_n$ with a multiset consisting of $n$ elements of $I$, corresponding to the residues of the nodes of any partition labelling an irreducible module in the block. We write $\mathcal{B}_S$ for the block labelled by the multiset $S$. An important consequence of this is that all the irreducible supermodules in a block have the same type; so we say that a block has type $\Mtype$ or $\Qtype$ accordingly.

We also have a double cover $\halt_n\subseteq \hsym_n$ of the alternating group whose twisted group algebra $e_z\F\halt_n$ can be identified with the even component $(\T_n)_\0$. 
Moreover, by \cite[Proposition~3.16]{Kessar}, the even component $\blk \rho d_\0$ of $\blk \rho d$ is a single block of $\F \halt_n$, unless $d=0$ and $\blk \rho d$ is of type $\Mtype$. We refer the reader to \cite[\S5.2b]{kl} for more on this.

\subsection{Branching rules and weights}\label{branchsec}

The block classification using multisets of residues allows us to define restriction and induction functors $E_i$ and $F_i$. Suppose $M$ is a $\T_n$-supermodule lying in the block $\mathcal{B}_S$. Given $i\in I$, we define a $\T_{n+1}$-module $F_iM$ by inducing $M$ to $\T_{n+1}$ and then taking the block component lying in the block $\mathcal{B}_{S\sqcup\{i\}}$ (if there is such a block; otherwise we set $F_iM:=0$). The restriction functor $E_i$ is defined in a similar way by restricting to $\T_{n-1}$ and removing a copy of $i$ from $S$. The functors $E_i,F_i$ (which are called $\operatorname{res}_i$ and $\operatorname{ind}_i$ in \cite[(22.17),(22.18)]{KBook}) are defined for all $n$, so we can consider powers $E_i^r,F_i^r$ for $r\gs0$. 

Given $\la\in\sPar(n)$, let $M(\la,i)$ be the set of strict partitions of $n+1$ which can be obtained by adding an $i$-node to $\la$. Then, in view of \cite[Theorem 3]{my2}, in the Grothendieck group of $\T_{n+1}$ we have
\begin{equation}\label{EFi}
[F_i\Sp\la]=\sum_{\mu\in M(\la,i)}a_{\la\mu}[\Sp\mu],
\end{equation}
where $a_{\la\mu}$ equals $2$ if $\la$ is odd and $\mu$ is even, and $1$ otherwise. Frobenius reciprocity yields a corresponding result for $E_i\Sp\la$. (This description of $[E_i\Sp\la]$ and $[F_i\Sp\la]$ can also be deduced by considering the $p>n$ case of \cite[Theorems~22.3.4,~22.3.5]{KBook}.) 

We can now apply  the operators $E_i$ and $F_i$ to characters of supermodules (either ordinary characters or $p$-modular Brauer characters) as well as to supermodules. For example, if $\chi^\la$ denotes the character of an irreducible supermodule $S_\C(\la)$, we define $F_i\chi^\la=\sum_{\mu\in M(\la,i)}a_{\la\mu}\chi^\mu$. We define $E_i\chi^\la$ similarly.

The modular branching rules of Brundan--Kleshchev and Kleshchev--Shchigolev give information on the modules $E_i\Dm\mu$. We just need one result, and to state this we need some more combinatorics. Suppose $\mu$ is a $p$-strict partition and $i\in I$. Let $\mu^-$ denote the smallest $p$-strict partition such that $\mu^-\subseteq\mu$ and $\mu\setminus\mu^-$ consists of $i$-nodes. These nodes are called the \emph{removable} $i$-nodes of $\mu$. Similarly, let $\mu^+$ denote the largest $p$-strict partition such that $\mu^+\supseteq\mu$ and $\mu^+\setminus\mu$ consists of $i$-nodes. These nodes are called the \emph{addable} $i$-nodes of $\mu$.

The \emph{$i$-signature} of $\mu$ is the sequence of signs obtained by listing the addable and removable $i$-nodes of $\mu$ from left to right, writing a $+$ for each addable $i$-node and a $-$ for each removable $i$-node. The \emph{reduced $i$-signature} is the subsequence obtained by successively deleting adjacent pairs $+-$. The removable nodes corresponding to the $-$ signs in the reduced $i$-signature are called the \emph{normal} $i$-nodes of $\mu$.

The result we will need below is the following (see \cite[Theorem A(ii)]{ks}).

\begin{lemma}\label{normal}
Suppose $\mu\in\rPar(n)$ and $\nu\in\rPar(n-1)$, and that $\nu$ is obtained from $\mu$ by removing a normal $i$-node. Then $\Dm\nu$ is a composition factor of $E_i\Dm\mu$.
\end{lemma}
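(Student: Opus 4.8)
The statement is, in essence, the modular branching rule of Kleshchev and Shchigolev, and the plan is to deduce it from \cite[Theorem~A]{ks}. Recall that the number of normal $i$-nodes of $\mu$ equals $\varepsilon_i(\mu):=\max\{r\ge 0\mid E_i^r\Dm\mu\neq 0\}$, and that the most basic form of the branching rule identifies the socle and the head of $E_i\Dm\mu$ (up to parity shift) as $\Dm{\tilde e_i\mu}$, where $\tilde e_i\mu$ is obtained from $\mu$ by removing the normal $i$-node corresponding to the last sign in the reduced $i$-signature. In particular the lemma holds in the case $\nu=\tilde e_i\mu$. The content of \cite[Theorem~A(ii)]{ks} is precisely that it also holds for $\nu$ obtained by removing any of the other normal $i$-nodes, provided (as we are assuming) that the result $\nu$ is again a restricted $p$-strict partition; so at one level the proof is simply to quote that result after checking that its hypotheses match ours.

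If one prefers to argue from the basic socle/head statement, the route I would take is to dualise. Since the functors $E_i$ and $F_i$ are biadjoint up to a parity shift (cf.\ \cite[Chapter~22]{KBook}), and since simple supermodules are orthogonal with respect to the associated bilinear form on the Grothendieck group, $\Dm\nu$ is a composition factor of $E_i\Dm\mu$ if and only if $\Dm\mu$ is a composition factor of $F_i\Dm\nu$; so it suffices to prove the latter. Now $\mu$ is obtained from $\nu$ by adding back the $i$-node $A$ that was removed, and the combinatorial fact that removing a normal $i$-node from a $p$-strict partition converts it into a \emph{conormal} $i$-node of the result (the sign at that position switches from $-$ to $+$, and one checks this $+$ survives in the reduced $i$-signature of $\nu$) shows that $A$ is a conormal $i$-node of $\nu$. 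The $F_i$-analogue of the branching rule — again \cite[Theorem~A]{ks} — then gives that $\Dm\mu$ is a composition factor of $F_i\Dm\nu$, as required. The hypothesis $\nu\in\rPar(n-1)$ is genuinely needed here: removing a normal $i$-node from a restricted partition need not leave it restricted.

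Either way, the real work is in the step that upgrades ``the distinguished (co)normal node contributes a composition factor'' to ``every (co)normal node contributes a composition factor''. This is exactly the substance of the Kleshchev--Shchigolev branching rule and relies on their analysis of lowering operators for the queer supergroup $Q(n)$; I would not reprove it, but apply \cite[Theorem~A(ii)]{ks} directly, so the only thing left to verify for this lemma is that our combinatorial hypotheses (normality of the removed node, restrictedness of $\nu$) are the ones appearing in \loccit.
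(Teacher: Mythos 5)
Your primary route is exactly the paper's: the lemma is stated there with a direct citation of \cite[Theorem A(ii)]{ks}, and no further argument is given. Your verification of hypotheses (and the optional dualisation via conormal nodes) is fine but not needed beyond quoting that result.
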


Now given a $\T_n$-supermodule and a word $\bi=i_1\dots i_n\in I^n$, we say that $\bi$ is a \emph{weight} of $M$ if $E_{i_1}\dots E_{i_n}M\neq0$. The fact that the functors $E_i$ are exact, together with the results above, yields the following.

\begin{propn}\label{wtprop}
Suppose $i\in I$ and $i_1\dots i_{n-1}\in I^{n-1}$.
\begin{enumerate}
\item
Suppose $\la\in\sPar(n)$ and $\mu\in\sPar(n-1)$ is obtained from $\la$ by removing an $i$-node. If $i_1\dots i_{n-1}$ is a weight of $\Sp\mu$, then $i_1\dots i_{n-1}i$ is a weight of $\Sp\la$.
\item
Suppose $\mu\in\rPar(n)$ and $\nu\in\rPar(n-1)$ is obtained from $\mu$ by removing a normal $i$-node. If $i_1\dots i_{n-1}$ is a weight of $\Dm\nu$, then $i_1\dots i_{n-1}i$ is a weight of $\Dm\mu$.
\end{enumerate}
\end{propn}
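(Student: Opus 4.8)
The plan is to derive both parts as formal consequences of the exactness of the restriction functors $E_j$, combined with the branching facts recalled immediately above. The one homological-algebra input I would isolate first is the following trivial observation: if $F$ is an exact functor and $L$ is a composition factor of $M$, then applying $F$ to the two short exact sequences that exhibit $L$ as a subquotient of $M$ shows that $F(L)$ is a subquotient of $F(M)$; in particular, $F(L)\neq 0$ forces $F(M)\neq 0$. Since each $E_j$ is exact (these are the super-analogues $\operatorname{res}_j$ of \cite[(22.17)]{KBook}), any composite $E_{i_1}\cdots E_{i_{n-1}}$ is again exact, and this observation applies to it.

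For part (i): since $\mu\in\sPar(n-1)$ is obtained from $\la$ by removing an $i$-node, the analogue of \eqref{EFi} for $E_i$ — obtained from \eqref{EFi} by Frobenius reciprocity, as noted right after that equation — shows that $\Sp\mu$ occurs as a composition factor of $E_i\Sp\la$ (with multiplicity $1$ or $2$, in any case nonzero). Set $F:=E_{i_1}\cdots E_{i_{n-1}}$. By the observation above, $F(\Sp\mu)$ is a subquotient of $F(E_i\Sp\la)=E_{i_1}\cdots E_{i_{n-1}}E_i\Sp\la$. The hypothesis that $i_1\dots i_{n-1}$ is a weight of $\Sp\mu$ says precisely that $F(\Sp\mu)\neq 0$; hence $E_{i_1}\cdots E_{i_{n-1}}E_i\Sp\la\neq 0$, i.e.\ $i_1\dots i_{n-1}i$ is a weight of $\Sp\la$. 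For part (ii) the argument is identical, with \cref{normal} in place of \eqref{EFi}: since $\nu$ is obtained from $\mu$ by removing a normal $i$-node, \cref{normal} gives that $\Dm\nu$ is a composition factor of $E_i\Dm\mu$, and then exactness of $E_{i_1}\cdots E_{i_{n-1}}$ together with the hypothesis that $i_1\dots i_{n-1}$ is a weight of $\Dm\nu$ forces $E_{i_1}\cdots E_{i_{n-1}}E_i\Dm\mu\neq 0$.

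I do not anticipate a genuine obstacle: the proof is essentially formal. The only things requiring a little care are that everything is taking place in the category of $\T_n$-supermodules rather than of ordinary modules, and that one should use the branching results exactly as stated (positivity of the coefficients in \eqref{EFi} and in its restriction analogue, so that $\Sp\mu$ really is a composition factor of $E_i\Sp\la$, not merely a ``virtual'' one). With these conventions the subquotient argument goes through verbatim in both parts.
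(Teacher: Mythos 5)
Your proof is correct and is exactly the argument the paper intends: the paper derives this proposition from the exactness of the $E_i$ together with the $E_i$-analogue of \eqref{EFi} (via Frobenius reciprocity) and \cref{normal}, precisely as you do. The only cosmetic point is that $\Sp\mu$ need not be irreducible, so rather than calling it a ``composition factor'' of $E_i\Sp\la$ one should say that $[E_i\Sp\la]-[\Sp\mu]$ is a non-negative combination of irreducibles, which is all your subquotient argument actually uses.
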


For (much) more information on branching rules for $\T_n$, see \cite[Part II]{KBook} and\cite{ks}.

\subsection{Virtual projective characters}
Given $\la\in\stp\rho d$, we write $\ch\la$ for the character of the irreducible supermodule $\SpC\la$, and we denote by $\operatorname{Ch}^{\rho,d}$ the $\Q$-span of the set $\{\ch\la\mid  \la\in\stp\rho d\}$ of class functions on $\hat\Si_{|\rho|+dp}$. 

For each $\mu\in\rpstp\rho d$ we have an indecomposable projective supermodule $P(\mu)$ with simple head $D(\mu)$. Lifting the idempotents as in the classical theory we deduce that $P(\mu)$ lifts to characteristic zero, yielding the character  
$\prj\mu\in \operatorname{Ch}^{\rho,d}$. We denote by $\operatorname{PCh}^{\rho,d}$ the $\Q$-span of the set $\{\prj\mu\mid  \mu\in\rpstp\rho d\}$ and refer to the elements of $\operatorname{PCh}^{\rho,d}$ as \emph{virtual projective characters}. 

Note that $\{\ch\la\mid \la\in\stp\rho d\}$ is a basis for $\operatorname{Ch}^{\rho,d}$ since each $\ch\la$ is either an irreducible character or a sum of two irreducible characters $
\ch{{\la,+}}+\ch{{\la,-}}$, and all the irreducible characters $\ch{{\la,\pm}},\ch\mu$ are distinct (cf.\ \cite[Corollary 12.2.10]{KBook}). 
Moreover, $\{\prj\mu\mid  \mu\in\rpstp\rho d\}$ is a basis for $\operatorname{PCh}^{\rho,d}$. This is proved as for the $\ch\la$. First, note that each $\prj\mu$ is either an indecomposable projective character or a sum of two indecomposable projective characters $
\prj{{\la,+}}+\prj{{\la,-}}$, and all the indecomposable projective  characters $\prj{{\la,\pm}},\prj\mu$ are distinct in view of \cite[Proposition~12.2.12 and Lemma~12.2.16]{KBook}. Then 
use linear independence of the indecomposable projective characters \cite[Theorem 18.26(iii)]{CR}.

Given $\phi=\sum_{\la\in\stp\rho d}a_\la \ch\la\in \operatorname{Ch}^{\rho,d}$, we write the coefficient $a_\la$ as $\chm{\phi}{\ch\la}$. We say that $\ch\la$ \emph{occurs} in $\phi$ if $\chm{\phi}{\ch\la}$ is non-zero.  Below we will use a superversion of Brauer reciprocity to compute decomposition numbers for $\blk\rho d$ in terms of the multiplicities $\chm{\prj\mu}{\ch\la}$:
\begin{equation}\label{EBrRec}
[\Sp\la:\Dm\mu]=
\begin{cases}
2[\prj\mu:\ch\la]&\text{if $\SpC\la$ is of type $\Qtype$ and $\Dm\mu$ is of type $\Mtype$},
\\
\tfrac12[\prj\mu:\ch\la]&\text{if $\SpC\la$ is of type $\Mtype$ and $\Dm\mu$ is of type $\Qtype$},
\\
[\prj\mu:\ch\la]&\text{otherwise}.
\end{cases}
\end{equation}
This follows from the classical Brauer reciprocity taking into account that when $\SpC\la$ is of type $\Qtype$ we have $\ch\la=\ch{{\la,+}}+\ch{{\la,-}}$, and when $\Dm\mu$ is of type $\Qtype$ we have $\prj\la=\prj{{\la,+}}+\prj{{\la,-}}$, and moreover, $\Dm\mu\cong \Dm{\mu,+}\oplus\Dm{\mu,-}$ for non-isomorphic irreducible modules $\Dm{\mu,+}$ and $\Dm{\mu,-}$ obtained from each other by tensoring with sign. 

\subsection{Projective characters from the $q$-deformed Fock space}\label{mprjsec}

Leclerc and Thibon \cite{lt} show how one can use canonical basis vectors to obtain another basis for the space 
$\operatorname{PCh}^{\rho,d}$;   
we briefly outline the background. Let $q$ be an indeterminate. The \emph{level-$1$ Fock space} of type $A^{(2)}_{2l}$ is a $\bbq(q)$-vector space $\calf$ with a \emph{standard basis}
\[
\lset{\ket\la}{\la\text{ a $p$-strict partition}}.
\]
This space is naturally a module for the quantum group $U_q(A^{(2)}_{2l})$. 
We note that the conventions for residues (and for simple roots in type $A^{(2)}_{2l})$ used here are as in \cite{kl},\cite{mattconj} and differ from those in  \cite{lt}.    
The submodule of $\calf$ generated by the vector $\ket\vn$ possesses a \emph{canonical basis}
\[
\lset{G(\mu)}{\mu\text{ a restricted $p$-strict partition}}.
\]
Expanding the canonical basis vectors in terms of the standard basis, one obtains the \emph{$q$-decomposition numbers} $d_{\la\mu}(q)$, indexed by pairs of $p$-strict partitions $\la,\mu$ with $\mu$ restricted:
\[
G(\mu)=\sum_{\la\text{ $p$-strict}}d_{\la\mu}(q)\ket\la.
\]
In fact \cite[Theorem 4.1]{lt} implies that $d_{\la\mu}(q)$ is zero unless $\la$ and $\mu$ have the same $p$-bar-core and the same size, so for $\mu\in\rpstp\rho d$ we actually have 
\begin{equation}\label{EDSameBlock}
G(\mu)=\sum_{\la\in\pstp\rho d }d_{\la\mu}(q)\ket\la.
\end{equation}
By \cite[Theorem 4.1(i)]{lt} each $d_{\la\mu}(q)$ is a polynomial in $q$ with integer coefficients. So given a strict partition $\la$ and a restricted $p$-strict partition $\mu$, recalling (\ref{EA}), we can define the integers
\[
D_{\la\mu}=2^{\lfloor\frac12(h_p(\la)+1-\a\la)\rfloor}d_{\la\mu}(1),
\]
where $h_p(\la)$ denotes the number of positive parts of $\la$ that are divisible by $p$. Then the discussion in \cite[Section 6]{lt} shows the following.

\begin{propn}\label{fromcanon}
Suppose $\mu$ is a restricted $p$-strict partition of $n$. Then the character
\begin{equation}
\label{EHatPhi}
\mprj\mu:=\sum_{\la\text{ strict}}D_{\la\mu}\ch\la
\end{equation}
is a virtual projective character of $\hsym_n$. Moreover, 
$\{\mprj\mu\mid  \mu\in\rpstp\rho d\}$ is a basis for $\operatorname{PCh}^{\rho,d}$. 
\end{propn}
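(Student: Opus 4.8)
The plan is to deduce the proposition from the discussion in \cite[\S6]{lt}. The preliminary step is bookkeeping: one translates the residue conventions of \cite{lt} into those of \cite{kl,mattconj} used here (matching, in particular, the special node of the affine Dynkin diagram of type $A^{(2)}_{2\ell}$), and checks that the normalisation $D_{\la\mu}=2^{\lfloor\frac12(h_p(\la)+1-\a\la)\rfloor}d_{\la\mu}(1)$ agrees with the one implicit in \cite[\S6]{lt}; here one uses that $h_p(\la)$ counts the parts of $\la$ divisible by $p$ and that $\SpC\la$ has type $\Qtype$ exactly when $\a\la=1$. Granting this, the first assertion is essentially the main point of \cite[\S6]{lt}. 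Under the categorification of the level-$1$ Fock space $\calf$ coming from the modular branching rules for $\hsym_\bullet$ (Equation \eqref{EFi} and its analogues), the submodule of $\calf$ generated by $\ket\vn$ --- which contains all the canonical basis vectors $G(\mu)$ --- maps, after the specialisation $q\mapsto1$ followed by rescaling the coefficient of $\ket\la$ by the type-factor $2^{\lfloor\frac12(h_p(\la)+1-\a\la)\rfloor}$, into the space of virtual projective characters of $\hsym_\bullet$. Indeed $\ket\vn$ corresponds to the (projective) character of $\hsym_0$, and the Chevalley generators $f_i$ of $U_q(A^{(2)}_{2\ell})$ by which $G(\mu)$ is built from $\ket\vn$ go over to the induction functors $F_i$ (induction from $\hsym_n$ to $\hsym_{n+1}$ followed by taking the appropriate block component); these preserve virtual projective characters, because the induction of a projective supermodule is projective, and so is each of its block components. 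Hence $\mprj\mu$, the image of $G(\mu)$, is a virtual projective character of $\hsym_n$; and since \eqref{EDSameBlock} confines the sum \eqref{EHatPhi} to $\la\in\pstp\rho d$, we get $\mprj\mu\in\operatorname{Ch}^{\rho,d}$, whence $\mprj\mu\in\operatorname{PCh}^{\rho,d}$ (a virtual projective character lying in $\operatorname{Ch}^{\rho,d}$ lies in the span of the projective characters of the block).

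For the second assertion, one first counts: by \eqref{EPartId} the set $\{\mprj\mu\mid\mu\in\rpstp\rho d\}$ has $|\rpstp\rho d|=|\Par^J(d)|$ elements, which equals $\dim\operatorname{PCh}^{\rho,d}$ since $\{\prj\mu\mid\mu\in\rpstp\rho d\}$ is already a basis of $\operatorname{PCh}^{\rho,d}$. By the first assertion it therefore suffices to prove that the $\mprj\mu$ are linearly independent; and since the characters $\ch\la$ with $\la\in\stp\rho d$ are linearly independent, this is exactly the statement that the matrix $(D_{\la\mu})_{\la\in\stp\rho d,\,\mu\in\rpstp\rho d}$ has full column rank. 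For this it is enough to exhibit an invertible square submatrix, and I would take the one on the rows indexed by the $p'$-partitions $\pfstp\rho d\subseteq\stp\rho d$, which by \eqref{EPartId} again number $|\Par^J(d)|$. On these rows $h_p(\la)=0$, so the type-factor is trivial and $D_{\la\mu}=d_{\la\mu}(1)$. Now the triangularity of the canonical basis recorded in \cite[Theorem 4.1]{lt} --- namely $d_{\mu\mu}(q)=1$, while $d_{\la\mu}(q)\in q\Z[q]$ and vanishes unless $\la$ and $\mu$ are comparable in the dominance order (equivalently, by \cref{succdom}, unless their $p$-bar-quotients are comparable for the order $\sucq$) --- together with the bar-analogue of James's regularisation worked out in \cite[\S6]{lt}, which supplies a bijection between $\pfstp\rho d$ and $\rpstp\rho d$ with respect to which $(d_{\la\mu}(1))$ is unitriangular, shows that this submatrix is invertible.

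The main obstacle is the first assertion: proving that, after the $q\mapsto1$ specialisation and the type-rescaling, the canonical basis vectors really do land among the virtual projective characters. Establishing this from scratch would amount to reconstructing the Ariki--Grojnowski-style categorification of the type $A^{(2)}_{2\ell}$ Fock space underlying \cite[\S6]{lt}, and in particular to tracking the powers of two carefully through the branching rules \eqref{EFi} and \cref{normal} at the special node of the affine diagram --- the point at which $p$-strict partitions that are not strict enter the picture. By contrast, the convention translation and the linear-algebra argument for the basis property are routine once the first assertion is available.
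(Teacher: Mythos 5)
Your treatment of the first assertion is essentially the paper's own, which consists of the single sentence ``the discussion in \cite[\S 6]{lt} shows the following'': the canonical basis vectors lie in the integral form of the submodule generated by $\ket\vn$, that integral form is spanned by the monomials $f_{i_1}^{(a_1)}\cdots f_{i_k}^{(a_k)}\ket\vn$, and under the identification of $f_i$ with $F_i$ (your ``bookkeeping'' with residue conventions and with the factors $2^{\lfloor\frac12(h_p(\la)+1-\a\la)\rfloor}$ matching the coefficients $a_{\la\mu}$ in (\ref{EFi})) these monomials become genuine projective characters, so $\mprj\mu\in\operatorname{PCh}^{\rho,d}$. You are right that this is where the real content sits, and right to leave it to the citation.

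The gap is in your argument for the basis property. The reduction to linear independence and to full column rank of $(D_{\la\mu})$ is fine, but the invertible submatrix you propose rests on the claim that \cite[\S 6]{lt} supplies a bijection between $\pfstp\rho d$ and $\rpstp\rho d$ with respect to which $(d_{\la\mu}(1))$ is unitriangular, with the bar-analogue of regularisation as the mechanism. No such bijection appears there, and regularisation cannot be it: it is not injective on $\pfstp\rho d$. For example, in a RoCK block with $\ell\gs2$ and $d=1$, \cref{rockreg} shows that the two \pfr-partitions whose \pbqs have a single node in component $\ell-1$, respectively in component $\ell$, regularise to the same partition. A triangular structure on the \pfr-rows of $(D_{\la\mu})$ does exist, but in this paper it is extracted from the explicit RoCK formula of \cref{fockrock} together with the Gelfand--Graev induction analysis (\cref{ldom}, \cref{lspan}, \cref{expbij}, \cref{mattprop}); it is not available at the point where \cref{fromcanon} is stated, and it says nothing about non-RoCK blocks, for which the proposition is also asserted. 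The argument that works in general, and is the one implicit in \cite[\S 6]{lt}, avoids submatrices: since the canonical basis and the monomials span the same integral form, the $\Q$-span of $\{\mprj\mu\mid\mu\in\rpstp\rho d\}$ equals the $\Q$-span of the block components of the characters obtained by applying monomials in the $F_i$ to the character of $\T_0$, and these span all of $\operatorname{PCh}^{\rho,d}$ (every $\prj\mu$ is accounted for by inducing along a weight of $\Dm\mu$ and using adjointness); a set of $|\rpstp\rho d|$ vectors spanning a space of that dimension is a basis.
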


In fact, the character $\mprj\mu$ coincides with $\prj\mu$ quite often, and our main aim in this paper is to show that $\mprj\mu=\prj\mu$ when $\mu\in\rpstp\rho d$ and $\blk{\rho}{d}$ is an abelian defect RoCK block. 

\subsection{RoCK blocks for double covers and the Kleshchev--Livesey Morita equivalence}
\label{moritasec}

Now, following \cite{kl}, we can define {\em RoCK blocks}: given a $p$-bar-core $\rho$ and $d\gs0$, we say that $\blk\rho d$ is a \emph{RoCK block} if $\rho$ is $d$-Rouquier. The term `RoCK' is borrowed from the corresponding theory for (non-spin) representations of symmetric groups, and stands for `Rouquier or Chuang--Kessar'. 

The definition of spin RoCK blocks is a natural analogue of the non-spin situation, and we expect that RoCK blocks will play a similarly important role. This has already begun with the use of RoCK blocks in proving Brou\'e's conjecture for double covers \cite{kl,bk3,elv}. Our purpose in this paper is to emulate the work of Chuang and Tan in the non-spin case and find the decomposition numbers for RoCK blocks.

Recall the material of \cref{wreathsec}, in particular, the wreath superproduct $\W_d=\Zig_\ell\wr\sym_d$. One of the main results of \cite{kl} is a Morita superequivalence relating a RoCK (super)block $\blk\rho d$ with $d<p$ and $\W_d$. This easily implies the following theorem:

\begin{thm}\label{mainkl}
Suppose\, $1\ls d<p$, and $\rho$ is a $d$-Rouquier $p$-bar-core. Then we have a Morita equivalence 
\[
\W_d\sim_{{\Mor}}
\begin{cases}
\blk\rho d&\text{if $\blk\rho d$ is of type $\Mtype$,}
\\
\blk\rho d_\0&\text{if $\blk\rho d$ is of type $\Qtype$.}
\end{cases}
\]
\end{thm}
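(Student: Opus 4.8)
The plan is to read the statement off from the Morita superequivalence of Kleshchev--Livesey \cite{kl} via the standard dictionary relating a superalgebra, its underlying ungraded algebra, and its even part; the substance is \cite{kl}, and what is left is bookkeeping with the types $\Mtype$ and $\Qtype$. First I would record the precise form of the input needed here. A strict Morita superequivalence preserves the type of each irreducible supermodule; and every irreducible $\W_d$-supermodule $L(\bla)$ is concentrated in the single $\ztz$-degree $\0$ (immediate from its construction as an iterated $\circ$-product of the degree-$\0$ modules $L_j(\la)=L_j^{\otimes d}\otimes\spe\la$), hence of type $\Mtype$. So the Kleshchev--Livesey equivalence necessarily takes the shape
\[
\smod{\blk\rho d}\;\simeq\;
\begin{cases}
\smod{\W_d}&\text{if }\blk\rho d\text{ is of type }\Mtype,\\
\smod{\W_d\otimes\mathcal{C}_1}&\text{if }\blk\rho d\text{ is of type }\Qtype,
\end{cases}
\]
where $\mathcal{C}_1=\F[c]/(c^2-1)$ (with $c$ odd) is the rank-one Clifford superalgebra: in the type-$\Qtype$ case the target must carry exactly one extra Clifford factor, since tensoring with $\mathcal{C}_1$ interchanges types $\Mtype$ and $\Qtype$, while any surplus factors cancel because $\mathcal{C}_1\otimes\mathcal{C}_1\cong\operatorname{Mat}_{1|1}(\F)$ is Morita superequivalent to $\F$.

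In the type-$\Mtype$ case I would use the elementary fact that a Morita superequivalence between superalgebras $A$ and $B$ induces an ordinary Morita equivalence of the underlying ungraded algebras: the implementing bisuperbimodule, after the $\ztz$-grading is forgotten, is still a progenerator bimodule on each side (projectivity and the generator property survive since $\Pi$ becomes invisible), and the endomorphism-superalgebra identification forgets to the required algebra isomorphism. This gives $\blk\rho d\sim_{\Mor}\W_d$, which is the first case.

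In the type-$\Qtype$ case I would instead pass to even parts via Dade's theorem. Both $\blk\rho d$ and $\W_d\otimes\mathcal{C}_1$ are strongly $\ztz$-graded: the latter because $c$ is an invertible odd element, and the former because $\T_n$ is strongly $\ztz$-graded --- its odd part is spanned by the invertible images of odd permutations of $\hsym_n$ --- while the block idempotent is even and central. By Dade's theorem the functor $V\mapsto V_\0$ is an equivalence $\smod{D}\xrightarrow{\ \sim\ }\mod{D_\0}$ for strongly $\ztz$-graded $D$; applying this to $\blk\rho d$ and to $\W_d\otimes\mathcal{C}_1$ and splicing in the equivalence above,
\[
\mod{\blk\rho d_\0}\;\simeq\;\smod{\blk\rho d}\;\simeq\;\smod{\W_d\otimes\mathcal{C}_1}\;\simeq\;\mod{(\W_d\otimes\mathcal{C}_1)_\0}.
\]
Finally $(\W_d\otimes\mathcal{C}_1)_\0\cong\W_d$ as ungraded algebras: since $\F$ is algebraically closed it contains $\mathrm{i}=\sqrt{-1}$, and $a\otimes1\mapsto a$ for even $a$, $\mathrm{i}\,a\otimes c\mapsto a$ for odd $a$, is an isomorphism. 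Hence $\mod{\blk\rho d_\0}\simeq\mod{\W_d}$ as $\F$-linear categories, i.e.\ $\blk\rho d_\0\sim_{\Mor}\W_d$; recalling from \S\ref{SRSNHat} that for $d\gs1$ the algebra $\blk\rho d_\0$ is precisely the even part of $\blk\rho d$ and a single block of $\F\halt_n$, this is the second case.

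The only genuinely delicate point --- and the main obstacle --- is getting the type-$\Qtype$ case straight: one must realise that there $\blk\rho d$ is \emph{not} Morita equivalent to $\W_d$ itself (the irreducibles carry the wrong type, and the numbers of irreducible ungraded modules on the two sides disagree), so the comparison has to be routed through the even subalgebra, and one must correctly identify the target $\W_d\otimes\mathcal{C}_1$ of the superequivalence provided by \cite{kl}.
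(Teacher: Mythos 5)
Your argument is correct and follows essentially the same route as the paper: both cases are deduced from the Kleshchev--Livesey Morita superequivalence \cite[Proposition 5.4.10(i)]{kl}, with the type $\Mtype$ case immediate because a Morita superequivalence forgets to an ordinary Morita equivalence. The only difference is in the packaging of the type $\Qtype$ bookkeeping: the paper tensors both sides with $\CL_1$ to get $\blk\rho d\otimes\CL_1\sim_{\sMor}\W_d\otimes\CL_2$ and invokes \cite[Lemmas 2.2.19 and 2.2.20]{kl}, whereas you pass to even parts directly via Dade's theorem for strongly $\ztz$-graded algebras together with the isomorphism $(\W_d\otimes\CL_1)_\0\cong\W_d$ --- the same Clifford-algebra manipulation, carried out by hand rather than by citation.
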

\begin{proof}
By \cite[Proposition 5.4.10(i)]{kl}, we have a Morita {\em superequivalence} 
\[
\blk\rho d\sim_{{\sMor}}
\begin{cases}
\W_d&\text{if $\blk\rho d$ is of type $\Mtype$,}
\\
\W_d\otimes\CL_1&\text{if $\blk\rho d$ is of type $\Qtype$.}
\end{cases}
\]
where $\CL_1$ is the Clifford superalgebra of rank $1$. 
If $\blk\rho d$ is of type $\Mtype$ the result follows immediately since Morita superequivalence implies Morita equivalence, see \cite[\S2.2c]{kl}. If $\blk\rho d$ is of type $\Qtype$, then we obtain $\blk\rho d\otimes\CL_1\sim_{{\sMor}}\W_d\otimes\CL_1\otimes\CL_1\simeq\W_d\otimes\CL_2$, and we apply \cite[Lemmas 2.2.19 and 2.2.20]{kl}. 
\end{proof}

\subsection{The regularization theorem}

One of the early general results concerning decomposition numbers for symmetric groups is James's regularization theorem \cite{j1}. Later we will need the analogue for spin modules, which was proved by Brundan and the second author \cite[Theorem 1.2]{bk2}. They define (in a combinatorial way) a function $\la\mapsto\la^\reg$ from $\sPar(n)$ to $\rPar(n)$ and prove the following statement.

%
%

\begin{thm}\label{regthm}
Suppose $\la$ is a strict partition. Then $\Dm{\la^\reg}$ occurs as a composition factor of $\Sp\la$, and $\Dm{\nu}$ is a composition factor of $\Sp\la$ only if $\la^\reg\dom\nu$. 
\end{thm}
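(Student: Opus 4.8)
The plan is to mimic James's inductive proof of the classical regularisation theorem \cite{j1}, with the branching functors $E_i$ of \cref{branchsec} in place of restriction to $\Si_{n-1}$, carrying both assertions — occurrence of $\Dm{\la^\reg}$ in $\Sp\la$, and the bound $\la^\reg\dom\nu$ on every other composition factor $\Dm\nu$ — through an induction on $n=|\la|$. The base case $\la=\vn$ is trivial. For the inductive step one fixes the residue $i\in I$ of the topmost non-empty ladder of $\la$, lets $r$ be the number of removable $i$-nodes of $\la$ lying on it, and puts $\nu\in\sPar(n-r)$ equal to $\la$ with all $r$ of them removed. The combinatorial core of the argument is that regularisation intertwines this operation with removal of the \emph{good} $i$-string on the restricted side: $\nu^\reg$ is $\la^\reg$ with its whole good $i$-string deleted, $r$ is also the number of normal $i$-nodes of $\la^\reg$, every restricted $p$-strict partition $\domby\nu^\reg$ lifts (by re-inserting $r$ $i$-nodes as low as possible) to a restricted partition $\domby\la^\reg$, and every restricted $\xi\domby\la^\reg$ with no normal $i$-node satisfies $\xi\domsby\la^\reg$.

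Granting this, the module-theoretic step runs as follows. By \eqref{EFi} and Frobenius reciprocity the divided power $E_i^{(r)}$ (a summand of the $r$-fold restriction functor, cut to the appropriate block) sends $\Sp\la$ to a module one of whose summands is $\Sp\nu$, the remaining summands being $\Sp\mu$'s for various $\mu$; dually, Kleshchev's modular branching machinery \cite{KBook} — in particular \cref{normal}, which gives $\Dm{\tilde e_i\xi}\hookrightarrow\soc E_i\Dm\xi$ — shows that $E_i^{(r)}\Dm\xi$ is nonzero precisely when $\xi$ has at least $r$ normal $i$-nodes, and that its composition factors are then $\Dm{\tilde e_i^r\xi}$ together with factors indexed by restricted partitions strictly dominated by $\tilde e_i^r\xi$. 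Since $E_i$ is exact, the inductive hypothesis applied to $\Sp\nu$ shows that the copy of $\Dm{\nu^\reg}$ inside $E_i^{(r)}\Sp\la$ comes from a composition factor $\Dm\xi$ of $\Sp\la$ with $\Dm{\nu^\reg}$ a factor of $E_i^{(r)}\Dm\xi$, and the combinatorial core forces $\xi=\la^\reg$ — giving the occurrence. For the dominance bound, let $\Dm\nu$ be any composition factor of $\Sp\la$: if $\nu$ has a normal $i$-node then the factors of $E_i^{(r)}\Dm\nu$ lie among those of $\Sp\nu$, so induction and the lifting statement give $\nu\domby\la^\reg$; otherwise $\nu\domsby\la^\reg$ by the last clause of the core, it being the choice of $i$ as the top ladder that makes such $\nu$ automatically small. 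The type-dependent factors of $2$ in \eqref{EFi} and in Brauer reciprocity \eqref{EBrRec} must be tracked but never kill a multiplicity.

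A tempting alternative would bypass the module theory via the $q$-deformed Fock space of \cref{mprjsec}: expanding $\ket\la$ in the canonical basis $\{G(\mu)\}$ and using positivity of the $q$-decomposition numbers, one would reduce to the combinatorial claim that the $\dom$-maximal restricted $\mu$ with $G(\mu)$ occurring is $\la^\reg$, with coefficient $1$. But converting this into a statement about the actual numbers $[\Sp\la:\Dm\mu]$ needs an Ariki-type identification of decomposition numbers with canonical basis coefficients for $\hsym_n$, which is unavailable in general — it is only conjectured, and is proved in the present paper solely for abelian-defect RoCK blocks (cf.\ \cite{mattconj,lt}). So for an unconditional theorem the branching-functor route is the one to take.

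The main obstacle is the combinatorial core itself: establishing that regularisation of $p$-strict partitions intertwines ``remove the top ladder'' with ``remove the good $i$-string'', that the two counts of nodes agree, and that the exceptional restricted partitions are strictly dominated. This is markedly more delicate than in the classical case because the residue sequence $0,1,\dots,\ell-1,\ell,\ell-1,\dots,1,0,0,\dots$ is folded: residues $0$ and $\ell$ behave specially (a ladder can carry doubled $0$-nodes, and the largest parts cluster near residue $\ell$), so the ladder and abacus bookkeeping has to be set up to respect both $p$-strictness and restrictedness. Pushing the occurrence and multiplicity accounting through the categorification in the presence of these doubled residues and the $\Mtype$/$\Qtype$ dichotomy is where most of the effort goes.
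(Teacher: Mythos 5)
The paper offers no proof of this statement: it is imported wholesale from Brundan--Kleshchev \cite[Theorem~1.2]{bk2}, and even the combinatorial definition of $\la\mapsto\la^\reg$ is omitted here (only its effect in RoCK blocks, \cref{rockreg}, is derived, and that by a separate canonical-basis argument). So the comparison is with the proof in that reference. Your chosen route --- induction on $n$ via the branching functors, removing the top ladder on the strict side and the matching $i$-string on the restricted side --- is essentially the strategy of \cite{bk2}, and your reason for rejecting the Fock-space shortcut (no unconditional Ariki-type theorem identifying $[\Sp\la:\Dm\mu]$ with $d_{\la\mu}(1)$ for $\hsym_n$) is correct.

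As written, however, this is an outline with the hard content deferred, in two places. First, the entire ``combinatorial core'' --- that regularisation intertwines removal of the top ladder with removal of the good $i$-string, that the node counts $r$ agree on both sides, that restricted partitions $\domby\nu^\reg$ lift compatibly, and that restricted $\xi\domby\la^\reg$ without normal $i$-nodes are strictly dominated --- is asserted, not proved; this is the bulk of \cite{bk2} and is exactly where the folded residue pattern and the doubled $0$-column make the ladder bookkeeping delicate, as you yourself observe. Second, the module-theoretic step silently upgrades \cref{normal}: you need that $E_i^{(r)}\Dm\xi$ vanishes unless $\xi$ has at least $r$ normal $i$-nodes, and that all its composition factors other than $\Dm{\tilde e_i^r\xi}$ are labelled by partitions strictly dominated by $\tilde e_i^r\xi$. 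Neither claim is contained in \cref{normal} (which only asserts that removing a normal $i$-node produces a composition factor of $E_i\Dm\mu$, not a socle statement and not a dominance bound on the remaining factors); both require the full modular branching rules of \cite{KBook,ks} together with a separate dominance analysis of normal-node configurations. Finally, the step ``the combinatorial core forces $\xi=\la^\reg$'' needs injectivity of $\xi\mapsto\tilde e_i^r\xi$ on the set of candidate $\xi$ and an argument that the occurrence of $\Dm{\nu^\reg}$ in $E_i^{(r)}\Sp\la$ cannot be accounted for by factors with fewer than $r$ normal $i$-nodes; organising this without circularity is precisely the role of the careful double induction in James's original argument and its spin analogue. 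Until these points are supplied, the proposal is a faithful sketch of the proof in \cite{bk2} rather than a proof.
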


We will not need the exact definition of regularization, since we use an alternative description of regularization in RoCK blocks, as follows.

\begin{lemma}\label{rockreg}
Suppose $\rho$ is a $d$-Rouquier $p$-bar-core, and $\la\in\stp\rho d$ with \pbq $(\la^{(0)},\dots,\la^{(\ell)})$. Then $\la^\reg$ is the partition in $\rpstp\rho d$ with \pbq
\[
(\la^{(0)},\dots,\la^{(\ell-2)},\la^{(\ell-1)}+{\la^{(\ell)}}',\vn).
\]
\end{lemma}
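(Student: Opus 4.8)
\emph{Proof proposal.}
The plan is to compute $\la^\reg$ directly from the Brundan--Kleshchev definition of the regularization map \cite{bk2}, specialising to $\la\in\stp\rho d$ and using the Rouquier hypothesis on $\rho$ to keep the combinatorics under control. Recall that $\la^\reg$ is produced from $\la$ by the ladder procedure attached to the residue pattern $0,1,\dots,\ell,\ell,\ell-1,\dots,1,0,\dots$ of type $A^{(2)}_{2\ell}$ (suitably adapted to $p$-strict partitions), sliding nodes as far up their ladders as possible, and that this operation preserves the $p$-bar-core and $p$-bar-weight (as it must, since $\Dm{\la^\reg}$ lies in the same block as $\Sp\la$); hence $\la^\reg\in\rpstp\rho d$, and by \cref{pbqlem}(iii) the last component of its \pbq is $\vn$. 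Writing $\bnu:=(\la^{(0)},\dots,\la^{(\ell-2)},\la^{(\ell-1)}+{\la^{(\ell)}}',\vn)$, it is immediate that $\bnu\in\Par^J(d)$ (its total size is $d$), so there is a partition $\nu\in\rpstp\rho d$ with \pbq $\bnu$, and the task reduces to showing $\la^\reg=\nu$.

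I would carry this out on the $p$-bar-abacus, with beads distributed on runners indexed by $0,1,\dots,\ell$, so that the component $\la^{(i)}$ records the displacement of the beads on runner $i$. The Rouquier inequalities $r_1(\rho)\gs d$ and $r_i(\rho)\gs r_{i-1}(\rho)+d-1$ then say exactly that, for every $\la\in\pstp\rho d$, the positions coming from $\la^{(i)}$ lie well above those coming from $\la^{(i-1)}$ — with more than enough room (as quantified by the bead-column formula recalled in the proof of \cref{succdom}) that no ladder used in regularizing $\la$ reaches from one such band of beads into a non-adjacent one. The three steps are: (a) show that, because of these gaps, the bands for runners $0,1,\dots,\ell-2$ are already in regularized position, so those components of the \pbq are unchanged; (b) analyse the ladders through the runner-$\ell$ nodes: since $\ell$ is the fold point of the runner pairing $i\leftrightarrow p-i$, pushing these nodes up their ladders sweeps a ``column'' of runner-$\ell$ nodes onto runner $\ell-1$ as a ``row'', which is precisely why ${\la^{(\ell)}}'$ appears and is \emph{added} to $\la^{(\ell-1)}$ rather than merely relabelled; (c) use the Rouquier numerology once more to check that this sweep lands cleanly above the beads already contributed by $\la^{(\ell-1)}$, does not propagate down to runner $\ell-2$, and empties runner $\ell$, so that the resulting \pbq is exactly $\bnu$. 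The identity $(\tau+\pi)'=\tau'\sqcup\pi'$ is what makes the bookkeeping in step~(b) work; note also that it yields $\bnu\sucq(\la^{(0)},\dots,\la^{(\ell)})$, so by \cref{succdom} $\la$ dominates $\nu$, which is a useful sanity check on the direction of the answer.

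The main obstacle is step~(b): making precise how the runner-$\ell$ nodes redistribute onto runner $\ell-1$ under the ladder moves, and confirming that the net effect is exactly the transpose-and-add operation rather than some other rearrangement. One has to track the ladder structure carefully through the ``peak'' residue $\ell$, where the ladder slope differs from that for residues $0,\dots,\ell-1$, and verify that the gap $r_\ell(\rho)\gs r_{\ell-1}(\rho)+d-1$ — and not a smaller gap — is what guarantees the sweep neither collides with existing runner-$(\ell-1)$ beads nor spills onto runner $\ell-2$. (An alternative would be to use \cref{regthm} as a characterisation of $\la^\reg$ as the unique $\dom$-maximal label of a composition factor of $\Sp\la$, exhibit $\Dm\nu$ as such a composition factor via the branching rule \cref{normal}, and bound the remaining composition factors; but the dominance bound needed for that is essentially \cref{regthm} itself, so the route is circular without already knowing the decomposition numbers — hence the preference for the direct abacus computation above.)
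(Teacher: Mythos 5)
Your route is genuinely different from the paper's, and in principle viable: the authors remark immediately after the statement that \cref{rockreg} ``is not very hard to prove directly from the combinatorial definition of $\la^\reg$'', which is exactly the route you propose. But as written your argument has a real gap: step (b), which you yourself flag as ``the main obstacle'', \emph{is} the content of the lemma, and you do not carry it out. Everything surrounding it --- block preservation, existence of $\nu$, restrictedness via \cref{pbqlem}(iii), the dominance sanity check via \cref{succdom} --- is routine; the whole difficulty is the precise verification that pushing the residue-$\ell$ nodes up their ladders converts each column of $\la^{(\ell)}$ into a row appended to $\la^{(\ell-1)}$, i.e.\ that the net effect is $\la^{(\ell-1)}+{\la^{(\ell)}}'$ rather than, say, $\la^{(\ell-1)}\sqcup{\la^{(\ell)}}'$ or some other rearrangement, and that the Rouquier gaps prevent any spill onto runner $\ell-2$ or collision within runner $\ell-1$. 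Naming the hard step and listing what must be checked is a plan, not a proof of that step.

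Separately, your dismissal of the ``characterize $\la^\reg$ by maximal dominance'' route as circular misses the non-circular version of it, which is what the paper actually uses. The regularization theorem holds already at the level of canonical basis coefficients: $d_{\la\la^\reg}(q)\neq0$ and $d_{\la\mu}(q)=0$ unless $\la^\reg\dom\mu$ (\cref{fockreg}, from \cite{mfreg}); this is a statement about the $q$-Fock space proved independently of any decomposition numbers, so no circularity arises. Combined with the closed formula for $d_{\la\mu}(q)$ in RoCK blocks (\cref{fockrock}), identifying the $\dom$-maximal $\mu$ with $d_{\la\mu}(q)\neq0$ reduces to a short combinatorial argument with Littlewood--Richardson coefficients and inverse Kostka polynomials (non-vanishing forces $\tau^{(1)}=\dots=\tau^{(\ell-1)}=\vn$, $\tau^{(\ell)}=\la^{(\ell)}$, etc.), which is how the paper pins down $\la^\reg=\nu$. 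If you want to keep the direct abacus approach you must actually perform the ladder analysis at the peak residue $\ell$; otherwise the Fock-space route is the efficient and rigorous way out.
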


\cref{rockreg} is not very hard to prove directly from the combinatorial definition of $\la^\reg$, but we will give a proof using canonical basis coefficients in \cref{mprjsec}.

\section{Projective characters}\label{projcharsec}

Having summarized all the background we need, we now work towards our main result. Throughout this section we fix an integer $d\gs1$ and a $d$-Rouquier $p$-bar-core $\rho$. Our aim is to work with projective characters in $\blk\rho d$; our main result in this section is to find the decomposition matrix for $\blk\rho d$ up to multiplying by a non-negative unitriangular matrix. Note that the results of this section do not require $d<p$.

\subsection{Projective characters $\mprj\mu$ in RoCK blocks}\label{mprjsecRoCK}
Recall the virtual projective characters $\mprj\mu$ defined in  (\ref{EHatPhi}). 
One of the main results of the first author's paper \cite{mattconj} is an explicit determination of the canonical basis vectors $G(\mu)$ for partitions in RoCK blocks. As a result of this, we can give the characters $\mprj\mu$ in $\blk\rho d$ explicitly. 

First, we give the formula for the canonical basis coefficients in a weight space corresponding to a RoCK block. Recall the notation of \cref{SSKostkaPol}.

\begin{thm}[{\!\cite[Theorem 8.2]{mattconj}}]\label{fockrock}
Suppose $\rho$  is a $d$-Rouquier $p$-bar-core, $\la\in\pstp\rho d$ and $\mu\in\rpstp\rho d$. Then
\[
d_{\la\mu}(q)=\sum K^{-1}_{\la^{(0)}\si^{(0)}}(-q^2)\prod_{i=1}^{\ell}\lr{\la^{(i)}}{\si^{(i)},\tau^{(i)}}\lr{\mu^{(i-1)}}{\si^{(i-1)},{\tau^{(i)}}'}q^{2\sum_{i\in I}i(|\la^{(i)}|-|\mu^{(i)}|)},
\]
where the sum is over all partitions $\si^{(0)},\dots,\si^{(\ell-1)},\tau^{(1)},\dots,\tau^{(\ell)}$, and we read $\si^{(\ell)}$ as $\vn$.
\end{thm}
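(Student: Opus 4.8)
The result is quoted from \cite[Theorem 8.2]{mattconj}, so nothing is strictly required here beyond the citation; for orientation I sketch the shape of an argument establishing it. The canonical basis element $G(\mu)$ is characterised as the unique bar-invariant vector of the form $\ket\mu+\sum_{\la\domsby\mu}d_{\la\mu}(q)\ket\la$ with $d_{\la\mu}(q)\in q\ZZ[q]$ for $\la\neq\mu$, and it is computed by the usual straightening procedure applied to monomials in the Chevalley operators $f_i$ and their divided powers for $U_q(A^{(2)}_{2\ell})$, starting from $\ket\rho$. The plan is to exploit the fact that a $d$-Rouquier core leaves so much empty space on the abacus that these operators \emph{decouple across runners}: as in \cref{succdom} and \cref{addbars}, adding a node to $\la^{(i)}$ corresponds to moving beads on runner $i$ only, and the Rouquier gaps $r_i(\rho)\gs r_{i-1}(\rho)+d-1$ guarantee that moves on different runners never interfere at $p$-bar-weight at most $d$. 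Thus, compatibly with the operators needed to build up the weight from $\rho$, the weight space $\bigoplus_{\la\in\pstp\rho d}\bbq(q)\ket\la$ factors as a tensor product of $\ell+1$ ``single-runner'' Fock spaces: one modelling the residue-$0$ runner (carrying the parts divisible by $p$ and the doubled behaviour at the special node $0$), and $\ell$ modelling the runners of residues $1,\dots,\ell$, with the $\ell$-end carrying its own special behaviour, which is why one reads $\si^{(\ell)}=\vn$.

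The next step is to compute the canonical basis in each factor. On the residue-$0$ runner the coefficients are governed by the expansion of Hall--Littlewood symmetric functions in the Schur basis, i.e.\ by inverse Kostka polynomials; because of the type-$A^{(2)}_{2\ell}$ (spin) normalisation the relevant evaluation is at $-q^2$, producing the factor $K^{-1}_{\la^{(0)}\si^{(0)}}(-q^2)$, see \S\ref{SSKostkaPol}. On each runner of residue $i\gs1$ the single-runner canonical basis is the trivial one, and, exactly as in the Chuang--Tan / Leclerc--Miyachi treatment of RoCK blocks of symmetric groups \cite{CT1,LM}, its contribution to the decomposition of a $\circ$-product of ``adding'' operators is a product of Littlewood--Richardson coefficients coming from multiplication and comultiplication of Schur functions. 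Assembling the tensor factors, naming the intermediate partitions $\si^{(i)},\tau^{(i)}$, and tracking the $q$-grading — which is simply the weight grading, hence the exponent $2\sum_{i\in I}i(|\la^{(i)}|-|\mu^{(i)}|)$ — yields the displayed sum; one then verifies that the resulting vector is bar-invariant with the correct unitriangularity, so that it is indeed $G(\mu)$.

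The main obstacle is the decoupling step: one must prove rigorously that in a $d$-Rouquier block the operators used to construct $G(\mu)$ genuinely factor through the tensor product of single-runner pieces, with no cross-terms. This is the $q$-deformed upgrade of the purely combinatorial bookkeeping in \cref{addbars} and \cite[\S4]{kl} — one has to classify the $p$-bar moves that could connect two runners and show that the Rouquier gaps forbid precisely those which would spoil the factorisation. A secondary subtlety is pinning down the exact argument $-q^2$ of the inverse Kostka polynomial (and, at $q=1$, the appearance of the Schur $P$-function); this requires matching the crystal/quantum-group conventions of \cite{kl,mattconj}, in which residue $0$ is the special node, against the classical Hall--Littlewood identities recalled in \S\ref{SSKostkaPol}.
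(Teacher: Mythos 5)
The paper gives no proof of this statement: it is imported verbatim from \cite[Theorem 8.2]{mattconj}, with the citation placed in the theorem header, so your opening observation that nothing beyond the citation is required is exactly what the paper does. Your accompanying sketch of the runner-decoupling argument is plausible background for the cited reference but plays no role here and need not be assessed as a proof.
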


As a consequence we can write down the characters $\mprj\mu$ in RoCK blocks; this follows from \cref{fockrock}, (\ref{EDSameBlock}) and the definition (\ref{EHatPhi}).

\begin{cory}\label{mattproj}
Suppose $\rho$ is a $d$-Rouquier $p$-bar-core and $\mu\in\rpstp\rho d$. 
Then
\[
\mprj\mu=\sum_{\la\in\stp\rho d}2^{\lfloor\frac12(h(\la^{(0)})+1-\a\la)\rfloor}
\sum
\ki{\la^{(0)}}{\si^{(0)}}\prod_{i=1}^{\ell}\lr{\la^{(i)}}{\si^{(i)},\tau^{(i)}}\lr{\mu^{(i-1)}}{\si^{(i-1)},{\tau^{(i)}}'}\ch\la,
\]
where the second sum is over all partitions $\si^{(0)},\dots,\si^{(\ell-1)},\tau^{(1)},\dots,\tau^{(\ell)}$, and we read $\si^{(\ell)}$ as $\vn$.
\end{cory}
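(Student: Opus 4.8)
The plan is to read off the claim as a direct consequence of the three facts assembled just above the statement: the definition (\ref{EHatPhi}) of the virtual projective character $\mprj\mu$, the vanishing statement (\ref{EDSameBlock}) which restricts the support of the $q$-decomposition numbers $d_{\la\mu}(q)$ to $p$-strict partitions with the same $p$-bar-core and size as $\mu$, and the closed formula of \cref{fockrock} for these numbers in a RoCK block. No new idea is required; the proof is a substitution and a bookkeeping check.

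First I would write out $\mprj\mu=\sum_{\la\text{ strict}}D_{\la\mu}\ch\la$ with $D_{\la\mu}=2^{\lfloor\frac12(h_p(\la)+1-\a\la)\rfloor}d_{\la\mu}(1)$. By (\ref{EDSameBlock}) the coefficient $D_{\la\mu}$ vanishes unless $\la\in\pstp\rho d$, and since (\ref{EHatPhi}) already restricts the sum to strict $\la$, it suffices to sum over $\la\in\stp\rho d$. For such $\la$ the \pbqs $(\la^{(0)},\dots,\la^{(\ell)})$ and $(\mu^{(0)},\dots,\mu^{(\ell)})$ are defined and \cref{fockrock} applies, expressing $d_{\la\mu}(q)$ as the stated sum of products of inverse Kostka polynomials and Littlewood--Richardson coefficients, multiplied by $q^{2\sum_{i\in I}i(|\la^{(i)}|-|\mu^{(i)}|)}$. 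Specialising at $q=1$, the power of $q$ becomes $1$, and since by the convention of \S\ref{SSKostkaPol} we have $\ki{\la^{(0)}}{\si^{(0)}}=K^{-1}_{\la^{(0)}\si^{(0)}}(-1)$ while $-q^2$ specialises to $-1$, we obtain
\[
d_{\la\mu}(1)=\sum\ki{\la^{(0)}}{\si^{(0)}}\prod_{i=1}^{\ell}\lr{\la^{(i)}}{\si^{(i)},\tau^{(i)}}\lr{\mu^{(i-1)}}{\si^{(i-1)},{\tau^{(i)}}'},
\]
the sum being over $\si^{(0)},\dots,\si^{(\ell-1)},\tau^{(1)},\dots,\tau^{(\ell)}$ with $\si^{(\ell)}$ read as $\vn$.

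Finally I would identify the power of $2$: from the very definition of the $p$-bar-quotient, $\la^{(0)}$ is formed from the parts of $\la$ divisible by $p$, each divided by $p$, so its positive parts correspond bijectively to the positive parts of $\la$ divisible by $p$, whence $h_p(\la)=h(\la^{(0)})$. Substituting this and the displayed formula for $d_{\la\mu}(1)$ into $\mprj\mu=\sum_{\la}D_{\la\mu}\ch\la$ yields exactly the asserted identity. I do not expect a genuine obstacle here: the argument is pure bookkeeping, and the only two points meriting a line of care are the evaluation $q=1$ inside the inverse Kostka polynomial (so that the variable specialisation $-q^2\mapsto-1$ is matched with the definition of $\ki{\cdot}{\cdot}$) and the elementary identity $h_p(\la)=h(\la^{(0)})$, both immediate from the relevant definitions.
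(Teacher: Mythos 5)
Your proposal is correct and is exactly the argument the paper intends: the paper's proof of this corollary is the one-line remark that it "follows from \cref{fockrock}, (\ref{EDSameBlock}) and the definition (\ref{EHatPhi})," and you have simply written out that substitution, including the two small checks ($-q^2\mapsto-1$ matching the definition of $\ki{\cdot}{\cdot}$, and $h_p(\la)=h(\la^{(0)})$) that make the bookkeeping go through.
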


\begin{cory}\label{mattprojPosInt}
Suppose $\rho$ is a $d$-Rouquier $p$-bar-core and $\mu\in\rpstp\rho d$. 
Then $\mprj\mu$ is a non-negative integral linear combination of irreducible characters $\ch\la$ with $\la\in\stp\rho d$.
\end{cory}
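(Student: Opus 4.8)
The plan is simply to unwind the explicit formula in \cref{mattproj} and check that the coefficient of each $\ch\la$ there is a non-negative integer. Writing $\mprj\mu=\sum_{\la\in\stp\rho d}c_\la\,\ch\la$, that corollary gives
\[
c_\la=2^{\lfloor\frac12(h(\la^{(0)})+1-\a\la)\rfloor}\sum\ki{\la^{(0)}}{\si^{(0)}}\prod_{i=1}^{\ell}\lr{\la^{(i)}}{\si^{(i)},\tau^{(i)}}\lr{\mu^{(i-1)}}{\si^{(i-1)},{\tau^{(i)}}'},
\]
where the inner sum runs over all tuples $(\si^{(0)},\dots,\si^{(\ell-1)},\tau^{(1)},\dots,\tau^{(\ell)})$ of partitions, with $\si^{(\ell)}$ read as $\vn$. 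So it suffices to show $c_\la\in\N_0$ for every such $\la$.

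First I would dispose of the prefactor: since $h(\la^{(0)})\gs0$ and $\a\la\in\{0,1\}$, the exponent $\lfloor\tfrac12(h(\la^{(0)})+1-\a\la)\rfloor$ is a non-negative integer, so $2^{\lfloor\frac12(h(\la^{(0)})+1-\a\la)\rfloor}\in\N$. Next, every Littlewood--Richardson coefficient appearing is a non-negative integer and vanishes unless the sizes of the partitions involved match; hence only finitely many tuples contribute, and the inner sum is a genuine finite sum of non-negative-integer multiples of the numbers $\ki{\la^{(0)}}{\si^{(0)}}$. So everything reduces to checking that $\ki{\la^{(0)}}{\si^{(0)}}=K^{-1}_{\la^{(0)}\si^{(0)}}(-1)\in\N_0$.

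This last point is the only place where genuine input is required, because in general $K^{-1}_{\la\si}(q)$ can have negative coefficients and so its value at $q=-1$ need not be non-negative for an arbitrary partition $\la$. Here, however, $\la\in\stp\rho d$, so $\la$ is strict, and then \cref{pbqlem}(i) shows that the component $\la^{(0)}$ is itself a strict partition; \cref{kostlem} (which rests on Stembridge's tableau description of these specialized inverse Kostka numbers) then gives $\ki{\la^{(0)}}{\si^{(0)}}\in\N_0$. Combining, $c_\la$ is a finite sum of products of elements of $\N_0$, hence lies in $\N_0$; and since only finitely many $\la\in\stp\rho d$ occur at all, this proves the corollary. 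I do not anticipate a real obstacle: once \cref{mattproj} and \cref{kostlem} are available the statement is a positivity bookkeeping exercise, the sole subtle point being the need for $\la^{(0)}$ to be strict so that \cref{kostlem} applies.
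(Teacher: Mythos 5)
Your proof is correct and is exactly the paper's argument (the paper's own proof is the one-line "Follows from \cref{mattproj} and \cref{kostlem}"); you have simply spelled out the bookkeeping, including the one genuinely necessary observation that $\la^{(0)}$ is strict so that \cref{kostlem} applies.
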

\begin{pf}
Follows from  \cref{mattproj} and \cref{kostlem}.
\end{pf}


We now use \cref{fockrock} to give the deferred proof of \cref{rockreg}. 
This relies on the following regularization theorem for canonical basis coefficients.

\begin{thm}[{\!\cite[Theorem 3.2]{mfreg}}]\label{fockreg}
 If $\la\in\Par_p(n)$ and $\mu\in\rPar(n)$ then $d_{\la\la^\reg}(q)\neq0$, and $d_{\la\mu}(q)=0$ unless $\la^\reg\dom\mu$.
\end{thm}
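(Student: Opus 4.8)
\cref{fockreg} is the Fock-space counterpart of the module-theoretic regularization result \cref{regthm}, and I would prove it by running James's ladder argument at the level of the canonical basis. (It cannot simply be deduced from \cref{regthm}: that would require an Ariki-type identification of $d_{\la\mu}(1)$ with $[\Sp\la:\Dm\mu]$, which for the level-$1$ Fock space of type $A^{(2)}_{2\ell}$ is essentially the open spin decomposition-number problem.) The starting point is the inductive characterisation of the canonical basis: $G(\mu)$ is the unique bar-invariant element of the submodule of $\calf$ generated by $\ket\vn$ with $d_{\mu\mu}(q)=1$ and $d_{\la\mu}(q)\in q\ZZ[q]$ for $\la\neq\mu$, and it satisfies the dominance-triangularity $d_{\la\mu}(q)=0$ unless $\la\dom\mu$ (which is part of the construction). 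This makes available an induction on the dominance order among restricted $p$-strict partitions of a fixed residue content.

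For each restricted $p$-strict $\mu$ I would introduce a \emph{ladder vector} $A(\mu):=f_{i_1}^{(a_1)}\cdots f_{i_r}^{(a_r)}\ket\vn$, a product of divided powers of Chevalley generators whose exponent sequence records the decomposition of the Young diagram of $\mu$ into its \emph{ladders}: the sets of nodes sharing a common ``colour'' (defined from the abacus display) along which the Brundan--Kleshchev map $\la\mapsto\la^\reg$ slides nodes. The properties needed are that a $p$-strict partition is restricted exactly when its nodes sit at the top of each ladder, that $\la^\reg$ has the same number of nodes on each ladder as $\la$, and that the nodes on one ladder can be added by a single prescribed divided power. Then $A(\mu)$ is bar-invariant and, because $\ket\mu$ is its least-dominant term and occurs with coefficient $1$, has the form $A(\mu)=\ket\mu+\sum_{\la\doms\mu}a_{\la\mu}(q)\ket\la$ with $a_{\la\mu}(q)\in\ZZ[q,q^{-1}]$.

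The combinatorial heart, and the step I expect to be the main obstacle, is the following adaptation of James's lemma: $a_{\la\mu}(q)\neq0$ implies $\la^\reg\dom\mu$, and $a_{\la\mu}(q)\neq0$ whenever $\la^\reg=\mu$. I would prove it by adding the ladders of $\mu$ one at a time (in the order that makes $\ket\mu$ the least-dominant term): at each stage the partitions occurring have all their nodes on the ladders processed so far, and however a divided power distributes its new nodes among the several ladders of a given colour, the partition obtained by sliding those nodes up their ladders accumulates, through each ladder, no more nodes than $\mu$ does --- summing these inequalities yields $\la^\reg\dom\mu$. The non-vanishing refinement requires tracking the monomial that adds the nodes of $\la$ in ladder order, together with an extremality argument within the fibre $\{\la:\la^\reg=\mu\}$ (or an appeal to the positivity $d_{\la\mu}(q)\in\ZZ_{\geq0}[q]$). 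Making the ladder combinatorics in type $A^{(2)}_{2\ell}$ genuinely match the Brundan--Kleshchev definition of $\la\mapsto\la^\reg$ --- especially at the special nodes $0$ and $\ell$ of the Dynkin diagram, where parts divisible by $p$ and the folding of the diagram interact, reflected in the conjugation ${\la^{(\ell)}}'$ of \cref{rockreg} --- is where the real work lies.

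Granting the lemma, the theorem follows by induction on dominance. For the most dominant restricted $\mu$ of its content, $G(\mu)=A(\mu)$. In general $G(\mu)=A(\mu)-\sum_{\nu\doms\mu}c_\nu(q)G(\nu)$ for suitable $c_\nu(q)$, with $\nu$ restricted and strictly dominating $\mu$; by induction every $\la$ occurring in such a $G(\nu)$ has $\la^\reg\dom\nu\doms\mu$, hence $\la^\reg\dom\mu$, and every $\la$ occurring in $A(\mu)$ has $\la^\reg\dom\mu$ by the lemma, so $d_{\la\mu}(q)\neq0\Rightarrow\la^\reg\dom\mu$ --- the triangular half. Finally, taking $\mu=\la^\reg$, the coefficient of $\ket\la$ in $\sum_{\nu\doms\la^\reg}c_\nu(q)G(\nu)$ vanishes (it would need $\nu$ with $\la^\reg\dom\nu\doms\la^\reg$), so $d_{\la\la^\reg}(q)$ equals the coefficient of $\ket\la$ in $A(\la^\reg)$, which is non-zero by the lemma.
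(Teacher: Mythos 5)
The first thing to say is that the paper does not prove this statement at all: it is imported verbatim from \cite[Theorem 3.2]{mfreg}, so there is no internal proof to compare yours against. Your outline does track the strategy of that cited paper — an LLT-style induction on dominance, using bar-invariant monomials of divided powers $f_i^{(a)}$ attached to the ladder decomposition of $\mu$, unitriangularity of the canonical basis, and a James-type lemma controlling which $\ket\la$ can occur in the monomial vector. Your correct observation that the theorem cannot be deduced from \cref{regthm} (absent an Ariki-type theorem in type $A^{(2)}_{2\ell}$) is also worth having on record.

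That said, as a proof the proposal has a genuine gap, and you have located it yourself: the ``combinatorial heart'' — that every $\ket\la$ occurring in $A(\mu)$ satisfies $\la^\reg\dom\mu$, and that $a_{\la\la^\reg}(q)\neq0$ — is asserted, not proved, and essentially all of the content of the theorem lives there. The ladder combinatorics for $p$-strict partitions is genuinely delicate (the residue pattern $0,1,\dots,\ell,\dots,1,0,0,\dots$ means ladders behave differently at the ends of the Dynkin diagram, and one must verify that sliding nodes up ladders really reproduces the Brundan--Kleshchev $\la\mapsto\la^\reg$ and interacts correctly with the divided powers $f_0^{(a)}$ and $f_\ell^{(a)}$). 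Moreover, the fallback you suggest for the non-vanishing half — positivity $d_{\la\mu}(q)\in\Z_{\geq0}[q]$ — does not suffice: positivity prevents cancellation in the subtraction step but gives no lower bound on the coefficient of $\ket\la$ in $A(\la^\reg)$ itself, so you still need the explicit tracking of the monomial that adds the nodes of $\la$ in ladder order. The surrounding scaffolding (bar-invariance of $A(\mu)$, the recursive peeling $G(\mu)=A(\mu)-\sum_\nu c_\nu(q)G(\nu)$, and the clean argument that the subtraction cannot kill the coefficient of $\ket\la$ when $\mu=\la^\reg$) is correct, so once the lemma is supplied the rest goes through as you describe.
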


\begin{pf}[Proof of \cref{rockreg}]
The \lcnamecref{rockreg} asserts that $\la^\reg$ is the partition $\nu\in\pstp\rho d$ defined by
\[
\nu^{(i)}=
\begin{cases}
\la^{(i)}&\text{if }0\ls i\ls\ell-2,
\\
\la^{(\ell-1)}+{\la^{(\ell)}}'&\text{if }i=\ell-1,
\\
\vn&\text{if }i=\ell.
\end{cases}
\]
\cref{fockreg} shows that $\la^\reg$ is the most dominant $p$-strict partition $\mu$ for which $d_{\la\mu}(q)\neq0$. So to show that $\la^\reg=\nu$ we must show that $d_{\la\nu}(q)\neq0$, and that if $\mu\dom\nu$ with $d_{\la\mu}(q)\neq0$ then $\mu=\nu$.

Showing that $d_{\la\nu}(q)\neq0$ is straightforward: in order to obtain a non-zero summand in the formula in \cref{fockrock}, we must take $\si^{(i)}=\la^{(i)}$ for $0\ls i\ls\ell-1$, $\tau^{(i)}=\vn$ for $1\ls i\ls\ell-1$, and $\tau^{(\ell)}=\la^{(\ell)}$, giving $d_{\la\nu}(q)=q^{2|\la^{(\ell)}|}$.

Now take a $p$-strict partition such that $\mu\dom\nu$ and $d_{\la\mu}(q)\neq0$. From (\ref{EDSameBlock}), $\mu$ must lie in $\rpstp\rho d$. Choose partitions $\si^{(i)},\tau^{(i)}$ for which the summand in \cref{fockrock} is non-zero. We assume for the rest of the proof that $p\gs5$; a minor modification is needed when $p=3$, which we leave to the reader.

In view of \cref{succdom}, the assumption that $\mu\dom\nu$ means that
\[
|\mu^{(0)}|+\dots+|\mu^{(r)}|\ls|\la^{(0)}|+\dots+|\la^{(r)}|
\]
for $0\ls r\ls\ell-2$. On the other hand, the non-vanishing of the polynomial $K^{-1}_{\la^{(0)}\si^{(0)}}(-q^2)$ and of the Littlewood--Richardson coefficients $\lr{\la^{(i)}}{\si^{(i)},\tau^{(i)}}$ and $\lr{\mu^{(i-1)}}{\si^{(i-1)},{\tau^{(i)}}'}$ implies that
\[
|\mu^{(0)}|+\dots+|\mu^{(r)}|=|\la^{(0)}|+\dots+|\la^{(r)}|+|\tau^{(r+1)}|
\]
for $0\ls r\ls\ell-1$. So $|\tau^{(1)}|=\dots=|\tau^{(\ell-1)}|=0$ and $|\tau^{(\ell)}|=|\la^{(\ell)}|$. Again by the non-vanishing of the Littlewood--Richardson coefficients it then follows that $\tau^{(1)}=\dots=\tau^{(\ell-1)}=\vn$, while $\tau^{(\ell)}=\la^{(\ell)}$. This in turn gives $\si^{(i)}=\mu^{(i)}$ for $0\ls i\ls\ell-2$, and $\si^{(i)}=\la^{(i)}$ for $1\ls i\ls\ell-1$, so that
\begin{itemize}
\item
$K^{-1}_{\la^{(0)}\mu^{(0)}}(t)\neq0$,
\item
$\mu^{(i)}=\la^{(i)}$ for $1\ls i\ls\ell-2$,
\item
$\lr{\mu^{(\ell-1)}}{\la^{(\ell-1)},{\la^{(\ell)}}'}\neq0$.
\end{itemize}
In particular, $|\mu^{(i)}|=|\nu^{(i)}|$ for all $i$, so (again using \cref{succdom}) the assumption $\mu\dom\nu$ amounts to the statement that $\mu^{(i)}\dom\nu^{(i)}$ for all $i$. But now the only way that $K^{-1}_{\la^{(0)}\mu^{(0)}}(t)=K^{-1}_{\nu^{(0)}\mu^{(0)}}(t)$ can be non-zero is if $\mu^{(0)}=\nu^{(0)}=\la^{(0)}$. A standard result about Littlewood--Richardson coefficients is that the most dominant partition $\xi$ for which $\lr\xi{\la^{(\ell-1)},{\la^{(\ell)}}'}\neq0$ is $\la^{(\ell-1)}+{\la^{(\ell)}}'$, so we also obtain $\mu^{(\ell-1)}=\nu^{(\ell-1)}$, and therefore $\mu=\nu$.
\end{pf}

\subsection{Gelfand--Graev induction}

Our aim is to explore the relationship between the characters $\prj\mu$ and $\mprj\mu$ by considering a third set of projective characters obtained by inducing the projective character $\ch\rho$ along special words which we call thick Gelfand--Graev words. Recall the induction operators $F_i$ from \cref{branchsec}.

Given $i\in J$ and $k\gs1$, we define the corresponding \emph{thick Gelfand--Graev word} (cf.\ \cite[(4.2.1)]{kl})
\begin{align}
\label{EGGW}
\bg^{i,k}&:=\ell^k(\ell-1)^{2k}\dots(i+1)^{2k}i^k\dots1^k0^{2k}1^k\dots i^k
\\
\intertext{and the corresponding induction operator}
\fg ik&:=F_i^k\dots F_1^kF_0^{2k}F_1^k\dots F_i^kF_{i+1}^{2k}\dots F_{\ell-1}^{2k}F_\ell^k.
\label{EFIK}
\end{align}
We want to know what these operators do to characters in a RoCK block.

\begin{rmk}\label{RDP}
We could define divided power induction operators $F_i^{(r)}:=\frac{F_i^r}{r!}$ and use them in place of the usual powers in the definition of $\fg ik$. This would produce slightly simpler formulas in \cref{ggop,lprjform} below but would not make things any easier, since a priori, $F_i^{(r)}$ is defined on the Grothendieck groups with scalars extended from $\Z$ to $\Q$ (although one can check, using \cite[Lemma 22.3.15]{KBook} for the case of large $p$, that in fact $F_i^{(r)}$ is always defined on the Grothendieck groups without extending scalars; we will not pursue this).
\end{rmk}

\begin{propn}\label{ggop}
Take $i\in J$, $\la\in\stp\rho c$ and $\al\in\stp\rho{c+k}$, where $k\geq 1$ and $c+k\ls d$. Then $\ch\al$ occurs in $\fg ik\ch\la$ \iff the \pbq $(\al^{(0)},\dots,\al^{(\ell)})$ is obtained from $(\la^{(0)},\dots,\la^{(\ell)})$ by adding $k$ nodes in components $i$ and $i+1$, with no two nodes added in the same column of component $i$ or in the same row of component $i+1$. If $\al$ satisfies this condition, define
\[
f(\la,\al)=\bigl|\lset{c\gs1}{\al^{(0)}\setminus\la^{(0)}\text{ contains a node in column $c$ but not in column $c+1$}}\bigr|.
\]
Then
%
\[
\chm{\fg ik\ch\la}{\ch\al}=2^{f(\la,\al)+\frac12(k(p-2)+h(\la^{(0)})-h(\al^{(0)})+\a\la-\a\al)}(2k)!^{\ell-i}k!^{2i+1}.
\]
\end{propn}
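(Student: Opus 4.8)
The plan is to unwind $\fg ik$ into an iterated application of the single-step induction operators and then use the branching formula \eqref{EFi} repeatedly. Writing $N:=kp$ for the length of the word $\bg^{i,k}$ and listing its letters as $g_1,\dots,g_N$ in the order in which the corresponding operators are applied, iterating \eqref{EFi} gives
\[
\chm{\fg ik\ch\la}{\ch\al}=\sum_{\underline\la}\ \prod_{t=1}^{N}a_{\la^{[t-1]}\la^{[t]}},
\]
where the sum runs over all sequences $\la=\la^{[0]}\subset\la^{[1]}\subset\dots\subset\la^{[N]}=\al$ of strict partitions in which $\la^{[t]}\setminus\la^{[t-1]}$ is a single node of residue $g_t$, and $a_{\la^{[t-1]}\la^{[t]}}$ equals $2$ when $\la^{[t-1]}$ is odd and $\la^{[t]}$ even and equals $1$ otherwise. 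The statement then reduces to (a) deciding for which $\al$ at least one such sequence exists, (b) counting the sequences for a fixed admissible $\al$, and (c) evaluating the product of twos along each sequence.

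For (a), a sequence can exist only if $\la\subseteq\al$; since $\la\in\stp\rho c$ and $\al\in\stp\rho{c+k}$ with $c,c+k\ls d$, \cref{addbars} then forces $\la^{(j)}\subseteq\al^{(j)}$ for all $j$ and shows $\al$ is obtained from $\la$ by adding $k$ $p$-bars. The refinement — that the added cells of the $p$-bar-quotient lie only in components $i$ and $i+1$, forming a horizontal strip in component $i$ (no two in a common column) and a vertical strip in component $i+1$ (no two in a common row) — I would extract by matching the ordered residue word $\bg^{i,k}$ against the forced order in which columns of $\la$ can be filled: each admissible single $p$-bar addition fills $p$ consecutive columns of a single row, in increasing order of column, and the residues of those $p$ columns form exactly one full period $0,1,\dots,\ell,\dots,1,0$. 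The $d$-Rouquier hypothesis on $\rho$ guarantees the columns governing distinct components or distinct rows of the quotient are far apart and hence do not interact. The asymmetry between the two strips reflects that residue $i$ occupies two runs of length $k$ in $\bg^{i,k}$ whereas residue $i+1$ occupies a single run of length $2k$. Conversely, given an $\al$ satisfying the stated condition, exhibiting one sequence is routine, which yields the `if' direction of the equivalence.

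For (b) and (c), fix such an $\al$. Because every $\la^{[t]}$ must be strict and $\rho$ is $d$-Rouquier, the addable $j$-nodes of each $\la^{[t]}$ are tightly constrained, so the $N$ added nodes are confined to a fixed skew region, and the only freedom is the relative order of nodes with equal residue inside a common maximal run of $\bg^{i,k}$; processing the word one run at a time (equivalently, using the divided powers $F_j^{(r)}$ of \cref{RDP}), a run of residue $j$ and length $m$ contributes a factor $m!$. Multiplying over all runs gives exactly $(2k)!^{\ell-i}k!^{2i+1}$, since $\bg^{i,k}$ has $\ell-i$ runs of length $2k$ (residues $0,i+1,\dots,\ell-1$) and $2i+1$ runs of length $k$ (residue $\ell$, together with two runs each for residues $1,\dots,i$). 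For the power of two, along any fixed sequence I would count the steps at which a node is appended to a positive even-valued part — these are precisely the odd-to-even transitions contributing $a_{\la^{[t-1]}\la^{[t]}}=2$. With the added region now pinned down this is a finite bookkeeping computation: the parity flips caused by completing the $k$ $p$-bars, the change $h(\la^{(0)})-h(\al^{(0)})$ in the number of parts divisible by $p$, the change $\a\la-\a\al$ of the global invariant, and the local count $f(\la,\al)$ of maximal column-runs of $\al^{(0)}\setminus\la^{(0)}$ together account for the exponent $f(\la,\al)+\tfrac12\bigl(k(p-2)+h(\la^{(0)})-h(\al^{(0)})+\a\la-\a\al\bigr)$, which one also checks is a non-negative integer. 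Combining (a)--(c) gives the claimed value.

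The main obstacle is parts (a)--(b): proving that the node-addition sequences are exactly the expected ones. One must show that no sequence can temporarily stray outside the region controlled by the RoCK combinatorics, and that inside each maximal run the constituent nodes really are freely interleavable, so that a run of length $m$ contributes precisely $m!$ and not less. Both rest on a careful analysis of addable nodes of strict partitions in $\blk\rho c$ together with the spreading afforded by the $d$-Rouquier condition; once this is established, the parity count in (c) is delicate but essentially mechanical.
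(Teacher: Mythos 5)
Your overall strategy coincides with the paper's: expand $\fg ik$ into single-node inductions via \eqref{EFi}, use \cref{addbars} to see that $\al$ is obtained from $\la$ by adding $p$-bars (equivalently, nodes of the \pbq), identify which components can receive nodes, count the admissible orderings, and track the branching coefficients. But there is a genuine error in the counting step, and it is exactly at the point you flag as the ``main obstacle.'' You claim that within each maximal run of $\bg^{i,k}$ the nodes are freely interleavable, so a run of length $m$ contributes $m!$, giving $(2k)!^{\ell-i}k!^{2i+1}$ orderings. This fails for the residue-$0$ run: each added hook contains \emph{two} horizontally adjacent $0$-nodes (the columns congruent to $0$ and $1$ mod $p$), and the left one must be added before the right one for the intermediate diagrams to be partitions. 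So that run contributes $(2k)!/2^k$, not $(2k)!$, and the total ordering count is $(2k)!^{\ell-i}k!^{2i+1}/2^k$. Correspondingly, your parity bookkeeping is off by the compensating factor: for $i>0$ every node addition flips $\a{}$ (your characterisation of the odd-to-even transitions as ``appending to a positive even part'' is also not right --- appending to a positive \emph{odd} part flips $\a{}$ too), so each admissible sequence carries branching coefficient $2^{\frac12(kp+\a\la-\a\al)}$, not $2^{\frac12(k(p-2)+\a\la-\a\al)}$. The two errors of $2^{\pm k}$ cancel in the product, but neither intermediate claim is correct as stated, and nothing in your argument produces the cancellation.

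Two further points. First, your description of a single $p$-bar addition as filling ``$p$ consecutive columns of a single row'' is wrong: the added region is a hook (an antidiagonal segment plus a horizontal segment) spread over several rows, one node per column; the correct shape is what makes the adjacency arguments (same column of component $i$, same row of component $i+1$, and the forced order of the two $0$-nodes) work. Second, the case $i=0$ is not really addressed: there the added $0$-hooks corresponding to consecutive columns of $\al^{(0)}\setminus\la^{(0)}$ are vertically stacked and impose \emph{additional} ordering constraints, which is where the factor $2^{f(\la,\al)}$ comes from, and nodes added in column $1$ do not flip $\a{}$, which is where $h(\la^{(0)})-h(\al^{(0)})$ enters. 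These need separate treatment, not just ``bookkeeping.''
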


\begin{pf}
First we assume $i>0$.

For $j\in I$, we define a \emph{$j$-hook} to be a set of nodes of the form
\[
\{(r+\ell-j,c+j+1),(r+\ell-j-1,c+j+2),\dots,(r,c+\ell+1),(r,c+\ell+2),\dots,(r,c+j+p)\}
\]
for $r\gs1$ and $c\gs0$ with $p\mid c$. In other words, a $j$-hook is a set of $p$ nodes with residues in the configuration below.
\[
{\footnotesize
\gyoung(:::^\hf;\ell{{\ell{-}1}}^\hf\hdts1001^\hf\hdts{{j{-}2}}{{j{-}1}},::^\hf;{{\ell{-}1}},::'\hf\hf\edts,:;{{j{+}1}},j)
}
\]
In \cite[Section 4.1a]{kl}, Kleshchev and Livesey observe that if $\la\in\stp\rho c$ with $c<d$, then adding a node to the $j$th component of the \pbq of $\la$ corresponds to adding a $j$-hook to $\la$.

By \cref{addbars}, if $\la\in\stp\rho c$ and $\al\in\stp\rho{c+k}$ with $\al\supseteq\la$, then $\al$ can be obtained from $\la$  by adding some $p$-bars. Thus $\al^{(j)}\supseteq\la^{(j)}$ for all $j\in I$. In particular, if $\ch\al$ occurs in $\fg ik\ch\la$, then $\al$ is obtained from $\la$ by adding $j$-hooks (for various values of $j$). But by the branching rule $\al$ is also obtained from $\la$ by adding nodes one at a time, with a specific sequence of residues determined by the definition of $\fg ik$. In particular, the last $k$ nodes added must all have residue $i$, so there must be a strict partition $\be$ with $\la\subset\be\subset\al$ such that $\al\setminus\be$ comprises $k$ nodes of residue $i$.

In any of the individual $j$-hooks comprising $\al\setminus\la$, the last node added must either be the leftmost node of residue $j$, or the rightmost node of residue $j-1$. So the last node added can have residue $i$ only if $j=i$ or $i+1$. Moreover, the assumption that $i>0$ means that the last two nodes added in a given $j$-hook cannot both have residue $i$. So the only way the last $k$ nodes added in reaching $\al$ from $\la$ can all have residue $i$ is if all the added hooks are $i$-hooks or $(i+1)$-hooks, and each of these hooks contains exactly one node of $\al\setminus\be$. In particular, the \pbq of $\al$ is obtained from the \pbq of $\la$ by adding nodes in components $i$ and $i+1$.

If two nodes are added to the same column of $\la^{(i)}$, the corresponding $i$-hooks are diagonally adjacent, as in the following diagram.
\Yboxdim{17pt}
\[
{\scriptsize
\gyoung(::::::^\hf^\hf^\hf^\hf;\ell^\hf\hdts00^\hf\hdts{{i{-}1}},::::::^\hf^\hf^\hf'\hf\hf\edts,:::::^\hf^\hf^\hf;i,:^\hf;\ell^\hf\hdts00^\hf\hdts{{i{-}1}},:'\hf\hf\edts,i)
}
\]
But now the $i$-hook on the right cannot contain a node of $\al\setminus\be$, because the $i$-node at the left of this hook must be added before the $(i-1)$-node at the right of the hook on the left. This is a contradiction. Similarly, if two nodes are added to the same row of $\la^{(i+1)}$, then the corresponding hooks are horizontally adjacent, and we reach a contradiction in the same way.
\[
{\scriptsize
\gyoung(::::::^\hf^\hf^\hf^\hf;\ell^\hf\hdts00^\hf\hdts i,::::::^\hf^\hf^\hf'\hf\hf\edts,:^\hf;\ell^\hf\hdts00^\hf\hdts i{{i{+}1}},:'\hf\hf\edts,{{i{+}1}})
}
\]
This is enough to prove the `only if' part of the \lcnamecref{ggop}. For the `if' part, suppose the \pbq of $\al$ is obtained form the \pbq of $\la$ by adding nodes in different columns of $\la^{(i)}$ and in different rows of $\la^{(i+1)}$. To show that $\ch\al$ occurs in $\fg ik\ch\la$, we show that we can get from $\la$ to $\al$ by adding nodes one at a time with the appropriate sequence of residues.

We begin by adding all the $\ell$-nodes in $\al\setminus\la$ (in an arbitrary order), then all the $(\ell-1)$-nodes, and so on, down to the $(i+1)$-nodes. Then we add an $i$-node in each hook, then an $(i-1)$-node in each hook, and so on, working along the arm of each hook, until we add a node of residue $1$ to each hook. Then we add all nodes of residue $0$ in $\al\setminus\la$, and then all remaining nodes of residues $1,\dots,i$ in turn. The assumptions on $\al$ mean that we obtain a strict partition at each stage, so $\ch\al$ does occur in $\fg ik\ch\la$.

The construction in the preceding paragraph enables us to compute the coefficient of $\ch\al$ in $\fg ik\ch\la$. To do this, we need to count possible orders in which the nodes of $\al\setminus\la$ can be added to $\la$ with the required sequence of residues, so that the partition obtained at each stage is strict. For each term $F_j^{ak}$ appearing in $\fg ik$, we need to add $ak$ nodes of residue $j$, and it clear that the choice made in the previous paragraph is the only possibility: in order to be able to add the nodes of residue $0$ in a given hook when applying $F_0^{2k}$, we must already have added the nodes of residues $i,i-1,\dots,1$ to the left of the nodes of residue $0$ in that hook. So our only choice is in which order to add the $j$-nodes for each factor $F_j^{ak}$. In each case we have a free choice, except for the factor $F_0^{2k}$: here in each hook the leftmost $0$-node must be added before the rightmost one. So the number of choices of order is
\[
k!\times\prod_{j=i+1}^{\ell-1}(2k)!\times\prod_{i=1}^ik!^2\times\frac{(2k)!}{2^k}=\frac{k!^{2i+1}(2k)!^{\ell-i}}{2^k}.
\]
It remains to consider the coefficients $a_{\la\mu}$ appearing in the branching rule. Because $i>0$, the assumptions on $\al$ give $\al^{(0)}=\la^{(0)}$, which in turn implies that $h(\la)=h(\al)$; therefore, as we go from $\la$ to $\al$ by adding nodes, the partitions obtained alternate between even and odd. So the number of times we pass from an odd partition to an even partition is $\frac12(kp+\a\la-\a\al)$. This yields
\[
\chm{\fg ik\ch\la}{\ch\al}=2^{\frac12(k(p-2)+\a\la-\a\al)}(2k)!^{\ell-i}k!^{2i+1},
\]
which agrees with the \lcnamecref{ggop} because $\la^{(0)}=\al^{(0)}$.

Now we consider the case where $i=0$. Now in order for for $\ch\al$ to appear in $\fg ik\ch\la$, it must be the case that $\al$ is obtained from $\la$ by adding $j$-hooks, and now there must exist a strict partition $\be$ with $\la\subset\be\subset\al$ such that $\al\setminus\be$ comprises $2k$ nodes of residue $0$. Arguing as in the previous case, this implies that $\al^{(j)}=\la^{(j)}$ for $j\gs2$, while $\al^{(1)}$ is obtained from $\la^{(1)}$ by adding nodes in distinct rows, and $\al^{(0)}\supseteq\la^{(0)}$. Now if two nodes are added in the same column of $\la^{(0)}$, then the corresponding $0$-hooks are vertically stacked, as in the following diagram.
\[
{\scriptsize
\gyoung(::^\hf;\ell^\hf\hdts10,::'\hf\hf\edts\ell^\hf\hdts10,::'\hf\hf\edts/\xhf,:;1,01,0)
}
\]
But now the upper $0$-bar cannot contain any nodes of $\al\setminus\be$, giving a contradiction. So again we find that the nodes added to $\la^{(0)}$ to obtain $\al^{(0)}$ must be added in distinct columns.

Now suppose $\al$ satisfies the conditions, and consider how we can obtain $\al$ from $\la$ by applying $\fg0k:=F_0^{2k}F_1^{2k}\dots F_{\ell-1}^{2k}F_\ell^k$. For each of the residues $j=\ell,\ell-1,\dots,1$, we can add the $j$-nodes of $\al\setminus\la$. In each added $1$-hook, the two $0$-nodes must be added in order from left to right, but otherwise there are no restrictions on the $1$-hooks. The $0$-nodes occurring in the added $0$-hooks can be added in any order, except that when two added $0$-hooks correspond to nodes in consecutive columns of $\al^{(0)}$, then the rightmost $0$-node of the left hook is adjacent to the leftmost $0$-node of the right hook (as in one of the following diagrams) so that these two nodes must be added in a specific order.
\[
{\scriptsize
\gyoung(::::^\hf^\hf^\hf;\ell^\hf\hdts0,::::^\hf^\hf'\hf\hf\edts,:^\hf;\ell^\hf\hdts00,:'\hf\hf\edts,0)
}\qquad
{\scriptsize
\gyoung(::::^\hf^\hf^\hf;\ell^\hf\hdts0,::::^\hf^\hf'\hf\hf\edts,:^\hf:^\hf:;0,:^\hf;\ell^\hf\hdts0,:'\hf\hf\edts,0)
}
\]
As a result, we obtain a coefficient $k!(2k)!^\ell/2^{k-f(\la,\al)}$. But we also need to take into account the coefficients coming from the branching rule: the partitions obtained as we add nodes alternate between even and odd, except when we add a node in column $1$. So we obtain a further factor $2^{\frac12(kp+\a\la-\a\al+h(\la)-h(\al))}$. Putting these coefficients together, we obtain
\[
\chm{\fg0k\ch\la}{\ch\al}=2^{f(\la,\al)+\frac12(k(p-2)+h(\la)-h(\al)+\a\la-\a\al)}(2k)!^{\ell}k!,
\]
in agreement with the \lcnamecref{ggop}.
\end{pf}

\subsection{Projective characters obtained by induction}

Our aim is to explore the relationship between the characters $\prj\mu$ and $\mprj\mu$, which we do by considering a third set of projective characters.

Recall from \cref{SSPar} the set $\pfstp\rho d\subseteq \stp\rho d$ of the \pfr-partitions in $\pstp\rho d$. By \cref{pbqlem}(ii), a partition $\la\in \pstp\rho d$ is $p'$ if and only if $\la^{(0)}=\vn$. 
Recall (\ref{EFIK}). 
Given $\la\in\pfstp\rho d$, we will define  a projective character $\lprj\la$ by inducing the projective character $\ch\rho$:
\begin{equation}\label{EPhiTilde}
\lprj\la=\prod_{i=1}^\ell\prod_{r=1}^{\la^{(i)}_1}\fg{i-1}{{\la^{(i)}}'_r}\,\ch\rho\in \operatorname{PCh}^{\rho,d},
\end{equation}
where the factors $\fg{i-1}{{\la^{(i)}}'_r}$ can be taken in any order. (It is not obvious at this stage that $\lprj\la$ is independent of the order of the factors, but we will see   in \cref{lspan}(ii)  that this is the case. For now, we define $\lprj\la$ by fixing an arbitrary order for each $\la$.)

For any strict partition $\pi$ and any composition $\ga$ let $\bar{c}(\pi;\ga)$ be the number of ways $\pi$ can be obtained from $\vn$ by adding at each step $\ga_i$ nodes all in different columns such that each step a strict partition is obtained. Now given $\al\in\stp\rho d$ and $\la\in\pfstp\rho d$, define
\begin{align*}
\widetilde D_\la&=2^{\frac12(d(p-2)+\a\la-\a\rho)}\prod_{i=1}^\ell\prod_{r\gs1}(2{\la^{(i)}}'_r)!^{l-i+1}{\la^{(i)}}'_r!^{2i-1},
\\
\widetilde D_{\la\al}&=\widetilde D_\la\sum_{\substack{\be^{(1)},\dots,\be^{(\ell)}\in\compn\\\ga^{(1)},\dots,\ga^{(\ell)}\in\compn\\\be^{(i)}+\ga^{(i)}={\la^{(i)}}'}}\bar{c}(\al^{(0)};\ga^{(1)})\prod_{i=1}^\ell\left[(\ypm{\be^{(i)}}\otimes\sgn)\tens\ypm{\ga^{(i+1)}}:\spe{\al^{(i)}}\right],
\end{align*}
where we read $\ga^{(\ell+1)}$ as $\vn$. Then we can deduce the following result from \cref{ggop}.

\begin{propn}\label{lprjform}
Suppose $\al\in\stp\rho d$ and $\la\in\pfstp\rho d$. Then $\ch\al$ occurs in $\lprj\la$ \iff $\widetilde D_{\la\al}\neq0$.
Furthermore, if $\al$ is a \pfr-partition, then $\chm{\lprj\la}{\ch\al}=\widetilde D_{\la\al}$.
\end{propn}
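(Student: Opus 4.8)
The plan is to unwind (\ref{EPhiTilde}) by repeatedly applying \cref{ggop}. First I would fix once and for all an order for the factors $\fg{i-1}{{\la^{(i)}}'_r}$ in (\ref{EPhiTilde}); the convenient choice is to apply all the $\fg0$-factors first, then all the $\fg1$-factors, and so on, ending with the $\fg{\ell-1}$-factors, taking the factors within the $\fg{i-1}$-block in the order $r=1,2,\dots$. Applying \cref{ggop} to one factor at a time, and using \cref{addbars} to see that the $p$-bar-quotient only grows at each step, one obtains $\chm{\lprj\la}{\ch\al}$ as a sum, over all chains $\rho=\al^{[0]}\subseteq\al^{[1]}\subseteq\dots\subseteq\al^{[N]}=\al$ of strict partitions (with $N=\sum_{i=1}^\ell\la^{(i)}_1$), of the product over the steps of the coefficients supplied by \cref{ggop}; only chains in which each step meets the combinatorial condition of \cref{ggop} contribute. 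Passing to $p$-bar-quotients, and using that $\fg i{\cdot}$ leaves component $0$ of the quotient unchanged for $i>0$, such a chain is exactly a recipe for building each component $\al^{(j)}$ (for $0\le j\le\ell$) up from $\vn$: component $j$, for $1\le j\le\ell-1$, receives horizontal strips from the $\fg j$-factors and vertical strips from the $\fg{j-1}$-factors; component $\ell$ receives vertical strips from the $\fg{\ell-1}$-factors; and component $0$ receives horizontal strips from the $\fg0$-factors, subject to the component-$0$ partition remaining strict at every stage.

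Next I would decouple the components. The factor $\fg{j-1}{{\la^{(j)}}'_r}$ contributes ${\la^{(j)}}'_r$ new nodes; some number $\be^{(j)}_r$ of them form a vertical strip added to component $j$, and the remaining $\ga^{(j)}_r:={\la^{(j)}}'_r-\be^{(j)}_r$ form a horizontal strip added to component $j-1$. Once the compositions $\be^{(j)},\ga^{(j)}$ with $\be^{(j)}+\ga^{(j)}={\la^{(j)}}'$ have been fixed, the choices that go into building the different components become independent of one another, since the vertical strip inserted into component $j$ and the horizontal strip inserted into component $j-1$ may be chosen separately. With the order of factors chosen as above, each component $\al^{(j)}$ with $j\ge1$ is built by first adding vertical strips of sizes $\be^{(j)}_1,\be^{(j)}_2,\dots$ (from the $\fg{j-1}$-factors) and then adding horizontal strips of sizes $\ga^{(j+1)}_1,\ga^{(j+1)}_2,\dots$ (from the $\fg j$-factors, reading $\ga^{(\ell+1)}=\vn$), while $\al^{(0)}$ is built by adding horizontal strips of sizes $\ga^{(1)}_1,\ga^{(1)}_2,\dots$ while staying strict.

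Then I would identify the per-component counts. By the Pieri rule — equivalently, because $\ypm{\be}\otimes\sgn$ and $\ypm\ga$ correspond to products of elementary and complete symmetric functions — together with \cref{msgnm}, the number of ways of building $\al^{(j)}$ by vertical strips of sizes $\be^{(j)}_r$ followed by horizontal strips of sizes $\ga^{(j+1)}_r$ equals $[(\ypm{\be^{(j)}}\otimes\sgn)\tens\ypm{\ga^{(j+1)}}:\spe{\al^{(j)}}]$, and the number of ways of building the strict partition $\al^{(0)}$ by horizontal strips of sizes $\ga^{(1)}_r$ is $\bar c(\al^{(0)};\ga^{(1)})$ by definition. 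As for the scalars in \cref{ggop}: the ``number of orderings'' factors $(2k)!^{\ell-i}k!^{2i+1}$ (taken with $i=j-1$, $k={\la^{(j)}}'_r$) multiply together to give exactly the product of factorials appearing in the definition of $\widetilde D_\la$, while the powers of $2$ telescope along the chain — the terms $\tfrac12k(p-2)$ summing to $\tfrac12 d(p-2)$, the $h$-terms and $\a$-terms telescoping, and the $f$-terms depending only on the component-$0$ part of the chain. When $\al$ is $p'$ we have $\al^{(0)}=\vn$, so $\bar c(\al^{(0)};\ga^{(1)})$ forces $\ga^{(1)}=\vn$, the $f$- and $h$-terms vanish, and the telescoped power of $2$ equals the prefactor $2^{\frac12(d(p-2)+\a\la-\a\rho)}$ of $\widetilde D_\la$; assembling everything then gives $\chm{\lprj\la}{\ch\al}=\widetilde D_{\la\al}$. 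For general $\al$ the $f$- and $h$-terms depend on the chain, so one does not get a clean product, but every term in the sum is a non-negative integer, and $\widetilde D_{\la\al}$ is itself a sum of non-negative integers built from the same data; hence $\ch\al$ occurs in $\lprj\la$ if and only if $\widetilde D_{\la\al}\ne0$.

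The main obstacle is the decoupling in the second paragraph: one must check carefully that the constraints in \cref{ggop}, which at a single step simultaneously involve two adjacent components of the quotient, really do factor across the components once the splittings $\be^{(j)}+\ga^{(j)}={\la^{(j)}}'$ are fixed — and, because the vertical strip into component $j+1$ and the horizontal strip into component $j$ come from the same $\fg j$-factor, that the intermediate partitions occurring for one component place no hidden restriction on the choices for its neighbour. A secondary, purely bookkeeping difficulty is tracking the powers of $2$ in \cref{ggop} (the $f$-, $h$- and $\a$-terms) closely enough to confirm that, in the $p'$ case, they telescope to exactly the prefactor of $\widetilde D_\la$.
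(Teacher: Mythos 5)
Your argument is essentially the paper's own proof: both unwind the definition of $\lprj\la$ one Gelfand--Graev factor at a time via \cref{ggop}, split the ${\la^{(i)}}'_r$ nodes of each factor between the two adjacent components of the \pbq to get the compositions $\be^{(i)},\ga^{(i)}$, identify the per-component counts as the multiplicities of \cref{msgnm} via the Pieri rule, and note that the scalar factors from \cref{ggop} multiply to $\widetilde D_\la$ when $\al$ is \pfr\ while the non-negativity of every summand gives the ``\iff'' for general $\al$. The decoupling you flag as the main obstacle is immediate from the statement of \cref{ggop} (the only joint datum linking the two components at a given step is the total number of nodes added, and strictness only constrains component $0$, which is what $\bar c$ records), so there is no gap.
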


\begin{pf}
We construct $\lprj\la$ by starting from $\ch\rho$ and applying each of the operators $\fg{i-1}{{\la^{(i)}}'_r}$, for $1\ls i\ls\ell$ and $1\ls r\ls\la^{(i)}_1$. We start from the \pbq of $\rho$, i.e.\ $(\vn,\dots,\vn)$, and when we apply $\fg{i-1}{{\la^{(i)}}'_r}$, we add ${\la^{(i)}}'_r$ nodes in components $i-1$ and $i$ in accordance with \cref{ggop}, and we consider the possible choices of how to add these nodes. Let $\be^{(i)}_r$ be the number of nodes we add in component $i$, and $\ga^{(i)}_r$ the number of nodes we add in component $i-1$. This defines partitions $\be^{(i)},\ga^{(i)}$ for $1\ls i\ls\ell$ with $\be^{(i)}+\ga^{(i)}={\la^{(i)}}'$, and we need to consider all possible such choices of $\be^{(i)},\ga^{(i)}$. Take a particular choice of $\be^{(i)},\ga^{(i)}$, and consider the coefficient of $\ch\al$ obtained. 
Recall from \cref{ggop} that when we apply $\fg{i-1}{{\la^{(i)}}'_r}$, the nodes added in component $i-1$ must be in distinct columns, and the nodes added in component $i$ must be in distinct rows. So (by the Pieri rule) the number of ways of obtaining the \pbq $(\al^{(0)},\al^{(1)},\dots,\al^{(\ell)})$ is
\begin{align*}
&\bar{c}(\al^{(0)};\ga^{(1)})\prod_{i=1}^\ell\lr{\al^{(i)}}{(\ga^{(i+1)}_1),(\ga^{(i+1)}_2),\dots\ (1^{\be^{(i)}_1}),(1^{\be^{(i)}_2}),\dots}\\
&=\bar{c}(\al^{(0)};\ga^{(1)})\prod_{i=1}^\ell\left[(\ypm{\be^{(i)}}\otimes\sgn)\tens\ypm{\ga^{(i+1)}}:\spe{\al^{(i)}}\right]
\end{align*}
by \cref{msgnm}; here we read $\ga^{(\ell+1)}=\vn$.

We sum over all possible choices of $\be^{(i)},\ga^{(i)}$ to get $\widetilde D_{\la\al}/\widetilde D_\la$; so the coefficient of $\ch\al$ is non-zero \iff $\widetilde D_{\la\al}\neq0$. In the case where $\al$ is a \pfr-partition, the product of the coefficients arising from \cref{ggop} is $\widetilde D_\la$, so the coefficient of $\ch\al$ in $\lprj\la$ is $\widetilde D_{\la\al}$.
\end{pf}

Our next task is to show that the characters $\lprj\la$ are \li. First we use \cref{lprjform} to give more information about the structure of the characters $\lprj\la$. Recall the partial order $\sucq$ on multipartitions from \cref{rouqsec}.

\begin{propn}\label{ldom}
Suppose $\la\in\pfstp\rho d$. Then the character $\ch\la$ occurs in $\lprj\la$, while any character $\ch\al$ occurring in $\lprj\la$ satisfies
\[
(\la^{(0)},\dots,\la^{(\ell)})\preq(\al^{(0)},\dots,\al^{(\ell)})\preq({\la^{(1)}}',\dots,{\la^{(\ell)}}',\vn).
\]
\end{propn}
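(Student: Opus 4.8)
The plan is to derive both assertions from \cref{lprjform}. For the first, note that since $\la$ is $p'$ we have $\la^{(0)}=\vn$ by \cref{pbqlem}(ii), so by \cref{lprjform} it suffices to show $\widetilde D_{\la\la}\neq0$. Consider the single summand of $\widetilde D_{\la\la}$ with $\ga^{(i)}=\vn$ (whence $\be^{(i)}={\la^{(i)}}'$) for every $i$. There $\bar{c}(\la^{(0)};\ga^{(1)})=\bar{c}(\vn;\vn)=1$, and by \cref{msgnm} the $i$th factor $[(\ypm{{\la^{(i)}}'}\otimes\sgn)\tens\ypm\vn:\spe{\la^{(i)}}]$ equals $\lr{\la^{(i)}}{(1^{{\la^{(i)}}'_1}),(1^{{\la^{(i)}}'_2}),\dots}=\lr{{\la^{(i)}}'}{({\la^{(i)}}'_1),({\la^{(i)}}'_2),\dots}$, which counts semistandard tableaux of shape ${\la^{(i)}}'$ and content ${\la^{(i)}}'$, of which there is exactly one; so this factor is $1$. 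Since $\widetilde D_\la>0$ and every summand of $\widetilde D_{\la\al}$ is a product of non-negative integers, we get $\widetilde D_{\la\la}\ge\widetilde D_\la>0$, hence $\ch\la$ occurs in $\lprj\la$.

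For the sandwich, suppose $\ch\al$ occurs in $\lprj\la$. By \cref{lprjform} and non-negativity of the summands there are compositions $\be^{(i)},\ga^{(i)}$ ($1\le i\le\ell$) with $\be^{(i)}+\ga^{(i)}={\la^{(i)}}'$, with $\bar{c}(\al^{(0)};\ga^{(1)})\neq0$, and with $[(\ypm{\be^{(i)}}\otimes\sgn)\tens\ypm{\ga^{(i+1)}}:\spe{\al^{(i)}}]\neq0$ for all $i$ (reading $\ga^{(\ell+1)}=\vn$). By \cref{msgnm} the last condition says $\lr{\al^{(i)}}{(1^{\be^{(i)}_1}),(1^{\be^{(i)}_2}),\dots,(\ga^{(i+1)}_1),(\ga^{(i+1)}_2),\dots}\neq0$, while $\bar{c}(\al^{(0)};\ga^{(1)})\neq0$ forces $\lr{\al^{(0)}}{(\ga^{(1)}_1),(\ga^{(1)}_2),\dots}\neq0$ (dropping the strictness condition in the definition of $\bar{c}$); in particular $|\al^{(0)}|=|\ga^{(1)}|$ and $|\al^{(i)}|=|\be^{(i)}|+|\ga^{(i+1)}|$. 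Writing $S_{k,c}(\bmu):=\sum_{j<k}|\mu^{(j)}|+{\mu^{(k)}}'_1+\dots+{\mu^{(k)}}'_c$, so that $\bmu\sucq\bnu$ iff $S_{k,c}(\bmu)\ge S_{k,c}(\bnu)$ for all $k,c$, I substitute these part-sizes (together with $|\la^{(j)}|=|\be^{(j)}|+|\ga^{(j)}|$ and $\la^{(0)}=\vn$) into the two required inequalities $S_{k,c}(\bla)\le S_{k,c}(\bal)$ and $S_{k,c}(\bal)\le S_{k,c}\big(({\la^{(1)}}',\dots,{\la^{(\ell)}}',\vn)\big)$. The $|\al^{(j)}|$-terms telescope; using $\sum_j|\al^{(j)}|=d=\sum_j|\la^{(j)}|$, the case $k=0$ of the first inequality and the case $k=\ell$ of the second become immediate, and for the remaining $k$ both reduce to
\[
\sum_{r=1}^c\be^{(k)}_r\ \le\ {\al^{(k)}}'_1+\dots+{\al^{(k)}}'_c\ \le\ |\be^{(k)}|+\la^{(k+1)}_1+\dots+\la^{(k+1)}_c ,
\]
where $\be^{(0)}$ and $\la^{(\ell+1)}$ are read as $\vn$.

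These two inequalities are pure Littlewood--Richardson positivity. Conjugating the relation $\lr{\al^{(k)}}{(1^{\be^{(k)}_1}),\dots,(\ga^{(k+1)}_1),\dots}\neq0$ (and for $k=0$ the relation $\lr{\al^{(0)}}{(\ga^{(1)}_1),(\ga^{(1)}_2),\dots}\neq0$) gives $\lr{{\al^{(k)}}'}{(\be^{(k)}_1),(\be^{(k)}_2),\dots,(1^{\ga^{(k+1)}_1}),(1^{\ga^{(k+1)}_2}),\dots}\neq0$. By the standard fact that $\lr\sigma{\nu^1,\dots,\nu^m}\neq0$ forces $\nu^1\sqcup\dots\sqcup\nu^m\domby\sigma\domby\nu^1+\dots+\nu^m$ (see \cite[I.9]{macdbook}): the partition ${\al^{(k)}}'$ dominates the partition whose parts are the $\be^{(k)}_r$ together with $|\ga^{(k+1)}|$ parts equal to $1$, so ${\al^{(k)}}'_1+\dots+{\al^{(k)}}'_c$ is at least the sum of the $c$ largest $\be^{(k)}_r$, hence at least $\sum_{r\le c}\be^{(k)}_r$; and ${\al^{(k)}}'$ is dominated by the componentwise sum, which has first row $|\be^{(k)}|+\#\{r:\ga^{(k+1)}_r\ge1\}$ and $j$th row $\#\{r:\ga^{(k+1)}_r\ge j\}$ for $j\ge2$, so that ${\al^{(k)}}'_1+\dots+{\al^{(k)}}'_c\le|\be^{(k)}|+\sum_r\min(\ga^{(k+1)}_r,c)\le|\be^{(k)}|+\sum_r\min({\la^{(k+1)}}'_r,c)=|\be^{(k)}|+\la^{(k+1)}_1+\dots+\la^{(k+1)}_c$, the middle step using $\ga^{(k+1)}_r\le{\la^{(k+1)}}'_r$, valid since $\be^{(k+1)}+\ga^{(k+1)}={\la^{(k+1)}}'$.

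I expect the only real obstacle to be the bookkeeping: unwinding the non-symmetric definition of $\sucq$, tracking the component sizes through the telescoping, and being careful that the $\be^{(i)},\ga^{(i)}$ are merely compositions, so that first-$c$ partial sums must be compared against sums of the $c$ largest parts or against total sizes rather than against a sorted version, together with handling the boundary indices $k=0,\ell$ by hand. The mathematical content is the two one-line dominance estimates above, applied to the conjugate partitions.
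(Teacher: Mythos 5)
Your proof is correct and follows essentially the same route as the paper's: both claims are read off from \cref{lprjform}, the occurrence of $\ch\la$ comes from the summand with $\ga^{(i)}=\vn$, $\be^{(i)}={\la^{(i)}}'$, and the sandwich follows from the size bookkeeping $|\al^{(i)}|=|\be^{(i)}|+|\ga^{(i+1)}|$ combined with the dominance bounds forced by non-vanishing Littlewood--Richardson coefficients. Your reduction to the two inequalities $\sum_{r\le c}\be^{(k)}_r\ls{\al^{(k)}}'_1+\dots+{\al^{(k)}}'_c\ls|\be^{(k)}|+\la^{(k+1)}_1+\dots+\la^{(k+1)}_c$, proved via the conjugated relation $\lr{{\al^{(k)}}'}{(\be^{(k)}_1),\dots,(1^{\ga^{(k+1)}_1}),\dots}\neq0$, is in fact a slightly cleaner (and safely stated) version of the paper's partial-sum manipulation.
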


\begin{pf}
Certainly $\ch\la$ occurs in $\lprj\la$: in the sum in \cref{lprjform} we can take $\be^{(i)}={\la^{(i)}}'$ and $\ga^{(i)}=\vn$ for all $i$; the corresponding summand is then
\begin{align*}
\prod_{i=1}^{\ell-1}\left[\ypm{{\la^{(i)}}'}\otimes\sgn:\spe{\la^{(i)}}\right]&=\prod_{i=1}^{\ell-1}\left[\ypm{{\la^{(i)}}'}\otimes\sgn:\spe{{\la^{(i)}}'}\otimes\sgn\right]
\\
&=\prod_{i=1}^{\ell-1}\left[\ypm{{\la^{(i)}}'}:\spe{{\la^{(i)}}'}\right]
\end{align*}
which is well known to be non-zero (indeed, $\spe{{\la^{(i)}}'}$ is \emph{defined} to be a submodule of $\ypm{{\la^{(i)}}'}$).

Now suppose $\ch\al$ occurs in $\lprj\la$, and choose $\be^{(1)},\dots,\be^{(\ell)},\ga^{(1)},\dots,\ga^{(\ell)}$ such that the corresponding summand in $\widetilde D_{\la\al}$ is non-zero. Then in particular $|\al^{(i)}|=|\be^{(i)}|+|\ga^{(i+1)}|$ for $0\ls i\ls\ell$ (where we read $\be^{(0)}=\ga^{(\ell+1)}=\vn$). To show that $(\al^{(0)},\dots,\al^{(\ell)})\sucq(\la^{(0)},\dots,\la^{(\ell)})$, take $0\ls k\ls\ell$ and $c\gs1$. Then
\begin{align*}
&\left(\sum_{i=0}^{k-1}|\al^{(i)}|+\sum_{i=1}^c{\al^{(k)}}'_i\right)-\left(\sum_{i=0}^{k-1}|\la^{(i)}|+\sum_{i=1}^c{\la^{(k)}}'_i\right)
\\
&=|\ga^{(k)}|+\sum_{i=1}^c{\al^{(k)}}'_i-\sum_{i=1}^c{\la^{(k)}}'_i
\\
&\gs|\ga^{(k)}|+\sum_{i=1}^c(\be^{(k)}\sqcup{\ga^{(k+1)}}')_i-\sum_{i=1}^c\be^{(k)}_i-\sum_{i=1}^c\ga^{(k)}_i
\\
&\gs\sum_{i=1}^c(\be^{(k)}\sqcup{\ga^{(k+1)}}')_i-\sum_{i=1}^c\be^{(k)}_i
\\
&\gs0,
\end{align*}
as required.

To show that $(\al^{(0)},\dots,\al^{(\ell)})\preq({\la^{(1)}}',\dots,{\la^{(\ell)}}',\vn)$, take $0\ls k\ls\ell$ and $c\gs1$. Then
\begin{align*}
&\left(\sum_{i=0}^{k-1}|\la^{(i+1)}|+\sum_{i=1}^c\la^{(k+1)}_i\right)-\left(\sum_{i=0}^{k-1}|\al^{(i)}|+\sum_{i=1}^c{\al^{(k)}}'_i\right)
\\
&=|\be^{(k)}|+\sum_{i=1}^c\la^{(k+1)}_i-\sum_{i=1}^c{\al^{(k)}}'_i
\\
&\gs|\be^{(k)}|+\sum_{i=1}^c({\be^{(k+1)}}'\sqcup{{\ga^{(k+1)}}')}_i-\sum_{i=1}^c(\be^{(k)}+{\ga^{(k+1)}}')_i
\\
&\gs\sum_{i=1}^c((\be^{(k+1)})'\sqcup{\ga^{(k+1)}}')_i-\sum_{i=1}^c{\ga^{(k+1)}}'_i
\\
&\gs0,
\end{align*}
as required.
\end{pf}

As a consequence, we can show that the characters $\lprj\la$ span the space of virtual projective characters, and derive some information about the form of the indecomposable projective characters.

\needspace{4em}
\begin{cory}\label{lspan}\indent
\begin{enumerate}
\vspace*{-\topsep}
\item\label{lpspan}
The set $\lset{\lprj\la}{\la\in\pfstp\rho d}$ is a basis for the space of virtual projective characters in $\blk\rho d$.
\item
For each $\la\in\pfstp\rho d$, the  character $\lprj\la$ is independent of the order of the factors $\fg{i-1}{{\la^{(i)}}'_r}$.
\item
There is a bijection $\la\mapsto\bak\la$ from $\pfstp\rho d$ to $\rpstp\rho d$ such that $\ch\la$ occurs in $\prj{\bak\la}$, and any  character $\ch\al$ occurring  in $\prj{\bak\la}$  satisfies $\al\unlhd\la$.
\end{enumerate}
\end{cory}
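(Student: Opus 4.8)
The plan is to treat the three parts in turn, the common thread being that $\lprj\la$ is a \emph{genuine} projective character, not merely a virtual one: by \eqref{EPhiTilde} it is obtained from the projective character $\ch\rho=\prj\rho$ by repeatedly applying the exact, projectivity-preserving induction operators $F_i$, so it is the character of an honest projective module in $\blk\rho d$ and hence $\lprj\la=\sum_{\mu\in\rpstp\rho d}n_{\la\mu}\prj\mu$ with all $n_{\la\mu}\ge0$; in particular $\lprj\la\in\operatorname{PCh}^{\rho,d}$. For part (i): since $|\pfstp\rho d|=|\rpstp\rho d|=\dim\operatorname{PCh}^{\rho,d}$ by \eqref{EPartId}, it suffices to prove linear independence. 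I would look at the $\pfstp\rho d\times\pfstp\rho d$ matrix with $(\la,\al)$-entry $\chm{\lprj\la}{\ch\al}$: by \cref{lprjform} this equals $\widetilde D_{\la\al}$ (as $\al$ is a $p'$-partition), which by \cref{ldom} and \cref{succdom} vanishes unless $\la\dom\al$, while the diagonal entry $\widetilde D_{\la\la}=\chm{\lprj\la}{\ch\la}$ is nonzero by \cref{ldom}. Ordering $\pfstp\rho d$ by a linear refinement of dominance makes this matrix triangular with nonzero diagonal, hence invertible, so the $\lprj\la$ form a basis of $\operatorname{PCh}^{\rho,d}$.

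For part (ii), part (i) shows that the linear map $R\colon\operatorname{PCh}^{\rho,d}\to\Q^{\pfstp\rho d}$ recording the coefficients $\chm{\phi}{\ch\al}$ for $\al\in\pfstp\rho d$ is injective (it carries the basis $\{\lprj\la\}$ to an invertible matrix). Fix $\la\in\pfstp\rho d$ and let $\psi$ be the character built using any chosen order of the factors $\fg{i-1}{{\la^{(i)}}'_r}$; by the above $\psi$ is a genuine projective character in $\blk\rho d$, so $\psi\in\operatorname{PCh}^{\rho,d}$. I would then re-examine the proof of \cref{lprjform} for a $p'$-partition $\al$: since $\al^{(0)}=\vn$ the factor $\bar c(\al^{(0)};\ga^{(1)})$ forces $\ga^{(1)}=\vn$, and the surviving factors $[(\ypm{\be^{(i)}}\otimes\sgn)\tens\ypm{\ga^{(i+1)}}:\spe{\al^{(i)}}]$ are Littlewood--Richardson coefficients depending only on the \emph{multisets} of strip sizes added to each component of the $p$-bar-quotient; hence the count is the same whatever the order, i.e.\ $\chm{\psi}{\ch\al}=\widetilde D_{\la\al}$ for every order. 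Thus $R(\psi)$ is independent of the order, and injectivity of $R$ forces $\psi=\lprj\la$ in all cases.

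For part (iii), write $\lprj\la=\sum_{\mu\in\rpstp\rho d}n_{\la\mu}\prj\mu$ as above and $\prj\mu=\sum_\al c_{\mu\al}\ch\al$ with every $c_{\mu\al}\ge0$ (the projective character $\prj\mu$ being a non-negative combination of ordinary irreducible characters, via \eqref{EBrRec}), so that $\chm{\lprj\la}{\ch\al}=\sum_\mu n_{\la\mu}c_{\mu\al}$ is a sum of non-negative terms. Since $\ch\la$ occurs in $\lprj\la$ by \cref{ldom}, some $\mu$ has $n_{\la\mu}>0$ and $c_{\mu\la}>0$; call such a $\mu$ a \emph{partner} of $\la$. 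If $\mu$ is a partner of $\la$ and $c_{\mu\al}>0$, then $\ch\al$ occurs in $\lprj\la$ (no cancellation being possible), so $\la\dom\al$ by \cref{ldom} and \cref{succdom}; as $\ch\la$ itself occurs in $\prj\mu$, this identifies $\la$ as the unique dominance-maximal $\ch$-constituent of $\prj\mu$. Hence a partner $\mu$ of $\la$ determines $\la$, so distinct $\la$'s have disjoint sets of partners; since $\pfstp\rho d$ and $\rpstp\rho d$ are finite of equal size and each $\la$ has at least one partner, it follows that each $\la$ has a unique partner $\bak\la$, that $\la\mapsto\bak\la$ is a bijection $\pfstp\rho d\to\rpstp\rho d$, and that $\ch\la$ occurs in $\prj{\bak\la}$ while every $\ch\al$ occurring in $\prj{\bak\la}$ satisfies $\al\unlhd\la$.

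The hard part is the order-independence used in part (ii): one must notice that the sole order-sensitive ingredient of \cref{lprjform}, the counts $\bar c(\al^{(0)};-)$ attached to the $0$th component of the $p$-bar-quotient, degenerates to a single term exactly when $\al$ is a $p'$-partition, so that the $p'$-row coefficients of $\lprj\la$ — which by part (i) already pin it down inside $\operatorname{PCh}^{\rho,d}$ — are manifestly order-free. The rest is bookkeeping with the dominance order via \cref{succdom} and with the positivity that comes from using genuine rather than virtual projective characters.
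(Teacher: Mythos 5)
Your proposal is correct and follows essentially the same route as the paper: part (i) via the counting identity \eqref{EPartId} plus the triangularity coming from \cref{lprjform} and \cref{ldom}; part (ii) by observing that the coefficients on the $p'$-rows are given by the order-free formula $\widetilde D_{\la\al}$ and that these rows already determine an element of $\operatorname{PCh}^{\rho,d}$ (the paper phrases this as expanding the difference of two orderings in the basis from (i) and extracting a dominance-maximal term, which is equivalent to your injectivity-of-$R$ argument); part (iii) by positivity of the expansion of $\lprj\la$ in indecomposable projectives together with the dominance bound of \cref{ldom} and a counting argument. Your uniqueness-of-partner refinement in (iii) is just an explicit version of the paper's "obviously injective" step, so no substantive difference.
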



\begin{pf}
(i) 
Since $|\pfstp\rho d|=|\rpstp\rho d|$  by (\ref{EPartId}), 
it suffices to show that the $\lprj\la$ are \li. But this follows from \cref{ldom} which shows that the matrix giving the multiplicities $\chm{\lprj\la}{\ch\al}$ for $\al,\la\in\pfstp\rho d$ is triangular with non-zero diagonal.

(ii)
Let $\lprj\la$ be defined using a particular choice of order of the factors $\fg{i-1}{({\la^{(i)}}'_r)}$, and let $\lprj{\la^\ast}$ be defined in the same way but using a different order. By \cref{lprjform}, $\lprj\la-\lprj{\la^\ast}$ is a linear combination of the characters $\ch\al$ with $\al$ \emph{not} being \pfr. By (\ref{lpspan}) we can write $\lprj\la-\lprj{\la^\ast}$ as a linear combination of the characters $\lprj\xi$ with $\xi\in\pfstp\rho d$.  If this linear combination is non-zero, then take $\xi$ maximal in the dominance order such that $\lprj\xi$ appears with non-zero coefficient. Then by \cref{ldom} the character $\ch\xi$ occurs in $\lprj\la-\lprj{\la^\ast}$, a contradiction.

(iii)
Since $\lprj\la$ is a character (not just a virtual character), it can be written as a linear combination, with non-negative coefficients, of the indecomposable projective characters. 
Since $\ch\la$ occurs in $\lprj\la$, it must occur in some indecomposable constituent $\prj{\bak\la}$ of $\lprj\la$. Then if $\ch\al$ occurs in $\prj{\bak\la}$ it must occur in $\lprj\la$, giving $\al\domby\la$.

This defines a map $\pfstp\rho d\to \rpstp\rho d,\ \la\mapsto\bak\la$ with the required properties. This map is obviously injective, and hence bijective  since $|\pfstp\rho d|=|\rpstp\rho d|$  by (\ref{EPartId}).  
\end{pf}

\subsection{The bijection $\la\mapsto\bak\la$}
In \cref{lspan}(iii), we have defined the bijection 
\[
\pfstp\rho d\to \rpstp\rho d,\qquad \la\mapsto\bak\la
\]
such that $\ch\la$ occurs in $\prj{\bak\la}$, and any character $\ch\al$ occurring in $\prj{\bak\la}$ satisfies $\al\unlhd\la$.
The goal of this subsection is to prove \cref{expbij} which 
describes the bijection explicitly. 
To prove this proposition,  we consider weights of modules, as outlined in \cref{branchsec}. We fix a weight $\bi^\rho$ of~$\Dm\rho$.
Recalling (\ref{EGGW}), for any $\la\in\pfstp\rho d$ and $j\in J$, define the word $\bg^{j,\la}$ to be the concatenation
\[
  \bg^{j,\la}:=   \bg^{j,{\la^{(j+1)}}'_1}\,\bg^{j,{\la^{(j+1)}}'_2}\,\bg^{j,{\la^{(j+1)}}'_3}\,\dots .
\]
Now define $\bg^\la$ to be the concatenation
\[
  \bg^\la:=\bi^\rho\,\bg^{\ell-1,\la}\,\bg^{\ell-2,\la}\,\dots\,\bg^{0,\la}. 
\]

\begin{lemma}\label{LBrGG}
Let $\mu\in \stp\rho d$. Then $\bg^\la$ is a weight of $\Sp\mu$ \iff $\ch\mu$ occurs in $\lprj\la$.
\end{lemma}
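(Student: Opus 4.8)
The plan is to recast both conditions in the \lcnamecref{LBrGG} as the existence of one and the same combinatorial datum --- a chain of strict partitions running from $\vn$ up to $\mu$ whose successive node-residues spell out $\bg^\la$ --- and then to observe that such a chain is pinned down at $\rho$. For strict partitions $\sigma\subseteq\tau$ and a word $\bu=u_1\dots u_m\in I^m$, say that $\sigma$ \emph{reaches} $\tau$ \emph{along} $\bu$ if $\tau$ is built from $\sigma$ by adjoining nodes of residues $u_1,u_2,\dots,u_m$ in turn, each intermediate shape being a strict partition (such shapes are automatically $p$-strict, so that (\ref{EFi}) and its $E_i$-analogue describe the action of $F_i$ and $E_i$ on their characters). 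Using the strict positivity of the branching coefficients $a_{\la\mu}$ in (\ref{EFi}), the linear independence of the characters $\ch\nu$ (equivalently of the classes $[\Sp\nu]$), and the exactness of the $E_i$, a short induction on $m$ yields two \emph{path characterisations}: (a) for $\nu\in\sPar(n)$ and $\bi\in I^n$, the word $\bi$ is a weight of $\Sp\nu$ \iff $\vn$ reaches $\nu$ along $\bi$; and (b) if $\mathcal F$ is any composite of operators $F_i$ whose residue sequence, read from the operator applied first to the operator applied last, is a word $\bu$, then $\ch\tau$ occurs in $\mathcal F\ch\sigma$ \iff $\sigma$ reaches $\tau$ along $\bu$.

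I would then unwind the definitions to identify the residue sequence of the operator $\prod_{i=1}^\ell\prod_{r=1}^{\la^{(i)}_1}\fg{i-1}{{\la^{(i)}}'_r}$ with $\bu:=\bg^{\ell-1,\la}\,\bg^{\ell-2,\la}\dots\bg^{0,\la}$: within a single factor this is immediate on comparing (\ref{EGGW}) with (\ref{EFIK}), and across factors it requires only that we order the factors to agree with the concatenation defining $\bg^\la$, which is allowed since $\lprj\la$ is independent of that order by \cref{lspan}(ii). Hence, by characterisation (b) with $\sigma=\rho$, the character $\ch\mu$ occurs in $\lprj\la$ \iff $\rho$ reaches $\mu$ along $\bu$.

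It remains to compare this with characterisation (a), namely that $\bg^\la=\bi^\rho\,\bu$ is a weight of $\Sp\mu$ \iff $\vn$ reaches $\mu$ along $\bi^\rho\,\bu$; the only substantive point is that any chain witnessing the right-hand side necessarily stands at $\rho$ after its first $|\rho|$ steps. To see this, note first that $\rho$, being a $p$-bar-core, is restricted (it can have no gap between consecutive parts exceeding $p$, and a gap equal to $p$ only below a part not divisible by $p$), so $\rho^\reg=\rho$ and \cref{regthm} shows $\Dm\rho$ is a composition factor of $\Sp\rho$; exactness of the $E_i$ then promotes $\bi^\rho$, fixed as a weight of $\Dm\rho$, to a weight of $\Sp\rho$, and so by (a) $\vn$ reaches $\rho$ along $\bi^\rho$ --- in particular the letters of $\bi^\rho$ are exactly the residues of the nodes of $\rho$. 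Consequently, if $\vn$ reaches any strict partition $\sigma$ along $\bi^\rho$, the nodes of $\sigma$ carry the same residues, so $\sigma$ and $\rho$ have the same residue multiset and hence, by \cite[Theorem 5]{my}, the same $p$-bar-core; as $|\sigma|=|\rho|$, this forces $\sigma$ to have $p$-bar-weight $0$, so $\sigma=\rho$. Splicing chains at $\rho$ now gives: $\bg^\la$ is a weight of $\Sp\mu$ \iff $\vn$ reaches $\mu$ along $\bg^\la$ \iff $\rho$ reaches $\mu$ along $\bu$ \iff $\ch\mu$ occurs in $\lprj\la$, which is the \lcnamecref{LBrGG}.

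The step I expect to be most delicate is characterisation (b) combined with the order-independence input: one must be careful that the residue sequence produced by $\prod_{i,r}\fg{i-1}{{\la^{(i)}}'_r}$ can genuinely be taken to be $\bg^{\ell-1,\la}\dots\bg^{0,\la}$ rather than merely a permutation of it --- this is exactly where \cref{lspan}(ii) is needed --- and one must verify throughout that every intermediate shape arising in these chains is $p$-strict, so that each single-node step is governed verbatim by (\ref{EFi}) and its $E_i$-counterpart.
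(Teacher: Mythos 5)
Your proposal is correct and follows essentially the same route as the paper: the paper likewise reads off that $\bg^\la$ is a weight of $\Sp\mu$ \iff $E_{\bg^{\ell-1,\la}}\dots E_{\bg^{0,\la}}\ch\mu$ is a non-zero multiple of $\ch\rho$ (your ``chain must stand at $\rho$'' step, which the paper gets from block/residue-content considerations), converts this by Frobenius reciprocity into $\ch\mu$ occurring in $F_{\bg^{0,\la}}\dots F_{\bg^{\ell-1,\la}}\ch\rho$, and identifies that induced character with $\lprj\la$ using \cref{lspan}(ii). Your explicit path characterisations (a) and (b) are just the combinatorial unwinding of these operator statements, so the two arguments coincide in substance.
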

\begin{pf}
For a word $\bi=i_1\dots i_n\in I^n$, we denote $E_\bi:=E_{i_1}\dots E_{i_n}$ and $F_\bi:=F_{i_n}\dots F_{i_1}$. Then by definition, $\bg^\la$ is a weight of $\Sp\mu$ if and only if $E_{\bg^\la}\Sp\mu\neq 0$ if and only if $E_{\bg^\la}\ch\mu\neq 0$. But $E_{\bg^\la}=E_{\bi^\rho}E_{\bg^{\ell-1,\la}}\dots E_{\bg^{0,\la}}$, so $E_{\bg^\la}\ch\mu\neq 0$ \iff $E_{\bg^{\ell-1,\la}}\dots E_{\bg^{0,\la}}\ch\mu=c\ch\rho$ for some non-zero scalar $c$. By Frobenius reciprocity, this is equivalent to the fact that $\ch\mu$ occurs in $F_{\bg^{0,\la}}\dots F_{\bg^{\ell-1,\la}}\ch\rho$. Recalling the definition (\ref{EPhiTilde}) of $\lprj\la$ and taking into account \cref{lspan}(ii), we deduce that $F_{\bg^{0,\la}}\dots F_{\bg^{\ell-1,\la}}\ch\rho=\lprj\la$, completing the proof of the lemma. 
\end{pf}

Given $\mu\in\rpstp\rho d$, define $\tilde\mu\in\rpstp\rho {d-|\mu^{(0)}|}$ to be the partition with $p$-bar-core $\rho$ and \pbq 
$(\vn,\mu^{(1)},\dots,\mu^{(\ell-1)},\vn);$ 
in other words, $\tilde\mu$ is defined by deleting from $\mu$ all the parts divisible by $p$ from $\mu$, cf.\ \cref{pbqlem}(iii). 

Given $\la\in\pfstp\rho d$, define $\hat\la\in\pfstp\rho {d-|\la^{(1)}|}$ to be the partition with $p$-bar-core $\rho$ and \pbq 
$(\vn,\vn,\la^{(2)},\dots,\la^{(\ell)}).$

\begin{lemma}\label{weightclaim}
Suppose $\mu\in\rpstp\rho d$ and $\la\in\pfstp\rho d$. If $\mu^{(0)}={\la^{(1)}}'$ and $\bg^{\hat\la}$ is a weight of $\Dm{\tilde\mu}$, then $\bg^\la$ is a weight of $\Dm\mu$.
\end{lemma}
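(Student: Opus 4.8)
The statement relates a weight of $\Dm\mu$ to a weight of $\Dm{\tilde\mu}$, where $\tilde\mu$ is obtained by stripping the parts of $\mu$ divisible by $p$ (equivalently, zeroing out the $0$-th component of the \pbq), and $\hat\la$ is obtained from $\la$ by zeroing out the first component. The key observation is that the word $\bg^\la$ factors as a concatenation $\bg^{\hat\la}_{\text{prefix}}\,\bg^{0,\la}$, where $\bg^{0,\la}$ is built entirely from the thick Gelfand--Graev words $\bg^{0,k}$ with $k$ running over the column-lengths of $\la^{(1)}$ (since $\hat\la$ has trivial first component, passing from $\bg^{\hat\la}$ to $\bg^\la$ only adds the block $\bg^{0,\la}$ at the end). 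So the plan is: first, recall from \cref{wtprop}(ii) that if $\nu$ is obtained from a restricted $p$-strict partition by removing a normal $i$-node and $\bi'$ is a weight of $\Dm\nu$, then $\bi'i$ is a weight of the larger partition. Iterating this, I want to show that $\mu$ can be obtained from $\tilde\mu$ by a sequence of additions of normal nodes whose residue sequence is exactly $\bg^{0,\la}$ (read in the appropriate order), using the hypothesis $\mu^{(0)} = {\la^{(1)}}'$.

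**Key steps.** (1) Unpack the definitions: $\bg^\la = \bi^\rho\,\bg^{\ell-1,\la}\cdots\bg^{1,\la}\,\bg^{0,\la}$ and $\bg^{\hat\la} = \bi^\rho\,\bg^{\ell-1,\hat\la}\cdots\bg^{1,\hat\la}$, and since $\hat\la^{(j)} = \la^{(j)}$ for $j\ge 2$ and $\hat\la^{(1)} = \vn$, we have $\bg^{j,\hat\la} = \bg^{j,\la}$ for $j\ge 1$ and $\bg^{0,\hat\la}$ empty. Hence $\bg^\la = \bg^{\hat\la}\,\bg^{0,\la}$ as words. (2) By hypothesis $\bg^{\hat\la}$ is a weight of $\Dm{\tilde\mu}$, so it suffices to show that $E_{\bg^{0,\la}}\Dm\mu$ has $\Dm{\tilde\mu}$ as a composition factor, or rather --- working in the other direction via Frobenius reciprocity as in the proof of \cref{LBrGG} --- that applying the induction operators corresponding to $\bg^{0,\la}$ to $\Dm{\tilde\mu}$ produces a module with $\Dm\mu$ as a quotient/summand, which reduces to showing $\mu$ arises from $\tilde\mu$ by successively removing normal $0$-signatured hooks. (3) The combinatorial heart: $\bg^{0,k} = \ell^k(\ell-1)^{2k}\cdots 1^k 0^{2k} 1^k$ (taking $i=0$ in \eqref{EGGW}), and adding such a word builds $k$ many $0$-hooks. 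Using $\cref{rockreg}$ or the \pbq description, passing from $\tilde\mu$ (which has $\tilde\mu^{(0)} = \vn$) to $\mu$ (which has $\mu^{(0)} = {\la^{(1)}}'$) means adding to component $0$ a partition equal to ${\la^{(1)}}'$, and the columns of ${\la^{(1)}}'$ are precisely the numbers ${\la^{(1)}}'_r$ that index the factors $\bg^{0,{\la^{(1)}}'_r}$ making up $\bg^{0,\la}$. I then need to verify that the relevant nodes are genuinely \emph{normal} (not merely removable) at each stage --- this is where the restriction of $\mu$ and the $d$-Rouquier hypothesis (which separates the hooks far enough apart in the abacus, cf.\ the arguments in \cref{succdom} and \cref{addbars}) enter: the Rouquier condition guarantees the added $0$-hooks sit in positions where the $0$-nodes at the ends of each hook are in the reduced $0$-signature, so \cref{wtprop}(ii) (via \cref{normal}) applies.

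**Main obstacle.** The delicate point is step (3): checking that at each stage of building up from $\tilde\mu$ to $\mu$ the node being added/removed is \emph{normal} in the sense of the reduced $i$-signature, not just removable. This requires a careful bookkeeping of addable/removable $i$-nodes along the arms of the $0$-hooks and their cancellation in the reduced signature. The $d$-Rouquier assumption is what makes this work --- it forces $r_1(\rho) > dp$ and the other spacing inequalities, so that the $0$-hooks added in component $0$ lie in columns that are ``generic'' enough that no unwanted $+-$ cancellations occur and the relevant $-$ signs survive in the reduced $0$-signature. I expect this to parallel closely the signature computations already implicit in the proof of \cref{ggop} (the $i=0$ case), and indeed one should be able to reuse that analysis: \cref{ggop} already tells us exactly which partitions $\al$ occur in $\fg 0k \ch\la$ and with what multiplicity, and the present lemma is essentially the ``$D$-module'' shadow of that statement, obtained by transporting along the branching rule \cref{normal}/\cref{wtprop}(ii) and the fact that the relevant Gelfand--Graev-induced projectives have the right composition structure. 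The remaining work is then routine verification that the residue word of the hook-additions matches $\bg^{0,\la}$ exactly.
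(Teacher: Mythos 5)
Your reduction is the same as the paper's: you factor $\bg^\la=\bg^{\hat\la}\,\bg^{0,\la}$, invoke \cref{wtprop}(ii), and reduce to showing that $\tilde\mu$ is reached from $\mu$ by successively removing \emph{normal} nodes whose residues spell out $\bg^{0,\la}$ (equivalently, the factors $\bg^{0,\mu^{(0)}_r}$, using $\mu^{(0)}={\la^{(1)}}'$). Up to that point the argument is correct.

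The problem is that you then stop exactly where the lemma's actual content begins. Your step (3) asserts that the normality of the relevant nodes ``should'' follow from the $d$-Rouquier condition and that one can ``reuse'' the analysis in \cref{ggop}, but neither claim is substantiated, and the second is not really available: \cref{ggop} computes $\fg ik\ch\la$ for the characters $\ch\la$ of the $\Sp\la$ via the ordinary branching rule \eqref{EFi}, which involves \emph{all} removable/addable nodes and never touches reduced signatures, so it contains no normality information about the modules $\Dm\mu$. What is needed -- and what the paper supplies -- is an explicit verification: for the smallest part $kp$ of $\mu$ divisible by $p$, the Rouquier condition forces the parts of $\mu$ below $kp$ to be precisely all integers $a<kp$ congruent to $1,\dots,\ell$ modulo $p$; one then lists the columns of the removable $0$-nodes ($1,p,p+1,2p,\dots,kp$, which are $2k$ in number), checks they are normal and stay normal when removed right-to-left, and repeats for residues $1,\dots,\ell$ in turn, the net effect being deletion of the part $kp$; an induction on $|\mu^{(0)}|$ (peeling off the last column of $\la^{(1)}$ at each step) finishes the proof. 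Also, a small but relevant slip: for $i=0$ the word \eqref{EGGW} is $\bg^{0,k}=\ell^k(\ell-1)^{2k}\cdots1^{2k}0^{2k}$, not $\ell^k(\ell-1)^{2k}\cdots1^k0^{2k}1^k$; getting the exact residue sequence right is essential if the removals are to match the word letter for letter.
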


\begin{pf}
%
By \cref{wtprop} it suffices to show that we can get from $\mu$ to $\tilde\mu$ by successively removing normal nodes, with the residues of these nodes giving the word $\bg^{0,\mu^{(0)}_1}\bg^{0,\mu^{(0)}_2}\dots$. We use induction on $|\mu^{(0)}|$, with the case $\mu^{(0)}=\vn$ being vacuous. For the inductive step, suppose $\mu^{(0)}\neq\vn$. Let $\mu^-$ be the partition obtained from $\mu$ by deleting the last positive part divisible by $p$; call this last part $k=\mu^{(0)}_{h(\mu^{(0)})}$. Similarly, define $\la^-$ by deleting the last non-zero column from $\la^{(1)}$. Then $\tilde\mu=\widetilde{\mu^-}$ and $\hat\la=\widehat{\la^-}$, so by induction if $\bg^{\hat\la}$ is a weight of $\Dm{\tilde\mu}$ then $\bg^{\la^-}$ is a weight of $\Dm{\mu^-}$. So we just need to show that we can get from $\mu$ to $\mu^-$ by removing $2k$ normal $0$-nodes, then $2k$ normal $1$-nodes, \dots, $2k$ normal $(\ell-1)$-nodes, and finally $k$ normal $\ell$-nodes. In fact, to do this it suffices to look at the first $kp$ columns of $\mu$. By assumption $\mu$ has at least one part equal to $kp$, so let $r$ be maximal such that $\mu_r=kp$, and let $h=h(\mu)$. Then (because $\rho$ is $d$-Rouquier) the integers $\mu_{r+1},\dots,\mu_h$ are simply the integers $a<kp$ which are congruent to $1,\dots,\ell$ modulo $p$. So $\mu$ has removable $0$-nodes in columns $1,p,p+1,2p,\dots,kp$. These are normal, and we define a smaller partition by removing them; specifically, if we remove them in order from right to left, then each node remains normal until it is removed. Now rows $r,\dots,h-1$ of the resulting partition are the integers $a<kp$ which are congruent to $2,\dots,\ell$ or $-1$ modulo $p$. This means there are removable $1$-nodes in columns $2,p-1,p+2,2p-1,\dots,kp-1$. These nodes are normal, and we remove them (again, in order from right to left). Now rows $r,\dots,h-1$ of the resulting partition are the integers $a<kp$ which are congruent to $1,3,\dots,\ell$ or $-2$ modulo $p$. We continue in this way, removing at the final step normal $\ell$-nodes in columns $l+1,2l+1,\dots,kp-l$. In the partition resulting after this final step, rows $r,\dots,h-1$ are the integers $a<kp$ which are congruent to $1,\dots,\ell$ modulo $p$, in other words, the integers $\mu_{r+1},\dots,\mu_h$. So the overall effect is just to have deleted the part $kp$, and we have the partition $\mu^-$, as required.
\end{pf}

\begin{propn}\label{expbij}
Suppose $\la\in\pfstp\rho d$. Then $\bak\la$ is the partition with \pbq $({\la^{(1)}}',\dots,{\la^{(\ell)}}',\vn)$.
\end{propn}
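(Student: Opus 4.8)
Write $\nu$ for the partition in $\rpstp\rho d$ with \pbq $({\la^{(1)}}',\dots,{\la^{(\ell)}}',\vn)$; this is restricted by \cref{pbqlem}(iii), and $\la\mapsto\nu$ is a bijection $\pfstp\rho d\to\rpstp\rho d$ by~(\ref{EPartId}). The plan is to identify $\bak\la$ through the weight $\bg^\la$ of \cref{LBrGG}: I will show that $\bg^\la$ is a weight of $\Dm\nu$, that it is a weight of $\Dm{\bak\la}$, and that $\nu$ is the only irreducible label in $\blk\rho d$ with this property, so that $\bak\la=\nu$.

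The first step is to prove that $\bg^\la$ is a weight of $\Dm\nu$, by induction on $d$. The case $d=0$ is immediate since $\bg^\rho=\bi^\rho$ is, by choice, a weight of $\Dm\rho$. For $d\gs1$, let $i_0\gs1$ be least with $\la^{(i_0)}\neq\vn$. When $i_0=1$ this is exactly the situation of \cref{weightclaim} with $\mu=\nu$: we have $\nu^{(0)}={\la^{(1)}}'$, and $\tilde\nu$ and $\hat\la$ have \pbqs $(\vn,{\la^{(2)}}',\dots,{\la^{(\ell)}}',\vn)$ and $(\vn,\vn,\la^{(2)},\dots,\la^{(\ell)})$ respectively, so that $\tilde\nu$ is the partition playing the role of $\nu$ for $\hat\la$, and $\hat\la$ has $p$-bar-weight $d-|\la^{(1)}|<d$; thus $\bg^{\hat\la}$ is a weight of $\Dm{\tilde\nu}$ by induction, and \cref{weightclaim} gives the claim. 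When $i_0>1$ one argues in the same way with the evident analogue of \cref{weightclaim} in which the index $0$ is replaced by $i_0-1$ — proved verbatim, by exhibiting a sequence of normal-node removals realising the block $\bg^{i_0-1,\la}$ of the word — peeling off that block and reducing the $p$-bar-weight to $d-|\la^{(i_0)}|$.

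For the endgame, recall from the proof of \cref{lspan}(iii) that $\prj{\bak\la}$ occurs with positive multiplicity in the genuine character $\lprj\la$, and that $\ch\la$ occurs in $\prj{\bak\la}$. By \cref{ldom} $\ch\la$ occurs in $\lprj\la$, so by \cref{LBrGG} the word $\bg^\la$ is a weight of $\Sp\la$; moreover every $\ch\al$ occurring in $\prj{\bak\la}$ occurs in $\lprj\la$, so $\bg^\la$ is a weight of $\Sp\al$ whenever $\Dm{\bak\la}$ is a composition factor of $\Sp\al$. Using exactness of the functors $E_i$ together with the modular branching rules, this forces $\bg^\la$ to be a weight of $\Dm{\bak\la}$ itself. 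On the other hand, the first step produces $\nu$ as a label with $\bg^\la$ a weight of $\Dm\nu$, and one checks that it is the only one: the word $\bg^\la$ determines the \pbq of $\la$, and an analysis of which normal nodes can be stripped from $\Dm\mu$ along $\bg^\la$ — combined with the bounds $\bak\la\domby\la^\reg$ (from \cref{regthm} and \cref{rockreg}) and $\al\domby\la$ for every $\ch\al$ in $\prj{\bak\la}$ (\cref{lspan}(iii)) — rules out all other $\mu\in\rpstp\rho d$. Hence $\bak\la=\nu$.

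The main obstacle is the endgame: establishing that $\bg^\la$ is a weight of $\Dm\mu$ for exactly one $\mu\in\rpstp\rho d$ and that this $\mu$ is $\bak\la$. This requires the precise form of the Brundan--Kleshchev / Kleshchev--Shchigolev branching rules for $\T_n$ (more than \cref{normal}), used together with the dominance bounds above; by contrast the first step is a fairly direct induction resting on \cref{weightclaim}.
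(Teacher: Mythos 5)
Your overall strategy --- identifying $\bak\la$ via the Gelfand--Graev weight $\bg^\la$, \cref{LBrGG} and \cref{weightclaim} --- is the paper's, but two of your steps contain genuine gaps. First, your induction for ``$\bg^\la$ is a weight of $\Dm\nu$'' immediately requires the case where the first nonempty component of $\la$ has index $i_0>1$ (already at the second step, since $\hat\la^{(1)}=\vn$), and there you invoke an ``evident analogue of \cref{weightclaim}, proved verbatim''. It is not verbatim: the proof of \cref{weightclaim} rests on the explicit description of the parts of $\mu$ below $kp$ inherited from the Rouquier core, and the normal-node analysis for stripping $(i_0-1)$-hooks with $i_0>1$ takes place in high-numbered columns, where addable nodes contributed by the other (possibly nonempty) components of $\mu$ have to be controlled before normality can be asserted. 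The paper does not prove such a generalization; it handles the partitions with $\la^{(1)}=\vn$ by an entirely different route, namely the set identity (\ref{indstep}) extracted from the already-settled cases, combined with \cref{regthm}, \cref{rockreg} and \cref{ldom} and an induction along the auxiliary order $\bktq$.

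Second, the endgame is where the proof actually lives, and you have not supplied it. The inference ``$\bg^\la$ is a weight of every $\Sp\al$ having $\Dm{\bak\la}$ as a composition factor, hence of $\Dm{\bak\la}$'' is a non sequitur: the weight could be carried by other composition factors of those $\Sp\al$. The correct argument runs through $\Sp\la$ itself: its composition factors lie among the $\Dm{\bak\ka}$ with $\ka\dom\la$, and for $\ka\doms\la$ the word $\bg^\la$ is not a weight of $\Sp\ka$ (as $\ch\ka$ does not occur in $\lprj\la$) and hence not of its composition factor $\Dm{\bak\ka}$, so the weight $\bg^\la$ of $\Sp\la$ must be witnessed by $\Dm{\bak\la}$. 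More seriously, your uniqueness claim --- that $\nu$ is the \emph{only} $\mu\in\rpstp\rho d$ with $\bg^\la$ a weight of $\Dm\mu$ --- is asserted rather than proved, and is stronger than necessary (and not obviously true: nothing rules out $\bg^\la$ being a weight of $\Dm{\bak\ka}$ for some $\ka\domsby\la$). What the paper actually uses is the weaker statement that $\bg^\la$ being a weight of $\Dm\mu$ forces $\fd\mu\domby\la$ (because $\Dm\mu$ is a composition factor of $\Sp{\fd\mu}$, so $\ch{\fd\mu}$ occurs in $\lprj\la$); combined with induction on the dominance order, this pins down $\bak\la$ without any uniqueness of the weight. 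You should replace your uniqueness claim by this dominance bound and restructure the argument as a double induction on $d$ and on dominance.
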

\begin{pf}
The defining properties of the bijection $\la\mapsto\bak\la$, together with Brauer reciprocity (\ref{EBrRec}), show that the composition factors of $\Sp\la$ lie among the irreducible supermodules $\Dm{\bak\ka}$ for which $\ka\dom\la$, and include $\Dm{\bak\la}$ at least once.

Now consider weights. By \cref{LBrGG}, $\bg^\la$ is a weight of $\Sp\ka$ \iff $\ch\ka$ occurs in $\lprj\la$, which, by \cref{ldom} and \cref{succdom},  happens only if $\ka\domby\la$. So if $\ka\doms\la$ then $\bg^\la$ is not a weight of $\Sp\ka$, and in particular is not a weight of $\Dm{\bak\ka}$. So $\bg^\la$ is not a weight of any composition factor of $\Sp\la$ except possibly $\Dm{\bak\la}$; but $\bg^\la$ is a weight of $\Sp\la$, so it must be a weight of~$\Dm{\bak\la}$.

So we can characterize the bijection $\mu\mapsto\bak\la$ recursively by the conditions
\begin{equation}\label{lcharact}
\bak\la\notin\lset{\bak\ka}{\ka\domsby\la},\qquad \bg^\la\text{ is a weight of }\Dm{\bak\la}.
\end{equation}

Now to prove the \lcnamecref{expbij} we use induction on $d$. For given $d$, we consider first the partitions $\la$ for which $\la^{(1)}\neq\vn$. For these partitions we use induction on the dominance order; so we assume that the \lcnamecref{expbij} is true if $\la$ is replaced with any partition $\ka\domsby\la$ (observe by \cref{succdom} that if $\la$ and $\ka$ are \pfr-partitions with $\la^{(1)}\neq\vn$ and $\ka\domsby\la$, then $\ka^{(1)}\neq\vn$ as well).

Given $\la$ with $\la^{(1)}\neq\vn$, let $\hat\la$ be the partition with \pbq $(\vn,\vn,\la^{(2)},\dots,\la^{(\ell)})$ as above. By induction on $d$ we know that $\fd{\hat\la}$ is the partition with \pbq $(\vn,{\la^{(2)}}',\dots,{\la^{(\ell)}}',\vn)$. In particular, $D(\vn,{\la^{(2)}}',\dots,{\la^{(\ell)}}',\vn)$ has $\bg^{\hat\la}$ as a weight. Now \cref{weightclaim} shows that $\bg^\la$ is a weight of $D({\la^{(1)}}',\dots,{\la^{(\ell)}}',\vn)$. By induction we know the partitions $\bak\ka$ for $\ka\domsby\la$, in particular we know that none of them has \pbq $({\la^{(1)}}',\dots,{\la^{(\ell)}}',\vn)$. So from the characterization (\ref{lcharact}), $\bak\la$ must be the partition with \pbq $({\la^{(1)}}',\dots,{\la^{(\ell)}}',\vn)$.

So (for our fixed $d$) we can assume the \lcnamecref{expbij} is true whenever $\la^{(1)}\neq\vn$. In particular, this means that
\[
\lset{\bak\ka}{\ka\in\pfstp\rho d,\ \ka^{(1)}\neq\vn}=\lset{\mu}{\mu\in\rpstp\rho d,\ \mu^{(0)}\neq\vn},
\]
and therefore
\begin{equation}\label{indstep}
\lset{\bak\ka}{\ka\in\pfstp\rho d,\ \ka^{(1)}=\vn}=\lset{\mu}{\mu\in\rpstp\rho d,\ \mu^{(0)}=\vn}.
\end{equation}

Now we deal with partitions $\la$ for which $\la^{(1)}=\vn$. For these partitions, we use induction with a different order: we write $\ka\bktq\la$ if $(\vn,\vn,{\ka^{(2)}}',\dots,{\ka^{(\ell)}}')\sucq(\vn,\vn,{\la^{(2)}}',\dots,{\la^{(\ell)}}')$, and we assume that 
the \lcnamecref{expbij} is true if $\la$ is replaced by any $\ka$ for which $\ka\bkt\la$.

Now (\ref{indstep}) shows that there is $\ka$ with $\ka^{(1)}=\vn$ such that $\bak\ka$ is the partition with \pbq $(\vn,{\la^{(2)}}',\dots,{\la^{(\ell)}}',\vn)$. By \cref{regthm} and Brauer reciprocity (\ref{EBrRec})  we know that $\ch{\bak\ka}$ occurs in $\prj{\bak\ka}$, and therefore occurs in $\lprj\ka$. Then \cref{ldom} gives
\[
(\vn,{\la^{(2)}}',\dots,{\la^{(\ell)}}',\vn)\preq(\vn,{\ka^{(2)}}',\dots,{\ka^{(\ell)}}',\vn),
\]
which is the same as saying $\ka\bktq\la$. But if $\ka\bkt\la$ then we know by induction that $\bak\ka$ is the partition with \pbq $(\vn,{\ka^{(2)}}',\dots,{\ka^{(\ell)}}',\vn)$, a contradiction. So $\ka=\la$, and we are done.
\end{pf}

\subsection{Adjustment matrix}\label{comparingsec}

Now we can return to the virtual projective characters $\mprj\mu$. First we express the characters $\lprj\la$ in terms of the characters $\mprj\mu$.

\begin{propn}\label{lfromm}
Suppose $\la\in\pfstp\rho d$. Then
\[
\lprj\la=\widetilde D_\la\sum_{\mu\in\rpstp\rho d}\prod_{i=1}^\ell\left[\ypm{{\la^{(i)}}'}:\spe{\mu^{(i-1)}}\right]\mprj\mu.
\]
\end{propn}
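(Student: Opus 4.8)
The plan is to compare the two sides as elements of $\operatorname{Ch}^{\rho,d}$. Write $\Phi$ for the right-hand side. Since $\mprj\mu\in\operatorname{PCh}^{\rho,d}$ for every $\mu$ by \cref{fromcanon}, we have $\Phi\in\operatorname{PCh}^{\rho,d}$, and $\lprj\la\in\operatorname{PCh}^{\rho,d}$ by its definition \eqref{EPhiTilde}. I would then reduce the claim to the equality of the coefficients $\chm\psi{\ch\al}$ for $\al\in\pfstp\rho d$ \emph{only}, using that the linear map
\[
\operatorname{PCh}^{\rho,d}\longrightarrow\QQ^{\pfstp\rho d},\qquad \psi\longmapsto\bigl(\chm\psi{\ch\al}\bigr)_{\al\in\pfstp\rho d}
\]
is injective. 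This holds because $\{\lprj\xi\mid\xi\in\pfstp\rho d\}$ is a basis of $\operatorname{PCh}^{\rho,d}$ by \cref{lspan}(i), because $|\pfstp\rho d|=|\rpstp\rho d|=\dim\operatorname{PCh}^{\rho,d}$ by \eqref{EPartId}, and because the matrix $\bigl(\chm{\lprj\xi}{\ch\al}\bigr)_{\al,\xi\in\pfstp\rho d}=(\widetilde D_{\xi\al})$ of this map in the basis $\{\lprj\xi\}$ is, by \cref{lprjform} together with \cref{ldom} and \cref{succdom}, triangular for the dominance order with non-zero diagonal entries $\widetilde D_{\xi\xi}$ (the diagonal entries are non-zero since $\ch\xi$ occurs in $\lprj\xi$ by \cref{ldom}), hence invertible.

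So it remains to prove $\chm{\lprj\la}{\ch\al}=\chm\Phi{\ch\al}$ for all $\al\in\pfstp\rho d$. For the left-hand side, \cref{lprjform} gives $\chm{\lprj\la}{\ch\al}=\widetilde D_{\la\al}$; since $\al^{(0)}=\vn$ by \cref{pbqlem}(ii), the factor $\bar{c}(\al^{(0)};\ga^{(1)})$ occurring in the definition of $\widetilde D_{\la\al}$ vanishes unless $\ga^{(1)}=\vn$, so
\[
\chm{\lprj\la}{\ch\al}=\widetilde D_\la\sum\prod_{i=1}^\ell\bigl[(\ypm{\be^{(i)}}\otimes\sgn)\tens\ypm{\ga^{(i+1)}}:\spe{\al^{(i)}}\bigr],
\]
the sum running over all $\be^{(i)},\ga^{(i)}\in\compn$ with $\be^{(i)}+\ga^{(i)}={\la^{(i)}}'$ for $1\ls i\ls\ell$, $\ga^{(1)}=\vn$, and $\ga^{(\ell+1)}=\vn$. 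For the right-hand side, I would substitute into $\chm\Phi{\ch\al}=\widetilde D_\la\sum_{\mu}\prod_{i=1}^\ell[\ypm{{\la^{(i)}}'}:\spe{\mu^{(i-1)}}]\,\chm{\mprj\mu}{\ch\al}$ the explicit formula of \cref{mattproj} for $\chm{\mprj\mu}{\ch\al}$. Using $\al^{(0)}=\vn$: the factor $\ki{\al^{(0)}}{\si^{(0)}}$ forces $\si^{(0)}=\vn$ and makes the power of $2$ equal to $1$, while $\lr{\mu^{(0)}}{\vn,{\tau^{(1)}}'}$ then forces $\mu^{(0)}={\tau^{(1)}}'$.

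At this point I would carry out the remaining sums one family at a time: sum over $\mu^{(j)}$ for $1\ls j\ls\ell-1$ using the Mackey-type identity \cref{mackey} (applied to the composition ${\la^{(j+1)}}'$), then sum over $\si^{(j)}$ for $1\ls j\ls\ell-1$ (reading $\si^{(\ell)}=\vn$, which forces $\tau^{(\ell)}=\al^{(\ell)}$) and over $\tau^{(j)}$ for $1\ls j\ls\ell-1$, in each case using the description of a Littlewood--Richardson coefficient as an induction multiplicity for symmetric groups together with transitivity of induction, and invoking \cref{msgnm} to recognize the sign-twisted permutation modules; throughout one uses that Littlewood--Richardson coefficients are unchanged upon conjugating all the partitions involved (as in the proof of \cref{klem}) to absorb the conjugates $(\cdot)'$ in \cref{mattproj}. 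After these collapses the right-hand side should become exactly the displayed sum for $\chm{\lprj\la}{\ch\al}$, with the Mackey-split pieces of the ${\la^{(i)}}'$ playing the roles of $\be^{(i)}$ and $\ga^{(i)}$, which finishes the proof.

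The main obstacle is precisely the bookkeeping of this last step: there are many nested sums (over the $\mu^{(j)}$, $\si^{(j)}$ and $\tau^{(j)}$), and one must order the reductions so that \cref{mackey}, \cref{msgnm} and transitivity of induction apply cleanly, while tracking the conjugations so that the outcome matches the $\widetilde D_{\la\al}$ side exactly (in particular so that the boundary constraints $\ga^{(1)}=\ga^{(\ell+1)}=\vn$ reappear correctly). I would most likely isolate the identity
\[
\sum_{\mu\in\rpstp\rho d}\prod_{i=1}^\ell[\ypm{{\la^{(i)}}'}:\spe{\mu^{(i-1)}}]\,\chm{\mprj\mu}{\ch\al}=\sum\prod_{i=1}^\ell\bigl[(\ypm{\be^{(i)}}\otimes\sgn)\tens\ypm{\ga^{(i+1)}}:\spe{\al^{(i)}}\bigr]
\]
(for $\al\in\pfstp\rho d$, the sum on the right as above) as a separate lemma, prove it by this iterated Mackey argument, and then combine it with \cref{lprjform} to conclude.
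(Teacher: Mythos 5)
Your proposal is correct and follows essentially the same route as the paper: reduce to comparing coefficients $\chm{\cdot}{\ch\al}$ for $\al\in\pfstp\rho d$ only (justified by the triangularity from \cref{ldom}/\cref{lspan}), evaluate the left side via \cref{lprjform} and the right side via \cref{mattproj} with $\al^{(0)}=\vn$, and collapse the nested sums using \cref{mackey}, \cref{msgnm} and transitivity of induction. The only cosmetic difference is that you make the injectivity of the restriction-of-coefficients map more explicit than the paper does, while leaving the final bookkeeping slightly more schematic; both are sound.
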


\begin{pf}
\cref{ldom} shows that $\lprj\la$ is determined among all virtual projective characters in $\blk\rho d$ by the coefficients $\chm{\lprj\la}{\ch\al}$ for $\al\in\pfstp\rho d$. So we fix $\al\in\pfstp\rho d$, and we just need to show that the coefficient of $\ch\al$ on each side of the equation is the same.

Using \cref{mattproj} together with the assumption that $\al^{(0)}=\vn$, we find that the coefficient of $\ch\al$ on the right-hand side of the equation can be written as $\widetilde D_\la X_{\la\al}$, where
\[
X_{\la\al}=\sum_{\mu\in\rpstp\rho d}\prod_{i=1}^\ell\left[\ypm{{\la^{(i)}}'}:\spe{\mu^{(i-1)}}\right]\sum_{\si^{(\bullet)},\tau^{(\bullet)}}\prod_{i=1}^\ell\lr{\al^{(i)}}{\si^{(i)},\tau^{(i)}}\lr{\mu^{(i-1)}}{\si^{(i-1)},{\tau^{(i)}}'}.
\]
Here and throughout this proof, $\sum_{\si^{(\bullet)},\tau^{(\bullet)}}$ means that we sum over all $\si^{(1)},\dots,\si^{(\ell-1)},\tau^{(1)},\dots,\tau^{(\ell)}\in\calp$, and we read $\si^{(0)}$ and $\si^{(\ell)}$ as $\vn$.

Summing over $\mu\in\rpstp\rho d$ is equivalent to summing over $\mu^{(0)},\dots,\mu^{(\ell-1)}\in\calp$ (because if $|\mu^{(0)}|+\dots+|\mu^{(\ell-1)}|\neq d$ then the summand is zero anyway). So we can write
\begin{align*}
X_{\la\al}&=\sum_{\si^{(\bullet)},\tau^{(\bullet)}}\prod_{i=1}^\ell\lr{\al^{(i)}}{\si^{(i)},\tau^{(i)}}\sum_{\mu^{(i-1)}\in\calp}\left[\ypm{{\la^{(i)}}'}:\spe{\mu^{(i-1)}}\right]\left[\spe{\si^{(i-1)}}\tens\spe{{\tau^{(i)}}'}:\spe{\mu^{(i-1)}}\right]
\\
&=\sum_{\si^{(\bullet)},\tau^{(\bullet)}}\prod_{i=1}^\ell\lr{\al^{(i)}}{\si^{(i)},\tau^{(i)}}\sum_{\substack{\be^{(i)},\ga^{(i)}\in\compn\\\be^{(i)}+\ga^{(i)}={\la^{(i)}}'}}\left[\ypm{\be^{(i)}}:\spe{{\tau^{(i)}}'}\right]\left[\ypm{\ga^{(i)}}:\spe{\si^{(i-1)}}\right]
\\
&=\sum_{\si^{(\bullet)},\tau^{(\bullet)}}\prod_{i=1}^\ell\sum_{\substack{\be^{(i)},\ga^{(i)}\in\compn\\\be^{(i)}+\ga^{(i)}={\la^{(i)}}'}}\left[\ypm{\be^{(i)}}\otimes\sgn:\spe{\tau^{(i)}}\right]\left[\ypm{\ga^{(i)}}:\spe{\si^{(i-1)}}\right]\left[\spe{\tau^{(i)}}\tens\spe{\si^{(i)}}:\spe{\al^{(i)}}\right],
\end{align*}
where for the second equality we use \cref{mackey}. 
Since we interpret $\si^{(0)}$ as $\vn$, the term $[\ypm{\ga^{(1)}}:\spe{\si^{(0)}}]$ equals $1$ if $\ga^{(1)}=\vn$, and $0$ otherwise. But if $\ga^{(1)}\neq\vn$, then (comparing the sizes of all the partitions involved) the product of the remaining terms in the summand is zero anyway. So we can simply omit the term $[\ypm{\ga^{(1)}}:\spe{\si^{(0)}}]$. Since we read $\si^{(\ell)}$ as $\vn$, we can also add a harmless factor $[\ypm{\ga^{(\ell+1)}}:\spe{\si^{(\ell)}}]$ in which we interpret $\ga^{(\ell+1)}$ as $\vn$. Now (with a shift of variable) $X_{\la\al}$ becomes
\begin{align*}
&\phantom=\sum_{\si^{(\bullet)},\tau^{(\bullet)}}\sum_{\substack{\be^{(1)},\dots,\be^{(\ell)}\in\compn\\\ga^{(1)},\dots,\ga^{(\ell)}\in\compn\\\be^{(i)}+\ga^{(i)}={\la^{(i)}}'}}\prod_{i=1}^\ell\left[\ypm{\be^{(i)}}\otimes\sgn:\spe{\tau^{(i)}}\right]\left[\ypm{\ga^{(i+1)}}:\spe{\si^{(i)}}\right]\left[\spe{\tau^{(i)}}\tens\spe{\si^{(i)}}:\spe{\al^{(i)}}\right],
\\
&=\sum_{\substack{\be^{(1)},\dots,\be^{(\ell)}\in\compn\\\ga^{(1)},\dots,\ga^{(\ell)}\in\compn\\\be^{(i)}+\ga^{(i)}={\la^{(i)}}'}}\sum_{\si^{(\bullet)},\tau^{(\bullet)}}\prod_{i=1}^\ell\left[(\ypm{\be^{(i)}}\otimes\sgn)\boxtimes\ypm{\ga^{(i+1)}}:\spe{\tau^{(i)}}\boxtimes\spe{\si^{(i)}}\right]\left[\spe{\tau^{(i)}}\tens\spe{\si^{(i)}}:\spe{\al^{(i)}}\right]
\\
&=\sum_{\substack{\be^{(1)},\dots,\be^{(\ell)}\in\compn\\\ga^{(1)},\dots,\ga^{(\ell)}\in\compn\\\be^{(i)}+\ga^{(i)}={\la^{(i)}}'}}\prod_{i=1}^\ell\left[(\ypm{\be^{(i)}}\otimes\sgn)\tens\ypm{\ga^{(i+1)}}:\spe{\al^{(i)}}\right]
\end{align*}
where in the final equality we use transitivity of induction and the fact that for any $t,s$ the irreducible $\C(\sym_t\times\sym_s)$-modules are precisely the modules $\spe\tau\boxtimes\spe\si$ for $\tau\in\calp(t)$ and $\si\in\calp(s)$. 

So $\widetilde D_\la X_{\la\al}$ coincides with the coefficient $\widetilde D_{\la\al}$ from \cref{lprjform} (bearing in mind that $\al^{(0)}=\vn$), and the proof is complete.
\end{pf}

We are now ready to prove the main result of this section. First we need some more notation. Recall from above the bijection
\begin{align*}
\pfstp\rho d\longmapsto\rpstp\rho d,
\ 
\la\longmapsto\bak\la
\end{align*}
where $\bak\la$ is the partition with \pbq $({\la^{(1)}}',\dots,{\la^{(\ell)}}',\vn)$. We write $\mu\mapsto\fd\mu$ for the inverse bijection.

%
%
%

Given $\la\in\pstp\rho d$, we define $\gth\la$ to be the composition $(|\la^{(0)}|,\dots,|\la^{(\ell)}|)\in\compn(d)$. Given a composition $\um=(m_0,\dots,m_{\ell-1},0)$ of $d$, we define $\fd \um$ to be the composition $(0,m_0,\dots,m_{\ell-1})$.

Now say that a virtual character $\psi$ is \emph{$\um$-bounded} if:
\begin{itemize}
\item
every $\ch\al$ occurring in $\psi$ satisfies $\um\dom\gth\al\dom\fd \um$,
\item
there is at least one $\ch\al$ occurring in $\psi$ with $\gth\al=\um$, and
\item
there is at least one $\ch\al$ occurring in $\psi$ with $\gth\al=\fd \um$.
\end{itemize}

Say that a virtual character $\psi$ is \emph{$\um$-semi-bounded} if:
\begin{itemize}
\item
every $\ch\al$ occurring in $\psi$ satisfies $\gth\al\dom\fd \um$, and
\item
there is at least one $\ch\al$ occurring in $\psi$ with $\gth\al=\fd \um$.
\end{itemize}


We make the following observations about the virtual characters we have defined. We begin with the virtual characters $\mprj\mu$.

\begin{lemma}\label{mattprop}
Suppose $\mu\in\rpstp\rho d$, and let $\um=\gth\mu$. Then $\mprj\mu$ is $\um$-bounded.
\end{lemma}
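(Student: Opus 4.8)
The plan is to read everything off the explicit formula for $\mprj\mu$ in \cref{mattproj}. For $\al\in\stp\rho d$, that formula expresses the coefficient of $\ch\al$ in $\mprj\mu$ as a positive power of $2$ times
\[
C_\al:=\sum\ki{\al^{(0)}}{\si^{(0)}}\prod_{i=1}^\ell\lr{\al^{(i)}}{\si^{(i)},\tau^{(i)}}\lr{\mu^{(i-1)}}{\si^{(i-1)},{\tau^{(i)}}'},
\]
summed over $\si^{(0)},\dots,\si^{(\ell-1)},\tau^{(1)},\dots,\tau^{(\ell)}\in\calp$ with the convention $\si^{(\ell)}=\vn$. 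Since $\al$ is strict, $\al^{(0)}$ is strict by \cref{pbqlem}(i), so each Kostka number $\ki{\al^{(0)}}{\si^{(0)}}$ is a non-negative integer by \cref{kostlem}, and the Littlewood--Richardson coefficients are non-negative too. Hence $\ch\al$ occurs in $\mprj\mu$ if and only if some single summand of $C_\al$ has all of its factors non-zero, and the whole proof reduces to analysing such a summand.

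For the containment statement I would fix a summand with all factors non-zero and pass to sizes. Writing $a_i:=|\al^{(i)}|$, $m_i:=|\mu^{(i)}|$ (so $m_\ell=0$ because $\mu$ is restricted), $s_i:=|\si^{(i)}|$, $t_i:=|\tau^{(i)}|$, and setting $t_0:=t_{\ell+1}:=0$: non-vanishing of the Kostka factor forces $s_0=a_0$, non-vanishing of $\lr{\al^{(i)}}{\si^{(i)},\tau^{(i)}}$ forces $a_i=s_i+t_i$, and non-vanishing of $\lr{\mu^{(i-1)}}{\si^{(i-1)},{\tau^{(i)}}'}$ forces $m_i=s_i+t_{i+1}$. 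Eliminating the $s_i$ gives $a_i=m_i-t_{i+1}+t_i$ for all $0\ls i\ls\ell$, hence the telescoping identity $\sum_{i=0}^k a_i=\sum_{i=0}^k m_i-t_{k+1}$; since $0\ls t_{k+1}=m_k-s_k\ls m_k$, this yields $\sum_{i=0}^{k-1} m_i\ls\sum_{i=0}^k a_i\ls\sum_{i=0}^k m_i$ for every $k$, with equality throughout at $k=\ell$. That is exactly $\um\dom\gth\al\dom\fd\um$.

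For the two non-emptiness conditions I would exhibit explicit partitions. For one with $\gth\al=\um$: use \cref{kostlem} to choose a strict partition $\sigma$ of $|\mu^{(0)}|$ with $\ki{\sigma}{\mu^{(0)}}>0$, and let $\al\in\pstp\rho d$ have \pbq $(\sigma,\mu^{(1)},\dots,\mu^{(\ell-1)},\vn)$; then $\al$ is strict by \cref{pbqlem}(i), $\gth\al=\um$, and the summand of $C_\al$ with $\si^{(i)}=\mu^{(i)}$ and all $\tau^{(i)}=\vn$ collapses (using $\lr{\nu}{\nu'',\vn}=\de_{\nu\nu''}$) to $\ki{\sigma}{\mu^{(0)}}>0$, so $\ch\al$ occurs. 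For one with $\gth\al=\fd\um$: let $\al\in\pstp\rho d$ have \pbq $(\vn,{\mu^{(0)}}',\dots,{\mu^{(\ell-1)}}')$; then $\al^{(0)}=\vn$ is strict, so $\al$ is strict by \cref{pbqlem}(i), $\gth\al=\fd\um$, and the summand of $C_\al$ with all $\si^{(i)}=\vn$ and $\tau^{(i)}={\mu^{(i-1)}}'$ collapses to $1$, so $\ch\al$ occurs. Together the three bullet points give that $\mprj\mu$ is $\um$-bounded.

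The only genuine care needed is in the telescoping identity and in keeping the index conventions straight (the convention $\si^{(\ell)}=\vn$, and $m_\ell=0$ since $\mu$ is restricted); everything else is immediate from \cref{mattproj}, \cref{kostlem} and \cref{pbqlem}, so I do not anticipate a real obstacle.
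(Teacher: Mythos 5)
Your proof is correct and follows essentially the same route as the paper's: the same size-counting on the non-vanishing factors (your telescoping identity is exactly the paper's displayed chain of equalities), and the same two witness partitions, with \pbq{}s $(\sigma,\mu^{(1)},\dots,\mu^{(\ell-1)},\vn)$ via \cref{kostlem} and $(\vn,{\mu^{(0)}}',\dots,{\mu^{(\ell-1)}}')$. No gaps.
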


\begin{pf}
Suppose $\ch\al$ occurs in $\mprj\mu$. Then by \cref{mattproj}, there are partitions $\si^{(0)},\dots,\si^{(\ell-1)},\tau^{(1)},\dots,\tau^{(\ell)}$ such that
\[
\ki{\al^{(0)}}{\si^{(0)}}\prod_{i=1}^{\ell}\lr{\al^{(i)}}{\si^{(i)},\tau^{(i)}}\lr{\mu^{(i-1)}}{\si^{(i-1)},{\tau^{(i)}}'}\neq0
\]
(where as usual we read $\si^{(\ell)}=\vn$). Using the fact that $\ki\pi\si$ is non-zero only if $|\pi|=|\si|$, while $\lr\be{\ga,\de}$ is non-zero only if $|\be|=|\ga|+|\de|$, this gives
\begin{equation}\label{eq1}
|\mu^{(0)}|+\dots+|\mu^{(r-1)}|+|\si^{(r)}|=|\al^{(0)}|+\dots+|\al^{(r)}|=|\mu^{(0)}|+\dots+|\mu^{(r)}|-|\tau^{(r+1)}|
\end{equation}
for each $r=0,1,\dots,\ell$ (interpreting $|\tau^{(\ell+1)}|$ as $0$), so that
\[
(|\mu^{(0)}|,\dots,|\mu^{(\ell-1)}|,0)\dom(|\al^{(0)}|,\dots,|\al^{(\ell)}|)\dom(0,|\mu^{(0)}|,\dots,|\mu^{(\ell-1)}|)
\]
as required.

Now use \cref{kostlem} to choose a strict partition $\nu^{(0)}$ for which $\ki{\nu^{(0)}}{\mu^{(0)}}\neq0$, and let $\nu$ be the partition in $\stp\rho d$ with \pbq $(\nu^{(0)},\mu^{(1)},\dots,\mu^{(\ell-1)},\vn)$. Then $\gth\nu=m$ and $\ch\nu$ occurs in $\mprj\mu$ (the only non-zero summand is for $\tau^{(i)}=\vn$ and $\sigma^{(i)}=\mu^{(i)}$ for every $i$).

Finally, let $\la=\fd\mu$. Then $\gth\la=\um$ and $\ch\la$ occurs in $\mprj\mu$ (the only non-zero summand is for $\tau^{(i)}={\mu^{(i-1)}}'$ and $\si^{(i)}=\vn$ for every $i$).
\end{pf}

Next we look at the characters $\lprj\la$.

\begin{lemma}\label{luciaprop}
Suppose $\la\in\pfstp\rho d$ and let $\um=(|\la^{(1)}|,\dots,|\la^{(\ell)}|,0)$. Then $\lprj\la$ is $\um$-bounded.
\end{lemma}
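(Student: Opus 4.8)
The plan is to read the statement off from the expansion of $\lprj\la$ in terms of the virtual projective characters $\mprj\mu$ supplied by \cref{lfromm}, using that every $\mprj\mu$ occurring there is already known to be $\um$-bounded by \cref{mattprop}.

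First I would dispose of two bookkeeping points. Since $\la\in\pfstp\rho d$ we have $\la^{(0)}=\vn$ by \cref{pbqlem}(ii), so $\gth\la=(0,|\la^{(1)}|,\dots,|\la^{(\ell)}|)=\fd\um$; and the partition $\bak\la\in\rpstp\rho d$, which has \pbq $({\la^{(1)}}',\dots,{\la^{(\ell)}}',\vn)$ by \cref{lspan}(iii) and \cref{expbij}, satisfies $\gth{\bak\la}=(|\la^{(1)}|,\dots,|\la^{(\ell)}|,0)=\um$. Next, by \cref{lfromm} I write $\lprj\la=\widetilde D_\la\sum_{\mu\in\rpstp\rho d}c_\mu\mprj\mu$ with $c_\mu=\prod_{i=1}^\ell[\ypm{{\la^{(i)}}'}:\spe{\mu^{(i-1)}}]\in\N_0$, and then observe three things: (a) $\widetilde D_\la>0$ (the exponent of $2$ in its definition is $\tfrac12(d(p-2)+\a\la-\a\rho)\gs0$ since $d(p-2)\gs1$, and the remaining factor is a product of factorials); (b) if $c_\mu\ne0$ then every Kostka number $[\ypm{{\la^{(i)}}'}:\spe{\mu^{(i-1)}}]$ is non-zero, forcing $|\mu^{(i-1)}|=|\la^{(i)}|$ for $1\ls i\ls\ell$, and combined with $\mu^{(\ell)}=\vn$ (from \cref{pbqlem}(iii), $\mu$ being restricted) this gives $\gth\mu=\um$, so $\mprj\mu$ is $\um$-bounded by \cref{mattprop}; (c) taking $\mu=\bak\la$ gives $c_{\bak\la}=\prod_{i=1}^\ell[\ypm{{\la^{(i)}}'}:\spe{{\la^{(i)}}'}]=1$, so $\mprj{\bak\la}$ occurs in the sum with the positive coefficient $\widetilde D_\la$.

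To finish I would invoke \cref{mattprojPosInt}: each $\mprj\mu$ is a non-negative $\Z$-combination of the $\ch\al$, so in $\sum_\mu c_\mu\mprj\mu$ there is no cancellation. Hence every $\ch\al$ occurring in $\lprj\la$ occurs in some $\um$-bounded $\mprj\mu$ and therefore satisfies $\um\dom\gth\al\dom\fd\um$; and the characters $\ch\al$ with $\gth\al=\um$, respectively with $\gth\al=\fd\um$, guaranteed by the $\um$-boundedness of $\mprj{\bak\la}$ survive in $\lprj\la$. This is exactly the assertion that $\lprj\la$ is $\um$-bounded.

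There is no serious obstacle here: the substantive work (the Littlewood--Richardson identity behind \cref{lfromm} and the branching/weight analysis behind \cref{mattprop}) has already been carried out. The only two points needing a little care are the ``no cancellation'' step — which is precisely why one appeals to \cref{mattprojPosInt} rather than treating the $\mprj\mu$ as merely virtual characters — and the verification in (b) that every $\mu$ contributing to the sum has $\gth\mu$ equal to $\um$ itself, rather than some strictly smaller composition, for which one needs both the size constraint coming from the Kostka numbers and the fact that a restricted $p$-strict partition has empty $\ell$-th \pbq component.
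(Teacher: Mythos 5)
Your proof is correct, and it does not use circular reasoning: \cref{lfromm}, \cref{mattprop}, \cref{mattprojPosInt} and \cref{expbij} all precede \cref{luciaprop} in the paper's logical order. However, you take a genuinely different route from the paper. The paper argues directly from \cref{ldom}: the bound $\um\dom\gth\al\dom\fd\um$ is just a coarsening of the chain $(\la^{(0)},\dots,\la^{(\ell)})\preq(\al^{(0)},\dots,\al^{(\ell)})\preq({\la^{(1)}}',\dots,{\la^{(\ell)}}',\vn)$, the character $\ch\la$ itself witnesses the bottom (since $\gth\la=\fd\um$), and the top is witnessed by exhibiting an explicit $\xi\in\stp\rho d$ with \pbq $((|\la^{(1)}|),{\la^{(2)}}',\dots,{\la^{(\ell)}}',\vn)$ and checking via \cref{lprjform} that $\widetilde D_{\la\xi}\neq0$ (the summand with $\be^{(i)}=\vn$, $\ga^{(i)}={\la^{(i)}}'$ contributes $1$). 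You instead pass through the expansion of $\lprj\la$ in the $\mprj\mu$, and your key observations — that every contributing $\mu$ has $\gth\mu=\um$ because the Kostka multiplicities force $|\mu^{(i-1)}|=|\la^{(i)}|$ and restrictedness forces $\mu^{(\ell)}=\vn$, that $\mprj{\bak\la}$ occurs with coefficient $\widetilde D_\la>0$, and that \cref{mattprojPosInt} rules out cancellation — are all sound. What your route buys is uniformity (both witnesses come for free from the $\um$-boundedness of $\mprj{\bak\la}$, with no new combinatorial construction); what it costs is that it leans on the heavier \cref{lfromm}, which is itself proved using \cref{ldom}, so the paper's argument is the more economical of the two.
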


\begin{pf}
The fact that $\um\dom\gth\al\dom\fd \um$ whenever $\ch\al$ occurs in $\lprj\la$ is just a cruder version of the second statement in \cref{ldom}. Furthermore, the first statement in \cref{ldom} says that $\ch\la$ occurs in $\lprj\la$, and by definition $\gth\la=\fd \um$. Finally, let $\xi$ be the partition in $\stp\rho d$ with \pbq $((|\la^{(1)}|),{\la^{(2)}}',\dots,{\la^{(\ell)}}',\vn)$. Then the coefficient $\widetilde D_{\la\xi}$ from \cref{lprjform} is non-zero: to see this, observe that the summand in which $\be^{(i)}=\vn$ and $\ga^{(i)}={\la^{(i)}}'$ for each $i$ equals $\bar c((|\la^{(1)}|),{\la^{(1)}}')=1$.
Hence $\ch\xi$ occurs in $\lprj\la$, and satisfies $\gth\xi=\um$.
\end{pf}

Finally we look at the indecomposable projective characters $\prj\mu$.

\begin{lemma}\label{pimprop}
Suppose $\mu\in\rpstp\rho d$, and let $\um=\gth\mu$. Then $\prj\mu$ is $\um$-semi-bounded.
\end{lemma}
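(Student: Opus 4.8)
The plan is to read this off from \cref{lspan,expbij,succdom}. Write $\la:=\fd\mu\in\pfstp\rho d$ for the preimage of $\mu$ under the bijection $\nu\mapsto\bak\nu$, so that $\bak\la=\mu$. By \cref{pbqlem}(ii) we have $\la^{(0)}=\vn$, and by \cref{expbij} the \pbq of $\mu$ is $({\la^{(1)}}',\dots,{\la^{(\ell)}}',\vn)$. Comparing the sizes of components, this gives
\[
\um=\gth\mu=(|\la^{(1)}|,\dots,|\la^{(\ell)}|,0),
\]
and hence $\fd\um=(0,|\la^{(1)}|,\dots,|\la^{(\ell)}|)=\gth\la$.

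Granting this identification, the second defining condition of $\um$-semi-boundedness is immediate: by \cref{lspan}(iii), $\ch\la$ occurs in $\prj{\bak\la}=\prj\mu$, and $\gth\la=\fd\um$.

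For the first condition I would again use \cref{lspan}(iii): any $\ch\al$ occurring in $\prj\mu$ has $\al\in\stp\rho d$ and satisfies $\al\domby\la$. By \cref{succdom} this is equivalent to $(\al^{(0)},\dots,\al^{(\ell)})\sucq(\la^{(0)},\dots,\la^{(\ell)})$, and letting $c$ be large in the defining inequalities of $\sucq$ collapses them to $|\al^{(0)}|+\dots+|\al^{(k)}|\gs|\la^{(0)}|+\dots+|\la^{(k)}|$ for all $0\ls k\ls\ell$, i.e.\ $\gth\al\dom\gth\la=\fd\um$, as required.

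No step here is really an obstacle — all the content sits in \cref{lspan,expbij,succdom} — so the only thing to watch is the index shift between $\la$ and $\mu=\bak\la$ (the $i$-th component of the \pbq of $\mu$ is the conjugate of the $(i{+}1)$-st component of the \pbq of $\la$), which is precisely what makes the equality $\fd\um=\gth\la$ come out correctly. As a cross-check one could instead observe that $\prj\mu$ occurs with a non-negative coefficient in $\lprj\la$, which is $\um$-bounded by \cref{luciaprop}; then every $\ch\al$ in $\prj\mu$ automatically satisfies $\gth\al\dom\fd\um$, and $\ch\la$ itself realises $\gth\la=\fd\um$.
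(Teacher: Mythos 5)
Your proof is correct and is essentially the paper's own argument: the paper's proof is the single line ``Let $\la=\fd\mu$. Then $\gth\la=\fd\um$, and \cref{lspan}(iii) says that $\prj\mu$ is $\um$-semi-bounded,'' and you have simply filled in the implicit details (that $\gth\la=\fd\um$ follows from \cref{expbij} and \cref{pbqlem}(ii), and that $\al\domby\la$ yields $\gth\al\dom\fd\um$ via \cref{succdom}). No gaps.
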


\begin{pf}
Let $\la=\fd\mu$. Then $\gth\la=\fd \um$, and \cref{lspan}(iii)  says that $\prj\mu$ is $\um$-semi-bounded.
\end{pf}

We now have a lot of information about our three families of virtual characters. Take a composition $\um=(m_0,\dots,m_{\ell-1},0)$ of $d$, and let
\[
\rp{{\um}}=\lset{\mu\in\rpstp\rho d}{\gth\mu=\um}.
\]
Now define
\begin{align*}
\prm&=\lset{\prj\mu}{\mu\in\rp{{\um}}},
\\
\lprm&=\lset{\lprj\la}{\bak\la\in\rp{{\um}}},
\\
\mprm&=\lset{\mprj\mu}{\mu\in\rp{{\um}}}.
\end{align*}
Each of these sets is a linearly independent set of virtual characters, of size $|\rp{{\um}}|$. The virtual characters in $\lprm$ and $\mprm$ are $\um$-bounded, while the virtual characters in $\prm$ are $\um$-semi-bounded.

Now we can finally make the connection between the characters $\prj\mu$ and the virtual characters~$\mprj\mu$.

\begin{thm}\label{projthm}
Take a composition $\um=(m_0,\dots,m_{\ell-1},0)$ of $d$. Then for each $\mu,\bak\la\in\rp{{\um}}$ we can write
\[
\prj\mu=\sum_{\nu\in\rp{{\um}}}A_{\nu\mu}\mprj\nu\qquad\text{and}\qquad\lprj{\la}=\sum_{\nu\in\rp{{\um}}}B_{\nu\la}\prj\nu
\]
where:
\begin{itemize}
\item
$A_{\nu\mu},B_{\nu\la}\in\N_0$ for each $\nu$;
\item
$A_{\mu\mu}>0$ and $B_{\bak\la\la}>0$;
\item
if $A_{\nu\mu}>0$ resp. $B_{\nu\la}>0$, then $\nu\dom\mu$ resp. $\nu\dom\bak\la$.
\end{itemize}
Moreover, each character $\prj\mu\in\prm$ is $\um$-bounded.
\end{thm}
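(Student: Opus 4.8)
The plan is to fix the composition $\um=(m_0,\dots,m_{\ell-1},0)$ of $d$ and to work inside the corresponding $\gth$-stratum. Since $\{\prj\mu\}$, $\{\mprj\mu\}$ and $\{\lprj\la\}$ are all bases of $\operatorname{PCh}^{\rho,d}$, indexed — via the bijection $\la\mapsto\bak\la$ of \cref{lspan}(iii) (made explicit in \cref{expbij}) — by $\rpstp\rho d=\bigsqcup_\um\rp\um$, the theorem amounts to showing that $\prm$, $\lprm$ and $\mprm$ are bases of one and the same subspace $V_\um$, with transition matrices of the stated shape. The order-theoretic input, used everywhere, is that \cref{succdom} lets one pass freely between the dominance order $\dom$ on $\stp\rho d$, the order $\sucq$ on $p$-bar-quotients, and dominance of the composition labels: letting $c\to\infty$ in the definition of $\sucq$ gives $\al\dom\be\Rightarrow\gth\be\dom\gth\al$, and since the bijections $\fd{}$ and $\bak{}$ merely transpose and shift the components of the $p$-bar-quotient they reverse $\dom$, i.e.\ $\fd\mu\dom\fd\nu\iff\nu\dom\mu$ (and likewise for $\bak{}$); also $\fd\um$ is a shift of $\um$.

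First I would record the transition $\lprm\to\mprm$. By \cref{lfromm} the coefficient of $\mprj\mu$ in $\lprj\la$ is $\widetilde D_\la\prod_{i=1}^\ell[\ypm{{\la^{(i)}}'}:\spe{\mu^{(i-1)}}]$, which vanishes unless $|\mu^{(i-1)}|=|\la^{(i)}|$ for all $i$ — i.e.\ unless $\gth\mu=\gth{\bak\la}$ — and which, by Young's rule together with \cref{succdom} (applied componentwise, all sizes being equal), is nonzero iff $\mu\dom\bak\la$ and equals $1$ when $\mu=\bak\la$. So within the stratum this transition is triangular for $\dom$ with positive integer diagonal $\widetilde D_\la$ (a positive integer, by the count in the proof of \cref{lprjform}) and entries in $\N_0$; in particular $\operatorname{span}\lprm=\operatorname{span}\mprm=:V_\um$.

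Next I would treat the expansion $\lprj\la=\sum_\nu B_{\nu\la}\prj\nu$. Since $\lprj\la$ is a genuine character (\cref{lspan}(iii)) the $B_{\nu\la}$ lie in $\N_0$, so there is no cancellation among the $\ch\al$ and every constituent of a $\prj\nu$ with $B_{\nu\la}>0$ is a constituent of $\lprj\la$. Applying this with $\al=\fd\nu$ — which occurs in $\prj\nu$ by the defining property of $\bak{}$ — and using \cref{ldom} (read through \cref{succdom}) gives $\la\dom\fd\nu$; hence $\nu=\bak{(\fd\nu)}\dom\bak\la$ and $\gth{\fd\nu}\dom\gth\la=\fd\um$, so $\gth\nu\dom\um$, while $\nu\dom\bak\la$ gives $\um=\gth{\bak\la}\dom\gth\nu$; thus $\gth\nu=\um$ and $\nu\dom\bak\la$. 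Moreover $\ch\la$ occurs in $\lprj\la$ (\cref{ldom}), hence in some $\prj\nu$ with $B_{\nu\la}>0$; as $\ch\la$ occurring in $\prj\nu$ forces $\fd\nu\dom\la$, which with $\la\dom\fd\nu$ gives $\fd\nu=\la$, we conclude $\nu=\bak\la$, so $B_{\bak\la\la}>0$. Therefore $B$ is block-diagonal in $\um$ and, within the block, triangular for $\dom$ with positive integer diagonal; in particular $\prm$ is a basis of $V_\um=\operatorname{span}\lprm$.

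Finally I would identify $A$ and deduce the \emph{Moreover} clause. Specialising \cref{mattproj} to $\ch{\fd{\nu'}}$ with $\nu,\nu'\in\rp\um$: since $(\fd{\nu'})^{(0)}=\vn$ the factor $\ki{\vn}{\si^{(0)}}$ forces $\si^{(0)}=\vn$, and since $\gth\nu=\gth{\nu'}=\um$ every size inequality imposed by the Littlewood--Richardson factors is an equality, which successively pins down $\si^{(i)}=\vn$, $\tau^{(i)}={\nu^{(i-1)}}'$ and then $\nu'^{(i-1)}=\nu^{(i-1)}$ for all $i$; as the remaining power of $2$ is $2^{\lfloor\frac12(1-\a{\fd\nu})\rfloor}=1$, this gives $[\mprj\nu:\ch{\fd{\nu'}}]=\delta_{\nu\nu'}$ for $\nu,\nu'\in\rp\um$. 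Since $\prj\mu\in\prm\subseteq V_\um=\operatorname{span}\mprm$, write $\prj\mu=\sum_{\nu\in\rp\um}A_{\nu\mu}\mprj\nu$; comparing coefficients of $\ch{\fd\nu}$ shows $A_{\nu\mu}=[\prj\mu:\ch{\fd\nu}]$, which is a non-negative integer, is positive when $\nu=\mu$ (because $\ch{\fd\mu}$ occurs in $\prj\mu$, \cref{lspan}(iii)), and vanishes unless $\fd\nu\domby\fd\mu$, i.e.\ unless $\nu\dom\mu$. For the last assertion, $\prj\mu=\sum_{\nu\in\rp\um}A_{\nu\mu}\mprj\nu$ with $A\ge0$ and $A_{\mu\mu}>0$ presents $\prj\mu$ as a cancellation-free non-negative combination (using \cref{mattprojPosInt}) of the $\um$-bounded characters $\mprj\nu$ (\cref{mattprop}); so $\prj\mu$ is supported on $\{\al:\um\dom\gth\al\dom\fd\um\}$ and inherits from the summand $\mprj\mu$ constituents $\ch\al$ with $\gth\al=\um$ and with $\gth\al=\fd\um$, i.e.\ $\prj\mu$ is $\um$-bounded.

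The place where I expect the care to be required is the chain of translations between the three dominance orders in play — establishing correctly, from \cref{succdom}, that $\fd{}$, $\bak{}$ and $\gth{}$ reverse $\dom$ — together with the $[\mprj\nu:\ch{\fd{\nu'}}]=\delta_{\nu\nu'}$ computation: once these are secured, the one-sided bounds of \cref{ldom} and \cref{lspan}(iii) close up to give the block-diagonality, and the rest of the theorem is bookkeeping.
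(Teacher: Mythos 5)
There is a genuine gap in your treatment of the expansion $\lprj\la=\sum_\nu B_{\nu\la}\prj\nu$, and it is load-bearing: everything downstream (the identity $\operatorname{span}\prm=\operatorname{span}\mprm$, the identification of $A$, and the final boundedness claim) needs one to know first that only $\prj\nu$ with $\nu\in\rp{{\um}}$ occur. Your argument for this rests on the assertion that $\bak{}$ and $\fd{}$ reverse the dominance order, so that $\la\dom\fd\nu$ implies $\nu\dom\bak\la$ and hence $\um=\gth{\bak\la}\dom\gth\nu$. That assertion is false in general. The map on \pbqs $(\vn,\al^{(1)},\dots,\al^{(\ell)})\mapsto({\al^{(1)}}',\dots,{\al^{(\ell)}}',\vn)$ conjugates each component (reversing the within-component part of $\sucq$) but shifts all components one place to the left (preserving the across-component part), so it neither preserves nor reverses $\dom$ globally; it reverses $\dom$ only between partitions with the same $\gth{}$ --- which is exactly what you are trying to prove at that point. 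Concretely, take $\ell=2$, $d=1$, $\la$ with quotient $(\vn,\vn,(1))$, so that $\bak\la$ has quotient $(\vn,(1),\vn)$ and $\um=(0,1,0)$, and take $\nu$ with quotient $((1),\vn,\vn)$, so $\gth\nu=(1,0,0)\neq\um$. Then $\fd\nu$ has quotient $(\vn,(1),\vn)$, i.e.\ $\fd\nu=\bak\la$, and both bounds of \cref{ldom} are satisfied by $\al=\fd\nu$ (indeed $\la\dom\fd\nu\dom\bak\la$), yet $\nu\ndom\bak\la$ and $\gth\nu\neq\um$. So the occurrence of $\ch{{\fd\nu}}$ in $\lprj\la$ does not exclude such a $\nu$, and your argument does not show $B_{\nu\la}=0$.

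What is missing is precisely the ingredient the paper's induction supplies. The constituent $\ch{{\fd\nu}}$ sits at the \emph{bottom} of $\prj\nu$, with $\gth{\fd\nu}=\fd{(\gth\nu)}$, and the two-sided bound of \cref{ldom} only yields $\gth\nu\dom\um$ together with $\um\dom\fd{(\gth\nu)}$, which does not force $\gth\nu=\um$. To exclude $\un:=\gth\nu\doms\um$ one needs a constituent of $\prj\nu$ at the \emph{top}, i.e.\ some $\ch\al$ with $\gth\al=\un$, which then contradicts the $\um$-boundedness of $\lprj\la$. That $\prj\nu$ has such a constituent is exactly the assertion that $\prj\nu$ is $\un$-bounded rather than merely $\un$-semi-bounded, and this is available only via the induction on $\um$ in decreasing dominance order (it is the ``Moreover'' clause of the theorem applied to $\un\doms\um$). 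The rest of your argument --- the triangularity of the $\lprm\to\mprm$ transition, the computation $[\mprj\nu:\ch{{\fd{\nu'}}}]=\delta_{\nu\nu'}$ leading to $A_{\nu\mu}=[\prj\mu:\ch{{\fd\nu}}]$ (where your use of order-reversal is legitimate because $\gth\mu=\gth\nu$ there), and the derivation of the boundedness claim from \cref{mattprop,mattprojPosInt} --- is sound and essentially agrees with the paper, but it cannot start until the block-diagonality of $B$ is secured.
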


\begin{pf}
We use induction on $\um$ in decreasing dominance order. So assume the \lcnamecref{projthm} is true whenever $\um$ is replaced by a composition $\un\doms \um$.

Since $\bak\la\in\rp{{\um}}$, the character $\lprj\la$ is $\um$-bounded by \cref{luciaprop}. Because $\lprj\la$ is a projective character (not just a virtual projective character), it is a linear combination, with non-negative coefficients, of the characters $\prj\nu$ for $\nu\in\rpstp\rho d$. We claim that only the characters $\prj\nu$ for $\nu\in\rp{{\um}}$ can occur, that is that $\lprj{\la}=\sum_{\nu\in\rp{{\um}}}B_{\nu\la}\prj\nu$ for some non-negative integer coefficients $B_{\nu\la}$. By \cref{pimprop} any other character $\prj\psi$ that occurs is $\un$-semi-bounded for some $\un\neq \um$: if $\un\ndom \um$, then there is a character $\ch\al$ occurring in $\prj\psi$ for which $\gth\al=\fd{\un}\ndom \fd \um$, so $\prj\psi$ cannot be a constituent of $\lprj\la$ because $\lprj\la$ is $\um$-bounded. On the other hand, if $\un\doms \um$, then by induction $\prj\psi$ is $\un$-bounded, so includes a character $\ch\al$ with $\gth\al=\un\doms \um$, so again the fact that $\lprj\la$ is $\um$-bounded means that $\prj\psi$ does not appear in $\lprj\la$. This proves our claim. 

By the previous paragraph, the span of $\lprm$ equals the span of $\prm$. On the other hand, \cref{lfromm} shows that the span of $\lprm$ equals the span of $\mprm$. So the span of $\prm$ equals the span of $\mprm$. In particular each character $\prj\mu\in\prm$ is a linear combination of the virtual characters in $\mprm$, that is there are coefficients $A_{\nu\mu}$ such that
\[
\prj\mu=\sum_{\nu\in\rp{{\um}}}A_{\nu\mu}\mprj\nu
\]
for each $\mu\in\rp{{\um}}$. 

But now observe from \cref{mattproj} that each character $\mprj\nu\in\mprm$ includes exactly one character $\ch\al$ with $\gth\al=\fd \um$, namely the partition $\al=\fd\nu$, and that $\chm{\mprj\nu}{\ch{{\fd\nu}}}=1$. To see this note that by \eqref{eq1} we need to take $\si^{(i)}=\vn$ and then $\tau^{(i)}={\nu^{(i-1)}}'$ in order to have a non-zero summand in the formula for $\chm{\mprj\nu}{\ch{{\fd\nu}}}$. 

So for each $\mu,\nu$, the coefficient $A_{\mu\nu}$ is simply the coefficient $\chm{\prj\mu}{\ch{{\fd\nu}}}$. These coefficients are certainly non-negative integers because $\prj\mu$ is a character, and \cref{lspan}(iii) shows that if $A_{\nu\mu}\neq0$ then $\fd\nu\domby\fd\mu$. Since $\gth\mu=\gth\nu$, this condition is the same as saying $\nu\dom\mu$. This triangularity property also gives $A_{\mu\mu}\neq0$ for each $\mu$, because the characters $\prj\mu$ are \li. 

Furthermore, by \cref{lfromm},
\[\sum_{\psi\in\rp{{\um}}} A_{\nu\psi}B_{\psi\la}=\widetilde D_\la\prod_{i=0}^{\ell-1}\left[\ypm{{\bak\la}^{(i)}}:\spe{\nu^{(i)}}\right].\]
It then follows by \cite[Theorem 4.13]{JamesBook} that 
$\nu\dom\bak\la$ whenever $B_{\nu\la}>0$, and then also that $B_{\bak\la\la}>0$ since by the previous paragraph $A_{\bak\la\psi}>0$ only if $\psi
\trianglelefteqslant\bak\la$.

The final statement of the \lcnamecref{projthm} now follows for $\um$: each $\mprj\nu\in\mprm$ is $\um$-bounded, and is a non-negative linear combination of irreducible characters, which means that any non-zero non-negative linear combination of the $\mprj\nu$ will also be $m$-bounded; so in particular $\prj\mu$ is $\um$-bounded.
\end{pf}

We extend the definition of the integers $A_{\nu\mu}$ to all $\mu,\nu\in\rpstp\rho d$ by setting $A_{\nu\mu}=0$ when $\gth\mu\neq\gth\nu$. Then the matrix $A$ with entries $A_{\nu\mu}$ is a non-singular square matrix with non-negative integer entries, which is triangular with respect to the dominance order. We call $A$ the \emph{adjustment matrix} for $\blk\rho d$. \cref{projthm} shows that the (super)decomposition matrix for $\blk\rho d$ can be obtained from the matrix determined by the characters $\mprj\mu$ by post-multiplying by $A$. Our aim in the remainder of the paper is to show that $A$ is the identity matrix when $d<p$ (and $\blk\rho d$ is RoCK).

\section{Cartan matrices and proof of the main theorem}


\subsection{The super-Cartan matrix and the adjustment matrix}\label{cartansec}

In this subsection and the next we consider the entries of the super-Cartan matrix of $\blk\rho d$.

Recall from (\ref{EHatPhi}) and \cref{mattproj} that we have 
integers
\[
D_{\la\mu}=2^{\lfloor\frac12(h(\la^{(0)})+1-\a\la)\rfloor}\sum_{\substack{\si^{(0)},\dots,\si^{(\ell-1)}\in\calp\\\tau^{(1)},\dots,\tau^{(\ell)}\in\calp}}\ki{\la^{(0)}}{\si^{(0)}}\prod_{i=1}^{\ell}\lr{\la^{(i)}}{\si^{(i)},\tau^{(i)}}\lr{\mu^{(i-1)}}{\si^{(i-1)},{\tau^{(i)}}'}
\]
for $\la\in\stp\rho d$ and $\mu\in\rpstp\rho d$, such that $\mprj\mu=\sum_\la D_{\la\mu}\ch\la$ for each $\mu$. We also have from \cref{comparingsec} an adjustment matrix $A$ such that $\prj\mu=\sum_\nu A_{\nu\mu}\mprj\nu$ for each $\mu$. Hence
\[
[\prj\mu:\ch\la]=\sum_{\nu\in\rpstp\rho d}A_{\nu\mu}D_{\la\nu}
\]
for every $\la\in\stp\rho d,\mu\in\rpstp\rho d$. So by Brauer reciprocity (\ref{EBrRec}), 
if we define
\[
\dd_{\la\mu}=2^{\lfloor\frac12(h(\la^{(0)})+\a\la)\rfloor}\sum_{\substack{\si^{(0)},\dots,\si^{(\ell-1)}\in\calp\\\tau^{(1)},\dots,\tau^{(\ell)}\in\calp}}\ki{\la^{(0)}}{\si^{(0)}}\prod_{i=1}^\ell\lr{\la^{(i)}}{\si^{(i)},\tau^{(i)}}\lr{\mu^{(i-1)}}{\si^{(i-1)},{\tau^{(i)}}'}
\]
for all $\la,\mu$, then
\begin{equation}\label{adjd}
[\Sp\la:\Dm\mu]=\sum_{\nu\in\rpstp\rho d}A_{\nu\mu}\dd_{\la\nu}.
\end{equation}

Now consider the super-Cartan matrix entries
\[
C_{\nu\mu}=[P(\nu):\Dm\mu]
\]
for $\mu,\nu\in\rpstp\rho d$, where $P(\nu)$ denotes the projective cover of $\Dm\nu$. From above, we can write
\[
C_{\nu\mu}=\sum_{\la\in\stp\rho d}[\prj\nu:\ch\la][\Sp\la:\Dm\mu]=\sum_{\substack{\la\in\stp\rho d\\\xi,\pi\in\rpstp\rho d}}A_{\xi\mu}\dd_{\la\xi}A_{\pi\nu}D_{\la\pi}.
\]


\subsection{Entries in the unadjusted Cartan matrix}
Our objective in this subsection is to compute the `unadjusted super-Cartan matrix' entries 
\[
\mathring C_{\mu\nu}:=\sum_\la\dd_{\la\mu}D_{\la\nu}
\]
for $\mu,\nu\in\rpstp\rho d$. In \cref{SSMainTProof}, we will then use \cref{wrcartan} and \cref{mainkl} to see that these `unadjusted super-Cartan matrix' entries coincide with the actual super-Cartan matrix entries, which will imply our main result.

\begin{propn}\label{unadjc}
Suppose $\mu,\nu\in\rpstp\rho d$. Then
\[
\mathring C_{\mu\nu}
=\prod_{i=0}^{\ell-1}\lr{\mu^{(i)}}{\phi^{(i)},\chi^{(i)},{\psi^{(i+1)}}',\om^{(i)}}\lr{\nu^{(i)}}{\phi^{(i)},\psi^{(i)},{\chi^{(i+1)}}',\om^{(i)}},
\]
summing over all partitions $\phi^{(i)},\psi^{(i)},\chi^{(i)},\om^{(i)}$ for $0\ls i\ls\ell-1$ such that $\chi^{(0)}={\psi^{(0)}}'$, and reading $\chi^{(\ell)}=\psi^{(\ell)}=\vn$.
\end{propn}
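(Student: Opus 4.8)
## Proof proposal for Proposition~\ref{unadjc}

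The plan is to compute $\mathring C_{\mu\nu}=\sum_{\la\in\stp\rho d}\dd_{\la\mu}D_{\la\nu}$ by substituting the explicit formulas for $\dd_{\la\mu}$ and $D_{\la\nu}$ and then carrying out the sum over $\la$ component by component. Recall that for $\la\in\stp\rho d$ with \pbq $(\la^{(0)},\dots,\la^{(\ell)})$, both $\dd_{\la\mu}$ and $D_{\la\nu}$ carry a power-of-$2$ prefactor depending on $\la^{(0)}$: namely $2^{\lfloor\frac12(h(\la^{(0)})+\a\la)\rfloor}$ and $2^{\lfloor\frac12(h(\la^{(0)})+1-\a\la)\rfloor}$ respectively. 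The first key observation is that the product of these two prefactors is exactly $2^{h(\la^{(0)})}$, since $\lfloor\frac12(x+\a\la)\rfloor+\lfloor\frac12(x+1-\a\la)\rfloor=x$ for any integer $x$ regardless of whether $\a\la$ is $0$ or $1$. (Here one uses that $\a\la$ depends only on $\la^{(0)}$ in a way compatible with the parity bookkeeping — more precisely, summing over $\la\in\stp\rho d$ is the same as summing over \pbqs $\bla\in\Par^I(d)$ with $\la^{(0)}$ strict, by \cref{pbqlem}(i), and $\a\la$ is determined by $\la$; but since $\a\la$ appears only inside the two floor functions whose sum is $h(\la^{(0)})$ independently of $\a\la$, it disappears entirely from the final expression.)

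With the $2$-powers combined into $2^{h(\la^{(0)})}$, the sum becomes
\[
\mathring C_{\mu\nu}=\sum_{\bla\in\Par^I(d),\ \la^{(0)}\text{ strict}}2^{h(\la^{(0)})}\Bigg(\sum_{\si^{(\bullet)},\tau^{(\bullet)}}\ki{\la^{(0)}}{\si^{(0)}}\prod_{i=1}^\ell\lr{\la^{(i)}}{\si^{(i)},\tau^{(i)}}\lr{\mu^{(i-1)}}{\si^{(i-1)},{\tau^{(i)}}'}\Bigg)\Bigg(\sum_{\xi^{(\bullet)},\zeta^{(\bullet)}}\ki{\la^{(0)}}{\xi^{(0)}}\prod_{i=1}^\ell\lr{\la^{(i)}}{\xi^{(i)},\zeta^{(i)}}\lr{\nu^{(i-1)}}{\xi^{(i-1)},{\zeta^{(i)}}'}\Bigg),
\]
where I have renamed the summation partitions in $D_{\la\nu}$ to $\xi^{(\bullet)},\zeta^{(\bullet)}$, and where as usual $\si^{(\ell)}=\xi^{(\ell)}=\vn$. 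I would now interchange the order of summation, summing over $\la^{(0)}$ first and over $\la^{(1)},\dots,\la^{(\ell)}$ afterwards. The $\la^{(0)}$-sum is exactly $\sum_{\la^{(0)}\in\calp_0}2^{h(\la^{(0)})}\ki{\la^{(0)}}{\si^{(0)}}\ki{\la^{(0)}}{\xi^{(0)}}$, which by \cref{klem} equals $\sum_{\beta,\ga\in\calp}\lr{\si^{(0)}}{\beta,\ga'}\lr{\xi^{(0)}}{\beta,\ga}$. This is the crucial input: it converts the inverse-Kostka ingredients (which have no analogue in the ordinary Chuang--Tan theory) into honest Littlewood--Richardson coefficients. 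For each remaining component $\la^{(i)}$ with $1\ls i\ls\ell$, the $\la^{(i)}$-sum is $\sum_{\la^{(i)}\in\calp}\lr{\la^{(i)}}{\si^{(i)},\tau^{(i)}}\lr{\la^{(i)}}{\xi^{(i)},\zeta^{(i)}}$, which by the Mackey-type identity \cref{smackey} equals $\sum_{\phi,\chi,\psi,\om\in\calp}\lr{\si^{(i)}}{\phi,\chi}\lr{\tau^{(i)}}{\psi,\om}\lr{\xi^{(i)}}{\phi,\psi}\lr{\zeta^{(i)}}{\chi,\om}$.

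After these substitutions, the expression is a large sum of products of Littlewood--Richardson coefficients indexed by the original $\si,\tau,\xi,\zeta$ together with the new variables $\beta,\ga$ (from \cref{klem}) and $\phi^{(i)},\chi^{(i)},\psi^{(i)},\om^{(i)}$ (from the $\ell$ applications of \cref{smackey}). The remaining work is to eliminate the intermediate variables $\si^{(i)},\tau^{(i)},\xi^{(i)},\zeta^{(i)}$ by repeatedly using the associativity/coproduct identity $\sum_{\kappa}\lr{\kappa}{a,b}\lr{c}{\kappa,\dots}=\lr{c}{a,b,\dots}$ (equivalently, transitivity of induction for symmetric groups) to merge the LR-factors that share an eliminated variable. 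Each $\si^{(i)}$ appears in $\lr{\mu^{(i-1)}}{\si^{(i-1)},{\tau^{(i)}}'}$ (with index shifted by one), in $\lr{\si^{(i)}}{\phi^{(i)},\chi^{(i)}}$, and — via the $\la^{(0)}$-reduction — $\si^{(0)}$ also appears in $\lr{\si^{(0)}}{\beta,\ga'}$; similarly for the other families. Matching up indices, one sets $\chi^{(0)}={\psi^{(0)}}'$ (this is where $\beta$ and $\ga$ get absorbed, using $\lr{\si^{(0)}}{\beta,\ga'}$ and ${\chi^{(i)}}'$-type conjugations) and bundles everything into the two stated products $\lr{\mu^{(i)}}{\phi^{(i)},\chi^{(i)},{\psi^{(i+1)}}',\om^{(i)}}$ and $\lr{\nu^{(i)}}{\phi^{(i)},\psi^{(i)},{\chi^{(i+1)}}',\om^{(i)}}$, reading $\chi^{(\ell)}=\psi^{(\ell)}=\vn$.

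The main obstacle will be the final bookkeeping step: keeping track of exactly which variable is conjugated (the $'$ appears on the $\tau$'s in the Fock-space formula, on the $\ga$ in \cref{klem}, and has to be shuffled through the $\chi\leftrightarrow\psi$ swap coming from the $\mu$-versus-$\nu$ asymmetry), and verifying that after all cancellations the index shifts line up so that the $(i+1)$-labelled partitions $\psi^{(i+1)},\chi^{(i+1)}$ truly appear in the $i$-th factor with the stated conjugations, and that the boundary conventions at $i=0$ (the $\chi^{(0)}={\psi^{(0)}}'$ constraint, replacing the Kostka contribution) and at $i=\ell$ ($\chi^{(\ell)}=\psi^{(\ell)}=\vn$, replacing $\si^{(\ell)}=\xi^{(\ell)}=\vn$) come out correctly. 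This is essentially the same combinatorial manipulation as in the proof of \cref{PCTMain} and \cref{wrcartan}, so I would organize the argument to make the parallel with \cref{wrcartan} visible — indeed the resulting formula is manifestly the same shape as the formula for $[P(\bla):L(\bmu)]$ there, which is the whole point, since \cref{SSMainTProof} will match $\mathring C_{\mu\nu}$ with the genuine super-Cartan entries of $\W_d$ via \cref{mainkl}.
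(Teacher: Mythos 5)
Your proposal follows essentially the same route as the paper's proof: combine the two powers of $2$ into $2^{h(\la^{(0)})}$, sum over the \pbq components of $\la$ using \cref{klem} for the $\la^{(0)}$-sum and \cref{smackey} for each $\la^{(i)}$-sum with $i\gs1$, then eliminate the intermediate variables via transitivity of Littlewood--Richardson coefficients. The only (immaterial) difference is the order in which \cref{klem} and \cref{smackey} are applied, and the final conjugation bookkeeping you defer is exactly the computation the paper carries out.
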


\begin{pf}
From the definition of the integers $D_{\la\mu}$ and $\dd_{\la\mu}$, we obtain
\begin{alignat*}2
\mathring C_{\mu\nu}
&=\sum_{\la\in\stp\rho d}\dd_{\la\mu}D_{\la\nu}&&
\\
&=\sum\Big(2^{h(\la^{(0)})}\ki{\la^{(0)}}{\si^{(0)}}\ki{\la^{(0)}}{\bar\si^{(0)}}&&\prod_{i=1}^\ell\lr{\la^{(i)}}{\si^{(i)},\tau^{(i)}}\lr{\mu^{(i-1)}}{\si^{(i-1)},{\tau^{(i)}}'}
\\
&&\mathllap{\times}&\prod_{i=1}^\ell\lr{\la^{(i)}}{\bar\si^{(i)},\bar\tau^{(i)}}\lr{\nu^{(i-1)}}{\bar\si^{(i-1)},\bar\tau{{}^{(i)}}'}\Big)
\end{alignat*}
where the sum is over $\la\in\stp\rho d$, $\si^{(i)},\bar\si^{(i)}\in\calp$ for $0\ls i\ls\ell-1$ and $\tau^{(i)},\bar\tau^{(i)}\in\calp$ for $1\ls i\ls\ell$, and we read $\si^{(\ell)}=\bar\si^{(\ell)}=\vn$. Summing over $\la\in\stp\rho d$ is equivalent to summing over $\la^{(0)},\dots,\la^{(\ell)}\in\calp$ with $\la^{(0)}$ strict and $|\la^{(0)}|+\dots+|\la^{(\ell)}|=d$. But in fact the summand is zero when $|\la^{(0)}|+\dots+|\la^{(\ell)}|\neq d$, so we can safely replace the variable $\la\in\stp\rho d$ with variables $\la^{(0)}\in\calp_0$ and $\la^{(i)}\in\calp$ for $1\ls i\ls\ell$.

We apply \cref{smackey} to eliminate the variables $\la^{(1)},\dots,\la^{(\ell)}$. We obtain
\begin{align*}
\mathring C_{\mu\nu}=\sum\Big(2^{h(\la^{(0)})}\ki{\la^{(0)}}{\si^{(0)}}\ki{\la^{(0)}}{\bar\si^{(0)}}&\prod_{i=1}^\ell\lr{\si^{(i)}}{\phi^{(i)},\chi^{(i)}}\lr{\tau^{(i)}}{\psi^{(i)},\om^{(i-1)}}
\\
\times&\prod_{i=1}^\ell\lr{\bar\si^{(i)}}{\phi^{(i)},\psi^{(i)}}\lr{\bar\tau^{(i)}}{\chi^{(i)},\om^{(i-1)}}
\\
\times&\prod_{i=1}^\ell\lr{\mu^{(i-1)}}{\si^{(i-1)},{\tau^{(i)}}'}\lr{\nu^{(i-1)}}{\bar\si^{(i-1)},\bar\tau{{}^{(i)}}'}\Big),
\end{align*}
where we have eliminated the variables $\la^{(i)}$ from the summation, and introduced new variables $\phi^{(i)},\psi^{(i)},\chi^{(i)}\in\calp$ for $1\ls i\ls\ell$ and $\om^{(i)}$ for $0\ls i\ls\ell-1$. Now we use standard relations for Littlewood--Richardson coefficients to get
\begin{align*}
\mathring C_{\mu\nu}=\sum\Big(&2^{h(\la^{(0)})}\ki{\la^{(0)}}{\si^{(0)}}\ki{\la^{(0)}}{\bar\si^{(0)}}\lr{\mu^{(0)}}{\si^{(0)},{\psi^{(1)}}',{\om^{(0)}}'}\lr{\nu^{(0)}}{\bar\si^{(0)},{\chi^{(1)}}',{\om^{(0)}}'}
\\
&\times\prod_{i=2}^\ell\lr{\mu^{(i-1)}}{\phi^{(i-1)},\chi^{(i-1)},{\psi^{(i)}}',{\om^{(i-1)}}'}\lr{\nu^{(i-1)}}{\phi^{(i-1)},\psi^{(i-1)},{\chi^{(i)}}',{\om^{(i-1)}}'}\Big).
\end{align*}
Here we have eliminated the variables $\si^{(i)},\bar\si^{(i)}$ for $1\ls i\ls\ell-1$ and $\tau^{(i)},\bar\tau^{(i)}$ for $1\ls i\ls\ell$. We have also elided the terms $\lr{\si^{(\ell)}}{\phi^{(\ell)},\chi^{(\ell)}}\lr{\bar\si^{(\ell)}}{\phi^{(\ell)},\psi^{(\ell)}}$. This is harmless because we interpret $\si^{(\ell)}=\bar\si^{(\ell)}=\vn$, but it means that we now eliminate $\phi^{(\ell)},\chi^{(\ell)},\psi^{(\ell)}$ as variables, reading $\psi^{(\ell)}=\chi^{(\ell)}=\vn$ in the formula above.

Now we apply \cref{klem} to eliminate the terms $\ki{\la^{(0)}}{\si^{(0)}}\ki{\la^{(0)}}{\bar\si^{(0)}}$. We get
\begin{align*}
\mathring C_{\mu\nu}
=\sum\Big(&\lr{\si^{(0)}}{\phi^{(0)},{\psi^{(0)}}'}\lr{\bar\si^{(0)}}{\phi^{(0)},\psi^{(0)}}\lr{\mu^{(0)}}{\si^{(0)},{\psi^{(1)}}',{\om^{(0)}}'}\lr{\nu^{(0)}}{\bar\si^{(0)},{\chi^{(1)}}',{\om^{(0)}}'}
\\
&\times\prod_{i=2}^\ell\lr{\mu^{(i-1)}}{\phi^{(i-1)},\chi^{(i-1)},{\psi^{(i)}}',{\om^{(i-1)}}'}\lr{\nu^{(i-1)}}{\phi^{(i-1)},\psi^{(i-1)},{\chi^{(i)}}',{\om^{(i-1)}}'}\Big),
\end{align*}
where we have eliminated the variable $\la^{(0)}$ and introduced two new variables $\phi^{(0)},\psi^{(0)}\in\calp$. Now we eliminate $\si^{(0)},\bar\si^{(0)}$ and obtain
\begin{align*}
\mathring C_{\mu\nu}
=\sum\Big(&\lr{\mu^{(0)}}{\phi^{(0)},{\psi^{(0)}}',{\psi^{(1)}}',{\om^{(0)}}'}\lr{\nu^{(0)}}{\phi^{(0)},\psi^{(0)},{\chi^{(1)}}',{\om^{(0)}}'}
\\
&\times\prod_{i=1}^{\ell-1}\lr{\mu^{(i)}}{\phi^{(i)},\chi^{(i)},{\psi^{(i+1)}}',{\om^{(i)}}'}\lr{\nu^{(i)}}{\phi^{(i)},\psi^{(i)},{\chi^{(i+1)}}',{\om^{(i)}}'}\Big).
\end{align*}
Replacing $\om^{(i)}$ with ${\om^{(i)}}'$ for each $i$ gives the required result.
\end{pf}

\subsection{Proof of the main theorem.}\label{SSMainTProof}
Let $d<p$ and $\rho$ be a $d$-Rouquier $p$-bar-core. Take $\la\in\stp\rho d$ and $\mu\in\rpstp\rho d$. Our main theorem asserts that the decomposition number $[\Sp\la:\Dm\mu]$ equals the integer $\dd_{\la\mu}$ defined in \cref{cartansec}. We have seen in (\ref{adjd}) that $[\Sp\la:\Dm\mu]=\sum_\nu A_{\nu\mu}\dd_{\la\nu}$, so our task is to show that the adjustment matrix $A$ is the identity matrix. 

Recall from \cref{cartansec} that for genuine  super-Cartan matrix entries we have 
\[
C_{\nu\mu}=\sum_{\xi\pi\in\rpstp\rho d}A_{\xi\mu}\mathring C_{\xi\pi}A_{\pi\nu},
\]
and the unadjusted super-Cartan matrix entries $\mathring C_{\xi\pi}$ are given by \cref{unadjc}.
The matrix $A$ is triangular with non-negative integer entries, which implies that $C_{\nu\mu}\gs\mathring C_{\nu\mu}$ for all $\mu,\nu$, with equality for all $\mu,\nu$ \iff $A$ is the identity matrix. More simply, $A$ is the identity matrix \iff $\sum_{\mu,\nu}C_{\nu\mu}=\sum_{\mu,\nu}\mathring C_{\nu\mu}$.

Assume first that $\blk\rho d$ is of type $\Mtype$. Then simple modules are the same as simple supermodules, and indecomposable projective modules are the same as indecomposable projective supermodules, so the entries of the usual  Cartan matrix are given by $C_{\nu\mu}$.

Assume next that $\blk\rho d$ is of type $\Qtype$. Then when we look at modules rather than supermodules, each $\prj\mu$ splits as a sum $\prj{\mu,+}\oplus\prj{\mu,-}$ and each simple module $\Dm\mu$ splits as a direct sum $\Dm{\mu,+}\oplus\Dm{\mu,-}$. If we restrict to the double cover $\halt_n$ of the alternating group, then $\prj{\mu,+}$ and $\prj{\mu,-}$ both restrict to the same indecomposable projective character $\prj{\mu,0}$, and the simple modules $\Dm{\mu,+}$ and $\Dm{\mu,-}$ both restrict to the same simple module $\Em{\mu,0}$. So $\Res_{\halt_n}\prj\mu=2\prj{\mu,0}$ and $\Res_{\halt_n}\Dm\mu\cong\Em{\mu,0}^{\oplus 2}$, and it follows that the entries of the usual Cartan matrix of the block $\blk\rho d_\0$ of $\halt_n$ are given by $C_{\nu\mu}$.

Now consider the wreath product algebra $\W_d$ from \cref{wreathsec}. 
By \cref{mainkl}, 
this algebra is Morita equivalent to $\blk\rho d$ if $\blk\rho d$ is of type $\Mtype$ or to $\blk\rho d_\0$ if $\blk\rho d$ is of type $\Qtype$. In either case the Cartan matrix of $\W_d$ and the matrix $(C_{\nu,\mu})$ are the same 
up to row and column permutations; 
that is, there is a bijection $\theta:\rpstp\rho d\to\Par^J(d)$ such that $C_{\nu\mu}=[P(\theta(\nu)):L(\theta(\mu))]$ for all $\mu,\nu$. Summing over $\mu,\nu$, we obtain
\[
\sum_{\mu,\nu\in\rpstp\rho d}C_{\nu\mu}=\sum_{\bla,\bmu\in\Par^J(d)}[P(\bla):L(\bmu)].
\]
But a comparison of \cref{unadjc,wrcartan} shows that if we define a bijection
\begin{align*}
\iota:\rpstp\rho d\longrightarrow\Par^J(d), 
\ 
\mu\longmapsto(\la^1,\dots,\la^\ell)
\end{align*}
where
\[
\la^i=
\begin{cases}
\mu^{(i-1)}&\text{$i$ even}
\\
{\mu^{(i-1)}}'&\text{$i$ odd}
\end{cases}
\]
then $\mathring C_{\nu\mu}=[P(\iota(\nu)),L(\iota(\mu))]$ for all $\mu,\nu$. Summing, we obtain
\[
\sum_{\mu,\nu\in\rpstp\rho d}\mathring C_{\nu\mu}=\sum_{\bla,\bmu\in\Par^J(d)}[P(\bla):L(\bmu)]=\sum_{\mu,\nu\in\rpstp\rho d}C_{\nu\mu}
\]
and the result follows.


\begin{thebibliography}{ABC}

\backrefparscanfalse

\bibitem[BK${}_1$]{BKHeCl} J.~Brundan \& A.~Kleshchev, Hecke-Clifford superalgebras, crystals of type $A^{(2)}_{2\ell}$ and modular branching rules for $\tilde{S}_n$, {\em Represent. Theory} {\bf5} (2001), 317--403.\bkp

\bibitem[BK${}_2$]{bk1}J. Brundan \& A. Kleshchev, Projective representations of symmetric groups via Sergeev duality, {\em Math.\ Z.} 239 (2002), 27--68.\bkp

\bibitem[BK${}_3$]{bk2}J. Brundan \& A. Kleshchev, James' regularization theorem for double covers of symmetric groups, \textit{J.\ Algebra} \textbf{306} (2006), 128--137.\bkp

\bibitem[BK${}_4$]{bk3}J.\ Brundan \& A.\ Kleshchev, Odd Grassmannian bimodules and derived equivalences for spin symmetric groups, \texttt{arXiv:2203.14149}.\bkp

\bibitem[CK]{CK}J.~Chuang and R.~Kessar, Symmetric groups, wreath products, Morita equivalences, and Brou{\'e}'s abelian defect group conjecture, \emph{Bull.~London Math.~Soc.}~\textbf{34} (2002), 174--184.\bkp

\bibitem[CR]{CR}J. Chuang and R. Rouquier,
Derived equivalences for symmetric groups and $\mathfrak{sl}_2$-categorification, {\em Ann.\ of Math.} {\bf 167} (2008), 245--298.\bkp

\bibitem[CT${}_1$]{ctq}J.\ Chuang \& K.\ M.\ Tan, Some canonical basis vectors in the basic $U_q(\widehat{\mathfrak{sl}}_n)$-module, \textit{J.\ Algebra} \textbf{248} (2002), 765--779.\bkp

\bibitem[CT${}_2$]{CT1}J. Chuang \& K. M. Tan, Representations of wreath products of algebras, {\em Math.\ Proc.\ Cambridge Philos.\ Soc.} {\bf 135} (2003), 395--411.\bkp

\bibitem[ELV]{elv}M.\ Ebert, A.\ Lauda \& L.\ Vera, Derived superequivalences for spin symmetric groups and odd $\mathfrak{sl}(2)$-categorifications, \texttt{arXiv:2203.14153}.\bkp

\bibitem[Fa${}_1$]{mfreg}M.\ Fayers, $q$-analogues of regularisation theorems for linear and projective representations of the symmetric group, \textit{J.\ Algebra} \textbf{316} (2007), 346--367.\bkp

\bibitem[Fa${}_2$]{mfspinalt2}M.\ Fayers, Irreducible projective representations of the alternating group which remain irreducible in characteristic~$2$, \textit{Adv.\ Math.} \textbf{374} (2020) 107340.\bkp

\bibitem[Fa${}_3$]{mattconj}M.\ Fayers, Comparing Fock spaces in types $A^{(1)}$ and $A^{(2)}$, \textit{Algebr.\ Comb.}, to appear; \texttt{arXiv:2207.01879}.\bkp

\bibitem[Fu]{fulton}W.\ Fulton, \textit{Young tableaux}, London Mathematical Society Student Texts \textbf{35}, Cambridge University Press, Cambridge, 1997.\bkp

\bibitem[H]{humph} J. Humphreys, Blocks of projective representations of the symmetric groups, \textit{J.\ London Math.\ Soc.} 33 (1986), 441--452.\bkp

\bibitem[J${}_1$]{j1}
G.\ D.\ James, On the decomposition matrices of the symmetric groups II, \textit{J.\ Algebra} \textbf{43} (1976), 45--54.\bkp

\bibitem[J${}_2$]{JamesBook}G.\ D. James, {\em The Representation Theory of the Symmetric Groups}, Lecture Notes in Mathematics, Vol. {\bf 682}, Springer, New York/Heidelberg/Berlin, 1978.\bkp

\bibitem[JK]{JK}G.\ D. James and A. Kerber, {\em The Representation Theory of the Symmetric Group}, Encyclopedia of Mathematics and Its Applications, Vol. {\bf 16}, Addison-Wesley, Reading, MA, 1981.\bkp

\bibitem[Ke]{Kessar}
R. Kessar, 
Blocks and source algebras for the double covers of the symmetric and alternating groups, \textit{J. Algebra} \textbf{186}  (1996), 872--933.\bkp

\bibitem[Kl]{KBook}A.\ Kleshchev, {\em Linear and Projective Representations of Symmetric Groups}, Cambridge University Press, 2005.\bkp

\bibitem[KL]{kl}A.\ Kleshchev \& M.\ Livesey, RoCK blocks for double covers of symmetric groups and quiver Hecke superalgebras, \textit{Mem.\ Amer.\ Math.\ Soc.}, to appear; \texttt{arXiv:2201.06870}.\bkp

\bibitem[KS]{ks}A.\ Kleshchev \& V.\ Shchigolev, Modular branching rules for projective representations of symmetric groups and lowering operators for the supergroup $Q(n)$, \textit{Mem.\ Amer.\ Math.\ Soc.} \textbf{220} (2012), no.\ 1034, xviii+123 pp.\bkp

\bibitem[LM]{LM}B.\ Leclerc \& H.\ Miyachi, Some closed formulas for canonical bases of Fock spaces, \textit{Represent.\ Theory} \textbf{6} (2002), 290--312.\bkp

\bibitem[LT]{lt}B.\ Leclerc \& J.--Y. Thibon, $q$-deformed Fock spaces and modular representations of spin symmetric groups, \textit{J.\ Physics A} \textbf{30} (1997), 6163--6176.\bkp

\bibitem[M${}_1$]{MacD}I. G. Macdonald, Polynomial functors and wreath products, {\em J.\ Pure Appl.\ Algebra} {\bf 18} (1980), 173--204.\bkp

\bibitem[M${}_2$]{macdbook}I.~Macdonald, Symmetric functions and Hall polynomials, 2nd Edition, Oxford University Press, 1995.\bkp

\bibitem[MY${}_1$]{my} A.~Morris \& A.~Yaseen, Some combinatorial results involving shifted Young diagrams, \textit{Math.\ Proc.\ Cambridge Philos.\ Soc.} 99 (1986), 23--31.\bkp

\bibitem[MY${}_2$]{my2} A.~Morris \& A.~Yaseen, Decomposition matrices for spin characters of symmetric groups, \textit{Proc.\ Roy. Soc. Edinburgh} {\bf 108A} (1988), 145--164.\bkp

\bibitem[N]{Na} Nazarov, M. L. Young's orthogonal form of irreducible projective representations of the symmetric group, {\em J. London Math.\ Soc.} {\bf 42} (1990) 437--451.\bkp

\bibitem[R]{Ro}R. Rouquier, Repr\'esentations et cat\'egories d\'eriv\'ees, Rapport  d'habilitation, Universit\'e de Paris VII, 1998.\bkp

\bibitem[S]{stem}J.~Stembridge, Shifted tableaux and the projective representations of symmetric groups, \textit{Adv. Math.} \textbf{74} (1989), 87--134.\bkp

\end{thebibliography}
\end{document}